\renewcommand*{\eqref}[1]{%
\hyperref[{#1}]{\textup{\tagform@{\!\!\ref*{#1}}}}%
}\makeatother 
\theoremstyle{plain}
\newtheorem{theorem}{Theorem}[section]
\newtheorem{lemma}[theorem]{Lemma}
\newtheorem{proposition}[theorem]{Proposition}
\newtheorem{corollary}[theorem]{Corollary}
\theoremstyle{definition}
\newtheorem{definition}[theorem]{Definition}
\newtheorem{remark}[theorem]{Remark}
\newcommand{\norm}[1]{{\|#1\|}}
\def\supp{\mathop{\mathrm{supp}}\nolimits}
\def\Id{\mathop{\mathrm{Id}}\nolimits}
\def\Ker{\mathop{\mathrm{Ker}}\nolimits}
\def\ac{\mathop{\mathrm{ac}}\nolimits}
\def\Im{\mathop{\mathrm{Im}}\nolimits}
\def\sgn{\mathop{\mathrm{sgn}}\nolimits}
\def\BMO{{\mathop{\mathrm{BMO}}}}
\def\AB{{\mathop{\mathrm{AB}}}}
\def\R{{\mathbb{R}}}
\def\Z{{\mathbb{Z}}}
\def\N{{\mathbb{N}}}
\def\C{{\mathbb{C}}}
\def\F{{\mathcal{F}}}
\def\H{{\mathcal{H}}}
\def\D{{\mathcal{D}}}
\def\<{{\langle}}
\def\>{{\rangle}}
\def\ep{{\varepsilon}}
\DeclareMathOperator*{\slim}{s-lim}
\title[The $L^p$-boundedness of wave operators]{ $L^p$-boundedness of wave operators for  fourth order Schr\"odinger operators with zero resonances on $\mathbb{R}^3$}
\author{Haruya Mizutani}
\address[H. Mizutani]{Department of Mathematics, Graduate School of Science, Osaka University, Toyonaka, Osaka 560-0043, Japan}
\email{haruya@math.sci.osaka-u.ac.jp}
\author{Zijun Wan}
\address[Z. Wan]{Department of Mathematics, Central China Normal University, Wuhan, 430079, P.R. China}
\email{zijunwan@mails.ccnu.edu.cn}
\author{Xiaohua Yao}
\address[X. Yao]{School of Mathematics and Statistics,   Key Laboratory of Nonlinear Analysis  and Applications (Ministry of Education), Central China Normal University, Wuhan, 430079, P.R. China}
\email{yaoxiaohua@ccnu.edu.cn}
\keywords{Wave operator,  Fourth-order Schr\"odinger operator, Zero resonance,  $L^p$-boundedness  }
\begin{document}
\date{\today}

\begin{abstract}
Let $H = \Delta^2 + V$ be the fourth-order Schr\"odinger operator on $\mathbb{R}^3$ with a real-valued fast-decaying potential $V$. If zero is neither a resonance nor an eigenvalue of $H$, then it was recently shown that the wave operators $W_\pm(H, \Delta^2)$ are bounded on $L^p(\mathbb{R}^3)$ for all $1 < p < \infty$ and unbounded at the endpoints $p=1$ and $p=\infty$.

This paper is to further establish the $L^p$-boundedness of $W_\pm(H, \Delta^2)$ that exhibit all types of singularities at the zero energy threshold. We first prove that $W_\pm(H, \Delta^2)$ are bounded on $L^p(\mathbb{R}^3)$ for all $1 < p < \infty$ in the first kind resonance case, and then proceed to establish for the second kind resonance case that they are bounded  on $L^p(\mathbb{R}^3)$ for all $1 < p < 3$, but not if $3 \le p \le \infty$. In the third kind resonance case, we also show that $W_\pm(H, \Delta^2)$ are bounded on $L^p(\mathbb{R}^3)$ for all $1<p<3$ and generically unbounded on $L^p(\R^3)$ for any $3\le p\le\infty$. Moreover, it is also shown that $W_\pm(H, \Delta^2)$ are bounded on $L^p(\R^3)$ for all $3\le p<\infty$ if in addition $H$ has the zero eigenvalue, but no $p$-wave zero resonances and all zero eigenfunctions are orthogonal to $x_ix_jx_kV$ in $L^2(\R^3)$ for all $i,j,k=1,2,3$ with $x=(x_1,x_2,x_3)\in \R^3$.

 These results describe precisely the validity of the $L^p$-boundedness of $W_\pm(H, \Delta^2)$ in $\mathbb{R}^3$ for all types of singularities at the zero energy threshold with some exceptions for the endpoint cases $p=1,\infty$. As an application, $L^p$-$L^q$ decay estimates are also derived for the fourth-order Schr\"odinger equations and Beam equations with zero resonance singularities.


\end{abstract}

\maketitle
\tableofcontents
\section{Introduction}

\subsection{Backgrounds}
We consider the fourth-order Schr\"odinger operator $$H=\Delta^2+V(x)$$  on $\mathbb R^3$ with a real-valued potentials $V(x)$ satisfying $|V(x)|\lesssim \<x\>^{-\mu}$ with a specific $\mu>0$ and $\<x\>=\sqrt{1+|x|^2}$. 
	Under this condition, $H$ is self-adjoint on $L^2(\R^3)$ with domain $H^4(\R^3)$, the $L^2$-Sobolev space of order $4$, and its spectrum consists of discrete eigenvalues and the essential spectrum $[0,\infty)$ (with possible zero eigenvalue).  Throughout the paper we always assume that $H$ has no positive eigenvalue.

If $\mu>1$, then it is well-known (see  e.g.  Kuroda \cite{Kuroda}) that the {\it wave operators}
\begin{align}\label{def-wave}
	W_\pm=W_\pm(H,\Delta^2) :=\slim_{t\to\pm\infty}e^{itH}e^{-it\Delta^2}
\end{align}
exist as partial isometries on  $L^2(\mathbb{R}^3)$ and are asymptotically complete, namely the range of $W_\pm$ coincides with the absolutely continuous spectral subspace $\H_{\ac}(H)$ of $H$. In particular, $W_\pm$ satisfy the  identities
$
W_\pm W_\pm^* =P_{\ac}(H)$, $W_\pm^*W_\pm=I
$
and exhibit {\it the intertwining property} $f(H)W_\pm=W_\pm f(\Delta^2)$ for any Borel measurable function $f$ on $\mathbb{R}$,
where $P_{\ac}(H)$ denotes the projection onto $\H_{\ac}(H)$. These formulas, in particular, imply
\begin{align}
	\label{intertwining_1}
	f(H)P_{\ac}(H)=W_\pm f(\Delta^2)W_\pm^*.
\end{align}
We refer to \cite{ReSi} for a detailed description on the mathematical scattering theory.

In this paper we are interested in the boundedness of the wave operators $W_\pm$ and $W_\pm^*$ on $L^p(\R^3)$ for $p\neq2$. One of  our motivations to study the $L^p$-boundedness is that, based on the equality \eqref{intertwining_1}, the $L^p$-boundedness of $W_\pm$ and $W_\pm^*$ can be directly employed to establish the $L^p$-$L^q$ estimates for the perturbed operator $f(H)$ from the same estimates for the free operator $f(\Delta^2)$. This can be seen as follows:
\begin{align}
	\label{Lp-bound of f(H)}
	\|f(H)P_{\ac}(H)\|_{L^p\to L^q}\le \|W_\pm\|_{L^q\to L^q}\ \|f(\Delta^2)\|_{L^p\to L^q}\|W_\pm^*\|_{L^{p}\to L^{p}}.
\end{align}

If $\mu>9$ and zero is neither an eigenvalue nor a resonance of $H$ (see Definition \ref{definition1.1} below), Goldberg and Green \cite{GoGr21} established  for the first time that $W_\pm$ are bounded on $L^p(\R^3)$ for $1 < p < \infty$.
In the endpoint cases $p=1$ and $\infty$, we have proved in \cite{MWY23} that $W_\pm$ become unbounded on $L^1(\R^3)$ and $L^\infty(\R^3)$  assuming that $V$ has a compact support.
Furthermore, the weak $(1,1)$-estimates for $W_\pm$ were also established in the regular case if $\mu > 11$ in \cite{MWY23}. That is,
\begin{align}\label{Weakesti}
	\big|\{x \in \R^3; \ |W_\pm f(x)| \ge \lambda\}\big| & \lesssim \frac{1}{\lambda} \int_{\R^3} |f(x)|dx, \quad \lambda > 0.
\end{align}
As is known, the range of  values $p$ for which the wave operators are bounded on $L^p$ heavily depends on the existence/non-existence of zero resonances of $H$. In the present case, there are three kinds of zero resonances (the third one coincides with the existence of zero eigenvalues), refer to Definition \ref{definition1.1}. Therefore, the main object of the present paper is to establish the cases when zero is either an eigenvalue or a resonance of $H$.


Our results in the present paper are divided into three cases according to the types of the zero resonances. For  the first-kind resonance, we show that $W_\pm$ are bounded on $L^p(\R^3)$ for all $1<p<\infty$, which are the same results as in the regular case by Goldberg and Green \cite{GoGr21}. For the second and third kind resonance cases, the situation is more complicated. Indeed, for the case of the second kind resonance, we show that $W_\pm$ are bounded on $L^p(\R^3)$ for all $1<p<3$, but not for any $3\le p\leq\infty$. For the case of the third kind resonance, we first show that $W_\pm$ are bounded on $L^p(\R^3)$ for all $1<p<3$. Remarkably, we can also show in such a case that $W_\pm$ are bounded on $L^p(\R^3)$ for $3\le p<\infty$ if in addition $H$ has no $p$-wave zero resonance and all zero eigenfunctions $\phi$ satisfy the orthogonality conditions: $$\int_{\R^3} x_ix_jx_kV(x)\phi(x)dx=0$$ for all $i,j,k=1,2,3$ with $x=(x_1,x_2,x_3)\in \R^3$. Finally, without such additional conditions, they are shown to be unbounded on $L^p(\R^3)$ for $3\le p\leq\infty$.

We here stress that, the proofs of these results significantly rely  on the thorough analysis of the asymptotic expansions of the resolvent of $H$ at low energy for all types of zero-resonance singularities, incorporating ideas from Yajima \cite{Yajima_2021arxiv} and Goldberg and Green \cite{GoGr21}.

\subsection{The main results and zero resonances}\label{main results}

In order to present our main results, we first introduce the definition of zero resonance  by Erdo\u gan, Green, and Toprak \cite{Erdogan-Green-Toprak}.
Recall that if there is a nontrivial solution $\phi\in L^2(\R^3)$ satisfying  $\Delta^2\phi+V\phi=0$ in the distribution sense, then  zero is an eigenvalue of $H$.  In general, there possibly exists a nontrivial resonant solution $\phi(x)$ in $L^2_{-s}(\R^3)$ with some $s>0$ but $\phi\notin L^2(\R^3)$  such that $H\phi(x)=0$. This may lead to challenges in achieving dispersive estimates or establishing $L^p$-boundedness for the wave operator of $H$.

Define $L^2_{-s}(\mathbb{R}^3):=\langle \cdot\rangle^{s} L^2(\R^3)$ for $s\in\mathbb{R}$ and  $ W_{-\sigma}(\mathbb{R}^3):=\bigcap_{s>\sigma}L^2_{-s}(\mathbb{R}^3)$ for $\sigma\in \R$. $W_{-\sigma} (\R^3)$ increases as $\sigma$ increases and  $L^2_{-\sigma}(\R^3)\subset W_{-\sigma}(\R^3)$. In particular, $L^2(\R^3)\subsetneq W_{-\frac12}(\R^3)\subsetneq W_{-\frac32}(\R^3)$.

 \begin{definition}\label{definition1.1}  The zero resonance of  $H=\Delta^2+V$ can be classified into the following four types:
\begin{itemize}
\item [(i)] Zero is the  first kind  resonance of $H$ if there exists a $0\neq \phi\in W_{-{3\over 2}}(\mathbb{R}^3)$  but no
 $0\neq \phi\in W_{-{1\over2}}(\mathbb{R}^3)$ such that $H\phi=0$ holds in the distributional sense;
\item [(ii)] Zero is the  second kind resonance of $H$ if there exists a $0\neq \phi\in W_{-{1\over2}}(\mathbb{R}^3)$  but no
$0\neq \phi\in L^2(\mathbb{R}^3)$ such that $H\phi=0$ holds  in the distributional sense;
\item [(iiii)] Zero is the  third kind resonance of $H$ if $H$ has a zero eigenvalue, namely there exists a  $0\neq\phi\in L^2(\mathbb{R}^3)$ such that $H\phi=0$ holds in the distributional sense;
\item[(iv)] Zero is a  regular point of $H$ if   $H$  has neither zero eigenvalue nor zero resonance.
\end{itemize}
\end{definition}

An equivalent characterization of zero resonances of $H$ will be given by  Definition \ref{definition_resonance} in Section \ref{subsection_resonance}. We also recall the notions of $s$-wave and $p$-wave resonances.
We say that zero is a  $s$-wave resonance if there exists $\phi\in W_{-{3\over 2}}(\R^3)\setminus W_{-{\frac 12}}(\R^3)$ such that $H\phi=0$, and  a   $p$-wave resonance if there exists $\phi\in W_{-{1\over 2}}(\R^3)\setminus L^2(\R^3)$ such that $H\phi=0$. Using these notions,
\begin{itemize}
\item Zero is the first kind  resonance of $H$ if there is only a $s$-wave zero resonance;
\item Zero is the second kind resonance of $H$ if there is a $p$-wave zero resonance, but no zero eigenvalue. In this case, there may or not may be a $s$-wave resonance;
\end{itemize}

  \begin{remark}\label{remark-eigenvalue} Some examples of zero resonance or eigenvalue are constructed as follows:
  	
  	(i)~(\text{The $s$-wave and  $p$-wave resonances})
  	If $\phi\in C^\infty(\mathbb{R}^3)$ is positive and $\phi(x)= c>0$ for $|x|>1$, then $H\phi(x)=0$ when the potential $V(x)$ is defined as $V(x)=-(\Delta^2\phi)/\phi(x)$. It can be readily verified that $V\in  C_0^\infty(\mathbb{R}^3)$ and $\phi\in  W_{-\frac{3}{2}}(\mathbb{R}^3)\setminus W_{-\frac{1}{2}}(\mathbb{R}^3)$. Consequently, zero is a resonance of the operator $H$, even in the presence of a compactly supported smooth potential.
  	
  Moreover, if we consider a positive smooth function $\phi(x)$ on $\mathbb{R}^3$ satisfying $\phi(x)=|x|^{-1}$ for $|x|>1$, then we still find $V\in  C_0^\infty(\mathbb{R}^3)$ such that $H\phi=0$ and $\phi\in  W_{-\frac{1}{2}}(\mathbb{R}^3)\setminus L^2(\mathbb{R}^3)$.
  	
  	
  	(ii)~(\text{The zero eigenvalue})
  	Unlike the case of zero resonance, the problem of zero eigenvalues for the operator $H$ is more subtle. For example, consider $\phi(x)=(1+x^2)^{-s}$ with $s>\frac{3}{4}$ and define $V(x)$ as $V(x)=-(\Delta^2\phi)/\phi(x)$. In this setup, it can be observed that $V(x)=O((1+|x|)^{-4})$ as $|x|\rightarrow \infty$, and $\phi\in L^2(\mathbb{R}^3)$ while satisfying $H\phi=0$. This demonstrates that zero is an eigenvalue of $H$ in the presence of such slowly decaying potential.
  	
  \end{remark}


We  emphasize that these zero resonances are closely linked to the asymptotic expansions of the resolvent $R_V(z)$ of the operator $H$ near zero energy. More specifically, the solutions of the Helmholtz equation $Hf=0$ (called zero resonant states) related to zero resonances can be characterized by the kernel subspaces of certain special bounded operators $T_0$,  $T_1$ and $T_2$ on $L^2(\R^3)$. These solutions are manifested in the asymptotic expansion through the projection operators $S_{j+1}$ for $j=0,1,2$ onto the kernel $\Ker (T_j)$, respectively (see Section \ref{subsection_resonance} below for the details).
Moreover,  it is worth noting that these asymptotic expansions play a crucial role in proving the $L^p$-boundedness of $W_\pm(H,\Delta^2)$, especially in the lower energy part. This will become clear through the stationary formula \eqref{stationary} of $W_\pm(H,\Delta^2)$, which will be presented later in Section \ref{regular3}-Section \ref{The third kind6}.

 Now we state our main results.
Let $\mathbb B(X,Y)$ be the space of bounded operators from Banach space $X$ to Banach space $Y$, namely $A\in \mathbb B(X,Y)$ if $ \|Af\|_{Y}\lesssim \|f\|_X$ for $ f\in X$ (or $f$ in some dense subset of $X$), and also set $\mathbb B(X)=\mathbb B(X,X)$.

  \begin{theorem}\label{theorem1.2}
  	Let  $H=\Delta^2+V$ with $|V(x)|\lesssim \<x\>^{-\mu}$ with some $\mu>0$ depending on the type of zero resonance.  Assume that $H$ has no positive embedded eigenvalues.
  	
  {\rm 	(i)}  If zero is  the first kind resonance of $H$ and $\mu>13$,  then  the wave operators $W_{\pm}(H,\Delta^2) \in \mathbb{B}(L^p(\R^3))$ for all $1<p<\infty$.

  	  {\rm (ii)}  If zero is the second kind resonance of $H$ and $\mu>21$,  then   $W_{\pm}(H,\Delta^2) \in \mathbb{B}(L^p(\R^3))$ for all $1<p<3$ and  $W_{\pm}(H,\Delta^2) \notin \mathbb{B}(L^p(\R^3))$ for any  $3\le p\leq\infty$.
  	
  	    {\rm (iii)}  If zero is the third kind resonance of $H$  (i.e. $H$ has a zero eigenvalue) and $\mu>25$,  then   $W_{\pm}(H,\Delta^2) \in \mathbb{B}(L^p(\R^3))$ for all $1<p<3$. Furthermore,  we have
  	    \begin{itemize}
  	  \item  If  $S_2L^2= S_3L^2=S_4L^2$, then $W_{\pm}(H,\Delta^2) \in \mathbb{B}(L^p(\R^3))$ for all  $3\le p<\infty$;
  	  \vskip0.2cm
  	     \item Otherwise, if  $S_2L^2\neq S_3L^2$\  or \ $S_3L^2\neq  S_4L^2$, then
  	     $W_{\pm}(H,\Delta^2) \notin \mathbb{B}(L^p(\R^3))$ for any $3\le p\leq\infty$.
  	
  	     \end{itemize}
   Here $S_2L^2$, $ S_3L^2$ and $S_4L^2$ are suitable finite dimensional subspaces of $L^2(\R^3)$ defined in Definition \ref{definition_resonance} and \eqref{projectionS_4}  in Section 2 (see also Lemma \ref{lemma_resonance}) such that $S_4L^2 \subset S_3L^2 \subset S_2L^2$.
  	\end{theorem}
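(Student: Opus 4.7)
My plan is to start from the stationary formula
\begin{align*}
W_\pm - I = \mp \frac{2}{\pi i}\int_0^\infty \lambda^{3}\, R_V(\lambda^{4}\pm i0)\, V\, R_0(\lambda^{4}\mp i0)\, d\lambda
\end{align*}
and decompose $W_\pm = W_\pm^{\mathrm{low}} + W_\pm^{\mathrm{high}}$ via a smooth cutoff $\chi(\lambda)$ localizing near $\lambda = 0$. The high energy piece $W_\pm^{\mathrm{high}}$ does not see the threshold structure; under the decay assumptions $\mu>13$, $\mu>21$, or $\mu>25$ it is bounded on $L^p(\R^3)$ for every $1<p<\infty$ by the arguments of \cite{GoGr21} and \cite{MWY23}. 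All of the delicate behaviour therefore concentrates in $W_\pm^{\mathrm{low}}$.

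For $W_\pm^{\mathrm{low}}$ I would apply the symmetric resolvent identity
\begin{align*}
R_V(\lambda^{4}\pm i0) = R_0(\lambda^{4}\pm i0) - R_0(\lambda^{4}\pm i0)\, v\, M_\pm(\lambda)^{-1}\, v\, R_0(\lambda^{4}\pm i0)
\end{align*}
with $v = |V|^{1/2}$ and $M_\pm(\lambda) = U + v R_0(\lambda^{4}\pm i0) v$, then insert the Jensen--Nenciu type asymptotic inverse of $M_\pm(\lambda)$ prepared in Section~2. In each case this inverse carries singular coefficients of the form $\lambda^{-k}\,\Gamma_k$, where $\Gamma_k$ is a finite rank operator built from the projections $S_2, S_3, S_4$ onto the spaces of $s$-wave resonances, $p$-wave resonances, and zero eigenfunctions respectively. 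Substituting this expansion back into the stationary integral and carefully integrating in $\lambda$ produces a finite sum of explicit leading operators $\mathcal{L}_k$ plus a remainder with absolutely bounded integral kernel, hence bounded on $L^p$ for every $1<p<\infty$. Each $\mathcal{L}_k$ has a separated kernel $g_k(x)\, h_k(y)$ built from the resonance states, whose pointwise decay is $\phi(x)=O(\langle x\rangle^{-3})$ for $s$-wave resonances and only $O(\langle x\rangle^{-1})$ for $p$-wave resonances, while zero eigenfunctions decay at a rate dictated by the vanishing of the moments $\int x^\alpha V \phi\, dx$.

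The $L^p$ assertions then follow case by case. In (i) every $\mathcal{L}_k$ carries only $s$-wave kernels and is bounded on $L^p$ for all $1<p<\infty$. In (ii) and the first part of (iii) the $p$-wave contribution produces a leading kernel dominated by $|x|^{-1}|y|^{-1}$, which is bounded on $L^p(\R^3)$ precisely for $1<p<3$ by a Hardy--Hilbert type estimate. Under the hypothesis $S_2L^2 = S_3L^2 = S_4L^2$ of the second bullet in (iii)---equivalent to the absence of a $p$-wave resonance together with the orthogonality $\int_{\R^3} x_i x_j x_k V(x)\phi(x)\,dx = 0$---the coefficients in front of all $|x|^{-1}|y|^{-1}$ pieces cancel, and the remaining $\mathcal{L}_k$ are controlled by zero-eigenfunction-type kernels bounded on $L^p$ for all $3\le p<\infty$. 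For the sharp unboundedness results I would isolate the most singular leading operator $\mathcal{L}_{\mathrm{top}}$, whose kernel has the form $c\,\phi(x)\psi(y)$ with $\phi$ a $p$-wave resonance state behaving like $|x|^{-1}$; testing on a Schwartz $f$ with $\langle f,\psi\rangle\neq 0$ yields an output behaving like $|x|^{-1}$ at infinity, hence lying in no $L^p(\R^3)$ with $p\ge 3$, while all other contributions to $W_\pm f$ stay in $L^p$ in this range.

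The main technical obstacle I anticipate is the precise derivation of the asymptotic inverse of $M_\pm(\lambda)$ up to the order needed in each case, and the algebraic identification of its singular coefficients with the projections $S_2,S_3,S_4$ and the associated resonance states. It is at this step---rather than in the $L^p$-bounds themselves---that the sharp distinction between the ranges $1<p<3$ and $1<p<\infty$ is determined, that the orthogonality conditions in (iii) arise as vanishing of moments, and that the exact decay thresholds $\mu>13,\,21,\,25$ are forced. Propagating these identifications through the stationary integral while retaining pointwise kernel estimates on the remainder is where the bulk of the technical work must be carried out.
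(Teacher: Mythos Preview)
Your overall architecture matches the paper: stationary formula, smooth high/low split in $\lambda$, symmetric resolvent identity, Jensen--Nenciu inversion of $M^\pm(\lambda)$, and term-by-term analysis of the resulting expansion. The high-energy part is indeed handled by \cite{GoGr21}.

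Where your proposal diverges substantially from the paper is in the \emph{mechanism} by which the low-energy terms are shown to be bounded or unbounded. You frame the analysis in terms of separated kernels $g_k(x)h_k(y)$ carrying the pointwise decay of resonance states (e.g.\ $\phi\sim|x|^{-1}$ for $p$-wave), and you attribute the $p<3$ threshold to a Hardy--Hilbert bound on a kernel like $|x|^{-1}|y|^{-1}$. The paper does \emph{not} proceed this way. After inserting the expansion of $M^{-1}(\lambda)$, each resulting operator still carries free resolvents $R_0^\pm(\lambda^4)$ on both sides; these are not collapsed to resonance states. Instead the paper writes $R_0^+-R_0^-$ via the spherical formula \eqref{free_resolvent2}, Taylor-expands $e^{i\lambda\omega\cdot z}$, and uses the orthogonality $S_jv=S_jv_k=\cdots=0$ (Lemma \ref{lemma_resonance}) to kill the low-order terms in $\lambda$. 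What remains is expressed through Riesz transforms $\mathcal R_{kl\cdots}$ and two concrete Fourier-side model operators $E_0,E_1$ (Lemma \ref{lem2.2}); the sharp bound $|E_1(x,y)|\lesssim\langle x\rangle^{-1}\langle|x|+|y|\rangle^{-2}$ is what produces exactly the range $1<p<3$ (Proposition \ref{prop5.2}). Your heuristic kernel $|x|^{-1}|y|^{-1}$ is not integrable in either variable, so as stated it cannot be the object to which a Hardy--Hilbert estimate applies; the paper's $E_1$ kernel is the correct replacement.

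For unboundedness, you propose testing on a Schwartz $f$ and showing the output is $\sim|x|^{-1}$. The paper instead argues by contradiction: assuming $L^{p_0}$-boundedness for some $p_0\ge3$, the leading piece is shown (after stripping $L^p$-bounded corrections) to have the form $\sum_j \ell_j(f)\,m(D)(vf_j)$ with $\ell_j(f)=\int \widehat{h_j}\,f$ and $h_j(\eta)=|\eta|^{-1}\widetilde\chi_{<a}(\eta)\phi_j(\eta)$ for explicit degree-zero homogeneous $\phi_j$. Boundedness forces each $\ell_j$ to extend continuously on $L^{p_0}$, hence $h_j\in L^{p_0}$ by Hausdorff--Young, hence $\phi_j\equiv0$. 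The coefficients of $\phi_j$ are then computed exactly (Lemmas \ref{calcuate xishu2}, \ref{calcuate xishu3}) and their vanishing is shown to force $S_2L^2=S_3L^2$ (resp.\ $S_2L^2=S_3L^2=S_4L^2$), contradicting the hypothesis. This algebraic step is where the explicit formulas for $A^2_{-3,1},A^2_{-2,1},H^2_{-1,1}$ etc.\ from Appendix \ref{Appendix A} are essential, together with the sign fact $1-\|V\|_{L^1}^{-1}\langle\D_1Tv,Tv\rangle\ge0$ (Lemma \ref{DTv}). Your testing approach is plausible in spirit but would still require proving that the relevant coefficient is nonzero---which is exactly this algebraic computation---so you have not bypassed the hard step.
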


\vskip0.3cm
\begin{remark}
\label{remark_orthogonal}
The condition $S_2L^2= S_3L^2=S_4L^2$ holds if and only if zero is an eigenvalue of $H$, but  not a $p$-wave resonance, and all zero eigenfunctions $\phi$ of $H$ satisfy
\begin{align}
\label{orthogonal}
\displaystyle \int_{\R^3}x_ix_jx_k V(x)\phi(x)dx=0
\end{align}
for all $i,j,k=1,2,3$. We refer to Lemma \ref{lemma_third_kind} for more details.

	In particular, from  Lemma \ref{gongzhengkehua-1} and  Remark \ref{remark-eigenvalue},  it has been seen that the  $S_1L^2\neq S_2L^2$ or $S_2L^2\neq S_3L^2$ may hold, even for rapidly decaying potentials.

	\end{remark}
\begin{remark}
We emphasize that the absence of embedded positive eigenvalues is a fundamental requirement for establishing $L^p$-bounds of wave operators. In fact, for any dimension $d\geq 1$, one can easily construct a potential $V\in C_0^\infty(\mathbb{R}^d)$ for which the operator $H=\Delta^2+V$ has some positive eigenvalues (see e.g. \cite[Section 7.1]{FSWY}). However, Feng et al. in \cite{FSWY} have proved that $H=\Delta^2 +V$ has no positive eigenvalues if $V\in C^1(\R^d)$ is bounded and satisfies the repulsive condition $$x\cdot \nabla V(x) \leq 0,\quad x\in \R^3.$$  Additionally, it is well-established by Kato \cite{Ka}, that the Schr\"odinger operator $-\Delta +V$ has no positive eigenvalues  when a bounded potential $V(x)=o(|x|^{-1})$ as  $|x|\to \infty$.
\end{remark}
As a consequence of Theorem \ref{theorem1.2} and \eqref{Lp-bound of f(H)}, we immediately obtain the following.

  \begin{corollary}Let $H$ and $V$ satisfy the same conditions as in Theorem \ref{theorem1.2} above.  Then
  	\begin{equation}\label{Schrodinger group}
  		\big\|e^{itH}P_{ac}(H)\big\|_{L^p(\R^3)\rightarrow L^{p'}(\R^3)}\lesssim |t|^{-\frac{3}{2}(\frac{1}{p}-\frac{1}{2})},
  	\end{equation}
  and
  \begin{equation}\label{wave group}
  	\big\|\cos(t\sqrt{H})P_{ac}(H)\big\|_{L^p(\R^3)\rightarrow L^{p'}(\R^3)}+\Big\|\frac{\sin(t\sqrt{H})}{t\sqrt{H}}P_{ac}(H)\Big\|_{L^p(\R^3)\rightarrow L^{p'}(\R^3)}\lesssim |t|^{-3(\frac{1}{p}-\frac{1}{2})},
  \end{equation}
  where $1<p\le 2$ in the regular or the first kind resonance cases and $\frac 32<p\le 2$ in the second or third kind resonance cases. Here $p'=p/(p-1)$ is the conjugate exponent of $p.$
  \end{corollary}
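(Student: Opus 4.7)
The plan is to deduce both \eqref{Schrodinger group} and \eqref{wave group} directly from Theorem \ref{theorem1.2} via the intertwining bound \eqref{Lp-bound of f(H)}, combined with standard dispersive decay estimates for the free propagators. Taking $f(\lambda)=e^{it\lambda}$, $\cos(t\sqrt{\lambda})$, or $\sin(t\sqrt{\lambda})/(t\sqrt{\lambda})$, and choosing $q=p'$ in \eqref{Lp-bound of f(H)}, I get
$$\|f(H)P_{ac}(H)\|_{L^p\to L^{p'}}\lesssim\|W_\pm\|_{L^{p'}\to L^{p'}}\,\|f(\Delta^2)\|_{L^p\to L^{p'}}\,\|W_\pm^*\|_{L^p\to L^p}.$$
By duality $\|W_\pm^*\|_{L^p\to L^p}=\|W_\pm\|_{L^{p'}\to L^{p'}}$, so both boundary factors collapse into the single requirement $W_\pm\in\mathbb B(L^{p'}(\R^3))$. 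The argument then splits into two essentially independent tasks: (a) establish the sharp $|t|$-decay of the free middle factor $\|f(\Delta^2)\|_{L^p\to L^{p'}}$; (b) invoke Theorem \ref{theorem1.2} to bound the wave-operator factors in the appropriate range of $p$.

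For (a), a standard non-degenerate stationary-phase analysis of the oscillatory integral with phase $x\cdot\xi+t|\xi|^4$ gives $\|e^{it\Delta^2}\|_{L^1(\R^3)\to L^\infty(\R^3)}\lesssim|t|^{-3/4}$, and Riesz--Thorin interpolation with the unitary $L^2$ bound produces $|t|^{-(3/2)(1/p-1/2)}$ for $1\le p\le 2$. For the beam-equation propagators the key reduction is the identification $\sqrt{\Delta^2}=-\Delta$ as positive self-adjoint operators (they share the Fourier multiplier $|\xi|^2$), so $\cos(t\sqrt{\Delta^2})$ and $\sin(t\sqrt{\Delta^2})/(t\sqrt{\Delta^2})$ are essentially Schr\"odinger-type oscillatory operators with phase $t|\xi|^2$; classical stationary phase then yields $\|\cdot\|_{L^1\to L^\infty}\lesssim|t|^{-3/2}$, and interpolation with the trivial $L^2$ bound gives $|t|^{-3(1/p-1/2)}$ for $1\le p\le 2$.

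For (b), Theorem \ref{theorem1.2}(i) (and Goldberg--Green \cite{GoGr21} for the regular case) gives $W_\pm\in\mathbb B(L^q(\R^3))$ for every $q\in(1,\infty)$ in the regular or first-kind resonance setting; taking $q=p'$ this is compatible with any $1<p\le 2$. In the second- or third-kind resonance setting, parts (ii)--(iii) of Theorem \ref{theorem1.2} only guarantee $W_\pm\in\mathbb B(L^q(\R^3))$ for $1<q<3$, so the constraint $p'<3$ forces $3/2<p\le 2$. Combining (a) and (b) through the displayed intertwining inequality yields \eqref{Schrodinger group} and \eqref{wave group} in the stated ranges.

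The proof is a routine combination of known ingredients once Theorem \ref{theorem1.2} is in hand, so there is no genuine obstacle; the only subtlety worth emphasizing is the observation $\sqrt{\Delta^2}=-\Delta$, which converts the beam-equation operators into Schr\"odinger-type propagators whose dispersive decay is classical, and the bookkeeping that identifies $p'$ as the exponent on which the wave operators must be bounded in order to apply \eqref{Lp-bound of f(H)}.
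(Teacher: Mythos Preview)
Your proposal is correct and matches the paper's approach exactly: the paper states the corollary follows immediately from Theorem \ref{theorem1.2} and the intertwining bound \eqref{Lp-bound of f(H)}, and you have filled in precisely those details, including the identification $\sqrt{\Delta^2}=-\Delta$ that reduces the free beam propagators to Schr\"odinger-type multipliers, and the bookkeeping that the wave-operator factor must be controlled in $L^{p'}$.
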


 When zero is a regular point or a resonance of the first kind, the decay estimates \eqref{Schrodinger group}  and \eqref{wave group} for all  $1\le p\le 2$, have been already established by Erdo\u gan et al.  \cite{Erdogan-Green-Toprak} and Chen et al \cite{CLSY} respectively, by different methods.  On the other hand, the estimates \eqref{Schrodinger group} and \eqref{wave group} for the case when zero is the second or third kind  resonances are new.

\subsection{Further related results}
\label{subsection_known_result}

For the classical Schr\"odinger operator $H=-\Delta+V(x)$, since the seminal work by Yajima in \cite{Yajima-JMSJ-95}, there has been a significant body of works focused on the $L^p$-boundedness of the wave operators $W_\pm$. This research has gained great importance due to its applications in linear and nonlinear dispersive equations. In particular, in the one-dimensional space ($d=1$), the wave operators $W_\pm$ are bounded on $L^p(\mathbb{R})$ for $1<p<\infty$ in both regular and zero resonance cases. However, they are generally unbounded for $p=1$ and $p=\infty$ (see, for example, \cite{ArYa,DaFa,Weder}).
In the regular case, for dimension $d=2$, the wave operators $W_\pm$ are bounded on $L^p(\mathbb{R}^2)$ for $1<p<\infty$, but the results at the endpoints are yet unknown (see \cite{Yajima-CMP-99, Jensen_Yajima_2D}). For dimensions $d\geq3$, the wave operators $W_\pm$ are bounded on $L^p(\mathbb{R}^d)$ for $1\leq p\leq\infty$ in the regular case (see, for example, \cite{BeSc,Yajima-1995,Yajima-JMSJ-95}). However, the presence of threshold resonances narrows down the range of values for $p$, depending on the dimension $d$ and asymptotic behaviors of zero resonant states and zero eigenfunctions (as discussed in \cite{EGG,Finco_Yajima_II,Goldberg-Green-Advance,Goldberg-Green-Poincare,Jensen_Yajima_4D, Yajima_2006,Yajima_2016,Yajima_2018,Yajima_2021arxiv,Yajima_2022arxiv}).

For the higher-order Schr\"odinger operator $H=(-\Delta)^m+V(x)$ on $\R^d$ with $m>1$,  many progresses have been made in recent years.  The first result in this direction is due to Goldberg--Green \cite{GoGr21} for the case $(m,d)=(2,3)$, where the $L^p$-boundedness of wave operators was proved for  $1<p<\infty$ if the zero energy is a regular point.  Additionally, for the case of $(m,d)=(2,1)$, Mizutani--Wan--Yao \cite{MiWYa} have obtained the $L^p$-boundedness for all $1<p<\infty$ in the  all kinds of resonances and counterexamples  at the endpoint $p=1,\infty$, as well as  weak-boundedness in the framework of $L^{1,\infty}$, $\H^1$ and $\BMO$. For $d>2m\ge4$, Erdo\u{g}an--Green \cite{Erdogan-Green21,Erdogan-Green22} proved the $L^p$-boundedness for all $1\le p\le \infty$ if the zero energy is a regular point and the potential $V(x)$ is sufficiently smooth.  Furthermore, for the case $d>4m-1$, Erdo\u{g}an--Goldberg--Green \cite{EGG23} provides examples of compactly supported non-smooth potential $V(x)$ for which the wave operators are not bounded on $L^p$ if $2d/(d-4m+1)<p\le \infty$. More recently, the case $d=2m=4$ was considered by Galtbayar--Yajima \cite{Yajima_2024JST} where the $L^p$-boundedness was proved for $1<p<p_0$ with suitable $p_0$ depending on the type of the singularity at the zero energy.

It can be observed from these works that the behavior of wave operators are roughly classified into three cases: $d<2m$, $d=2m$ and $d>2m$. When $d<2m$, as observed by \cite{GoGr21}, the resolvent has a singularity at the zero energy even in the free case and singular integrals similar to  Hilbert transform are appeared in the stationary representation of the low energy part of wave operators even in the regular case. It thus can be expect that the wave operators are generically not bounded on $L^p$ for $p=1,\infty$ in this case (see e.g. Mizutani--Wan--Yao \cite{MWY23}). On the other hand, when $d>2m$, the singularity at the zero energy of the resolvent is relatively mild, but the high energy part becomes  more complicated than the case $d<2m$ since the resolvent does not decay (or even can grow in higher space dimensions) in the high energy limit. The case $d=2m$ is critical in the sense that it has these difficulties in the low and high energy parts of the wave operators.
In this paper, our  focus is to further establish the $L^p$-bounds of wave operators  that exhibit all types of zero energy singularities  for the case $(m,d)=(2,3)$.

\subsection{Notations}
\label{subsection_notation}
Throughout the paper we use the following notations:
\begin{itemize}
	\item $A\lesssim B$ (resp. $A\gtrsim B$) means $A\le CB$ (resp. $A\ge CB$) with some constant $C>0$.
	\vskip0.2cm
	\item $L^p=L^p(\R^3),L^{1,\infty}=L^{1,\infty}(\R^3)$ denote the Lebesgue and weak $L^1$ spaces on $\R^3$, respectively.
	\vskip0.2cm
	\item $\<f,g\>_{L^2}=\int_{\R^3} f\overline gdx$ denotes the inner product in $L^2(\R^3)$.
\vskip0.1cm
\item $\mathcal{S}(\R^3)$ denotes the Schwartz class.
\vskip0.1cm
\item  The Fourier transform  of $f$ is  defined as
\begin{equation}\label{Fourier}\widehat{f}(\xi)=\mathcal{F}(f)(\xi)=\frac{1}{(2\pi)^{3}}\int_{\R^3}e^{-ix\xi}f(x)dx,\quad \xi\in \R^3. 
\end{equation}
We denote the inverse Fourier transform of $f$ by $\check{f}(\xi)$ or $\mathcal{F}^{-1}(f)(\xi)$.
\vskip0.1cm
\item  Let
$D=(D_1, D_2, D_3)$ denote the gradient operator, where $D_l=-i\partial/\partial x_l$ for $l=1,2,3.$ Moreover, $m(D)=\F^{-1}m(\xi)\F$ denotes the Fourier multiplier operator with symbol $m(\xi)$. 

\item Let $\widehat{\mathcal{R}_jf}(\xi)=\frac{\xi_j}{|\xi|} \widehat{f}(\xi) $ with $ j=1,2, 3$ be the Riesz transforms, and set $\mathcal R=(\mathcal R_1,\mathcal R_2,\mathcal R_3)$. Moreover, we set for short
\begin{align*}
	\mathcal{R}_{kl}=\mathcal{R}_k\mathcal{R}_l,\quad \mathcal{R}_{kls}=\mathcal{R}_k\mathcal{R}_l\mathcal{R}_s,
	\quad \mathcal{R}_{klst}=\mathcal{R}_k\mathcal{R}_l\mathcal{R}_s\mathcal{R}_t,	\quad \mathcal{R}_{klstm}=\mathcal{R}_k\mathcal{R}_l\mathcal{R}_s\mathcal{R}_t\mathcal{R}_m.
\end{align*}

\vskip0.1cm
\item Let $v(x)=|V(x)|^{1/2}$ and $U(x)=\sgn V(x)$, i.e., $U(x)=1$ if $V(x)\ge0$, $U(x)=0$ if $V(x)<0$. 
\vskip0.1cm
	\item 
For $z=(z_1,z_2,z_3)\in \R^3$ and $k,l,s,t,m=1,2,3$, we set for short
\begin{align*}
	z_{kl}&=z_kz_l,\quad  z_{kls}=z_kz_lz_s,\quad  z_{klst}=z_kz_lz_sz_t,\quad  z_{klstm}=z_kz_lz_sz_tz_m;\\
	v_{k}(z)&=z_kv(z),\quad v_{kl}(z)=z_{kl}v(z),\quad v_{kls}(z)=z_{kls}v(z),\quad\\
	v_{klst}(z)&=z_{klst}v(z),\quad v_{klstm}(z)=z_{klstm}v(z).
\end{align*}
\vskip0.1cm
\item
For $\theta>0$ and $\rho\in \R^3$, we define the dilation operator $U_\theta$ by  
$
U_\theta u(x) = \theta^{-3} u(\theta^{-1} x)
$,  and the translation operator $J_\rho$ by  
$
J_\rho u(x) = u(x - \rho).  
$
\vskip0.1cm
\item The operators $P,Q,T,T_j,S_j$ and $\D_j$ are defined in Definition \ref{definition_resonance}. 
\vskip0.1cm
 \item A bounded operator $A\in \mathbb B(L^2)$ is said to be absolutely bounded ($A\in \AB(L^2)$ for short) if $$A=M_f+K$$ is a sum of a multiplication $M_f \in \mathbb B(L^2)$ by some $f\in L^\infty$  and an integral operator $K\in  \mathbb B(L^2)$ with kernel $K(x,y)$ such that the integral operator $|K|$ with kernel $|K(x,y)|$ is also bounded on $L^2$. Note that $A\in \AB(L^2)$ if  $A$ is a Hilbert--Schmidt operator on $L^2$. 
\vskip0.1cm
\item   Let $\chi\in C_0^\infty (\R)$ be such that $0\le \chi(\lambda)\le1$, $\chi(\lambda)=1$ for $|\lambda|\le 1/2$ and $\chi(\lambda)=0$ for $|\lambda|\ge 1$.  For  any $a>0$, we set 
$$
\quad  \quad  \chi_{<a}(\lambda)=\chi(\lambda/a),\quad \chi_{\ge  a}(\lambda)=1-\chi_{<a}(\lambda),\quad \widetilde \chi_{<a}(x)=\chi_{<a}(|x|),\quad \widetilde \chi_{\ge a}(x)=\chi_{\ge a}(|x|).
$$
\end{itemize}



\section{The stationary formulas and asymptotic expansions}
\label{section2}
\subsection{The stationary formulas of wave operators}
\label{section_stationary}

First of all, we observe that it suffices to deal with $W_-$ only since \eqref{def-wave} implies $W_+f=\overline{W_-\overline f}$.
The starting point is the following well-known stationary representation of $W_-$ (see e.g. Kuroda \cite{Kuroda}):
\begin{align}
	\label{stationary}
	W_-=\Id-\frac{2}{\pi i}\int_0^\infty \lambda^3 R_V^+(\lambda^4)V\left(R_0^+(\lambda^4)-R_0^-(\lambda^4)\right)d\lambda.
\end{align}
To explain the formula \eqref{stationary}, we need to introduce some notations.  Let
\begin{align*}
	R_0(z)=(\Delta^2-z)^{-1},\quad
	R_V(z)=(H-z)^{-1},\quad z\in \C\setminus[0,\infty),
\end{align*}
be the resolvents of $\Delta^2$ and $H=\Delta^2+V(x)$, respectively. Then  $R^\pm_0(\lambda),R^\pm_V(\lambda)$  are defined as their proper boundary values (limiting resolvents) on $(0,\infty)$, namely
$$
R^\pm_0(\lambda)=\lim_{\ep \searrow 0}R_0(\lambda\pm i\ep),\quad R_V^\pm(\lambda)=\lim_{\ep \searrow 0}R_V(\lambda\pm i\ep),\quad \lambda>0.
$$
More specifically, the existence of $R^\pm_0(\lambda)$ as bounded operators from $L^2_s(\R^3)$ to $L^2_{-s}(\R^3)$ with $s>1/2$ follows from the limiting absorption principle for the resolvent $(-\Delta-z)^{-1}$ of the free Schr\"odinger operator $-\Delta$ (see e.g.  Agmon \cite{Agmon}) and the following equality:
\begin{equation}\label{decomposition}
R_0(z)=\frac{1}{2\sqrt z}\left[(-\Delta-\sqrt z)^{-1}-(-\Delta+\sqrt z)^{-1}\right],\quad z\in \C\setminus[0,\infty), \ \Im\sqrt z>0.
\end{equation}
The existence of $R^\pm_V(\lambda)$ for $\lambda>0$ under the assumption of Theorem \ref{theorem1.2} has been also already shown (see  e.g. \cite{Agmon,FSY18, Kuroda}).

Formula \eqref{decomposition} also leads to the explicit  kernels of free resolvent $R_0^\pm(\lambda^4)$ of $\Delta^2$:
\begin{align}
	\label{free_resolvent}
	R_0^\pm(\lambda^4,x,y)=\frac{1}{8\pi\lambda^2|x-y|}\Big(e^{\pm i\lambda|x-y|}-e^{-\lambda|x-y|}\Big)=\frac{F_\pm(\lambda|x-y|)}{8\pi\lambda},
\end{align}
where $x, y\in \R^3$ and $F_\pm(s)=s^{-1}(e^{\pm is}-e^{-s})$.   As a consequence,  for each $\lambda>0$,
\begin{align}
\nonumber
	(R_0^+(\lambda^4)-	R_0^-(\lambda^4))f (x)
	&=\frac{i}{4\pi\lambda^2}\int_{\R^3}\frac{\sin\lambda|x-y|}{|x-y|}\ f(y)dx\\
\label{free_resolvent2}
	&=\frac{\pi i}{2\lambda}\int_{S^2} e^{i\lambda\omega x} \widehat{ f}(\lambda\omega)d\omega,
\end{align}
where $S^2$ is the unit sphere of $\R^3$. \eqref{free_resolvent2} immediately gives the following commuting property:
\begin{align}\label{free_resolvent4}
m(|D|)(R_0^+(\lambda^4)-	R_0^-(\lambda^4))f=(R_0^+(\lambda^4)-	R_0^-(\lambda^4))m(|D|)f= m(\lambda)(R_0^+(\lambda^4)-R_0^-(\lambda^4))f
\end{align}
for $f\in \mathcal S(\R^3)$ and thus, by the density argument, for all $f\in \D(m(|D|))$, the domain of $m(|D|)$, where  $m:(0,\infty)\to \C$ is piecewise continuous and of at most polynomial growth.

\subsection{Some integral operators}
In this subsection, we give some lemmas on the $L^p$ boundedness of  some integral operators related with the wave operators $W_\pm$.  We first recall  the  classical Schur's test,  which says that  an operator $K \in \mathbb B\big(L^p(\R^3)\big)$ for all $1\le p\le \infty$ if its kernel $K(x,y)$ satisfies
\begin{align}\label{Schur test}
\sup_{x\in \R^3} \int_{\R^3}|K(x,y)|dy+\sup_{y\in \R^3} \int_{\R^3}|K(x,y)|dx<\infty.
\end{align}
In many situations faced here,  the kernels $K(x,y)$ decays slowly in $x$ or $y$ such that $K(x,y)$ \eqref{Schur test} does not satisfy \eqref{Schur test}  in which case we need to study these integral operators case by case. 

The next lemma will be frequently used in this paper. One can find its proof in the proof of \cite[Lemma 3.3]{GoGr21} (see pages 7--9 of \cite{GoGr21}). We also refer to \cite[Proposition 2.4 and Lemma 5.2]{MWY23}  for a different proof. 

\begin{lemma}\label{lem2.1}
	Let $z, w\in \R^3$ and $A_{z,w}$ be the integral operator with kernel 
\begin{align}
\label{Azw}	
A_{z, w}(x,y)=\int_0^\infty  \lambda^4 \chi_{<a}(\lambda)R_0^+(\lambda^4,x,z) \left (R_0^+(\lambda^4,w,y)-R_0^-(\lambda^4,w,y)\right)d\lambda, \ \ x,y\in\R^3. 
	\end{align}
Then  $A_{z,w}\in \mathbb{B}(L^p(\R^3))$ for any $z,w\in \R^3$ and $1<p<\infty$, and $$\sup_{z,w}\|A_{z,w}\|_{\mathbb{B}(L^p)}=\|A_{0,0}\|_{\mathbb{B}(L^p)}<\infty. $$ 
	\end{lemma}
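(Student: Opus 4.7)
First, observe that the kernel $A_{z,w}(x,y)$ in \eqref{Azw} depends on its four arguments only through $(x-z,\,y-w)$, since $R_0^+(\lambda^4,x,z)$ and $R_0^\pm(\lambda^4,w,y)$ are both functions of the difference of their spatial arguments. Hence $A_{z,w}(x,y)=A_{0,0}(x-z,\,y-w)$, equivalently the operator identity $A_{z,w}=\tau_z A_{0,0}\tau_{-w}$ with $\tau_a f(\cdot):=f(\cdot-a)$. Translations are isometries on every $L^p(\R^3)$, so $\|A_{z,w}\|_{\mathbb{B}(L^p)}=\|A_{0,0}\|_{\mathbb{B}(L^p)}$ for all $z,w\in\R^3$; the supremum in $(z,w)$ is therefore attained at the origin, and it remains only to prove $A_{0,0}\in\mathbb{B}(L^p(\R^3))$ for every $1<p<\infty$.

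Next, using \eqref{free_resolvent}--\eqref{free_resolvent2} a direct calculation gives
\begin{equation*}
A_{0,0}(x,y)=\frac{i}{32\pi^2|x|\,|y|}\int_0^\infty \chi_{<a}(\lambda)\bigl(e^{i\lambda|x|}-e^{-\lambda|x|}\bigr)\sin(\lambda|y|)\,d\lambda,
\end{equation*}
which is a function of $(|x|,|y|)$ alone. In particular $A_{0,0}f$ is always radial, and $A_{0,0}f=A_{0,0}f^{\sharp}$ where $f^{\sharp}(x):=(4\pi)^{-1}\int_{S^2}f(|x|\omega)\,d\omega$; by Jensen's inequality $\|f^{\sharp}\|_{L^p}\le\|f\|_{L^p}$, so it suffices to bound $A_{0,0}$ on radial inputs. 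The Taylor cancellations $e^{i\lambda|x|}-e^{-\lambda|x|}=O(\lambda|x|)$ near $|x|=0$ and $\sin(\lambda|y|)=O(\lambda|y|)$ near $|y|=0$ absorb the apparent $1/|x|$ and $1/|y|$ singularities, so $A_{0,0}(x,y)$ is uniformly bounded on any region where $|x|$ or $|y|$ is in a compact set.

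The only delicate region is therefore $|x|,|y|\gtrsim 1$. Setting $\phi(t):=\int_0^\infty \chi_{<a}(\lambda)e^{i\lambda t}\,d\lambda$ and integrating by parts yields $\phi(t)=-1/(it)+O(|t|^{-2})$ as $|t|\to\infty$; the leading part of the kernel in this region is
\begin{equation*}
A_{0,0}(x,y)\sim -\frac{i}{32\pi^2\,|x|\,(|x|^2-|y|^2)},
\end{equation*}
with a Hilbert-transform singularity on the diagonal $|x|=|y|$. Under the logarithmic substitution $|x|=e^u$, $|y|=e^v$, this leading kernel---together with the spherical Jacobian---becomes a convolution with a constant multiple of $1/\sinh(u-v)$, a Calder\'on--Zygmund kernel with additional exponential decay at $\pm\infty$, for which convolution is bounded on $L^p(\R,du)$ for all $1<p<\infty$.

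The main obstacle is that the $L^p(\R^3)$-norm pulls back under the log substitution to an exponentially weighted $L^p(\R)$-norm, and applying Young's inequality to $1/\sinh$ in isolation only closes the restricted range $1<p<3$. To reach the full range $1<p<\infty$ one must exploit the cancellation between the $e^{i\lambda|x|}$ and $e^{-\lambda|x|}$ pieces in the resolvent: this cancellation removes the would-be $1/|x|$ singularity at $|x|\to 0$ that otherwise blocks the weighted estimate at $p\ge 3$. The remaining contributions---from the lower-order tail $\phi(t)+1/(it)=O(|t|^{-2})$ and from the $e^{-\lambda|x|}$-integral---produce kernels with additional decay, handled by Schur's test with polynomial weights. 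The detailed bookkeeping of these cancellations, which is the heart of the argument, is carried out in \cite[Lemma~3.3]{GoGr21} (see also \cite[Proposition~2.4 and Lemma~5.2]{MWY23}).
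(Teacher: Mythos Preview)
The paper does not prove this lemma; it simply cites \cite[Lemma 3.3]{GoGr21} and \cite[Proposition 2.4, Lemma 5.2]{MWY23}. Your proposal ends by citing the same references, so in that sense the two match. Your translation argument $A_{z,w}=\tau_z A_{0,0}\tau_{-w}$ and the explicit kernel formula are both correct.

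However, the sketch you add on top contains a genuine error. You extract as ``leading term'' only the contribution of the oscillatory piece $e^{i\lambda|x|}$, obtaining a kernel $\sim \tfrac{1}{|x|(|x|^2-|y|^2)}$, and relegate the $e^{-\lambda|x|}$ piece to a remainder with ``additional decay, handled by Schur's test with polynomial weights.'' This is false: the $e^{-\lambda|x|}$ integral contributes $\sim\tfrac{1}{|x|(|x|^2+|y|^2)}$, which is of the \emph{same} order as your leading term---both are homogeneous of degree $-3$, and a Mellin-convolution analysis shows each is bounded on $L^p(\R^3)$ only for $1<p<3$ (the obstruction is exactly the $1/|x|$ factor). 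Neither piece can be controlled separately for $p\ge 3$, so your claim that the exponential remainder is handled by Schur cannot work. The actual argument in \cite{GoGr21,MWY23} integrates by parts in \emph{all four} exponentials simultaneously, producing the combined boundary term
\[
\frac{1}{|x||y|}\Big(-\frac{1}{i(|x|+|y|)}+\frac{1}{i(|x|-|y|)}+\frac{1}{|x|+i|y|}-\frac{1}{|x|-i|y|}\Big)=\frac{-4i|x|}{|x|^4-|y|^4},
\]
whose Mellin kernel $k(t)=t/(t^4-1)$ satisfies $\int_0^\infty |k(t)|t^{3/p-1}dt<\infty$ for the full range $1<p<\infty$. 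You do mention the $e^{i\lambda|x|}$--$e^{-\lambda|x|}$ cancellation in your last paragraph, but this contradicts your earlier claim that the exponential piece is a Schur-testable remainder; the outline should be reorganized so that the cancellation is used to produce the correct leading kernel \emph{before} any splitting into main term plus remainder.
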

		

 Besides, we also need to estimate the following two Fourier type integrals:
\begin{align}\label{E_0}
	E_0(x,y)=\frac{1}{(2\pi)^6}\int_{\R^6}e^{ix\xi-iy\eta}\ \frac{\widetilde \chi_{<4a}(\xi)\widetilde \chi_{<a}(\eta)}{(|\xi|^2+|\eta|^2)(|\xi|+|\eta|)}d\xi d\eta,
\end{align}
\begin{align}\label{E_1}
	E_1(x,y)=\frac{1}{(2\pi)^6}\int_{\R^6}e^{ix\xi-iy\eta}\ \frac{\widetilde \chi_{<4a}(\xi)\widetilde \chi_{<a}(\eta)}{(|\xi|^2+|\eta|^2)|\eta|}d\xi d\eta.
\end{align}

\begin{lemma}\label{lem2.2} The above $E_0(x,y)$ and $E_1(x,y)$ satisfy the following estimates:
	\begin{align}\label{eq2.12}
		\big|E_0(x,y)\big|\lesssim \< |x|+|y|\>^{-3},\\
\label{eq2.13}
	\big|E_1(x,y)\big|\lesssim \<x\> ^{-1}\< |x|+|y|\>^{-2}.
\end{align}
As  a consequence,  $E_0\in \mathbb{B}(L^p(\R^3))$ for all  $1<p<\infty$  and $E_1\in \mathbb{B}(L^p(\R^3))$ for all  $1<p<3$. 
\end{lemma}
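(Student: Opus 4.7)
The plan is to obtain the pointwise estimates \eqref{eq2.12}--\eqref{eq2.13} by combining the absolute-integrability bound $|E_0|,|E_1|\lesssim 1$ (handling small $|x|+|y|$) with an explicit Fourier evaluation that yields the correct decay rate for large $|x|+|y|$, and then to deduce the $L^p$-bounds via a standard splitting into a Hardy--Littlewood maximal piece and a one-dimensional weighted Hardy piece. Absolute integrability is immediate from polar coordinates on $\mathbb{R}^6$: both symbols are of radial order $r^{-3}$ against the volume element $r^5\,dr\,d\omega$, while the extra factor $|\eta|^{-1}$ in $E_1$ is absorbed by $|\eta|^2\,d|\eta|$ after the $\eta$-polar change.

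For the decay of $E_0$ I would use that the symbol is jointly homogeneous of degree $-3$ on $\mathbb{R}^6$: integrating out the two unit spheres in $\xi$ and $\eta$ reduces $E_0(x,y)$ to
\[
\frac{C}{|x||y|}\int_0^{4a}\!\!\int_0^{a}\frac{\lambda\mu\sin(\lambda|x|)\sin(\mu|y|)}{(\lambda^2+\mu^2)(\lambda+\mu)}\,d\lambda\,d\mu,
\]
and the rescaling $\lambda=\alpha/|x|,\,\mu=\beta/|y|$ shows the integrand inherits the symbol's joint homogeneity of degree $-3$, giving $|E_0(x,y)|\lesssim(|x|+|y|)^{-3}$ for large $|x|+|y|$. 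For $E_1$ the $|\eta|^{-1}$ factor breaks the joint homogeneity, so I would instead carry out the two Fourier integrations in sequence. The $\xi$-integral is handled by the Yukawa identity
\[
\int_{\mathbb{R}^3}\frac{e^{ix\xi}\,d\xi}{|\xi|^2+|\eta|^2}=\frac{2\pi^2 e^{-|\eta||x|}}{|x|},
\]
whose cutoff version differs by a term smooth in $x$ uniformly in $|\eta|\le a$. Substituting into the outer $\eta$-integral, passing to polar coordinates $\eta=t\omega$, performing the $t$-integration via $\int_0^\infty t\,e^{-(|x|+iy\omega)t}\,dt=(|x|+iy\omega)^{-2}$, and using the elementary closed form
\[
\int_{S^2}\frac{d\omega}{(|x|+iy\omega)^2}=\frac{4\pi}{|x|^2+|y|^2}
\]
(obtained with $y$ as the polar axis) produces the main contribution $C/(|x|(|x|^2+|y|^2))$; matching with the $L^\infty$ bound for small $|x|+|y|$ gives \eqref{eq2.13}.

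For the $L^p$-boundedness, Jensen's inequality on the angular variable reduces to a radial problem, and then I would split the $y$-integration at $|y|=|x|$. On $\{|y|\le|x|\}$ both kernels collapse to a constant multiple of $\langle x\rangle^{-3}|B(0,|x|)|$ times the ball average of $|f|$, hence are dominated by the Hardy--Littlewood maximal function and bounded on $L^p$ for every $1<p<\infty$. On $\{|y|\ge|x|\}$ the remaining operator is the one-dimensional weighted Hardy operator $f\mapsto w(R)\int_R^\infty K(s)f(s)\,ds$ with $K(s)=\langle s\rangle^{-k}s^2$, where $w(R)=1$ and $k=3$ in the $E_0$ case, and $w(R)=\langle R\rangle^{-1}$ and $k=2$ in the $E_1$ case. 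Its boundedness on $L^p(R^2\,dR)$ is equivalent to the Muckenhoupt--Bradley condition
\[
\sup_{R>0}\bigl(\textstyle\int_0^R v\bigr)^{1/p}\bigl(\textstyle\int_R^\infty U^{1-p'}\bigr)^{1/p'}<\infty,
\]
and a direct exponent computation confirms this holds for all $1<p<\infty$ in the $E_0$ case and for exactly $1<p<3$ in the $E_1$ case, the ceiling $p<3$ arising from the non-integrability of $\langle s\rangle^{-2p'}$ on $(R,\infty)$ when $p'\le 3/2$. The main obstacle I expect is the anisotropic Fourier evaluation for $E_1$, since the $|\eta|^{-1}$ singularity prevents a direct scaling argument and forces the explicit Yukawa-plus-$S^2$ computation; once this decay is secured the Bradley--Hardy step is a routine exponent check, entirely consistent with Lemma \ref{lem2.1} and its proof in \cite{GoGr21,MWY23}.
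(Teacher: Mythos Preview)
Your $L^p$-boundedness step is essentially the paper's Proposition~\ref{prop5.2} and is fine; the gaps are in the pointwise decay. For $E_1$ the regime $|x|\le 1$ with $|y|$ large is not covered: the uniform $L^\infty$ bound gives only $|E_1|\lesssim 1$ there, while \eqref{eq2.13} demands $\langle y\rangle^{-2}$, and your Yukawa computation does not help since its output $C|x|^{-1}(|x|^2+|y|^2)^{-1}$ blows up as $|x|\to 0$. In fact your statement about the cutoff remainder is backwards: the truncated $\xi$-integral $\int e^{ix\xi}\widetilde\chi_{<4a}(\xi)(|\xi|^2+|\eta|^2)^{-1}d\xi$ is the Fourier transform of a compactly supported $L^1$ function and hence \emph{bounded} in $x$, whereas the full Yukawa carries the $|x|^{-1}$ singularity---so the difference is the singular object, not a smooth one. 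For $E_0$, rescaling by two different factors $|x|,|y|$ destroys the joint homogeneity you invoke, while rescaling by a common $R=|x|+|y|$ leaves an $(\alpha,\beta)$-integral over a region of size $R$ with integrand of order $(\alpha+\beta)^{-1}$ at infinity, which is not absolutely bounded without exploiting oscillation; since the symbol is not smooth on $\R^6\setminus\{0\}$ (because of $|\xi|+|\eta|$), the Fourier-of-smooth-homogeneous-distribution shortcut is also unavailable.

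The paper avoids both issues by factoring each symbol and using the representations $(|\xi|^2+|\eta|^2)^{-1}=\int_0^\infty e^{-s(|\xi|^2+|\eta|^2)}ds$ and $(|\xi|+|\eta|)^{-1}=\int_0^\infty e^{-s(|\xi|+|\eta|)}ds$ to reduce to Gauss and Poisson kernels; the bounds \eqref{eq2.12}--\eqref{eq2.13} then follow from convolution of the resulting explicit pointwise estimates (Propositions~\ref{L1,L2,L3}--\ref{E}), which treat all $(|x|,|y|)$-regimes uniformly.
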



 The proof of Lemma \ref{lem2.2} is given in Section \ref{Section_technical_lemma}.
Remarkably, the estimates \eqref{eq2.12} and \eqref{eq2.13} are  optimal, because the $L^p$-boundedness of $T_K$ and $T_E$ provides the precise range for the $L^p$-boundedness of wave operators, as asserted in Theorem \ref{theorem1.2}.

Finally, we  collect some facts  on Fourier multiplier $m(D)$,  see  e.g. Grafakos \cite{Grafakos_Classical_III}.

 \begin{lemma}[Mikhlin's theorem]\label{lem2.3}
Let $k>d/2$ and  $m\in C^k(\R^d\setminus \{0\})$ satisfy the following estimates:
 	$$|\partial^\alpha_\xi m(\xi)|\le C_\alpha |\xi|^{-|\alpha|},\ \xi\neq 0,\  |\alpha|\le k.$$
 	Then $m(D)\in \mathbb{B}(L^p(\R^d))$ for all $1<p<\infty$.
 	 In particular,  Riesz transforms $\mathcal{R}=(\mathcal{R}_1, \mathcal{R}_2,\cdots, \mathcal{R}_d)$ are bounded on $L^p(\R^d)$ for all $1<p<\infty$, where
 	 $\widehat{\mathcal{R}_jf}(\xi)=\frac{\xi_j}{|\xi|} \widehat{f}(\xi) $ with $1 \le j\le d$.
 \end{lemma}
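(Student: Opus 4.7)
The plan is to invoke the standard Calder\'on--Zygmund machinery, since this is the classical Mikhlin multiplier theorem. First I would reduce to the smallest relevant regularity, namely $k=\lfloor d/2\rfloor+1$. The $L^2$-bound is immediate from Plancherel: the $\alpha=0$ hypothesis gives $m\in L^\infty(\R^d)$, so $\|m(D)f\|_{L^2}\le\|m\|_{L^\infty}\|f\|_{L^2}$. The content of the theorem is then to upgrade this to $L^p$ for $1<p<\infty$ by verifying that the convolution kernel $K=\F^{-1}m$ (defined in the sense of tempered distributions) is a Calder\'on--Zygmund kernel away from the origin.

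For the kernel estimate I would perform a Littlewood--Paley decomposition in the frequency variable. Fix $\psi\in C_c^\infty(\R^d)$ supported in the annulus $\{1/2\le|\xi|\le2\}$ with $\sum_{j\in\Z}\psi(\xi/2^j)=1$ for $\xi\neq0$, and set $m_j(\xi)=m(\xi)\psi(\xi/2^j)$, $K_j=\F^{-1}m_j$. Integration by parts $k$ times in the identity
$$
K_j(x)=\frac{1}{(2\pi)^d}\int_{\R^d}e^{ix\xi}m_j(\xi)\,d\xi,
$$
together with the bounds $|\partial^\alpha m_j(\xi)|\lesssim 2^{-j|\alpha|}\mathbf{1}_{|\xi|\sim 2^j}$ coming from the Mikhlin hypothesis and Leibniz's rule, yields the pointwise estimate
$$
|K_j(x)|\lesssim 2^{jd}\bigl(1+2^j|x|\bigr)^{-k},\qquad |\nabla K_j(x)|\lesssim 2^{j(d+1)}\bigl(1+2^j|x|\bigr)^{-k}.
$$
Since $k>d/2\ge0$ in fact we may bootstrap (taking $k$ derivatives on the kernel side together with the obvious bound $|K_j(x)|\lesssim 2^{jd}$) to obtain $|K_j(x)|\lesssim 2^{jd}(1+2^j|x|)^{-N}$ for any $N\le k$, and analogously with a gradient. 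Summing a geometric series in $j$ gives
$$
|K(x)|\lesssim |x|^{-d},\qquad |\nabla K(x)|\lesssim |x|^{-d-1},\qquad x\neq0,
$$
which in turn implies the H\"ormander regularity condition $\int_{|x|>2|y|}|K(x-y)-K(x)|\,dx\lesssim 1$.

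With the $L^2$ bound and the H\"ormander condition in hand, the Calder\'on--Zygmund theorem delivers the weak-type $(1,1)$ estimate, hence by Marcinkiewicz interpolation and duality the strong $L^p$-boundedness for all $1<p<\infty$. For the Riesz transforms, the symbol $\xi_j/|\xi|$ is homogeneous of degree $0$ and smooth on $\R^d\setminus\{0\}$, so by homogeneity $|\partial^\alpha(\xi_j/|\xi|)|\lesssim|\xi|^{-|\alpha|}$ for every multi-index $\alpha$; the Mikhlin hypothesis is satisfied for all $k$, and the conclusion is immediate. The main technical obstacle is really just the integration-by-parts estimate for the pieces $K_j$ and summing them over dyadic scales; everything else is the black-box Calder\'on--Zygmund theory as presented, for instance, in Grafakos~\cite{Grafakos_Classical_III}.
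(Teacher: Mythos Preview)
The paper does not prove this lemma; it simply records it as a known fact and cites Grafakos~\cite{Grafakos_Classical_III}. Your outline follows the standard Calder\'on--Zygmund route one finds there, and the application to the Riesz transforms is of course correct.

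There is, however, a genuine gap in your kernel estimate at the sharp regularity $k>d/2$. The pointwise bound $|K_j(x)|\lesssim 2^{jd}(1+2^j|x|)^{-k}$ obtained by $k$-fold integration by parts is fine, but the sum $\sum_j 2^{jd}(1+2^j|x|)^{-k}$ only converges (to $\sim|x|^{-d}$) when $k>d$, and the analogous sum for $\nabla K_j$ needs $k>d+1$: the high-frequency tail $\sum_{2^j>|x|^{-1}}2^{j(d-k)}|x|^{-k}$ diverges for $k\le d$. To reach the Mikhlin threshold $k=\lfloor d/2\rfloor+1$ one must pass through $L^2$: Plancherel gives $\|\,|\cdot|^{k}K_j\,\|_{L^2}^2\lesssim\sum_{|\alpha|=k}\|\partial^\alpha m_j\|_{L^2}^2\lesssim 2^{j(d-2k)}$, and Cauchy--Schwarz on dyadic shells then produces the H\"ormander condition directly, rather than via a pointwise $|\nabla K(x)|\lesssim|x|^{-d-1}$ bound (which is in general false when $d/2<k\le d+1$). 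This $L^2$-based argument is the one in Grafakos; your sketch as written only establishes the theorem under the stronger hypothesis $k>d+1$.
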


\subsection{A characterization of zero resonances}
\label{subsection_resonance}
Firstly, we provide some notations:  Let $$v(x)=|V(x)|^{1/2},\quad U(x)=\sgn V(x)=\begin{cases}1&\text{if}\ V(x)\ge0,\\0&\text{if}\ V(x)<0.\end{cases} $$ 
We also let $G_j$ with $j=1,3,4$  be the integral operators with kernels
$$
G_1(x,y)=|x-y|^2,\quad G_3(x,y)=|x-y|^4,\quad G_4(x,y)=-\frac{|x-y|^5}{4\pi\cdot 6!}
$$
Next  we recall an equivalent characterization of zero resonances of $H$ due to \cite{Erdogan-Green-Toprak}: 
\begin{definition}[{\cite[Definition 4.2]{Erdogan-Green-Toprak}}]
\label{definition_resonance}Define
\begin{align}
\label{PQT}
P:=\norm{V}_{L^1}^{-1}\<\cdot,v\>v,\quad Q:=I-P,\quad T:=U+v(\Delta^2)^{-1}v. 
\end{align}
	\begin{itemize}
		\item[(i)] We say that zero is a regular point  of $H$ if $T_0:=QTQ: QL^2\to QL^2$ is invertible on $QL^2(\mathbb{R}^3)$.  In this case, we define $\D_0:=T_0^{-1}:QL^2\to QL^2$. 
		\vskip0.2cm
		\item[(ii)] Assume that $T_0$ is not invertible on $QL^2(\mathbb{R}^3).$ Let $S_1$ be the orthogonal projection onto the kernel of $T_0$.  Then $T_0+S_1$ is invertible on $QL^2(\mathbb{R}^3)$.  In this case, we define $\D_0=\big(T_0+S_1\big)^{-1}$ as an operator on $QL^2(\mathbb{R}^3)$, which does not conflict with the previous definition since $S_1=0$ in the regular case.
		Then we say zero is the first kind resonance of $H$ if
		\begin{equation}\label{T_1}
			T_1:= S_1TPTS_1-\frac{\|V\|_{L^1}}{3\cdot (8\pi)^2}S_1vG_1vS_1,
		\end{equation}
		is invertible on $S_1L^2(\mathbb{R}^3)$. We define $\D_1=T_1^{-1}$ as an operator on $S_1L^2(\mathbb{R}^3)$.
		\vskip0.2cm
		\item[(iii)] Assume $T_1$ is not invertible on $S_1L^2(\mathbb{R}^3).$ Let $S_2$ be the orthogonal  projection onto the kernel of $T_1.$ Then $T_1+S_2$ is invertible on $S_1L^2(\mathbb{R}^3).$ In this case, we define $\D_1=\big(T_1+S_2\big)^{-1}$ as an operator on $S_1L^2(\mathbb{R}^3)$.
		Then we say zero is the second kind  resonance of $H$  if
		\begin{equation}\label{T_2}
			T_2:= S_2vG_3vS_2+\frac{10}{3\|V\|_{L^1}} S_2(vG_1v)^2S_2-\frac{10}{3\|V\|_{L^1}}S_2vG_1vT\D_1TvG_1vS_2,
		\end{equation}
		is invertible on $S_2L^2(\mathbb{R}^3)$. We define $\D_2=T_2^{-1}$ as an operator on $S_2L^2(\mathbb{R}^3)$.
		\vskip0.2cm
		\item[(iv)] Finally, we say zero is the third kind  resonance of $H$ if $T_2$ is not invertible on $S_2L^2(\mathbb{R}^3)$. 
		In this case, the operator $T_3:=S_3vG_4vS_3$ is always invertible on $S_3L^2(\mathbb{R}^3)$ (see \cite[Lemma 7.6]{Erdogan-Green-Toprak}), where $S_3$ is the orthogonal  projection onto the kernel of $T_2.$ Let $\D_3=T_3^{-1}$ as an operator $T_3$ on $S_3L^2(\mathbb{R}^3)$. We define $\D_2=(T_2+S_3)^{-1}$ as an operator on $S_2L^2(\mathbb{R}^3)$. 
	\end{itemize}
\end{definition}

Remark that all of $T$ and $T_j$ are bounded self-adjoint operators on $L^2(\R^3)$ and so are $\D_j$ (if they exist) by the closed graph theorem. 

In shorthand, we have the following characterization of zero resonances of $H$: 
\begin{itemize}
	\item ~ Zero is a regular point of $H$ if and only if $S_1L^2=\{0\};$
	\item~  Zero is the first kind resonance of $H$ if and only if $S_1L^2\neq\{0\}$ and $S_2L^2=\{0\};$
	\item~ Zero is the second kind resonance of $H$ if and only if $S_2L^2\neq\{0\}$ and $S_3L^2=\{0\};$
	\item~ Zero is the third kind resonance of $H$ if and only if $S_3L^2\neq\{0\}.$
\end{itemize}
Since $(\Delta^2)^{-1}(x,y)=(-8\pi)^{-1}|x-y|$ (see Appendix \ref{Appendix A} below) and hence $$\left|v(x)(\Delta^2)^{-1}(x,y)v(y)\right|^2\lesssim \<x\>^{-\mu+2}\<y\>^{-\mu+2},$$  $v(\Delta^2)^{-1}v$ is a Hilbert--Schmidt operator on $L^2$ and hence $T$ is a compact perturbation of $U$ if $\mu>5$. Then the Fredholm alternative theorem ensures $S_1$ is finite rank and so are $S_2$ and $S_3$. Moreover, by definition, $S_jL^2=\Ker (T_{j-1})$ for $j=1,...,3$. The following lemma provides an explicit characterization of closed subspaces $QL^2$ and $S_jL^2$ of $L^2$. And more details about this lemma can be seen  in
{\cite [Section 7]{Erdogan-Green-Toprak}}.
\begin{lemma}\label{lemma_resonance}We have
	\begin{itemize}
	\item[(i)] $QL^2(\R^3)=\{f\in L^2\ | \ \langle v, f\rangle =0 \}$. 
		\item[(ii)]$S_1L^2(\R^3)=\{f \in QL^2(\R^3)\ | \ QTf=0\}$. As a result, $QTS_1= S_1TQ=0$.
		\item[(iii)]$S_2L^2(\R^3)=\{ f\in S_1L^2(\mathbb{R}^3)\ |\ PTf=0,\ \langle x_iv, f\rangle =0,\ i=1,2,3 \}$. In particular, $$TS_2=S_2T=0,\quad QvG_1vS_2=S_2vG_1vQ=0.$$
		\item[(iv)]$S_3L^2(\R^3)=
		\{ f\in S_2L^2(\mathbb{R}^3)\ |\ \langle x_ix_jv, f\rangle =0,\ i, j=1,2,3\}$.
	\end{itemize}
\end{lemma}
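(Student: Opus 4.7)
The plan is to characterize each subspace by computing the Hermitian form $\langle T_{j-1}f, f\rangle$ (or more generally $\langle T_{j-1}f, h\rangle$) for $f, h \in S_{j-1}L^2$ and reducing the vanishing condition, equivalent to $T_{j-1}f = 0$ since $T_{j-1}$ is self-adjoint, to a finite list of moment constraints $\langle x^\alpha v, f\rangle = 0$. At each step one exploits the moment vanishing inherited from the previous level.

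Parts (i) and (ii) are direct unwindings of the definitions. Since $P = \|V\|_{L^1}^{-1}\langle \cdot,v\rangle v$ is rank-one with kernel $\{f : \langle v, f\rangle = 0\}$, we obtain $QL^2 = \Ker(P)$, giving (i). For (ii), $S_1$ is by construction the orthogonal projection onto $\Ker(T_0) = \Ker(QTQ|_{QL^2})$, so $f \in S_1L^2$ iff $f \in QL^2$ (hence $Qf = f$) and $QTQf = QTf = 0$; the identities $QTS_1 = S_1 TQ = 0$ then follow from $T^* = T$ and $Q^* = Q$.

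For (iii), membership $f \in S_1L^2$ forces $QTf = 0$, so $Tf = PTf \in \C v$ and hence $\|PTf\|^2 = \|Tf\|^2$. A direct computation using $S_1f = f$ gives
\begin{equation*}
\langle T_1 f, f\rangle = \|Tf\|^2 - \frac{\|V\|_{L^1}}{3(8\pi)^2}\langle G_1 vf, vf\rangle.
\end{equation*}
Expanding $G_1(x,y) = |x|^2 - 2x\cdot y + |y|^2$ and using $\langle v, f\rangle = 0$ collapses the second term to $-2\sum_i |\langle x_iv, f\rangle|^2$, exhibiting $\langle T_1 f, f\rangle$ as a sum of squares. Hence $T_1 f = 0$ iff $Tf = 0$ (equivalently $PTf = 0$, since $QTf$ already vanishes) and $\langle x_iv, f\rangle = 0$, giving the characterization. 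The two consequences are then immediate: $Tf = 0$ on $S_2L^2$ yields $TS_2 = S_2 T = 0$, and for $g \in S_2L^2$ the vanishing of $\langle v, g\rangle$ and $\langle x_iv, g\rangle$ reduces $vG_1vg$ to the scalar multiple $(\int |y|^2 vg\,dy)\,v \in PL^2$, annihilated by $Q$.

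Part (iv) uses the same strategy but applied to the bilinear form $\langle T_2 f, h\rangle$ for $f, h \in S_2L^2$, and constitutes the main technical obstacle. By (iii) one has $Tf = 0$ and $vG_1 vf = C(f)v$ with $C(f) = \int |y|^2 vf\,dy$, so both the second and third terms of $T_2$ reduce to scalar multiples of $C(f)\overline{C(h)}$. Expanding
\begin{equation*}
|x-y|^4 = |x|^4 + |y|^4 + 4(x\cdot y)^2 + 2|x|^2|y|^2 - 4(|x|^2+|y|^2)(x\cdot y)
\end{equation*}
and applying $\langle v, f\rangle = \langle x_iv, f\rangle = 0$ (and the analogues for $h$), the leading term $\langle vG_3 vf, h\rangle$ collapses to $4\sum_{i,j}\langle x_ix_jv, f\rangle\overline{\langle x_ix_jv, h\rangle} + 2C(f)\overline{C(h)}$. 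The main difficulty is to verify that the specific coefficient $10/(3\|V\|_{L^1})$ and the $\D_1$-correction in the definition of $T_2$ combine with the residual $2C(f)\overline{C(h)}$ so that the full bilinear form on $S_2L^2$ is non-negative; this requires evaluating $\langle \D_1 Tv, Tv\rangle$ via the structure $\D_1 = (T_1+S_2)^{-1}$, and is tied to the low-energy resolvent expansion of $R_V$ at second order that motivates the definition of $T_2$. Once non-negativity is established, the identity $C(f) = \sum_i \langle x_i^2 v, f\rangle$ shows that $\Ker(T_2)|_{S_2L^2}$ coincides with the common kernel of the functionals $f\mapsto \langle x_ix_jv, f\rangle$, yielding (iv).
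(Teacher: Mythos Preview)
Your treatment of (i)--(iii) is correct and matches the approach the paper adopts (deferring the full proof to \cite[Section~7]{Erdogan-Green-Toprak}); in particular the quadratic-form identity $\langle T_1 f, f\rangle = \|Tf\|^2 + \tfrac{2\|V\|_{L^1}}{3(8\pi)^2}\sum_i|\langle x_iv, f\rangle|^2$ for $f\in S_1L^2$ is exactly what underlies the paper's own proof of Lemma~\ref{DTv}.

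For (iv) there is a genuine gap. Your computation correctly yields, for $f\in S_2L^2$,
\[
\langle T_2 f, f\rangle = \Bigl[2 + \tfrac{10}{3}\bigl(1-\|V\|_{L^1}^{-1}\langle \D_1 Tv, Tv\rangle\bigr)\Bigr]\,|C(f)|^2 + 4\sum_{i,j}|\langle x_ix_jv, f\rangle|^2,
\]
and you correctly isolate the sign of $1-\|V\|_{L^1}^{-1}\langle \D_1 Tv, Tv\rangle$ as the crux. But you do not establish it, and your remark that the verification ``is tied to the low-energy resolvent expansion of $R_V$'' points in the wrong direction. The required inequality is precisely the content of Lemma~\ref{DTv}, and its proof is purely algebraic: one writes $T_1+S_2 = A+B$ on $S_1L^2$ with $B = S_1TPTS_1$ of rank at most one and $A = -\tfrac{\|V\|_{L^1}}{3(8\pi)^2}S_1vG_1vS_1 + S_2 \ge 0$ (the non-negativity of $A$ is nothing other than your own computation from (iii), augmented by the projection term $\|S_2f\|^2$). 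Setting $\eta = \D_1 Tv$ and $\xi = S_1Tv$, one has $(T_1+S_2)\eta = \xi$ and $B\eta = \|V\|_{L^1}^{-1}\langle\eta,\xi\rangle\,\xi$, whence $0 \le \langle A\eta,\eta\rangle = \langle\eta,\xi\rangle - \|V\|_{L^1}^{-1}|\langle\eta,\xi\rangle|^2$; this forces $\langle\eta,\xi\rangle = \langle\D_1Tv,Tv\rangle \in [0,\|V\|_{L^1}]$. No resolvent expansion is involved. Once this inequality is in hand, your argument for (iv) goes through unchanged.
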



Next, we present the following Lemma \ref{DTv} , which is crucial for proving the unboundedness results in Sections \ref{section 5.2-un} and \ref{section 6.2-un}.
\begin{lemma}\label{DTv}
	Let $T = U + v(\Delta^2)^{-1}v$ and $\D_1$ be the inverse of $T_1 + S_2$ on $S_1L^2(\mathbb{R}^3)$, where  $T_1$ and $S_2$  are given in Definition \ref{definition_resonance}. Then, the following inequality holds:
	$$
	1 - \|V\|_{L^1}^{-1}{\langle \D_1Tv, Tv \rangle} \geq 0.
	$$
\end{lemma}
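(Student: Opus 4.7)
The plan is to reduce the claim to the operator inequality $T_1 \geq \|V\|_{L^1}^{-1}\,|S_1Tv\rangle\langle S_1Tv|$ on $S_1L^2(\mathbb{R}^3)$, after which the standard variational characterization of the inverse of a positive self-adjoint operator yields the result immediately. Setting $h := S_1Tv$ and noting that $\D_1$ is self-adjoint with $\D_1 = S_1\D_1 S_1$, we have $\langle \D_1Tv, Tv\rangle = \langle \D_1 h, h\rangle$, so the claim reduces to $\langle \D_1 h, h\rangle \leq \|V\|_{L^1}$.

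The first, and main, step is to compute $T_1$ as a quadratic form on $S_1L^2$. Using $Pf = \|V\|_{L^1}^{-1}\langle f, v\rangle v$ and the self-adjointness of $T$, for any $g \in S_1L^2$ one has $PTg = \|V\|_{L^1}^{-1}\langle g, Tv\rangle v$, hence
$$\langle S_1 TPTS_1 g, g\rangle = \|PTg\|_{L^2}^{2} = \|V\|_{L^1}^{-1}\,|\langle g, Tv\rangle|^{2}.$$
For the second piece, expanding $|x-y|^{2} = |x|^{2} - 2x\cdot y + |y|^{2}$ in the integral defining $\langle vG_1 vg, g\rangle$ and using the orthogonality $\langle g, v\rangle = 0$ (since $g \in S_1L^2 \subset QL^2$, by Lemma~\ref{lemma_resonance}(i)), the contributions from the $|x|^{2}$ and $|y|^{2}$ terms vanish, leaving
$$\langle vG_1 v g, g\rangle = -2\sum_{j=1}^{3}|\langle x_j v, g\rangle|^{2} \leq 0.$$
Combining these two identities produces the crucial formula
$$\langle T_1 g, g\rangle = \|V\|_{L^1}^{-1}|\langle g, Tv\rangle|^{2} + \frac{2\|V\|_{L^1}}{3(8\pi)^{2}}\sum_{j=1}^{3}|\langle x_j v, g\rangle|^{2}, \qquad g \in S_1L^2,$$
which in particular yields $T_1 \geq \|V\|_{L^1}^{-1}|h\rangle\langle h|$ on $S_1L^2$ and guarantees $T_1 + S_2 \geq 0$; together with the assumed invertibility of $T_1+S_2$, it is strictly positive definite on $S_1L^2$.

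To finish, I would invoke the standard Cauchy--Schwarz characterization
$$\langle A^{-1} h, h\rangle \;=\; \sup_{g \in S_1L^2 \setminus \{0\}} \frac{|\langle g, h\rangle|^{2}}{\langle Ag, g\rangle}$$
valid for any positive definite self-adjoint operator $A$ on the Hilbert space $S_1L^2$. Taking $A = T_1 + S_2$ and exploiting the lower bound $\langle Ag, g\rangle \geq \langle T_1 g, g\rangle \geq \|V\|_{L^1}^{-1}|\langle g, h\rangle|^{2}$ from the previous step, the right-hand ratio is $\leq \|V\|_{L^1}$ whenever $\langle g, h\rangle \neq 0$ and equals $0$ otherwise; hence $\langle \D_1 h, h\rangle \leq \|V\|_{L^1}$, which is the desired inequality.

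The delicate point is the quadratic-form computation of $\langle vG_1 vg, g\rangle$, where the key cancellation rests on $\langle g, v\rangle = 0$; once this is in place the variational bound is essentially automatic. A minor bookkeeping issue is the extension of $\D_1$ to all of $L^2$ (via $\D_1 = S_1\D_1 S_1$) that legitimizes the identification $\langle \D_1Tv, Tv\rangle = \langle \D_1 h, h\rangle$.
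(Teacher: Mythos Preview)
Your proof is correct and shares with the paper's the same core computation: expressing the quadratic form $\langle T_1 g,g\rangle$ on $S_1L^2$ as a sum of two nonnegative terms via the expansion of $|x-y|^2$ and the cancellation $\langle g,v\rangle=0$. The only difference lies in the final step. The paper writes $T_1+S_2=A+B$ with $B=S_1TPTS_1=\|V\|_{L^1}^{-1}|h\rangle\langle h|$ and $A\ge0$, then tests $A\ge0$ at the single vector $\eta=\D_1h$, obtaining $0\le\langle\eta,A\eta\rangle=\langle\D_1h,h\rangle-\|V\|_{L^1}^{-1}|\langle\D_1h,h\rangle|^2$, from which the claim follows after first observing $\langle\D_1h,h\rangle\ge0$. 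You instead invoke the variational formula $\langle(T_1+S_2)^{-1}h,h\rangle=\sup_g|\langle g,h\rangle|^2/\langle(T_1+S_2)g,g\rangle$ and bound the ratio by $\|V\|_{L^1}$ using the operator inequality $T_1+S_2\ge\|V\|_{L^1}^{-1}|h\rangle\langle h|$. Your route is a bit more streamlined, avoiding the two-stage argument (nonnegativity, then factoring the quadratic); the paper's route is more hands-on but makes the strict positivity of $\D_1$ unnecessary. Both are standard and essentially equivalent ways to pass from the same operator inequality to the scalar bound.
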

The proof of Lemma \ref{DTv} is essentially similar to show
  $1-\frac{\<(U_1T_1U_1)^{-1}U_1Tv,\   U_1Tv\>}{\|V\|_{L^1}}\geq0,$ in \cite[Lemma 4.6--4.7]{Erdogan-Green-Toprak}.  
 In particular, $\|V\|_{L^1}^{-1},$ $A$ and  $B$ below correspond to $\alpha,$ $\mathcal{S}$ and  $\mathcal{T}$  in \cite[Lemma 4.7]{Erdogan-Green-Toprak}, respectively.
 \begin{proof}
 	Denote $\eta:=D_1Tv$ and $\xi:=S_1Tv.$ Then by $D_1=S_1D_1$ from \eqref{orthog-relation-1}, one has 
 	\begin{align*}
 		1 - \frac{\langle \D_1Tv, Tv \rangle}{\|V\|_{L^1}}=	1 - \frac{\langle S_1\D_1Tv, Tv \rangle}{\|V\|_{L^1}}=	1 - \frac{\langle \D_1Tv, S_1Tv \rangle}{\|V\|_{L^1}}=1 - \frac{\langle \eta, \xi \rangle}{\|V\|_{L^1}}.
 	\end{align*}
 	Hence, it suffices to prove that $1 - \|V\|_{L^1}^{-1}{\langle \eta, \xi \rangle}\geq0.$  By \eqref{T_1}, it follows that $T_1+S_2=A+B,$
 where  	operators $A$ and $B$ are given by
 		\begin{align*}
 		A= \Big(-\frac{\|V\|_{L^1}}{3\cdot (8\pi)^2}S_1vG_1vS_1+S_2\Big),\quad B=S_1TPTS_1.
 	\end{align*}
 	
 	Next we claim that the  operator $A$ is nonnegative, i.e. $\<Af, f\>\geq0$ for any $f\in L^2.$ Indeed 
 		\begin{align*}
 		\<Af, f\>= -\frac{\|V\|_{L^1}}{3\cdot (8\pi)^2}\<S_1vG_1vS_1f,  f\>+\<S_2f, f\>.
 	\end{align*}
 	Since $S_2$ is the orthogonal  projection, then $\<S_2f, f\>=\|S_2f\|^2\geq0.$ Furthermore, applying  the kernel $G_1(x,y)=|x-y|^2$ given by \eqref{def-Gk}, we obtain that
 	\begin{align*} 
 	\<S_1vG_1vS_1f,  f\>&=	\<G_1vS_1f, vS_1f\>\\
 	&=\int_{\R^3}\int_{\R^3}\big( |x|^2-2x\cdot y+|y|^2\big)v(y)(S_1f)(y)dy\big(v(x)\overline{S_1f}(x)\big)dx\\
 	&=\< |x|^2v, S_1f\>\<  S_1f, v\>+\<  S_1f, |y|^2v\>\<  v, S_1f\>-2\sum_{j=1}^3\<  S_1f, y_jv\>\< x_jv, S_1f\>.
 	\end{align*}
 	Note that $\<  S_1f, v\>=0$ from the cancellation property $v\perp S_1L^2$ (see Lemma \ref{lemma_resonance}). Thus,
 		\begin{align*} 
 		\<S_1vG_1vS_1f,  f\>
 		=-2\sum_{j=1}^3\<  S_1f, y_jv\>\< x_jv, S_1f\>=-2\sum_{j=1}^3|\< x_jv, S_1f\>|^2.
 	\end{align*}
As a result,  it follows that 
 	\begin{align*}
 	\<Af, f\>= \frac{2\|V\|_{L^1}}{3\cdot (8\pi)^2}\sum_{j=1}^3|\< x_jv, S_1f\>|^2+\|S_2f\|^2\geq0,\ \ \forall f\in L^2.
 \end{align*}
 
 Now, we show that $B\eta=\|V\|_{L^1}^{-1}\<\eta, \xi\>\xi,$ and $(T_1+S_2)\eta=\xi.$ In fact, since $B=S_1TPTS_1$ and $P=\norm{V}_{L^1}^{-1}\<\cdot,v\>v,$ then
 	\begin{align*}
 	B\eta= S_1TPTS_1\eta=\norm{V}_{L^1}^{-1}\<T_1S_1\eta,v\>S_1Tv=\norm{V}_{L^1}^{-1}\<\eta,S_1Tv\>S_1Tv=\norm{V}_{L^1}^{-1}\<\eta, \xi\>\xi.
 \end{align*}
Moreover,  taking into account that $\D_1$ is the inverse of $T_1 + S_2$ on $S_1L^2(\mathbb{R}^3),$ it concludes that 
 	\begin{align*}
 (T_1+S_2)\eta=(T_1+S_2)D_1Tv=S_1Tv=\xi.
 \end{align*}

 Considering that $A=T_1+S_2-B,$ $\<Af, f\>\geq0$ for any $f\in L^2,$ $B\eta=\|V\|_{L^1}^{-1}\<\eta, \xi\>\xi,$ and $(T_1+S_2)\eta=\xi$ all together, we get that
\begin{align*}
	0\leq\<\eta, A\eta\>&=\<\eta, (T_1+S_2)\eta-B\eta\>=\<\eta, \xi\>-\|V\|_{L^1}^{-1}\big|\<\eta, \xi\>\big|^2.
\end{align*}
 Then, $\<\eta, \xi\>\geq\|V\|_{L^1}^{-1}\big|\<\eta, \xi\>\big|^2\geq0.$ Thus,  we derive  that
 \begin{align*}
 \<\eta, \xi\>-\|V\|_{L^1}^{-1}\big|\<\eta, \xi\>\big|^2=\<\eta, \xi\>\Big(1-\|V\|_{L^1}^{-1}\<\eta, \xi\>\Big)\geq0,
 \end{align*}
 which implies that $1-\|V\|_{L^1}^{-1}\<\eta, \xi\>\geq0.$
 \end{proof}

So far, there are two definitions of zero resonances of $H$, Definitions \ref{definition1.1} and \ref{definition_resonance}.  These are in fact equivalent to each other as follows. Recall that $W_{-\sigma}(\mathbb{R}^3):=\bigcap_{s>\sigma}L^2_{-s}(\mathbb{R}^3)$. 


\begin{lemma}\label{gongzhengkehua-1}
Assume that $H=\Delta^2+V$ and  $|V(x)|\lesssim(1+|x|)^{-\beta}$ with $\beta>0$.
\begin{itemize}
\item[(i).] If $\beta> 11$, then $f\in S_1L^2\setminus \{0\}$ if and only if $f=Uv\phi$ with $0\neq \phi\in W_{-\frac{3}{2}}(\mathbb{R}^3)$  such that $H\phi=0$ in the distributional sense.
\item[(ii).] If $\beta> 19$, then $f\in S_2L^2\setminus \{0\}$ if and only if $f=Uv\phi$ with $0\neq\phi\in W_{-\frac{1}{2}}(\mathbb{R}^3)$  such that $H\phi=0$ in the distributional sense.
\item[(iii).] If $\beta> 23$, then $f\in S_3L^2\setminus \{0\}$ if and only if $f=Uv\phi$ with $0\neq\phi\in L^2$ such that $H\phi=0$ in the distributional sense.
\end{itemize}
\end{lemma}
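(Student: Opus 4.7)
The plan is to exploit the Birman--Schwinger correspondence $\phi\leftrightarrow f:=Uv\phi$: if $H\phi=0$ then $\Delta^2\phi=-V\phi=-vf$, so $\phi$ and $-(\Delta^2)^{-1}vf$ differ by a biharmonic function. Using the fundamental solution $(\Delta^2)^{-1}(x,y)=-\frac{1}{8\pi}|x-y|$ together with the pointwise expansion
\[
|x-y| = |x| - \hat{x}\cdot y + \frac{|y|^2-(\hat{x}\cdot y)^2}{2|x|} + O\big(|y|^3|x|^{-2}\big), \qquad \hat{x}:=x/|x|,
\]
I would derive, for $|x|$ large,
\[
-(\Delta^2)^{-1}vf(x) = \frac{|x|}{8\pi}\<v,f\> - \frac{\hat{x}}{8\pi}\cdot\<yv,f\> + \frac{1}{16\pi|x|}\int\big(|y|^2-(\hat{x}\cdot y)^2\big)v(y)f(y)\,dy + O(|x|^{-2}).
\]
The growth of $\phi$ at infinity is therefore controlled by the first nonvanishing of $\<v,f\>$, $\<x_iv,f\>$ and $\<x_ix_jv,f\>$, which are exactly the moment conditions cutting out $QL^2$, $S_2L^2$ and $S_3L^2$ in Lemma \ref{lemma_resonance}.

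For the forward implications I would start with $f\in S_1L^2\setminus\{0\}$. Lemma \ref{lemma_resonance} yields $\<v,f\>=0$ and $QTf=0$, whence $Tf=cv$ for some $c\in\R$; set $\phi:=-(\Delta^2)^{-1}vf+c$. The constant $c$ is the unique biharmonic correction compatible with $\phi\in W_{-3/2}$, since every nonconstant biharmonic polynomial grows at infinity. Using $V=Uv^2$ and $Tf=cv$, a short calculation gives $V(\Delta^2)^{-1}vf=cV-vf$ on $\supp V$ (and trivially off it), so $H\phi=-vf-V(\Delta^2)^{-1}vf+cV=0$. The vanishing of $\<v,f\>$ kills the leading $|x|$-term in the expansion above, so $\phi\in W_{-3/2}$. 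For $f\in S_2L^2$ the conditions $PTf=0$ and $\<x_iv,f\>=0$ force $c=0$ and annihilate the angular constant term, promoting $\phi$ to $W_{-1/2}$; adding $\<x_ix_jv,f\>=0$ for $f\in S_3L^2$ eliminates the $O(|x|^{-1})$ term and places $\phi\in L^2$. Nontriviality follows from $f=Uv\phi\neq 0$.

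For the reverse implications, given a nonzero $\phi$ solving $H\phi=0$ in the relevant space I set $f:=Uv\phi$; the decay assumption on $V$ together with Cauchy--Schwarz against the polynomial weight of $\phi$ gives $f\in L^2$. Since $\Delta^2(\phi+(\Delta^2)^{-1}vf)=0$, the difference is a biharmonic polynomial, and the growth restriction on $\phi$ collapses it to a constant $c$. Reversing the Birman--Schwinger step then yields $Tf=cv$, while testing $\Delta^2\phi=-V\phi$ against $1$ (justified by the integrability of the source $V\phi$ and the decay of $\nabla^k\phi$ via elliptic regularity) gives $\<v,f\>=\int V\phi\,dx=0$, so $f\in S_1L^2$. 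Under the stronger hypothesis $\phi\in W_{-1/2}$, matching the angular asymptotic of $-(\Delta^2)^{-1}vf$ with $\phi\to 0$ forces $c=0$ (equivalently $PTf=0$) and $\<x_iv,f\>=0$, placing $f\in S_2L^2$; analogously $\phi\in L^2$ forces $\<x_ix_jv,f\>=0$ and $f\in S_3L^2$.

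The main obstacle is pinning down the biharmonic correction: because $\Delta^2 h=0$ admits polynomial solutions of every degree, one must combine the $W_{-\sigma}$ bound on $\phi$ with a Liouville-type argument to conclude $h$ is either a constant (part (i)) or identically zero (parts (ii)--(iii)). A secondary technical point is ensuring convergence of the higher moment integrals $\<x_ix_jv,f\>$, which is precisely where the decay thresholds $\mu>11,19,23$ enter via Cauchy--Schwarz against the polynomial growth allowed for $\phi$.
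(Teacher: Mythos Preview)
The paper does not supply its own proof: it simply cites \cite[Lemmas 7.1, 7.4 and 7.5]{Erdogan-Green-Toprak} (where the analogous result is proved in $L^p$-based rather than $W_{-\sigma}$-spaces) and declares the argument ``essentially analogous''. Your Birman--Schwinger sketch, via the explicit Green function $(\Delta^2)^{-1}(x,y)=-|x-y|/(8\pi)$ and the multipole expansion of $|x-y|$, is precisely the standard route taken in that reference, and the outline is correct.

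One small redundancy: the step ``testing $\Delta^2\phi=-V\phi$ against $1$'' is unnecessary and is the hardest to justify rigorously. Once you know the biharmonic remainder $h=\phi+(\Delta^2)^{-1}vf$ is a polynomial of degree $\le 1$, write $h=a+b\cdot x$; then $\phi\in W_{-3/2}$ together with $(\Delta^2)^{-1}vf=-\tfrac{|x|}{8\pi}\<v,f\>+O(1)$ forces $b\cdot x+\tfrac{|x|}{8\pi}\<v,f\>\in W_{-3/2}$, and since $|x|,x_1,x_2,x_3$ are linearly independent with none in $L^2_{-2}$, this already yields $b=0$ and $\<v,f\>=0$ simultaneously.
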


\begin{proof}
An essentially same result, in terms of the $L^p$-spaces instead of weighted $L^2$-spaces $W_{-\sigma}$, has already been obtained by {\cite[Lemmas 7.1, 7.4 and 7.5]{Erdogan-Green-Toprak}}. Since the proof is also essentially analogous, we omit it. 
\end{proof}

Finally, we also introduce the following  closed subspace 
\begin{align}
\label{projectionS_4}
S_4L^2(\R^3):=\{ f\in S_3L^2\ | \ \langle x_ix_jx_kv, f\rangle =0,\quad i, j,k=1,2,3\}\subset L^2(\R^3). 
\end{align}
It is easy to see that $$S_4L^2\subset S_3L^2\subset S_2L^2\subset S_1L^2\subset QL^2.$$
Moreover, we can rephrase the condition $S_2L^2=S_3L^2=S_4L^2$ in Theorem \ref{theorem1.2} as in Remark \ref{remark_orthogonal}:
\begin{lemma}
\label{lemma_third_kind}
Suppose that zero is the third kind resonance of $H$. Then $S_2L^2=S_3L^2=S_4L^2$ if and only if $H$ has no $p$-wave zero resonance and \eqref{orthogonal} holds for all $i,j,k=1,2,3$.
\end{lemma}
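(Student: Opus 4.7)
The plan is to translate the two subspace equalities into spectral statements about $H$ by combining the explicit orthogonality descriptions of $S_jL^2$ in Lemma \ref{lemma_resonance} and \eqref{projectionS_4} with the bijective correspondence $f = Uv\phi$ between elements of $S_jL^2$ and zero resonant states/eigenfunctions of $H$ furnished by Lemma \ref{gongzhengkehua-1}. Since $S_4L^2\subset S_3L^2\subset S_2L^2$ by construction and the third-kind assumption guarantees $S_3L^2\neq\{0\}$, the combined identity $S_2L^2 = S_3L^2 = S_4L^2$ decouples into the two inclusions $S_2L^2\subset S_3L^2$ and $S_3L^2\subset S_4L^2$, which I would handle separately.

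For $S_2L^2 = S_3L^2$, I would invoke Lemma \ref{gongzhengkehua-1}(ii)--(iii), which identifies (via $\phi\mapsto Uv\phi$) the set $S_2L^2\setminus\{0\}$ with the nonzero distributional solutions $\phi\in W_{-\frac12}(\R^3)$ of $H\phi=0$, and $S_3L^2\setminus\{0\}$ with the subset of such $\phi$ additionally lying in $L^2(\R^3)$. Thus the gap $S_2L^2\setminus S_3L^2$ is nontrivial precisely when some solution lies in $W_{-\frac12}(\R^3)\setminus L^2(\R^3)$, which by definition is a $p$-wave zero resonance. Hence $S_2L^2 = S_3L^2$ is equivalent to $H$ having no $p$-wave zero resonance.

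For $S_3L^2 = S_4L^2$, the definition \eqref{projectionS_4} gives directly that this equality holds iff $\<x_ix_jx_k v,\, f\>_{L^2}=0$ for every $f\in S_3L^2$ and every triple $i,j,k\in\{1,2,3\}$. Writing $f=Uv\phi$ with $\phi\in L^2(\R^3)$ a zero eigenfunction (via Lemma \ref{gongzhengkehua-1}(iii)), and using $V = Uv^2$ pointwise (from $U=\sgn V$, $v=|V|^{1/2}$), the pairing collapses to
\[
\<x_ix_jx_k v,\, Uv\phi\>_{L^2} = \int_{\R^3} x_ix_jx_k\, V(x)\overline{\phi(x)}\,dx.
\]
Because $H$ is real and self-adjoint, the zero eigenspace admits a real-valued basis, so the vanishing of this pairing for every $f\in S_3L^2$ is equivalent to the orthogonality \eqref{orthogonal} for every zero eigenfunction $\phi$. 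Combining the two equivalences gives the lemma. The only genuinely nontrivial ingredient is the identification in Lemma \ref{gongzhengkehua-1}, which is already at our disposal; thus no serious obstacle remains beyond bookkeeping and the scalar pairing computation just displayed.
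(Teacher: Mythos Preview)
Your proposal is correct and follows essentially the same approach as the paper: both arguments use Lemma \ref{gongzhengkehua-1} to translate $S_2L^2=S_3L^2$ into the absence of $p$-wave resonances and to translate $S_3L^2=S_4L^2$ into the orthogonality \eqref{orthogonal} via the identity $\<x_ix_jx_kv,\,Uv\phi\>=\<x_ix_jx_kV,\,\phi\>$. Your citation of parts (ii)--(iii) of Lemma \ref{gongzhengkehua-1} for the first equivalence is in fact more precise than the paper's reference to (i)--(ii), and your explicit remark about real-valued eigenfunctions cleanly handles the conjugate in the pairing.
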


\begin{proof}
By definition and Lemma \ref{gongzhengkehua-1} (i),(ii), the condition $S_2L^2=S_3L^2$ is nothing but the absence of  $p$-wave zero resonances. We next assume $S_3L^2=S_4L^2$ and take a zero eigenfunction $\phi$ of $H$. Then $f:=Uv\phi\in S_3L^2=S_4L^2$ by Lemma \ref{gongzhengkehua-1} (iii), so $\<x_ix_jx_kV,\phi\>=\<x_ix_jx_k v,f\>=0$ for all $i,j,k$. Conversely, if \eqref{orthogonal} holds for all $i,j,k$ and $f\in S_3L^2$, then there exists a zero eigenfunction $\phi$ by Lemma \ref{gongzhengkehua-1} (iii) so that $f=Uv\phi$ and $\<x_ix_jx_k v,f\>=\<x_ix_jx_kV,\phi\>=0$. This shows $S_3L^2=S_4L^2$ since $S_4L^2\subset S_3L^2$. 
\end{proof}

\subsection{Resolvent asymptotic  expansions}
\label{subsection_resolvent}

This subsection is  devoted to the study of asymptotic behaviors of the resolvent $R_V^\pm (\lambda^4)$ at low energy $\lambda\to 0^+$ when zero is a resonance of $H$.

 Recall $v(x)=|V(x)|^{1/2}$ and $U(x)=\sgn V(x)$, that is $U(x)=1$ if $V(x)\ge0$ and $U(x)=-1$ if $V(x)<0$. Let $M^\pm(\lambda)=U+vR_0^\pm(\lambda^4)v$ and $(M^{\pm})^{-1}(\lambda):=[M^\pm(\lambda)]^{-1}$.
\begin{lemma}
	\label{lemma_2.1}
	For $\lambda>0$, $M^\pm(\lambda)$ is invertible on $L^2(\R^3)$ and $R_V^\pm(\lambda^4)V$ has the form
	\begin{align}
		\label{lemma_3_2_1}
		R_V^\pm(\lambda^4)V=R_0^\pm(\lambda^4)v (M^\pm(\lambda))^{-1}v,
	\end{align}

\end{lemma}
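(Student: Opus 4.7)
The plan is to split the lemma into two independent tasks: (a) establishing invertibility of $M^\pm(\lambda)=U+vR_0^\pm(\lambda^4)v$ on $L^2(\R^3)$ for each $\lambda>0$, and (b) deriving the identity \eqref{lemma_3_2_1} by a purely algebraic manipulation of the second resolvent equation. Task (a) is the substantive one and relies on the absence of positive embedded eigenvalues of $H$ together with the limiting absorption principle already invoked in Section \ref{section_stationary}; task (b) is a Birman--Schwinger style factorization.

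For invertibility, I would first note that $v R_0^\pm(\lambda^4) v$ is a Hilbert--Schmidt, hence compact, operator on $L^2(\R^3)$. Indeed, the explicit kernel \eqref{free_resolvent} is bounded by $C_\lambda|x-y|^{-1}\<x-y\>^{0}$ with a mild singularity on the diagonal, and the rapid decay $|V(x)|\lesssim \<x\>^{-\mu}$ (so $v(x)\lesssim\<x\>^{-\mu/2}$) makes the weighted integral $\iint v(x)^2 |R_0^\pm(\lambda^4,x,y)|^2 v(y)^2\,dxdy$ finite for $\mu$ as in Theorem \ref{theorem1.2}. Consequently $M^\pm(\lambda)$ is a compact perturbation of the bounded multiplication operator $U$ with $U^2=1$ a.e. on $\supp V$, so Fredholm alternative reduces the claim to triviality of $\ker M^\pm(\lambda)$. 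To show $\ker M^\pm(\lambda)=\{0\}$, suppose $f\in L^2$ with $Uf+vR_0^\pm(\lambda^4)vf=0$ and set $\phi:=-R_0^\pm(\lambda^4)vf\in L^2_{-s}$ for any $s>1/2$ by the limiting absorption principle. Multiplying the kernel identity by $v$ gives $vf=V\phi$, whence applying $(\Delta^2-\lambda^4)$ to $\phi$ yields
\begin{equation*}
(\Delta^2-\lambda^4)\phi=-vf=-V\phi,
\end{equation*}
so $\phi$ is a distributional solution of $H\phi=\lambda^4\phi$ in $L^2_{-s}$. A standard Agmon bootstrap, combined with the radiation condition satisfied by $R_0^\pm$, upgrades $\phi$ to an $L^2$-eigenfunction at the positive energy $\lambda^4$. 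The hypothesis that $H$ has no positive embedded eigenvalues forces $\phi=0$, hence $vf=0$ and $Uf=v\phi=0$, so that $f=0$ on $\supp V$. This shows the Fredholm kernel is trivial, and $M^\pm(\lambda)$ is invertible.

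For the algebraic identity, I start from the second resolvent equation $(I+R_0^\pm(\lambda^4)V)R_V^\pm(\lambda^4)=R_0^\pm(\lambda^4)$, which yields
\begin{equation*}
R_V^\pm(\lambda^4)V=(I+R_0^\pm(\lambda^4)V)^{-1}R_0^\pm(\lambda^4)V.
\end{equation*}
Using the factorization $V=v\cdot(Uv)$ as a product of multiplication operators and the general identity $(I+KL)^{-1}K=K(I+LK)^{-1}$ with $K=R_0^\pm(\lambda^4)v$ and $L=Uv$, I obtain
\begin{equation*}
R_V^\pm(\lambda^4)V=R_0^\pm(\lambda^4)v\,(I+UvR_0^\pm(\lambda^4)v)^{-1}Uv.
\end{equation*}
Since $U^2=1$ on $\supp V$, one checks $I+UvR_0^\pm(\lambda^4)v=U(U+vR_0^\pm(\lambda^4)v)=UM^\pm(\lambda)$ on the closure of the range of $v$, so $(I+UvR_0^\pm(\lambda^4)v)^{-1}=(M^\pm(\lambda))^{-1}U$ there, and the two factors of $U$ collapse to produce
\begin{equation*}
R_V^\pm(\lambda^4)V=R_0^\pm(\lambda^4)v\,(M^\pm(\lambda))^{-1}v,
\end{equation*}
which is precisely \eqref{lemma_3_2_1}. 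The main obstacle is really the first step---verifying that kernel elements of $M^\pm(\lambda)$ lift to honest $L^2$ eigenfunctions at positive energy---and this is where the no-positive-eigenvalue assumption of Theorem \ref{theorem1.2} enters decisively.
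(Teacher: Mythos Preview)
Your proposal is correct and follows essentially the same approach as the paper. For invertibility, the paper simply invokes Kuroda \cite{Kuroda} and the absence of positive eigenvalues, whereas you unpack this into the standard Fredholm/Birman--Schwinger argument; for the identity \eqref{lemma_3_2_1}, both you and the paper use the same factorization $V=vUv$, $U^2=1$ and the second resolvent equation, with only cosmetic differences in how the intermediate steps are written (the paper factors $R_V^\pm(\lambda^4)v$ directly from $R_V^\pm v=R_0^\pm v-R_V^\pm vUvR_0^\pm v$, while you route through $(I+KL)^{-1}K=K(I+LK)^{-1}$). One minor point: with the intended definition $U=\sgn V\in\{\pm1\}$ everywhere, $Uf=0$ already forces $f=0$ on all of $\R^3$, not just on $\supp V$.
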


\begin{proof}
	Due to the absence of embedded positive eigenvalues of $H$, it is well-known that $M^\pm(\lambda)$ is invertible for all $\lambda>0$ (see Kuroda \cite{Kuroda}). Since $V=vUv$ and $1=U^2$, we have
	\begin{align*}
		R_V^\pm(\lambda^4)v
		&=R_0^\pm(\lambda^4)v-R_V^\pm(\lambda^4) v UvR_0^\pm(\lambda^4)v
		=R_0^\pm(\lambda^4)v\Big(1+UvR_0^\pm(\lambda^4)v\Big)^{-1}\\
		&=R_0^\pm(\lambda^4)v\Big(U+vR_0^\pm(\lambda^4)v\Big)^{-1}U^{-1}.
	\end{align*}
	Multiplying $Uv$ from the right, we obtain the desired formula for $R_V^\pm(\lambda^4)V$.
\end{proof}

Throughout this paper, we exclusively utilize $M^+(\lambda)$ only, so for simplicity, we denote it as $M(\lambda) = M^+(\lambda)$. The asymptotic expansion of $M^{-1}(\lambda)$ for the operator $H = \Delta^2 + V$ in $\R^3$ was initially obtained by Feng--Soffer--Yao \cite{FSY18} in the regular case. It was further established for all zero resonance cases by Erdo\u gan--Green--Toprak \cite{Erdogan-Green-Toprak}, where the decay estimates for $e^{itH}$ were derived using these asymptotic expansions.
However, in order to establish our results in  the paper, a more precise version of asymptotic expansions of $M^{-1} (\lambda)$ at $\lambda=0$ will be needed.

\begin{theorem}\label{thm-main-inver-M}
	Let $Q$ and $S_j$ for $j=1,2,3$ be the projections defined in Definition \ref{definition_resonance}.   Assume that $|V(x)| \lesssim \<x\>^{-\mu}$ with some $\mu >0$. Then there exists $\lambda_0>0$ such that
	the following asymptotic expansions of $	M^{-1}(\lambda)=(M(\lambda))^{-1}$  hold for $0<\lambda \le \lambda_0$ in $\mathbb{B}(L^2(\mathbb{R}^3))$:
	\begin{itemize}\label{projections}
		\item[(i)] If zero is   a  regular point of $H$ and $\mu> 9$, then
		\begin{equation}\label{thm-regularinver-M0}
			M^{-1}(\lambda)=QA_{0,1}^0Q+\lambda  A^0_{1,1}
			+\Gamma^0_2(\lambda);
		\end{equation}
		\item[(ii)] If zero is  the first kind resonance of $H$ and $\mu > 13$, then
		\begin{equation}\label{thm-resoinver-M1}
			M^{-1}(\lambda)
				= \lambda^{-1}S_1A_{-1,1}^1S_1+ \Big(S_1A^1_{0,1} +A^1_{0,2}S_1 +QA^1_{0,3}Q \Big)+\lambda  A^1_{1,1}+\Gamma^1_2(\lambda);
		\end{equation}
		\item[(iii)] If zero is the second kind resonance of $H$ and $\mu > 21$, then
		\begin{equation}\label{thm-resoinver-M2}
			\begin{split}
			M^{-1}(\lambda)
				&=\lambda^{-3}S_2A^2_{-3,1}S_2+\lambda^{-2}\Big(S_2A^2_{-2,1}S_1 + S_1A^2_{-2,2}S_2\Big)\\
				&+\lambda^{-1}\Big(S_2A^2_{-1,1}+A^2_{-1,2}S_2 +S_1A^2_{-1,3}S_1\Big)
		+\Big( S_1A_{0,1}^2 +A^2_{0,2}S_1 +QA^2_{0,3}Q\Big)\\
			&+\lambda  A^2_{1,1}+\Gamma^2_2(\lambda);
			\end{split}
		\end{equation}
		\item[(iv)] If zero is  the third kind resonance of $H$ and $\mu > 25$, then
		\begin{equation}\label{thm-resoinver-M3}
			\begin{split}
			M^{-1}(\lambda)
				&=\lambda^{-4} S_3 A_{-4,1}^3 S_3+\lambda^{-3} S_2A^3_{-3,1}S_2+\lambda^{-2}\Big(S_2A^3_{-2,1}S_1 + S_1A^3_{-2,2}S_2\Big)\\
			&+\lambda^{-1}\Big(S_2A^3_{-1,1}+ A^3_{-1,2}S_2+S_1A^3_{-1,3}S_1\Big)
				+\Big( S_1A_{0,1}^3 +A^3_{0,2}S_1 +QA^3_{0,3}Q\Big)\\&+ \lambda A^3_{1,1}+\Gamma^3_2(\lambda).
			\end{split}
		\end{equation}
	\end{itemize}
	Here in the statements (i)-(iv) above,
\begin{itemize}
	\item $ A_{i,j}^k $ denote $\lambda$-independent absolutely bounded operators on $ L^2(\mathbb{R}^3)$, where $ k $, $ i $, and $ j $ represent the resonance type, the degree of $\lambda$, and the order of operators with the same power of $\lambda$, respectively. (Here, $ k = 0$ corresponds to the regular case.)
	\item $\Gamma^k_2(\lambda)$ denote  $\lambda$-dependent absolutely  bounded operators  on $L^2(\mathbb{R}^3)$ satisfying
	\begin{align}
		\label{lemma_3_3_4}
		\norm{|\partial_\lambda^\ell \Gamma^k_2(\lambda)|}_{L^2\to L^2}\le C_{\ell, k} \lambda^{2-\ell},\quad 0<\lambda\le\lambda_0,\ \ell=0,1,2,\ k=0, 1,2,3.
	\end{align}
\end{itemize}
\end{theorem}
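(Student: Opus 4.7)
My plan is to invert $M(\lambda) = U + vR_0^+(\lambda^4)v$ near $\lambda = 0$ by combining a Taylor expansion of the free resolvent kernel \eqref{free_resolvent} with an iterated Feshbach/Jensen--Nenciu reduction onto the nested kernel subspaces $QL^2 \supset S_1L^2 \supset S_2L^2 \supset S_3L^2$. This follows the strategy of Erdo\u{g}an--Green--Toprak \cite{Erdogan-Green-Toprak}, but the expansion must be carried far enough in $\lambda$ to produce absolutely bounded coefficients $A_{i,j}^k$ together with the $C^2$-regular remainder $\Gamma_2^k(\lambda)$ satisfying $\|\partial_\lambda^\ell \Gamma_2^k\|\lesssim \lambda^{2-\ell}$ demanded by the statement.

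\textbf{Step 1 (Kernel expansion of $M(\lambda)$).} Expanding $e^{i\lambda r} - e^{-\lambda r}$ in powers of $\lambda r$ and dividing by $8\pi\lambda^2 r$ gives, pointwise in $x, y$,
\begin{align*}
R_0^+(\lambda^4, x, y) = \frac{1+i}{8\pi\lambda} - \frac{|x-y|}{8\pi} + c_1\lambda|x-y|^2 + c_3\lambda^3|x-y|^4 + \lambda^4 G_4(x,y) + \tilde r(\lambda, x, y),
\end{align*}
with explicit complex constants $c_1, c_3$ and a remainder $\tilde r$ whose $\lambda$-derivatives up to second order are controlled by powers of $\lambda$ and $|x-y|$. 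Sandwiching by $v(x), v(y)$ and using the Hilbert--Schmidt bound $\int\!\!\int v(x)^2 v(y)^2 |x-y|^{2k}\,dx\,dy < \infty$, valid provided $\mu > 2k+3$ (which explains the successively stronger decay hypotheses $\mu > 9, 13, 21, 25$), yields an operator expansion
\begin{align*}
M(\lambda) = \frac{c_{-1}}{\lambda}P + T + c_1\lambda\, vG_1v + c_3\lambda^3\, vG_3v + \lambda^4\, vG_4v + \mathcal R(\lambda),
\end{align*}
with $c_{-1} = (1+i)\|V\|_{L^1}/(8\pi)$, $P$ the rank-one projection of \eqref{PQT}, and $\mathcal R(\lambda)$ an absolutely bounded remainder with good $\lambda$-derivative bounds.

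\textbf{Step 2 (Feshbach reduction and identification of $T_j$).} Decomposing $L^2 = PL^2 \oplus QL^2$ writes $M(\lambda)$ as a $2\times 2$ block operator. The $PP$-block is dominated by the singular term $c_{-1}\lambda^{-1}P$, hence invertible on $PL^2$ with $(PM(\lambda)P)^{-1} = (\lambda/c_{-1})P + O(\lambda^2)$. The Schur complement on $QL^2$,
\begin{align*}
N(\lambda) := QMQ - QMP(PMP)^{-1}PMQ = T_0 + \lambda N_1 + \lambda^2 N_2 + \cdots,
\end{align*}
has leading term $T_0 = QTQ$. In the regular case $T_0$ is invertible on $QL^2$ and backward substitution through the block formula produces \eqref{thm-regularinver-M0} directly. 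In the resonance cases, restricting $S_1 N(\lambda) S_1$ to $\Ker T_0 = S_1L^2$ reveals that the coefficient of $\lambda$ is the sum of a direct contribution from $c_1 S_1 vG_1 v S_1$ and a cross-term of the form $S_1 TPT S_1$ from the Schur correction, which after proper normalization matches $T_1$ of \eqref{T_1}. Iterating the Jensen--Nenciu inversion lemma on $S_1L^2$, then on $S_2L^2 = \Ker T_1$, and finally on $S_3L^2 = \Ker T_2$, produces the successively more singular expansions \eqref{thm-resoinver-M1}, \eqref{thm-resoinver-M2}, \eqref{thm-resoinver-M3}; at each step the vanishing relations of Lemma \ref{lemma_resonance} ($QTS_1 = 0$, $TS_2 = 0$, $QvG_1 v S_2 = 0$, etc.) cancel lower-order terms, so that the next effective operator on $S_{j+1}L^2$ identifies with $T_{j+1}$ of Definition \ref{definition_resonance}. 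The iteration terminates at the fourth stage since $T_3 = S_3 vG_4 v S_3$ is always invertible by \cite[Lemma 7.6]{Erdogan-Green-Toprak}, yielding the $\lambda^{-4} S_3 \D_3 S_3$ head of \eqref{thm-resoinver-M3}.

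\textbf{Main obstacle.} The hard part is the coefficient bookkeeping through three or four rounds of reduction: one must verify not only that the algebraic structure of each $T_j$ emerges, but also that the precise rational constants (for example $-\frac{\|V\|_{L^1}}{3(8\pi)^2}$ in $T_1$ and $\frac{10}{3\|V\|_{L^1}}$ in $T_2$) match exactly, which requires balancing the complex factors $(1 \pm i)$ arising from the Taylor expansion of $e^{i\lambda r} - e^{-\lambda r}$ against the expansion of $|x-y|^{2k}$ into monomials in $x$ and $y$, followed by cancellation via the orthogonality relations $\<v, S_j L^2\> = 0$, $\<x_i v, S_2 L^2\> = 0$, and $\<x_i x_j v, S_3 L^2\> = 0$. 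Propagating the remainder bound $\|\partial_\lambda^\ell \Gamma_2^k(\lambda)\|_{L^2 \to L^2} \lesssim \lambda^{2-\ell}$ along the Feshbach chain also requires careful use of Leibniz and Neumann series estimates to ensure that $\lambda$-dependent inversions preserve the two-derivative regularity throughout.
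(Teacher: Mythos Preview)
Your proposal is correct and follows essentially the same approach as the paper: expand $M(\lambda)$ via the Taylor series of the free resolvent kernel, then iteratively apply the Jensen--Nenciu inversion lemma down the chain $QL^2\supset S_1L^2\supset S_2L^2\supset S_3L^2$, using the orthogonality relations of Lemma~\ref{lemma_resonance} to identify the effective operators $T_0,T_1,T_2,T_3$ at each stage. The only organizational difference is that the paper first rescales $\widetilde M=\lambda\tilde a^{-1}M$ so that $\widetilde M+Q\approx I$ can be inverted by a single Neumann series (rather than inverting $PMP$ on $PL^2$ and forming the Schur complement directly as you describe), but this is an equivalent reformulation of the same Feshbach step.
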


\begin{proof}
The proof will be given in Appendix \ref{Appendix A}. 
\end{proof}

\begin{remark}
To obtain the positive results in Theorem \ref{theorem1.2} (the $L^p$-boundedness of $W_\pm$), it is enough to establish associated boundedness properties of $A_{i,j}^k$. However, to prove the negative results (unboundedness of  of $W_\pm$), the specific expressions of the operators $A^2_{-3,1}$, $A^2_{-2,1}$, $A^2_{-1,1}$, $A^3_{-4,1}$, $A^3_{-3,1}$, $A^3_{-2,1}$, and $A^3_{-1,1}$ will play essential roles. 
\end{remark}

\section{The general strategy and a review of the regular case}\label{regular}
\label{regular3}

In this section, we recall very briefly a general strategy of the proof of $L^p$ boundedness of wave operator $W_-$ in the regular case by Goldberg and Green \cite{GoGr21} (see also \cite{MWY23}). For a fixed  $a>0$, we first decompose the stationary formula \eqref{stationary} of $W_-$ into the low and high energy parts
\begin{align}
	\label{lower}
	\Omega^{<a}&=\frac{2}{\pi i}\int_0^\infty \lambda^3 \chi_{<a}(\lambda) R_V^+(\lambda^4)V\left(R_0^+(\lambda^4)-R_0^-(\lambda^4)\right)d\lambda,
\end{align}
\begin{align}
	\label{higher}
	\Omega^{\ge a}&=\frac{2}{\pi i}\int_0^\infty \lambda^3 \chi_{\ge a}(\lambda) R_V^+(\lambda^4)V\left(R_0^+(\lambda^4)-R_0^-(\lambda^4)\right)d\lambda.
\end{align}
so that
$W_-=\Id-\left(\Omega^{<a}+	\Omega^{\ge a}\right)$, where $\chi_{<a}$ (resp. $\chi_{\ge a}$) is a cut-off function supported in $[0,a]$ (resp. $[a/2,\infty)$) given in Section \ref{subsection_notation}. It was proved by \cite[Proposition 4.1]{GoGr21} that, for any $a>0$, $\Omega^{\ge a}$ is bounded on $L^p(\R^3)$ for all $1\le p\le \infty$ if $V(x)=O(\<x\>^{-\mu})$ with $\mu>5$ without any condition on the zero energy. For the low energy part $\Omega^{< a} $, by  Lemma \ref{lemma_2.1} we  can write
\begin{align}
	\label{lower1}
	\Omega^{<a}&=\frac{2}{\pi i}\int_0^\infty \lambda^3 \chi_{<a}(\lambda) R_0^+(\lambda^4)v M^{-1}(\lambda)v\left(R_0^+(\lambda^4)-R_0^-(\lambda^4)\right)d\lambda.
\end{align}
For sufficiently small $a$, we will make use of asymptotic expansion of $M^{-1}(\lambda)$ in Theorem \ref{thm-main-inver-M} to further decompose \eqref{lower1} into several parts depending on the types of zero resonances. 

 If zero is a regular point of $H$, then  we can put $M^{-1}(\lambda)=QA_{0,1}^0Q+\lambda  A^0_{1,1}+\Gamma^0_2(\lambda)$ into \eqref{lower1} so that it is split into the sum of the following  three operators:
\begin{align}
	\label{T_B}
T_B&=\frac{2}{\pi i}\int_0^\infty \lambda^3 \chi_{<a}(\lambda) R_0^+(\lambda^4)v B v\left(R_0^+(\lambda^4)-R_0^-(\lambda^4)\right)d\lambda 
\end{align}
with $B=QA_{0,1}^0Q,\lambda  A^0_{1,1,},\Gamma^0_2(\lambda)$. Then it has been proved by Goldberg and Green  \cite{GoGr21} that $T_B$ is bounded on $L^p(\R^3)$ for all $1<p<\infty$ and all these three choices of $B$. Hence, the wave operator $W_-$ is bounded on $L^p(\R^3)$ for all $1<p<\infty$ in the regular case. 

The proofs of the $L^p$-boundedness for $T_B$ in \cite{GoGr21} do not depend on the specific expressions of the operators $A^0_{0,1}$, $A^0_{1,1}$ and $\Gamma^0_2(\lambda)$. Instead, the proofs rely on the absolute boundedness of these operators on $L^2$, the property (i) in Lemma \ref{gongzhengkehua-1} and \eqref{lemma_3_3_4}. In  other words, if $B$ is one of the forms
$$
QAQ,\quad \lambda A,\quad \Gamma_2(\lambda)
$$
with some $\lambda$-independent $A\in \AB(L^2)$ and some $\Gamma_2(\lambda)\in \AB(L^2)$ (see  Section \ref{subsection_notation} for the definition of $\AB(L^2)$) satisfying the same estimate as in \eqref{lemma_3_3_4}, then the argument of \cite{GoGr21} (see also \cite{MWY23}) works well to conclude  $T_B\in \mathbb B(L^p(\R^3))$ for all $1<p<\infty$. In the following argument, we will thus omit the proofs for such cases. 

We end this section with the following remark, which is   frequently used in the rest of the paper. 
\begin{remark}\label{real-valued basis}
	Recall that $ S_jL^2 $ for $ 1 \leq j \leq 4 $ are finite-dimensional subspaces of $QL^2 $ with $ S_{j+1}L^2 \subset S_jL^2 $. Let $ n_j = \dim S_jL^2 $, where $ n_4 \leq n_3 \leq n_2 \leq n_1 < \infty $.  Let $\{f_j\}_{j=1}^{n_1}$ be an orthonormal basis of $S_1L^2$ such that, for each $k=2,3,4$, $\{f_j\}_{j=1}^{n_k}$ is an orthonormal basis of $S_kL^2$ (which is possible since $S_{j+1}L^2$ is a closed subspace of $S_jL^2$ for $j=1,2,3$). Since $v$ is real-valued, by Lemma \ref{lemma_resonance} and \eqref{projectionS_4}, one can observe that $S_jL^2$ are invariant under the complex conjugation: 
	$$
	S_jL^2=\{f\in L^2\ |\ \overline f\in S_jL^2\}. 
	$$
	Hence we may assume without loss of generality that $f_j$ are real-valued. 
\end{remark}

\section{The first kind resonance case}
\label{the first kind4}
This section is devoted to the proof of  the $L^p$ boundedness of wave operator $W_-$ for all $1<p<\infty$ in the first kind resonance case. As  mentioned above,  it  suffices to deal with the low energy part $\Omega^{<a}$. Recalling the expansion of $M^{-1}$ in the first kind case in Theorem \ref{thm-main-inver-M}: 
\begin{align}
	\label{first resonance}
	M^{-1}(\lambda)
	= \lambda^{-1}S_1A_{-1,1}^1S_1+ S_1A^1_{0,1} +A^1_{0,2}S_1 +QA^1_{0,3}Q +\lambda  A^1_{1,1}+\Gamma^1_2(\lambda),
	\end{align}
we split $\Omega^{<a}$ into the six operators $T_B$ given by \eqref{T_B} with $B$ being one of six terms in this formula \eqref{first resonance}. As mentioned above, the cases with $B=QA^1_{0,3}Q$,  $\lambda  A^1_{1,1}$ and $\Gamma^1_2(\lambda)$ can be dealt with by the same method as in the regular case. Hence we  only consider the case with $B=\lambda^{-1}S_1A_{-1,1}^1S_1$,  $S_1A^1_{0,1}$ and  $A^1_{0,2}S_1 $, which is given by the following  Propositions \ref{proposition_4_1} and \ref{proposition_4_3}.

Before providing Propositions  \ref{proposition_4_1} and \ref{proposition_4_3}, we recall some notations from Section \ref{subsection_notation}:

\begin{itemize}
	\item 
 For $z=(z_1,z_2,z_3)\in \R^3$ and $k,l=1,2,3$, we set
	\begin{align*}
		z_{kl}=z_kz_l,\quad 	v_{k}(z)=z_kv(z),\quad v_{kl}(z)=z_{kl}v(z).
	\end{align*}
	\item  Let $\mathcal R_j=\F^{-1}|\xi|^{-1}\xi_j\F$ be the Riesz transforms, $\mathcal R=(\mathcal R_1,\mathcal R_2,\mathcal R_3)$ and $\mathcal{R}_{kl}=\mathcal{R}_k\mathcal{R}_l $.
\item Let $
U_\theta u(x) = \theta^{-3} u(\theta^{-1} x)$ for $\theta>0$ and 
$J_\rho u(x) = u(x - \rho)
$ for $\rho\in \R^3$. 
\end{itemize}



 \begin{proposition}	\label{proposition_4_1}
Let $A\in \AB(L^2)$ and $B=\lambda^{-1}S_1AS_1$. Then $T_B\in \mathbb B(L^p(\R^3))$ for all $1<p<\infty$. 
 	\end{proposition}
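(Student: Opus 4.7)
The strategy is to exploit the orthogonality $\langle v, f\rangle = 0$ for every $f \in S_1L^2$ (which follows from $S_1L^2 \subset QL^2$) to absorb the singular factor $\lambda^{-1}$ carried by $B$. Since $S_1$ has finite rank, fixing a real orthonormal basis $\{f_j\}_{j=1}^{n_1}$ of $S_1L^2$ as in Remark \ref{real-valued basis}, I expand $S_1AS_1 = \sum_{j,k} a_{jk}\langle \cdot, f_k\rangle f_j$. By linearity, it suffices to prove the $L^p$-boundedness of each rank-one piece
\[
W_{jk}(x,y) = \frac{2}{\pi i}\int_0^\infty \lambda^{2}\chi_{<a}(\lambda)\,\mathcal L_j(\lambda,x)\,\mathcal R_k(\lambda,y)\,d\lambda,
\]
where $\mathcal L_j(\lambda,x) = \int R_0^+(\lambda^4,x,z) v(z) f_j(z)\,dz$ and $\mathcal R_k(\lambda,y) = \int v(w) f_k(w)[R_0^+(\lambda^4,w,y) - R_0^-(\lambda^4,w,y)]\,dw$.

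Using the vanishing moments $\int vf_j = \int vf_k = 0$, I would subtract the $z = 0$ (resp.\ $w = 0$) values of the free resolvents inside the integrals without changing $\mathcal L_j$ and $\mathcal R_k$. The Taylor expansion $F_\pm(s) = (1\pm i) - s + O(s^2)$, together with the analogous expansion of $\sin(\lambda r)/r$, shows that these subtractions kill the singular leading contributions $\frac{1+i}{8\pi\lambda}$ of $R_0^+$ (resp.\ $\frac{i}{4\pi\lambda}$ of $R_0^+ - R_0^-$) at $\lambda = 0$, leaving quantities of size $O(|z|)$ and $O(\lambda|w|)$ respectively, uniformly in $\lambda \in (0, a)$. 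Writing each subtracted resolvent via the fundamental theorem of calculus as a gradient integral, e.g.\ $R_0^+(\lambda^4,x,z) - R_0^+(\lambda^4,x,0) = -z \cdot \int_0^1 \nabla_y R_0^+(\lambda^4,x,tz)\,dt$, recasts $W_{jk}$ as a superposition over base points $(tz, sw) \in \R^6$ of kernels of the type covered by Lemma \ref{lem2.1}, weighted against $v(z)z\,f_j(z)$ and $v(w)w\,f_k(w)$.

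The $L^p$-boundedness of $W_{jk}$ for $1 < p < \infty$ then follows from the uniform (in base points) $L^p$-bound furnished by Lemma \ref{lem2.1} combined with Minkowski's integral inequality, provided the weights $\langle z\rangle v(z)|f_j(z)|, \langle w\rangle v(w)|f_k(w)|$ are integrable on $\R^3$; this integrability is ensured by the decay hypothesis $\mu > 13$ together with the characterization in Lemma \ref{gongzhengkehua-1} placing $f_j$ in the appropriate weighted $L^2$-space. The main obstacle is to track the exact $\lambda$-power after the two-sided subtraction and the gradient representation, verifying that the effective kernel matches (or improves upon) the $\lambda^4$-weighted model $A_{z,w}$ of Lemma \ref{lem2.1}, so that uniform-in-$(t, s, z, w)$ operator bounds are available. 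The structural blueprint is the treatment of $B = QA^0_{0,1}Q$ in the regular case from \cite{GoGr21}, with the stronger projection $S_1$ now supplying the additional cancellation needed to dominate the extra $\lambda^{-1}$.
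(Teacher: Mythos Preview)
Your strategy of exploiting the cancellation $\langle v,f_j\rangle=0$ on both the incoming and outgoing resolvent factors is exactly the right intuition, and it matches the spirit of the paper's argument. However, the step where you claim the gradient-subtracted kernel ``recasts $W_{jk}$ as a superposition \dots of kernels of the type covered by Lemma \ref{lem2.1}'' does not go through as stated. After writing $R_0^+(\lambda^4,x,z)-R_0^+(\lambda^4,x,0)=z\cdot\int_0^1\nabla_z R_0^+(\lambda^4,x,tz)\,dt$ you obtain the kernel $\frac{1}{8\pi}F_+'(\lambda|x-tz|)\cdot\frac{tz-x}{|tz-x|}$, and analogously $(F_+-F_-)'$ with a direction factor on the right. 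This is \emph{not} the product $F_+(\lambda|x-\rho|)(F_+-F_-)(\lambda|\rho'-y|)$ appearing in $A_{z,w}$: you have derivatives of $F_\pm$ together with $x$- and $y$-dependent unit vectors that cannot be pulled outside the operator. Lemma \ref{lem2.1} as stated simply does not apply, and establishing the analogue for $F_\pm'$ with direction factors would require a separate argument. Also, your size claim $O(\lambda|w|)$ for the subtracted $R_0^+-R_0^-$ factor is one power of $\lambda$ too optimistic; a single vanishing moment only yields $O(|w|)$ uniformly, since $|\nabla_w(R_0^+-R_0^-)(\lambda^4,w,y)|\lesssim 1$, not $\lesssim\lambda$.

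The paper avoids these difficulties by working on the Fourier side via \eqref{free_resolvent2}. The $y$-side cancellation is implemented through the Taylor expansion of $e^{i\lambda\omega\cdot z}$, which naturally produces Riesz transforms $\mathcal R_k,\mathcal R_{kl}$ (bounded on $L^p$) instead of direction-dependent unit vectors. This splits $T_B$ into a ``good'' piece $\Phi^1_{kl}$ that genuinely reduces to $A_{\rho,\theta z}$, and a more singular piece $\Phi^1_k$ carrying only $\lambda^2$. For $\Phi^1_k$ the paper then uses the $x$-side cancellation in Fourier space, writes $D_l=\lambda\mathcal R_l+(|D|-\lambda)\mathcal R_l$, and handles the $(|D|-\lambda)$ contribution with the operator $E_0$ of Lemma \ref{lem2.2}, whose kernel bound $|E_0(x,y)|\lesssim\langle|x|+|y|\rangle^{-3}$ is proved separately. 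This $E_0$ ingredient has no counterpart in your sketch and is what closes the argument; Lemma \ref{lem2.1} alone is insufficient here.
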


\begin{proof} For short we set $e_{\omega}(z):=e^{i\omega\cdot z}$, where $\omega\cdot z=\omega_1z_1+\omega_2z_2+\omega_3z_3$ with $z\in \R^3$ and $\omega=(\omega_1,\omega_2,\omega_3)\in S^2$. Let $K_\lambda=R_0^+(\lambda^4)-R_0^-(\lambda^4)$, and $K_\lambda(x,y)$ denote the kernel of $K_\lambda$. 
By  the formula  \eqref{free_resolvent2} and the property $S_1v=0$ (see Lemma \ref{lemma_resonance}), we have for $f\in \mathcal S(\R^3)$, 
\begin{align*}
\nonumber
&[S_1vK_\lambda f](x)
=\frac{\pi i}{2\lambda}\int_{S^2} [S_1v(e_{\lambda \omega}-1)](x) \widehat{ f}(\lambda\omega)d\omega\\
&=-\frac{\pi}{2}\sum_{k=1}^3[S_1v_k](x)\Big(\int_{S^2} \omega _k\widehat{ f}(\lambda\omega)d\omega\Big)
-\frac{\pi i}{2}\sum_{k,l=1}^3\lambda\int_0^1(1-\theta) \int_{S^2} [S_1v_{kl}e_{\lambda \theta \omega}](x)\omega_{kl} \widehat{ f}(\lambda\omega)d\omega d\theta,
\end{align*}
where the second equality follows from  the Taylor expansion: 
 \begin{align}
 \label{taylor}
 e^{i\lambda\omega\cdot z}=1+ i\lambda\omega\cdot z-(\lambda\omega\cdot z)^{2}  \int_0^1(1-\theta)e^{i\lambda \theta\omega\cdot z}d\theta. 
 \end{align}
Since $\omega _k\widehat{ f}(\lambda\omega)=\widehat{\mathcal R_k f}(\lambda \omega),$  $\omega_{kl} \widehat{ f}(\lambda\omega)=\widehat{\mathcal R_{kl} f}(\lambda \omega),$  we hence can write
\begin{align}
	\label{eq4.10}
T_B=i\sum_{k=1}^{3} \Phi_k^1-\sum_{k,l=1}^{3} \Phi^1_{kl},
\end{align}
where
\begin{align}
\label{Phi_k^1}
\Phi_k^1f(x)&=\int_0^\infty \lambda^2 \chi_{<a}(\lambda)\left[R_0^+(\lambda^4)vS_1AS_1v_k\right](x)\Big(  \int_{S^2} \widehat{\mathcal R_k f}(\lambda\omega)d\omega\Big )d\lambda,\\
\nonumber
\Phi^1_{kl}f(x)&=\int_0^1(1-\theta)\int_0^\infty \lambda^3 \chi_{<a}(\lambda)\left[R_0^+(\lambda^4)vS_1AS_1 v_{kl}  \int_{S^2} e_{\lambda \theta \omega}\widehat{\mathcal R_{kl} f}(\lambda\omega)d\omega \right](x)d\lambda d\theta.
\end{align}
\\
\underline{(i) The proof of $\Phi^1_{kl}\in \mathbb{B}(L^p)$}. 
Recall that $S_1$ has finite rank (see Section \ref{subsection_resonance}) and $A\in \AB(L^2)$, which implies $S_1AS_1\in \AB(L^2)$. Let $(S_1AS_1)(x,y)$ denote its integral kernel and $|S_1AS_1|$ denote the integral operator with kernel $|(S_1AS_1)(x,y)|$. By \eqref{free_resolvent2}, we have 
, for $\rho\in \R^3$, 
\begin{align*}
\Big(S_1AS_1v_{kl}  \int_{S^2} e_{\lambda \theta \omega}\widehat{\mathcal R_{kl} f}(\lambda\omega)d\omega\Big)(\rho)
&=\frac{2 \lambda}{\pi i}\int_{\R^3}(S_1AS_1)(\rho,z)v_{kl}(z)\left(K_\lambda \mathcal R_{kl} f\right)(\theta z)dz\\
&=\frac{2 \lambda}{\pi i}\int_{\R^6}(S_1AS_1)(\rho,z)v_{kl}(z)K_\lambda(\theta z,y)(\mathcal R_{kl} f)(y)dydz
\end{align*}
which, together with the above formula of $\Phi^1_{kl}$ yields
 \begin{align*}
\Phi^1_{kl}f(x)
&=\frac{2}{\pi i} \int_0^1(1-\theta)\int_0^\infty \lambda^4 \chi_{<a}(\lambda)\int_{\R^3}R_0^+(\lambda^4,x,\rho) v(\rho)\\
&\quad \times \int_{\R^6}(S_1AS_1)(\rho,z)v_{kl}(z)K_\lambda (\theta z,y)(\mathcal R_{kl} f)(y)dydzd\rho d\theta\\
&=\frac{2}{\pi i} \int_0^1(1-\theta)\int_{\R^6}v(\rho)(S_1AS_1)(\rho,z)v_{kl}(z)\\
&\quad \times \left[\int_{\R^3}\left(\int_0^\infty \lambda^4 \chi_{<a}(\lambda) R_0^+(\lambda^4,x,\rho)K_\lambda (\theta z,y)d\lambda\right) (\mathcal R_{kl} f)(y)dy\right]dzd\rho d\theta\\
&=\frac{2}{\pi i} \int_0^1(1-\theta)\int_{\R^6}v(\rho)(S_1AS_1)(\rho, z)v_{kl}(z) (A_{\rho, \theta z}\mathcal R_{kl} f)(x) dz d\rho d\theta,
\end{align*}
where  $A_{\rho, \theta z}$ is given by \eqref{Azw}. Minkowski's inequality,  Lemmas \ref{lem2.1} and  \ref{lem2.3}) thus show
\begin{align*}
\|\Phi^1_{kl}f\|_{L^p}
&\lesssim \int_0^1\int_{\R^6}\left|v(\rho)(S_1AS_1)(\rho, z)v_{kl}(z)\right|\|A_{\rho,\theta z}\|_{\mathbb B(L^p)}\|\mathcal R_{kl}\|_{\mathbb{B}(L^p)} \|f\|_{L^p}dzd\rho d\theta\\
&\lesssim\bigg(\int_{\R^6}\left|v(\rho)(S_1AS_1)(\rho, z)v_{kl}(z)\right|d\rho dz\bigg)  \sup_{\rho,w}\|A_{\rho,w}\|_{\mathbb{B}(L^p)}\|\mathcal R_{k}\|_{\mathbb{B}(L^p)}\|\mathcal R_{l}\|_{\mathbb{B}(L^p)} \|f\|_{L^p}\nonumber \\
&\lesssim \|v\|_{L^2}\||S_1AS_1|\|_{\mathbb B(L^2)}\|z_kz_lv\|_{L^2}\|A_{0,0}\|_{\mathbb{B}(L^p)}\|f\|_{L^p}\\
&\lesssim \|f\|_{L^p}
\end{align*}
for all $1<p<\infty$. 
Hence, $\Phi^1_{kl}\in \mathbb{B}(L^p)$ for all $1<p<\infty$.
\vspace{0.3cm}
\\
\underline{(ii) The proof of  $\Phi_k^1\in \mathbb{B}(L^p)$}. 
We decompose it into the high and low frequency parts: $$\Phi_k^1=\widetilde \chi_{\ge 4a}(D)\Phi_k^1+\widetilde \chi_{< 4a}(D)\Phi_k^1,$$ where $\widetilde \chi_{\ge 4a}(\xi)=\chi_{\ge 4a}(|\xi|)$ is defined in Section \ref{subsection_notation}. 
For the high frequency part $\widetilde \chi_{\ge 4a}(D)\Phi_k^1$, since 
$$
(|\xi|^4-\lambda^4)^{-1}=|\xi|^{-4}+\lambda^4|\xi|^{-4}(|\xi|^4-\lambda^4)^{-1}
$$
for $|\xi|\ge 4a$ and $0<\lambda<a$, we find
$\widetilde \chi_{\ge 4a}(D)R_0^+(\lambda^4)=m(D)+\lambda^4m(D)R_0^+(\lambda^4)$, where $m(\xi)=\widetilde\chi_{\ge 4a}(\xi)|\xi|^{-4}$. Hence we can write 
$
\widetilde \chi_{\ge 4a}(D)\Phi_k^1=\Phi_k^{1,1}+\Phi_k^{1,2}
$
with 
\begin{align}
\label{Phi_k^{1,1}}
\Phi_k^{1,1}f(x)&=(m(D)vS_1AS_1v_k)(x) \Big( \int_0^\infty \lambda^2 \chi_{<a}(\lambda) \int_{S^2} \widehat{\mathcal R_k f}(\lambda\omega)d\omega d\lambda\Big ),\\
\label{Phi_k^{1,2}}
\Phi_k^{1,2}f(x)&=\int_0^\infty \lambda^6 \chi_{<a}(\lambda) \left(m(D)R_0^+(\lambda^4)vS_1AS_1v_k\right)(x)\Big(  \int_{S^2} \widehat{\mathcal R_k f}(\lambda\omega)d\omega\Big )d\lambda.
\end{align}
For the part $\Phi_k^{1,1}$, by letting $\eta=\lambda\omega$ and using the unitarity of $\F$ on $L^2$, we have
$$
\int_0^\infty \lambda^2 \chi_{<a}(\lambda)\Big(  \int_{S^2} \widehat{\mathcal R_k f}(\lambda\omega)d\omega\Big )d\lambda=\int_{\R^3}\widetilde \chi_{<a}(\eta)\widehat{\mathcal R_k f}(\eta)d\eta
=\frac{1}{(2\pi)^{3/2}}\<\mathcal R_k f,\F^{-1}\widetilde \chi_{<a}\>.
$$
Since $\partial_\xi^\alpha m\in L^1$ for any $\alpha\in \Z^3_+$, $|\F^{-1}m(x)|\le C_N\<x\>^{-N}$ for any $N\ge0$. We thus have
\begin{align*}
\|\Phi_k^{1,1}f\|_{L^p}&\lesssim \|(\F^{-1}m)*(vS_1AS_1v_k)\|_{L^p}|\<\mathcal R_kf,\F^{-1}\widetilde \chi_{<a}\>|\\
&\lesssim \|\F^{-1}m\|_{L^p}\|vS_1AS_1v_k\|_{L^1}\|\mathcal R_kf\|_{L^p}\|\mathcal F^{-1}\widetilde \chi_{<a}\|_{L^{p'}}\\
&\lesssim \|f\|_{L^p},
\end{align*}
where we also used the fact $\widetilde \chi_{<a}\in \mathcal S(\R^3)$. 
For the part $\Phi_k^{1,2}$, we calculate
\begin{align*}
\left(R_0^+(\lambda^4)vS_1AS_1v_k\right)(x)
&=\int_{\R^3}R_0^+(\lambda^4,x,\rho)(vS_1AS_1v_k)(\rho)d\rho,\\
\lambda^2\widetilde \chi_{<4a}(\lambda)  \int_{S^2} \widehat{\mathcal R_k f}(\lambda\omega)d\omega
&=\frac{2}{\pi i}\left(K_\lambda |D|^3\widetilde \chi_{<a}(D)\mathcal R_kf\right)(0),
\end{align*}
where we have used \eqref{free_resolvent2} with $x=0$ and \eqref{free_resolvent4} for the second formula. Using these formulas and the fact $\widetilde \chi_{<a}=\widetilde \chi_{<a}\widetilde \chi_{<4a}$,  we can follow the way to calculate $\Phi_{kl}^1$ above to compute 
\begin{align*}
&\int_0^\infty\lambda^6\widetilde \chi_{<a}\left(R_0^+(\lambda^4)vS_1AS_1v_k\right)(x)\Big(\int_{S^2} \widehat{\mathcal R_k f}(\lambda\omega)d\omega\Big )d\lambda\\
&=\frac{2}{\pi i}\int_{\R^3}(vS_1AS_1v_k)(\rho)
\left(\int_0^\infty\lambda^4\widetilde \chi_{<a}(\lambda)R_0^+(\lambda^4,x,\rho)\left(K_\lambda |D|^3\widetilde \chi_{<a}(D)\mathcal R_k f\right)(0)d\lambda \right)d\rho\\
&=\frac{2}{\pi i}\int_{\R^3}(vS_1AS_1v_k)(\rho)\left(A_{\rho,0}|D|^3\widetilde \chi_{<a}(D)\mathcal R_k f\right)(x)d\rho
\end{align*}
with $A_{\rho,0}$ being defined in  Lemma \ref{lem2.1}, which shows
$$
\Phi_k^{1,2}f(x)=\frac{2}{\pi i}\int_{\R^3}(vS_1AS_1v_k)(\rho)\left(m(D)A_{\rho,0}|D|^3\widetilde \chi_{<a}(D)\mathcal R_k f\right)(x)d\rho. 
$$
Since $m(D)$, $|D|^3\widetilde \chi_{<4a}(D)$ and $\mathcal{R}_k$ are bounded on $L^p(\R^3)$ by Lemma \ref{lem2.3}, and $A_{\rho,0}\in \mathbb{B}(L^p)$ uniformly in $\rho$ by Lemma \ref{lem2.1}  for all $1<p<\infty$, we obtain
$$
\|\Phi_k^{1,2}f\|_{L^p}\le \|vS_1AS_1v_k\|_{L^1}\|m(D)\|_{\mathbb{B}(L^p)} \sup_{\rho}\|A_{\rho,0}\|_{\mathbb{B}(L^p)}\||D|^3\widetilde \chi_{<4a}(D)\mathcal{R}_kf\|_{L^p}
\lesssim\|f\|_{L^p}.
$$
Thus $\widetilde \chi_{\ge 4a}(D)\Phi_k^1\in \mathbb{B}(L^p)$  for all $1<p<\infty$.

It remains to deal with the part $\widetilde \chi_{<4a}(D)\Phi_k^1$. Since $S_1AS_1v_k\in S_1L^2$ and $S_1L^2\perp\{v\}$ (see Lemma \ref{lemma_resonance}), we can write
\begin{align*}
\F[vS_1AS_1v_k](\xi)
&=\frac{1}{(2\pi)^3}\int_{\R^3}\big(e^{-iy\xi}-1\big)v(y)(S_1AS_1v_k)(y)dy\\
&=-i\sum_{l=1}^3\frac{1}{(2\pi)^3}\int_{\R^3}\left(\int_0^1 e^{-i\theta y\xi}d\theta\right)\xi_lv_l(y)(S_1AS_1v_k)(y)dy\\
&=-i\sum_{l=1}^3\frac{1}{(2\pi)^3}\int_0^1\int_{\R^3}e^{-iy\xi}\xi_lv_l(\theta^{-1}y)(S_1AS_1v_k)(\theta^{-1}y)\theta^{-3}dyd\theta\\
&=\sum_{l=1}^3 \int_0^1(-i\xi_l)\F[U_\theta v_lS_1AS_1v_k](\xi)d\theta,
\end{align*}
where $U_\theta: u(x)\mapsto \theta^{-3}u(\theta^{-1}x)$ is a dilation. We set $h_\theta (x)=(U_\theta v_lS_1AS_1v_k)(x)$ for short. Then
\begin{align*}
\left[R_0^+(\lambda^4)vS_1AS_1v_k\right](x)
&=-i\sum_{l=1}^3\int_0^1 R_0^+(\lambda^4)D_lh_\theta(x) d\theta\\
&=-i\sum_{l=1}^3\int_0^1  \Big\{\lambda \mathcal R_l R_0^+(\lambda^4)+\mathcal R_l R_0^+(\lambda^4)(|D|-\lambda)\Big\}
 h_\theta(x) d\theta,
\end{align*}
where we used the identity  $D_l=\lambda |D|^{-1}D_l+(|D|-\lambda)|D|^{-1}D_l$ in the last line. Plugging this formula into the definition of $\widetilde \chi_{<4a}(D)\Phi_k^1$ implies 
$$
\widetilde \chi_{<4a}(D)\Phi_k^1=-i\sum_{l=1}^3\left( \widetilde \Phi^{1,1}_k+\widetilde \Phi^{1,2}_k\right)
$$
with
\begin{align}\label{1stphi11}
\widetilde \Phi^{1,1}_kf(x)&=\int_0^1\int_0^\infty \lambda^3\chi_{<a}(\lambda) \left(\mathcal R_l \widetilde \chi_{<4a}(D)R_0^+(\lambda^4)h_\theta\right)(x)\Big(\int_{S^2} \widehat{\mathcal R_k f}(\lambda\omega)d\omega\Big )d\lambda d\theta,\\
\label{1stphi12}
\widetilde \Phi^{1,2}_kf(x)&=\int_0^1\int_0^\infty \lambda^2\chi_{<a}(\lambda) \left(\mathcal R_l \widetilde \chi_{<4a}(D)R_0^+(\lambda^4)(|D|-\lambda)h_\theta\right)(x)\Big(\int_{S^2} \widehat{\mathcal R_k f}(\lambda\omega)d\omega\Big )d\lambda d\theta.
\end{align}
By the same argument as that for $\Phi^{1,1}_k$ above, one can write
$$
\widetilde \Phi^{1,1}_kf(x)=\int_0^1\int_{\R^3}h_\theta (\rho)\left(\mathcal R_l\widetilde \chi_{<4a}(D)A_{\rho,0}\mathcal R_k f\right)(x)d\rho d\theta.
$$
Therefore, noting that $U_\theta$ leaves the $L^1$-norm invariant, we have for all $1<p<\infty$, 
\begin{align*}
\|\widetilde \Phi^{1,1}_kf\|_{L^p}
\lesssim \|\mathcal R_l\widetilde \chi_{<4a}(D)\|_{\mathbb B(L^p)} \left(\int_0^1\|h_\theta\|_{L^1}d\theta\right) \sup_\rho \|A_{\rho,0}\mathcal R_k f\|_{L^p}
\lesssim \|f\|_{L^p}.
\end{align*}

Finally, for the term $\widetilde \Phi^{1,2}_k$, 
since the Fourier symbol of $R_0^+(\lambda^4)(|D|-\lambda)$ satisfies
\begin{align*}
\lim_{\ep\to 0^+}\frac{1}{2\lambda^2}\left(\frac{1}{|\xi|^2-(\lambda+i\ep)^2}-\frac{1}{|\xi|^2+\lambda^2}\right)(|\xi|-\lambda)
=\frac{1}{(|\xi|^2+\lambda^2)(|\xi|+\lambda)},
\end{align*}
one  can write
\begin{align*}
&\int_0^\infty \lambda^2\chi_{<a}(\lambda) \left(\widetilde \chi_{<4a}(D)R_0^+(\lambda^4)(|D|-\lambda)h_\theta\right)(x)\Big(\int_{S^2} \widehat{\mathcal R_k f}(\lambda\omega)d\omega\Big)\\
&=\int_{\R^6}e^{i(x-\rho)\xi}h_\theta(\rho)\left(\int_0^\infty\int_{S^2} \lambda^2 \frac{\chi_{<a}(\lambda)\widetilde \chi_{<4a}(\xi)}{(|\xi|^2+\lambda^2)(|\xi|+\lambda)} \widehat{\mathcal R_k f}(\lambda\omega)d\omega  d\lambda \right)d\rho d\xi\\
&=\int_{\R^6}e^{i(x-\rho)\xi}h_\theta(\rho)\left(\int_{\R^3} \frac{\widetilde \chi_{<a}(\eta)\widetilde \chi_{<4a}(\xi)}{(|\xi|^2+|\eta|^2)(|\xi|+|\eta|)} \widehat{\mathcal R_k f}(\eta)d\eta \right)dyd\xi\\
&=\int_{\R^3}h_\theta(\rho)\int_{\R^3}\left(\int_{\R^6} e^{i(x-\rho)\xi-iy\eta}\frac{\widetilde \chi_{<a}(\eta)\widetilde \chi_{<4a}(\xi)}{(|\xi|^2+|\eta|^2)(|\xi|+|\eta|)}d\eta d\xi\right)(\mathcal R_k f)(y)dyd\rho \\
&=\int_{\R^3}h_\theta(\rho)(J_\rho E_0\mathcal R_kf)(x)d\rho, 
\end{align*}
where $J_\rho: u(x)\mapsto u(x-\rho)$ is the translation by $\rho$ and $E_0$ has been defined by \eqref{E_0}. Hence
$$
\widetilde \Phi^{1,2}_kf(x)=\int_0^1\int_{\R^3}h_\theta(\rho)(\mathcal R_lJ_\rho E_0\mathcal R_kf)(x)d\rho d\theta,\quad h_\theta(x) =U_\theta[v_lS_1AS_1v_k](x). 
$$
This formula, combined with Lemmas \ref{lem2.2} and \ref{lem2.3}, shows
$$
\|\widetilde \Phi^{1,2}_kf\|_{L^p}\lesssim \int_0^1\|h_\theta\|_{L^1}d\theta\sup_\rho\|\mathcal R_lJ_\rho E_0\mathcal R_k\|_{\mathbb B(L^p)}\|f\|_{L^p}\lesssim \|f\|_{L^p}. 
$$
This completes the proof of $\Phi^1_k\in \mathbb B(L^p(\R^3))$ for all $1<p<\infty$, and thus the proof of the proposition. 
\end{proof}

\begin{remark}\label{remark4.2}
Let $\alpha\geq0$ and $g(z)$ be  a polynomial of degree $s$ on $\R^3$. Let $g(\mathcal{R})$ be the corresponding polynomial of Riesz transforms $\mathcal{R}=(\mathcal{R}_1, \mathcal{R}_2, \mathcal{R}_3)$. It is clear that $g(\mathcal{R})\in \mathbb B(L^p(\R^3))$ for all $1<p<\infty$.  Let $B=K+F(x)\in \mathop{\mathrm{AB}}(L^2)$ with an integral operator $K\in \mathbb B(L^2)$ with kernel $K(x,y)$ and a multiplication operator $F\in L^\infty$. Let $|K|\in \mathbb B(L^2)$ be the integral operator with kernel $|K(x,y)|$. 
Consider the following operators: 
\begin{align}
\label{Rema4.2ii}
\Theta f(x)&=\int_0^\infty \lambda^2 \chi_{<a}(\lambda)\left(R_0^+(\lambda^4)vS_1Bgv\right)(x)\left(  \int_{S^2}\widehat{g(\mathcal{R}) f}(\lambda\omega)d\omega\right )d\lambda,\\
\label{Rema4.2i}
\Phi f(x)&=\int_0^1(1-\theta)^\alpha \int_0^\infty \lambda^3 \chi_{<a}(\lambda)\bigg[R_0^+(\lambda^4)vB gv  \int_{S^2} e_{\lambda\theta \omega}\widehat{g(\mathcal{R}) f}(\lambda\omega)d\omega\bigg](x)d\lambda d\theta.
\end{align}
 Then $\Theta,\Phi\in \mathbb{B}(L^p(\R^3))$ for all $1<p<\infty$ provided $|V(x)|\lesssim \<x\>^{-\mu}$ with $\mu>\max\{3+2s,5\}$. 
 
 Indeed, for the operator $\Theta$, one can follow the proof of $\Phi_k^1\in \mathbb B(L^p(\R^3))$ in the proof of Proposition \ref{proposition_4_1} where no specific structure of $B$, except for its absolute boundedness on $L^2$, was used. 

For $\Phi$, as in the analysis of $\Phi^1_{kl}$ in the proof of Proposition \ref{proposition_4_1}, we can write
$$
\Phi f(x)=\frac{2}{\pi i} \int_0^1(1-\theta)^\alpha \int_{\R^6}v(\rho)B(\rho, z) g(z) v(z) (A_{\rho, z\theta}g(\mathcal{R}) f)(x) dz d\rho d\theta
$$
provided that $B=K$, that is $F\equiv0$. Then Lemma \ref{lem2.1} yields
\begin{align}
\label{remark_4_2_1}
\|\Phi f\|_{L^p}\lesssim \|v\|_{L^2}\||K|\|_{\mathbb B(L^2)}\|gv\|_{L^2}\|A_{0,0}\|_{\mathbb B(L^p)}\|g(\mathcal R)\|_{\mathbb B(L^p)}\|f\|_{L^p}\lesssim \|f\|_{L^p}.
\end{align}
The same argument also works well even if  $F\not\equiv0$ by letting $B(\rho,z):=F(z)\delta_{z}(\rho)+K(\rho,z)$ with Dirac's delta $\delta_z$ centered at $z$, in which case \eqref{remark_4_2_1} still holds with $\||K|\|_{\mathbb B(L^2)}$ replaced by $\|F\|_{L^\infty}+\||K|\|_{\mathbb B(L^2)}$. 
\end{remark}


  Next, we consider  the remained two terms associated with $S_1A_{0,1}^1 $ and $A_{0,2}^1 S_1$ in  \eqref{first resonance}. 

\begin{proposition}\label{proposition_4_3}
Let $A\in \AB(L^2)$, $B\in \{S_1A,AS_1\}$. Then $T_{B}\in \mathbb{B}(L^p(\R^3))$ for all $1<p<\infty$.
\end{proposition}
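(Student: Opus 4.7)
The plan is to treat the two cases $B=AS_1$ and $B=S_1A$ separately, both resting on the cancellation $S_1v=0$ from Lemma \ref{lemma_resonance}(ii) but extracted from opposite sides of the resolvent sandwich. For $B=AS_1$, apply the formula \eqref{free_resolvent2} for $R_0^+-R_0^-$ together with the first-order Taylor expansion
\[
e^{i\lambda\omega\cdot z}=1+i\lambda\sum_{k=1}^3\omega_kz_k\int_0^1 e^{i\theta\lambda\omega\cdot z}d\theta
\]
and $S_1v=0$ to obtain
\[
AS_1\bigl[v(\cdot)e^{i\lambda\omega\cdot}\bigr]=i\lambda\sum_{k=1}^3\omega_k\int_0^1 AS_1\bigl[v_k(\cdot)e^{i\theta\lambda\omega\cdot}\bigr]d\theta.
\]
Substituting this into \eqref{T_B} and using $\omega_k\widehat f(\lambda\omega)=\widehat{\mathcal R_kf}(\lambda\omega)$ rewrites $T_{AS_1}$ as a finite sum of operators of the form $\Phi$ in Remark \ref{remark4.2}, with operator $AS_1\in\AB(L^2)$ (since $S_1$ has finite rank), polynomial $g(z)=z_k$, and exponent $\alpha=0$. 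Because $\mu>13$ easily satisfies the decay hypothesis of Remark \ref{remark4.2}, this yields $T_{AS_1}\in\mathbb B(L^p(\R^3))$ for all $1<p<\infty$.

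For $B=S_1A$, the cancellation must instead be coaxed out of the left factor $R_0^+(\lambda^4)vS_1$. Choose a real-valued orthonormal basis $\{f_j\}_{j=1}^{n_1}$ of $S_1L^2$ from Remark \ref{real-valued basis} and write $(S_1A)(x,y)=\sum_jf_j(x)g_j(y)$ for suitable $g_j\in L^2$; this turns $T_{S_1A}f(x)$ into a finite sum of expressions of the form
\[
\int_0^\infty\lambda^3\chi_{<a}(\lambda)\bigl[R_0^+(\lambda^4)(vf_j)\bigr](x)\,\bigl\langle vg_j,\,(R_0^+-R_0^-)(\lambda^4)f\bigr\rangle\,d\lambda.
\]
Since $f_j\in S_1L^2\perp v$, each $vf_j$ has integral zero, so one may Fourier-expand $\widehat{vf_j}(\xi)=-i\sum_l\xi_l\int_0^1\widehat{U_\theta(v_lf_j)}(\xi)d\theta$ exactly as in the treatment of $\widetilde\Phi^{1,2}_k$ in Proposition \ref{proposition_4_1}. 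Then split $T_{S_1A}=\widetilde\chi_{\ge 4a}(D)T_{S_1A}+\widetilde\chi_{<4a}(D)T_{S_1A}$ and handle each piece in parallel with $\Phi^{1,1}_k,\Phi^{1,2}_k$ and $\widetilde\Phi^{1,1}_k,\widetilde\Phi^{1,2}_k$ from Proposition \ref{proposition_4_1}: the high-frequency part is dispatched by $(|\xi|^4-\lambda^4)^{-1}=|\xi|^{-4}+\lambda^4|\xi|^{-4}(|\xi|^4-\lambda^4)^{-1}$ on $|\xi|\ge 4a>\lambda$, Mikhlin's theorem (Lemma \ref{lem2.3}), and Lemma \ref{lem2.1}.

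The main obstacle is the low-frequency piece $\widetilde\chi_{<4a}(D)T_{S_1A}$, where the Fourier expansion above produces the multiplier $\xi_l/(|\xi|^4-\lambda^4-i0)$ that must be paired with the spherical representation of $(R_0^+-R_0^-)f$ from \eqref{free_resolvent2}. Using the partial-fraction identity
\[
\lim_{\varepsilon\to 0^+}\frac{1}{2\lambda^2}\Bigl(\frac{1}{|\xi|^2-(\lambda+i\varepsilon)^2}-\frac{1}{|\xi|^2+\lambda^2}\Bigr)(|\xi|-\lambda)=\frac{1}{(|\xi|^2+\lambda^2)(|\xi|+\lambda)}
\]
already exploited in the proof of $\widetilde\Phi^{1,2}_k$ in Proposition \ref{proposition_4_1}, together with the symbolic splitting $\xi_l=|\xi|\mathcal R_l$, the resulting double Fourier integral rearranges into precisely the kernel of $E_0$ from \eqref{E_0}. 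The low-frequency part thereby takes the schematic shape $\mathcal R_lJ_\rho E_0$ acting on $f$ and integrated against the $L^1$ density $U_\theta(v_lf_j)$ in $\rho$ and $\theta$; Lemma \ref{lem2.2} (which gives $E_0\in\mathbb B(L^p)$ for $1<p<\infty$) together with the $L^p$-boundedness of the Riesz transforms (Lemma \ref{lem2.3}) then completes the proof that $T_{S_1A}\in\mathbb B(L^p(\R^3))$ for all $1<p<\infty$.
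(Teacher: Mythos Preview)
Your treatment of $B=AS_1$ coincides with the paper's. For $B=S_1A$ the paper takes a considerably shorter route: it Taylor-expands $e^{i\lambda\omega\cdot z}=1+i\lambda\omega\cdot z\int_0^1 e^{i\lambda\theta\omega\cdot z}\,d\theta$ on the \emph{right} factor (no cancellation is needed there, since this $B$ carries no $\lambda^{-1}$), obtaining $T_{S_1A}=\Phi^1_0+i\sum_k\Phi^1_k$ with $\Phi^1_0$ exactly of the form $\Theta$ in Remark~\ref{remark4.2} (take $g\equiv1$, $B=A$) and $\Phi^1_k$ of the form $\Phi$ there; both are then bounded in one sentence by citing that remark. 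Your route---basis decomposition of $S_1$ followed by a from-scratch rerun of the full high/low-frequency analysis of $\Phi^1_k$ from Proposition~\ref{proposition_4_1}---is essentially re-deriving what Remark~\ref{remark4.2} already packages. It is correct, with one point you gloss over: because you leave the right factor $\langle vg_j,(R_0^+-R_0^-)(\lambda^4)f\rangle$ unexpanded, the spherical average carries an extra weight $\int (vg_j)(w)\,e^{i\lambda\omega\cdot w}\,dw$, so your analogues of $\widetilde\Phi^{1,1}_k$ and $\widetilde\Phi^{1,2}_k$ involve $A_{\rho,w}$ and $J_\rho E_0 J_{-w}$ with a genuine second translation parameter $w$ (integrated against $vg_j\in L^1$), not just $w=0$. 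This is harmless---Lemma~\ref{lem2.1} is stated uniformly in $(z,w)$ and the $L^p$ bound on $E_0$ from Lemma~\ref{lem2.2} is translation invariant---but it is a detail the paper's approach avoids entirely.
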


\begin{proof}The proof is similar to that of Proposition \ref{proposition_4_1}. For the case $B=S_1A$, by using \eqref{free_resolvent2} and the Taylor expansion $e^{i\lambda \omega\cdot z}=1+i\lambda \omega\cdot z\int_0^1e^{i\lambda\theta\omega\cdot z}d\theta$, we can write $T_{S_1A}=\Phi^1_0+i \sum_{k=1}^3 \Phi_k^1$, where 
\begin{align*}
\Phi^1_0 f(x)&=\int_0^\infty\lambda^2 \chi_{<a}(\lambda)\big(R_0^+(\lambda^4)vS_1A v\big)(x) \Big(\int_{S^2} \widehat{ f}(\lambda\omega)d\omega\Big) d\lambda,\\
\Phi_k^1f(x)&=\int_0^1 \int_0^\infty \lambda^3 \chi_{<a}(\lambda)\left[ R_0^+(\lambda^4)vS_1A v_k\int_{S^2} e_{\lambda\theta \omega}\widehat{ \mathcal{R}_kf}(\lambda\omega)d\omega\right] (x)d\lambda d\theta.
\end{align*}
It thus follows from Remark \ref{remark4.2} that $\Phi^1_0,\Phi_k^1 \in \mathbb{B}(L^p)$ for all $1<p<\infty$. 

Similarly, if $B=AS_1$, then we use the formula \eqref{free_resolvent2} and the fact $S_1v=0$ to derive
                         $$S_1v e_{\lambda\omega}=i \lambda \sum_{k=1}^3 \omega_kS_1v_k\int_0^1 e_{\lambda\theta \omega} d\theta.$$
Using this formula and a similar computations to that for $\Phi^1_{kl}$, one can write $T_{AS_1}$ as 
\begin{align*}
	T_{AS_1}f(x)&= i \sum_{k=1}^3\int_0^1 \int_0^\infty \lambda^3 \chi_{<a}(\lambda)\bigg[ R_0^+(\lambda^4)vAS_1v_k\int_{S^2} e_{\lambda\theta \omega}\widehat{ \mathcal{R}_kf}(\lambda\omega)d\omega\Big)\bigg] (x)d\lambda d\theta
\end{align*}
which is bounded on $L^p$ for all $1<p<\infty$ by Remark \ref{remark4.2}. 
\end{proof}

As mentioned in the beginning of this section, all of the terms in \eqref{first resonance}, except for $B=\lambda^{-1}S_1A^1_{-1,1}S_1,S_1A^1_{0,1},A^1_{0,2}S_1$, can be dealt with by the same argument as in the regular case. For these remained three terms, we can apply Propositions \ref{proposition_4_1} and \ref{proposition_4_3}. Hence we have completed the proof of Theorem \ref{theorem1.2} (i).

\section{The second kind resonance case}
\label{the second kind5}
In this section, we suppose zero is a resonance of the second kind and prove Theorem \ref{theorem1.2} (ii). 
\subsection{The boundedness for $1<p<3$ }
We first prove $W_-$ is bounded on $L^p$ for all $1< p<3$. Recall the expansion \eqref{thm-resoinver-M2} of  $M^{-1}(\lambda)$ for the resonance of the second kind in Theorem \ref{thm-main-inver-M}:
\begin{align}
	\label{second resonance}
	M^{-1}(\lambda)
	=&\lambda^{-3}S_2A^2_{-3,1}S_2+\lambda^{-2}\Big(S_2A^2_{-2,1}S_1 + S_1A^2_{-2,2}S_2\Big)
	+\lambda^{-1}\Big(S_2A^2_{-1,1}+A^2_{-1,2}S_2 \nonumber\\ &\ \ \ \ \  \ \ \ \ \ \ +S_1A^2_{-1,3}S_1\Big)
	+\Big( S_1A_{0,1}^2 +A^2_{0,2}S_1 +QA^2_{0,3}Q\Big)
	+\lambda  A^2_{1,1}+\Gamma^2_2(\lambda).
\end{align}
As above, we decompose the low energy part of $W_-$ into several integral operators $T_B$ given by \eqref{T_B} using this expansion. Then, it is enough to deal with the five terms corresponding with 
\begin{align}
\label{first_kind}
B=\lambda^{-3}S_2A^2_{-3,1}S_2,\ \lambda^{-2}S_2A^2_{-2,1}S_1,\ \lambda^{-1}S_2A^2_{-1,1},\ \lambda^{-2}S_1A^2_{-2,2}S_2,\ \lambda^{-1}A^2_{-1,2}S_2;
\end{align} otherwise the proof is exactly same as in the previous cases. Precisely, we have $T_B\in \mathbb B(L^p(\R^3))$ for all $1<p<\infty$ and for $B=QA^2_{0,3}$, $\lambda  A^2_{1,1}$ and $\Gamma^2_2$ by the same argument as in the regular case,  and for $B=\lambda^{-1}S_1A^2_{-1,3}S_1$, $S_1A_{0,1}^2$ and $A^2_{0,2}S_1$ by applying Propositions \ref{proposition_4_1} and \ref{proposition_4_3}, respectively. 

The $L^p$-boundedness of $T_B$ for the case \eqref{first_kind} is given by the following Propositions \ref{proposition_5_1}-\ref{proposition_5_3}. As in the previous section, we frequently use the notations in Section \ref{subsection_notation} in the following argument. 


\begin{proposition}	\label{proposition_5_1}
Let $B=\lambda^{-3}S_2AS_2$ with $A\in \AB(L^2)$. Then $T_B\in \mathbb B(L^p(\R^3))$ for $1<p<3$. 
\end{proposition}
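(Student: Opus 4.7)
The plan is to adapt the strategy of Propositions \ref{proposition_4_1} and \ref{proposition_4_3} by performing deeper Taylor expansions on both sides of the absolutely bounded operator $A$ in the integrand
\[
T_Bf=\frac{2}{\pi i}\int_0^\infty\chi_{<a}(\lambda)\,R_0^+(\lambda^4)\,vS_2AS_2v\,K_\lambda f\,d\lambda,\qquad K_\lambda:=R_0^+(\lambda^4)-R_0^-(\lambda^4),
\]
exploiting the enhanced orthogonalities $S_2v=0$ and $S_2v_k=0$ from Lemma \ref{lemma_resonance} (iii). Second-order Taylor expansion of $e^{i\lambda\omega\cdot y}$ inside $(K_\lambda f)(y)=\frac{\pi i}{2\lambda}\int_{S^2}e^{i\lambda\omega\cdot y}\hat f(\lambda\omega)\,d\omega$ annihilates both the $O(1)$ and $O(\lambda)$ contributions to $S_2vK_\lambda f$, leaving
\[
(S_2vK_\lambda f)(\cdot)=-\frac{\pi i\lambda}{4}\sum_{k,l}(S_2v_{kl})(\cdot)\int_{S^2}\widehat{\mathcal R_{kl}f}(\lambda\omega)\,d\omega+\lambda^2\cdot[\text{cubic remainder}].
\]
Symmetrically, for any $h\in L^2$ (with $h=AS_2vK_\lambda f$ in mind), the relations $\<v,S_2h\>=\<x_kv,S_2h\>=0$ together with $e^{-iy\cdot\xi}-1+iy\cdot\xi=-(y\cdot\xi)^2\int_0^1(1-\theta)e^{-i\theta y\cdot\xi}\,d\theta$ yield
\[
\widehat{vS_2h}(\xi)=-\sum_{m,n}\xi_m\xi_n\int_0^1(1-\theta)\,\F[U_\theta(v_{mn}S_2h)](\xi)\,d\theta,
\]
which separates a factor $|\xi|^2\,\mathcal R_m\mathcal R_n$, exactly in the spirit of the treatment of $\widetilde\chi_{<4a}(D)\Phi_k^1$ in the proof of Proposition \ref{proposition_4_1}.

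Inserting both expansions, switching to $\eta=\lambda\omega$ so that $\lambda^2\,d\lambda\,d\omega=d\eta$ with $|\eta|=\lambda$, and applying the partial-fraction identity
\[
\frac{|\xi|^2}{|\xi|^4-\lambda^4-i0}=\frac12\left[\frac{1}{|\xi|^2-\lambda^2-i0}+\frac{1}{|\xi|^2+\lambda^2}\right],
\]
the principal contribution to $T_Bf$ reduces to a $\theta$- and $\rho$-integral of compositions $\mathcal R_m\mathcal R_n\,J_\rho\,F\,\mathcal R_k\mathcal R_l$, where $F$ has Fourier symbol $\widetilde\chi_{<4a}(\xi)\widetilde\chi_{<a}(\eta)/[(|\xi|^2+|\eta|^2)|\eta|]$ --- precisely the kernel of $E_1$ in Lemma \ref{lem2.2}. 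Since $E_1\in\mathbb B(L^p)$ for $1<p<3$ and the Riesz transforms are bounded on $L^p$ for all $1<p<\infty$ (Lemma \ref{lem2.3}), this principal piece lies in $\mathbb B(L^p)$ for $1<p<3$. The half of the partial fraction producing $(|\xi|^2-\lambda^2-i0)^{-1}$ yields an $E_0$-type operator bounded for all $1<p<\infty$, and the cubic Taylor remainders combine with Lemma \ref{lem2.1} as in the arguments for $\Phi^1_{kl}$ and $\widetilde\Phi^{1,2}_k$ in Proposition \ref{proposition_4_1} to give operators bounded on $L^p$ for all $1<p<\infty$.

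The main obstacle is to verify that the principal term genuinely carries the $\frac{1}{(|\xi|^2+|\eta|^2)|\eta|}$ singularity of $E_1$, not the milder $E_0$-type singularity that appeared in the first-kind case. The $|\eta|^{-1}$ is structural: the two $S_2$-cancellations on the right extract only the factor $\lambda^2$, which together with the $\lambda^{-1}$ in front of $K_\lambda$ leaves a net $\lambda^1$; after absorbing $\lambda^2$ into the spherical Jacobian $d\eta=\lambda^2\,d\lambda\,d\omega$, one is left with an extra $d\lambda/\lambda$ that becomes $d\eta/|\eta|$. This forces the restriction $1<p<3$ asserted in the proposition and matches the sharp threshold $p=3$ at which the unboundedness of $T_B$ will be established in the next subsection.
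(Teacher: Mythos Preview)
Your approach follows the paper's strategy closely: both expand $e^{i\lambda\omega\cdot z}$ on the right using the $S_2$-cancellations to extract powers of $\lambda$, and both exploit $S_2$-orthogonality on the left (Fourier side) to pull out a factor $D_sD_t$. The identification of the $E_1$ kernel as the obstruction forcing $p<3$ is exactly right. Two points, however, need correction.

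First, the partial-fraction splitting you use is not the paper's and leads to a wrong intermediate claim. You write $\frac{|\xi|^2}{|\xi|^4-\lambda^4-i0}=\frac12\bigl[\frac{1}{|\xi|^2-\lambda^2-i0}+\frac{1}{|\xi|^2+\lambda^2}\bigr]$ and assert that the first half yields ``an $E_0$-type operator bounded for all $1<p<\infty$''. This is false: the resulting symbol $\frac{1}{(|\xi|^2-|\eta|^2-i0)|\eta|}$ carries a Helmholtz-type singularity on $|\xi|=|\eta|$ and is not $E_0$-type. In fact, since $\frac12\cdot\frac{1}{|\xi|^2-\lambda^2-i0}=\frac{\lambda^2}{|\xi|^4-\lambda^4-i0}+\frac12\cdot\frac{1}{|\xi|^2+\lambda^2}$, your first half equals the paper's $\lambda^2R_0^+(\lambda^4)$-piece plus an extra $\tfrac12E_1$-piece, so it is bounded only for $1<p<3$. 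The paper instead writes $D_sD_t=\lambda^2\mathcal R_{st}+(|D|^2-\lambda^2)\mathcal R_{st}$; the first term keeps the \emph{full} bi-Laplacian resolvent $\lambda^2R_0^+(\lambda^4)$, which then feeds into Lemma \ref{lem2.1} (the $A_{\rho,0}$-operator, handled on the spatial side) and is genuinely bounded for all $1<p<\infty$.

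Second, your ``cubic remainder'' on the right carries a $\lambda^2$ integrand together with the oscillatory factor $e_{\lambda\theta\omega}$. This does not fit either the $\Phi$-operator of Remark \ref{remark4.2} (which needs $\lambda^3$) or the $\Theta$-operator (which has no $e_{\lambda\theta\omega}$), so neither the $\Phi^1_{kl}$ nor the $\widetilde\Phi^{1,2}_k$ argument applies as stated. The paper resolves this by expanding to \emph{fourth} order on the right (formula \eqref{taylor3}), producing a clean $\lambda^2$-term $\Phi^2_{kls}$ without the oscillatory factor (the $\Theta$ case) and a $\lambda^3$-remainder $\Phi^2_{klst}$ with it (the $\Phi$ case). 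You also need the high/low-frequency split $\widetilde\chi_{\ge4a}(D)+\widetilde\chi_{<4a}(D)$ on $\Phi^2_{kl}$ \emph{before} the left-side Fourier expansion, both to produce the cutoff $\widetilde\chi_{<4a}(\xi)$ in $E_1$ and to dispose of the complementary high-frequency piece separately (the paper's $\Phi^{2,1}_{kl}$, $\Phi^{2,2}_{kl}$).
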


\begin{proof}The general strategy is similar to that in the previous section. The only difference is that, since $\lambda^{-3}S_2AS_2$ is more singular at $\lambda=0$ than before, which causes the restriction $p<3$. Due the same reason, we also need the cancelation property of $S_2$ against not only $v(z)$ but also its first moment $zv(z)$, that is $S_2v=S_2v_k=0$ for all $k=1,2,3$, where $v_k=z_kv$. We thus use 
the Taylor expansion
	\begin{align}\label{taylor3}
		e^{i\lambda\omega\cdot z}=\sum_{\ell=0}^3\frac{i^{\ell}}{\ell !}(\lambda\omega\cdot z)^{\ell}
		  +\frac{1}{6}(\lambda\omega\cdot z)^{4}  \int_0^1(1-\theta)^3e^{i\lambda \theta\omega\cdot z}d\theta,\quad z\in \R^3,\ \omega\in S^2,
	\end{align}
and the formula \eqref{free_resolvent2} to rewrite $S_2v(R_0^+(\lambda^4)-	R_0^-(\lambda^4))f$ as 
 \begin{align*}\nonumber
&-\frac{\pi i}{4}\sum_{k,l=1}^3\lambda S_2v_{kl}\left(\int_{S^2}\widehat{\mathcal R_{kl}f}(\lambda\omega)d\omega\right)
			+\frac{\pi}{12}\sum_{k,l,s=1}^3\lambda^2S_2v_{kls}\left(\int_{S^2}\widehat{\mathcal{R}_{kls}f}(\lambda\omega)d\omega\right)\\
	& \ \ \ \ \ \ \ \ \ \ +\frac{\pi}{12}\sum_{k,l,s,t=1}^3\lambda^3\int_0^1(1-\theta)^3S_2v_{klst}
			\int_{S^2} e_{\lambda\theta\omega}\widehat{\mathcal{R}_{klst}f}(\lambda\omega)d\omega\big) d\theta,
\end{align*}
where $e_{\omega}(z)=e^{i\omega\cdot z}$, $v_k=z_kv$, $v_{kl}=z_kz_lv$ and so on. 
Plugging this formula into \eqref{T_B} gives
	\begin{align}\label{K2-31-unbound}
		T_B
		=-\frac{1}{2}\sum_{k,l=1}^3\Phi^2_{kl}-\frac{i}{6}\sum_{k,l,s=1}^3\Phi^2_{kls}-\frac{i}{6}\sum_{k,l,s,t=1}^3\Phi^2_{klst},
	\end{align}
where
\begin{align}
\label{Phi^2_{kl}}
\Phi^2_{kl}f(x)=&\int_0^\infty \lambda\chi_{<a}(\lambda)\big( R_0^+(\lambda^4)vS_2AS_2v_{kl}\big)(x)
		\Big(\int_{S^2}\widehat{\mathcal R_{kl}f}(\lambda\omega)d\omega\Big) d\lambda,\\
\label{Phi^2_{kls}}
\Phi^2_{kls}f(x)=&\int_0^\infty \lambda^2\chi_{<a}(\lambda)\big( R_0^+(\lambda^4)vS_2AS_2v_{kls}\big)(x)
		\Big(\int_{S^2}\widehat{\mathcal{R}_{kls}f}(\lambda\omega)d\omega\Big) d\lambda,\\
\label{Phi^2_{klst}}
\Phi^2_{klst}f(x)=&\int_0^1(1-\theta)^3\int_0^\infty \lambda^3\chi_{<a}(\lambda)
		\bigg[ R_0^+(\lambda^4)vS_2AS_2v_{klst}\int_{S^2} e_{\lambda\theta\omega}\widehat{\mathcal{R}_{klst}f}(\lambda\omega)d\omega\bigg](x)d\lambda d\theta.
\end{align}
Remark \ref{remark4.2} then implies 
 $ \Phi^2_{klst},\Phi^2_{kls}\in \mathbb B(L^p(\R^3))$ for all $1<p<\infty$ and  $1\le k,l,s,t\le 3$. 

It remains to deal with the most singular part $\Phi^2_{kl}$ with respect to $\lambda$. As in the previous case, we decompose $\Phi^2_{kl}=\widetilde \chi_{\ge 4a}(D)\Phi^2_{kl}+\widetilde \chi_{<4a}(D) \Phi^2_{kl}$. For $\widetilde \chi_{\ge 4a}(D)\Phi^2_{kl}$, taking into account the fact that we have $\lambda$ rather than $\lambda^2$ in the integrand of  $\Phi^2_{kl}$, we follow the way to deal with the operator $\widetilde \chi_{\ge 4a}(D)\Phi^1_{k}$ in the proof of Proposition \ref{proposition_4_1} to write $\widetilde \chi_{\ge 4a}(D)\Phi^2_{kl}=\Phi^{2,1}_{kl}+\Phi^{2,2}_{kl}$ with
\begin{align}
\nonumber
\Phi^{2,1}_{kl}f(x)
&=(m(D)vS_2AS_2v_{kl})(x) \Big( \int_0^\infty \lambda \chi_{<a}(\lambda) \int_{S^2} \widehat{\mathcal R_{kl} f}(\lambda\omega)d\omega d\lambda\Big )\\
\label{Phi^{2,1}_{kl}}
&=(m(D)vS_2AS_2v_{kl})(x)\int_{\R^3}\widetilde m(\eta)\widehat{\mathcal R_{kl}f}(\eta)d\eta,\\
\nonumber
\Phi^{2,2}_{kl}f(x)
&=\int_0^\infty \lambda^5 \chi_{<a}(\lambda) \left(m(D)R_0^+(\lambda^4)vS_2AS_2v_{kl}\right)(x)\Big(  \int_{S^2} \widehat{\mathcal R_{kl} f}(\lambda\omega)d\omega\Big )d\lambda\\
\label{Phi^{2,2}_{kl}}
&=\frac{2}{\pi i}\int_{\R^3}(vS_2AS_2v_{kl})(\rho)\left(A_{\rho,0}|D|^2\widetilde \chi_{<a}(D)\mathcal R_{kl}f\right)(x)d\rho,
\end{align}
where $m(\xi)=|\xi|^{-4}\widetilde \chi_{\ge 4a}(\xi)$ and $\widetilde m(\eta)=|\eta|^{-1}\widetilde \chi_{<a}(\eta)$. It will be seen in Proposition \ref{L1,L2,L3} below that 
\begin{equation*}
\begin{split}
|\F\widetilde m(y)|\lesssim \<y\>^{-2}\in L^{q},\quad 3/2<q<\infty. 
\end{split}
\end{equation*}
Hence $\Phi^{2,1}_{kl}\in \mathbb B(L^p(\R^3))$ for $1<p<3$ as follows:  
$$
\|\Phi^{2,1}_{kl}f\|_{L^p}\lesssim \|\F^{-1}m\|_{L^p}\|vS_2AS_2v_{kl}\|_{L^1}\|\mathcal R_{kl}f\|_{L^p}\|\F\widetilde m\|_{L^{p'}}\lesssim \|f\|_{L^p}. 
$$
Moreover, the same argument as that for $\Phi_{k}^{1,2}$ in 
Proposition \ref{proposition_4_1} implies $\Phi^{2,2}_{kl}\in \mathbb B(L^p(\R^3))$ for all $1<p<\infty$. 

It remains to  deal with $\widetilde \chi_{< 4a}(D)\Phi^2_{kl}$. Since $S_2AS_2v_{kl}\in S_2L^2$ and $S_2L^2\perp\{v,z_1v,z_2v,z_3v\}$ (see Lemma \ref{lemma_resonance}), by  the Taylor expansion of $e^{-i z\xi}$, one has
\begin{align*}
\F[S_2AS_2v_{kl}](\xi)
&=\frac{-1}{(2\pi)^3}\sum_{s,t=1}^3 \int_0^1(1-\theta)\int_{\R^3}\xi_s\xi_t e^{-i\theta y\xi}v_{st}(y)(S_2AS_2v_{kl})(y)dy d\theta\\
&=-\sum_{s,t=1}^3\int_0^1(1-\theta)\xi_s\xi_t\F[U_\theta v_{st}S_2AS_2v_{kl}](\xi)d\theta
\end{align*}
and hence, with $\widetilde h_\theta(x)=(U_\theta v_{st}S_2AS_2v_{kl})(x)$ and the formula $D_sD_t=\lambda^2 \mathcal R_{st}+(|D|^2-\lambda^2)\mathcal R_{st}$, 
$$
\left[R_0^+(\lambda^4)S_2AS_2v_{kl}\right](x)=-\sum_{s,t=1}^3\int_0^1(1-\theta)\mathcal R_{st}\left\{\lambda^2 R_0^+(\lambda^4)+R_0^+(\lambda^4)(|D|^2-\lambda^2)\right\}\widetilde h_\theta(x) d\theta,
$$
where $
U_\theta u(x) = \theta^{-3} u(\theta^{-1} x)
$. Taking into account the fact that we have $\lambda$ rather than $\lambda^2$ in the integrand of  $\Phi^2_{kl}$, we plug this into the above formula of $\Phi^2_{kl}$ and follow the way to calculate $\widetilde \Phi^{1,1}_k$ and $\widetilde \Phi^{1,2}_k$ defined by \eqref{1stphi11} and  \eqref{1stphi12}, respectively,  to observe that $\widetilde \chi_{< 4a}(D)\Phi^2_{kl}$ is the sum of 
\begin{align*}
\widetilde \Phi^{2,1}_{kl} f(x)&=\int_0^1(1-\theta)\int_{\R^3}\widetilde h_\theta (\rho)\left(\mathcal R_{st}\widetilde \chi_{<4a}(D)A_{\rho,0}\mathcal R_{kl} f\right)(x)d\rho d\theta,\\
\widetilde \Phi^{2,2}_{kl} f(x)&=\int_0^1\int_{\R^3}\widetilde h_\theta(\rho)(\mathcal R_{st}J_\rho E_1\mathcal R_{kl}f)(x)d\rho d\theta,
\end{align*}
where $
J_\rho u(x) = u(x - \rho)
$ and $E_1$ is given by \eqref{E_1}. Note that we have used the following identities to derive $\widetilde \Phi^{2,2}_{kl}$ (from which it is clear why $E_1$ appears instead of $E_0$): 
\begin{align*}
\lim_{\ep\to 0^+}\frac{1}{2\lambda^2}\left(\frac{1}{|\xi|^2-(\lambda+i\ep)^2}-\frac{1}{|\xi|^2+\lambda^2}\right)(|\xi|^2-\lambda^2)
=\frac{1}{|\xi|^2+\lambda^2},
\end{align*}
and 
$$
\int_0^\infty\int_{S^2} \frac{\lambda \chi_{<a}(\lambda)\widetilde \chi_{<4a}(\xi)}{|\xi|^2+\lambda^2} \widehat{\mathcal R_{kl} f}(\lambda\omega)d\omega  d\lambda
=\int_{\R^3}\frac{\widetilde \chi_{<a}(\eta)\widetilde \chi_{<4a}(\xi)}{(|\xi|^2+|\eta|^2)|\eta|} \widehat{\mathcal R_{kl} f}(\eta)d\eta. 
$$
By the same argument as that for $\widetilde \Phi^{1,1}_k$ and $\widetilde \Phi^{1,2}_k$ with $E_0$ replaced by $E_1$ and using Lemma \ref{lem2.2}, we have $\widetilde \Phi^{2,1}_{kl}\in \mathbb{B}(L^p(\R^3))$ for all $1<p<\infty$ and $\widetilde \Phi^{2,2}_{kl}\in \mathbb{B}(L^p(\R^3))$ for all $1<p<3$. This shows $\widetilde \chi_{<4a}(D) \Phi^2_{kl}\in  \mathbb{B}(L^p(\R^3))$ for all  $1<p<3$ and completes the proof of the proposition. 
\end{proof}

 We next deal with operators associated with $B=\lambda^{-2}S_2A^2_{-2,1}S_1$ and $\lambda^{-1}S_2A^2_{-1,1}$. 

\begin{proposition}\label{proposition_5_2}
Let $A\in \AB(L^2)$ and $B\in \{\lambda^{-2}S_2AS_1,\lambda^{-1}S_2A\}$. Then $T_B \in \mathbb{B}(L^p(\R^3))$ for all $1<p<3$.
\end{proposition}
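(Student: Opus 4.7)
The plan is to parallel the argument of Proposition~\ref{proposition_5_1}. The key observation, valid in both cases, is that the left projection $S_2$ forces every auxiliary function arising in the computation ($S_2AS_1 v_\alpha$ in the first case, $S_2A v_\alpha$ in the second) to lie in $S_2 L^2$, which by Lemma~\ref{lemma_resonance}~(iii) is orthogonal to $v$ and to every $z_j v$. Hence $\F[v \cdot h](\xi) = O(|\xi|^2)$ whenever $h \in S_2 L^2$, and the same mechanism $D_sD_t = \lambda^2\mathcal R_{st} + (|D|^2 - \lambda^2)\mathcal R_{st}$ used in Proposition~\ref{proposition_5_1} applies here verbatim.

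For $B = \lambda^{-2}S_2AS_1$, I use $S_1 v = 0$ (Lemma~\ref{lemma_resonance}~(ii)) and apply the second-order Taylor expansion \eqref{taylor} to $S_1 v\, e^{i\lambda\omega\cdot z}$ inside \eqref{free_resolvent2}: the constant term vanishes and the linear term in $\lambda$ cancels the $\lambda^{-1}$ prefactor. Substituting into $T_B$ and integrating against $\lambda^3\chi_{<a}(\lambda)\,d\lambda$ produces two summands, the dominant one being
\[
\Psi^{2,1}_k f(x) = \int_0^\infty \lambda\, \chi_{<a}(\lambda)\,\bigl(R_0^+(\lambda^4)\,v\,S_2AS_1 v_k\bigr)(x)\Big(\int_{S^2}\widehat{\mathcal R_k f}(\lambda\omega)\,d\omega\Big) d\lambda,
\]
together with an auxiliary $\Psi^{2,2}_{kl}$ of order $\lambda^2$ carrying the remainder factor $e_{\lambda\theta\omega}$ with $v_{kl}$ and $\mathcal R_{kl}$. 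The term $\Psi^{2,1}_k$ is the exact analogue of $\Phi^2_{kl}$ from Proposition~\ref{proposition_5_1} (with $\mathcal R_k$ in place of $\mathcal R_{kl}$), so I handle it by the same high/low-frequency split $\widetilde\chi_{\ge 4a}(D) + \widetilde\chi_{<4a}(D)$: on the high-frequency side, expanding $R_0^+(\lambda^4) = m(D) + \lambda^4 m(D)R_0^+(\lambda^4)$ with $m(\xi) = \widetilde\chi_{\ge 4a}(\xi)|\xi|^{-4}$ reduces matters to the multiplier $\F^{-1}(|\eta|^{-1}\widetilde\chi_{<a})$ (which lies in $L^{p'}$ exactly when $p' > 3/2$) plus a piece absorbed by $A_{\rho,0}$ from Lemma~\ref{lem2.1}; on the low-frequency side the $|\xi|^2$ cancellation of $\F[v\,S_2AS_1 v_k]$ is extracted via the $D_sD_t$ decomposition, and the argument reduces to $A_{\rho,0}$ (Lemma~\ref{lem2.1}) together with the kernel $E_1$ (Lemma~\ref{lem2.2}), bounded on $L^p$ for $1 < p < 3$. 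The auxiliary $\Psi^{2,2}_{kl}$ carries an extra $\lambda$; after exploiting the same $S_2L^2$ cancellation on the left, the resulting integrals are either absorbed by Remark~\ref{remark4.2} or reduced to the same $E_1$-type contribution.

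For $B = \lambda^{-1}S_2 A$ there is no cancellation available on the right before expanding, so I instead use the fourth-order expansion \eqref{taylor3} on $v\,e^{i\lambda\omega\cdot z}$. This decomposes $v(R_0^+(\lambda^4) - R_0^-(\lambda^4))f$ into summands involving $v, v_k, v_{kl}, v_{kls}$ with successive powers of $\lambda$ and Riesz multipliers $1, \mathcal R_k, \mathcal R_{kl}, \mathcal R_{kls}$, plus a remainder in $v_{klst}$ carrying $e_{\lambda\theta\omega}$. After applying $\lambda^{-1}S_2 A$, every summand is of the form $v \cdot (S_2A v_\alpha)$ with $S_2A v_\alpha \in S_2L^2$, so the Fourier cancellation applies. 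The summands of order $\lambda^2$ or higher fit the templates of Remark~\ref{remark4.2} (using $S_2 = S_1 S_2$ to match the $v S_1 B g v$ structure with $B$-operator $S_2 A$ and polynomials of degree $\le 3$) and are therefore bounded on $L^p$ for all $1 < p < \infty$. The unique singular summand of order $\lambda^1$, arising from $v\,S_2 A v$ paired with the angular average $\int_{S^2}\widehat{f}(\lambda\omega)\,d\omega$, is treated verbatim as $\Psi^{2,1}_k$ above. The main obstacle is precisely this $\lambda^1$ integrand: it is strictly more singular than the $\lambda^2$ template of Remark~\ref{remark4.2} and cannot be reduced to Lemma~\ref{lem2.1} alone. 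Both restrictions $p < 3$ --- from $\F^{-1}(|\eta|^{-1}\widetilde\chi_{<a}) \in L^{p'}$ for $p' > 3/2$ and from the $L^p$-range of $E_1$ in Lemma~\ref{lem2.2} --- are sharp, and they force $1 < p < 3$ in the conclusion, consistently with the unboundedness for $3 \le p \le \infty$ asserted in Theorem~\ref{theorem1.2}~(ii).
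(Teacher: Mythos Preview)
Your approach is correct in substance but differs from the paper's in how you calibrate the Taylor expansion on the right-hand factor. For $B=\lambda^{-2}S_2AS_1$, the paper expands $e^{i\lambda\omega\cdot z}$ to \emph{third} order (via \eqref{taylor(1-theta)2}), producing three pieces $\Psi^2_k$ (order $\lambda$), $\Psi^2_{kl}$ (order $\lambda^2$, no remainder), $\Psi^2_{kls}$ (order $\lambda^3$, with remainder); the last two slot directly into the templates $\Theta$ and $\Phi$ of Remark~\ref{remark4.2}, so only $\Psi^2_k$ needs the full $\Phi^2_{kl}$-type analysis. You stop at second order and must therefore treat the $\lambda^2$-with-remainder piece $\Psi^{2,2}_{kl}$ via left $S_2$-cancellation. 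This does work, but the $(|D|^2-\lambda^2)$ part is not an $E_1$-type contribution: the extra $\lambda$ carried by the remainder (after the substitution $\eta=\lambda\omega$) replaces the factor $|\eta|^{-1}$ in \eqref{E_1} by $1$, yielding the kernel $L_1$ of Proposition~\ref{L1,L2,L3}, which is in fact better (bounded for all $1<p<\infty$). For $B=\lambda^{-1}S_2A$, the paper uses only the \emph{second}-order expansion \eqref{taylor}, obtaining $\widetilde\Psi^2$, $\widetilde\Psi^2_k$, $\widetilde\Psi^2_{kl}$ that again match the templates directly; your fourth-order expansion is harmless but generates extra terms (orders $\lambda^3,\lambda^4$ without remainder and $\lambda^5$ with remainder) that do not literally fit either $\Theta$ or $\Phi$ and require the $|D|^j\widetilde\chi_{<2a}(D)$ absorption trick (as in \eqref{Phi^{2,2}_{kl}}) to conform. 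The paper's rule---expand exactly one step beyond the highest vanishing Taylor coefficient available on the right---is chosen so that every non-singular summand falls into Remark~\ref{remark4.2} without further manipulation, which keeps the argument shorter than yours while arriving at the same bound.
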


\begin{proof}Suppose first $B=\lambda^{-2}S_2AS_1$. By the Taylor expansion of $e^{i\lambda\omega\cdot z}$:
	\begin{align}\label{taylor(1-theta)2}
		e^{i\lambda\omega\cdot z}=\sum_{\ell=0}^2\frac{i^{\ell}}{\ell !}(\lambda\omega\cdot z)^{\ell}
		-\frac{i}{2}(\lambda\omega\cdot z)^{3}\int_0^1(1-\theta)^2e^{i\lambda \theta\omega\cdot z}d\theta, 
	\end{align}
the property $S_1(v)=0$ and the formula  \eqref{free_resolvent2},
$S_1v(R_0^+(\lambda^4)-	R_0^-(\lambda^4))f$ is rewritten as 
	\begin{align}\label{S1v(1-theta)2}
-\frac{\pi}{2}\sum_{k=1}^3 & S_1v_k\int_{S^2}\widehat{\mathcal{R}_{k}f}(\lambda\omega)d\omega
-\frac{\pi i}{4}\sum_{k,l=1}^3\lambda S_1v_{kl}
		\int_{S^2}\widehat{\mathcal R_{kl}f}(\lambda\omega)d\omega\nonumber\\
		&+\frac{\pi}{4}\sum_{k,l,s=1}^3\lambda^2\int_0^1(1-\theta)^2S_1v_{kls}
		\int_{S^2} e_{\lambda \theta\omega}\widehat{\mathcal{R}_{kls}f}(\lambda\omega)d\omega d\theta.
	\end{align}
	Putting \eqref{S1v(1-theta)2} into \eqref{T_B}, we have
	\begin{align*}
	T_{B}
		=i\sum_{k=1}^3\Psi^2_k-\frac{1}{2}\sum_{k,l=1}^3\Psi^2_{kl}-\frac{i}{2}\sum_{k,l,s=1}^3\Psi^2_{kls},
	\end{align*}
	where
\begin{align}
\label{Psi^2_k}
\Psi^2_kf(x)&=\int_0^\infty \lambda\chi_{<a}(\lambda) \left( R_0^+(\lambda^4)vS_2AS_1v_{k}\right)(x)
		\left(\int_{S^2}\widehat{\mathcal R_{k}f}(\lambda\omega)d\omega\right) d\lambda,\\
\nonumber
\Psi^2_{kl}f(x)&=\int_0^\infty \lambda^2\chi_{<a}(\lambda)\left( R_0^+(\lambda^4)vS_2AS_1v_{kl}\right)(x)
		\left(\int_{S^2}\widehat{\mathcal{R}_{kl}f}(\lambda\omega)d\omega\right) d\lambda,\\
\nonumber
\Psi^2_{kls}f(x)&=\int_0^1(1-\theta)^3\int_0^\infty \lambda^3\chi_{<a}(\lambda)
		\left[ R_0^+(\lambda^4)vS_2AS_1v_{kls}\int_{S^2} e_{\lambda \theta\omega}\widehat{\mathcal{R}_{kls}f}(\lambda\omega)d\omega\right](x)d\lambda d\theta.
\end{align}
$\Psi^2_{k}$, $\Psi^2_{kl}$ and $\Psi^2_{kls}$  have almost  analogous structures to that of $\Phi^2_{kl}$, $\Phi^2_{kls}$ and $\Phi^2_{klst}$ in Proposition \ref{proposition_5_1}, respectively (see \eqref{Phi^2_{kl}}--\eqref{Phi^2_{klst}}. Hence, one can prove by the same argument that $\Psi^{2}_{k}\in \mathbb B(L^p(\R^3))$ for all $1<p<3$ and $\Psi^2_{kl},\Psi^2_{kls}\in \mathbb B(L^p(\R^3))$ for $1<p<\infty$. 

Similarly, to deal with the case $B=\lambda^{-1} S_2A$, we use \eqref{taylor} and \eqref{free_resolvent2} to observe
	\begin{align*}
v(R_0^+(\lambda^4)-	R_0^-(\lambda^4))f
&=\frac{\pi i}{2\lambda}v\int_{S^2}\widehat{f}(\lambda\omega)d\omega
		-\frac{\pi}{2}\sum_{k=1}^3 v_k\int_{S^2}\widehat{\mathcal{R}_{k}f}(\lambda\omega)d\omega\nonumber\\
		&-\frac{\pi i}{2}\sum_{k,l=1}^3\lambda v_{kl}\int_0^1(1-\theta)
		\int_{S^2} e_{\lambda \theta\omega}\widehat{\mathcal R_{kl}f}(\lambda\omega)d\omega d\theta.
	\end{align*}
Thus $T_{\lambda^{-1} S_2A}=\widetilde{\Psi}^2+i\sum_{k=1}^3\widetilde{\Psi}^2_{k}-\sum_{k,l=1}^3\widetilde{\Psi}^2_{kl}$ with
\begin{align}
\label{widetildePsi^2}
\widetilde{\Psi}^2f(x)&=\int_0^\infty \lambda\chi_{<a}(\lambda)\left( R_0^+(\lambda^4)vS_2Av\right)(x)
		\left(\int_{S^2}\widehat{f}(\lambda\omega)d\omega\right) d\lambda,\\
\nonumber
\widetilde{\Psi}^2_{k}f(x)&=\int_0^\infty \lambda^2\chi_{<a}(\lambda)\left( R_0^+(\lambda^4)vS_2Av_{k}\right)(x)
		\left(\int_{S^2}\widehat{\mathcal{R}_{k}f}(\lambda\omega)d\omega\right) d\lambda,\\
\nonumber
\widetilde{\Psi}^2_{kl}f(x)&=\int_0^1(1-\theta)^3\int_0^\infty \lambda^3\chi_{<a}(\lambda)
		\left[ R_0^+(\lambda^4)vS_2Av_{kl}\int_{S^2} e_{\lambda \theta\omega}\widehat{\mathcal{R}_{kl}f}(\lambda\omega)d\omega\right](x)d\lambda d\theta. 
\end{align}
Again, by  the same argument as above, we can prove $\widetilde{\Psi}^{2}\in \mathbb B(L^p(\R^3))$ for all $1<p<3$ and 
 $\widetilde{\Psi}^2_{kl},\widetilde{\Psi}^2_{k}\in \mathbb B(L^p(\R^3))$ for all  $1<p<\infty$. This completes the proof. 
\end{proof}

Finally,  we deal with the terms corresponding with $\lambda^{-2}S_1A^2_{-2,2}S_2$ and $\lambda^{-1}A^2_{-1,2}S_2$. 

\begin{proposition}\label{proposition_5_3}
Let $A\in \AB(L^2)$ and $B\in \{\lambda^{-2}S_1AS_2,\lambda^{-1}AS_2\}$. Then $T_B \in \mathbb{B}(L^p(\R^3))$ for all $1<p<\infty$.
\end{proposition}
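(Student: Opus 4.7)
The plan is to follow the same strategy as in Propositions \ref{proposition_5_1} and \ref{proposition_5_2}: I will expand the exponential $e^{i\lambda\omega z}$ in the representation \eqref{free_resolvent2} of $R_0^+(\lambda^4)-R_0^-(\lambda^4)$ via Taylor's theorem with integral remainder, and exploit the orthogonality relations $S_2 v=S_2 v_k=0$ (from Lemma \ref{lemma_resonance}) to absorb the negative powers of $\lambda$ contributed by $B$ and by the $(2\lambda)^{-1}$ factor in \eqref{free_resolvent2}. The essential structural difference from Proposition \ref{proposition_5_2} is that here the $S_2$ cancellation sits on the right of $A$, so the surviving moment $v_{kl}$ is acted on by the absolutely bounded operator rather than by the free resolvent. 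Because of this, the resulting terms fit directly into the operator classes $\Theta$ and $\Phi$ of Remark \ref{remark4.2}, which are bounded on $L^p(\R^3)$ for all $1<p<\infty$; this avoids the bottleneck $p<3$ encountered in Proposition \ref{proposition_5_2}.

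For $B=\lambda^{-2}S_1 A S_2$, I will start from
$$T_B f = \frac{2}{\pi i}\int_0^\infty \lambda\, \chi_{<a}(\lambda)\, R_0^+(\lambda^4)\bigl[v S_1 A \cdot S_2 v\bigl(R_0^+(\lambda^4)-R_0^-(\lambda^4)\bigr)f\bigr]\, d\lambda$$
and apply the Taylor expansion
$$e^{i\lambda\omega z}=1+i\lambda\omega z-\tfrac{1}{2}(\lambda\omega z)^2-\tfrac{i}{2}(\lambda\omega z)^3\int_0^1(1-t)^2 e^{it\lambda\omega z}\,dt,$$
so that, by $S_2 v=S_2 v_k=0$, $S_2 v(R_0^+(\lambda^4)-R_0^-(\lambda^4))f$ decomposes as a leading piece of size $\lambda$ involving $S_2 v_{kl}\int_{S^2}\widehat{\mathcal R_{kl}f}(\lambda\omega)d\omega$ plus an integral remainder of size $\lambda^2$ involving $v_{klm}\int_{S^2} e^{it\lambda\omega\cdot}\widehat{\mathcal R_{klm}f}(\lambda\omega)d\omega$. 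Substituting back, $T_B$ splits into a leading operator of coefficient $\lambda^2$ that coincides exactly with $\Theta$ of Remark \ref{remark4.2} (with $B_\Theta=AS_2\in\AB(L^2)$ and $g(z)=z_kz_l$), plus a remainder of coefficient $\lambda^3$ that coincides exactly with $\Phi$ of Remark \ref{remark4.2} (with $B_\Phi=S_1AS_2$, $g(z)=z_kz_lz_m$ and $\alpha=2$). Remark \ref{remark4.2} then yields $T_B\in\mathbb B(L^p(\R^3))$ for all $1<p<\infty$, since $\mu>21$ supplies the required decay.

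For $B=\lambda^{-1}AS_2$, I will similarly start from
$$T_B f = \frac{2}{\pi i}\int_0^\infty \lambda^2\, \chi_{<a}(\lambda)\, R_0^+(\lambda^4)\bigl[vA\cdot S_2 v\bigl(R_0^+(\lambda^4)-R_0^-(\lambda^4)\bigr)f\bigr]\, d\lambda,$$
but use the shorter Taylor expansion $e^{i\lambda\omega z}=1+i\lambda\omega z-(\lambda\omega z)^2\int_0^1(1-t) e^{it\lambda\omega z}dt$. By $S_2 v=S_2 v_k=0$, the right factor collapses to a single integral-remainder term of size $\lambda$, and substitution gives
$$T_B f = -\sum_{k,l=1}^3\int_0^1(1-t)\int_0^\infty \lambda^3\,\chi_{<a}(\lambda)\, R_0^+(\lambda^4)\Bigl[vAS_2\Bigl\{v_{kl}\int_{S^2} e^{it\lambda\omega\cdot}\widehat{\mathcal R_{kl}f}(\lambda\omega)d\omega\Bigr\}\Bigr](x)\,d\lambda\,dt,$$
which is exactly the operator $\Phi$ of Remark \ref{remark4.2} with $B_\Phi=AS_2\in\AB(L^2)$, $g(z)=z_kz_l$ and $\alpha=1$. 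Remark \ref{remark4.2} again gives $T_B\in\mathbb B(L^p(\R^3))$ for all $1<p<\infty$.

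The only subtlety requiring care is to verify that, after unwrapping the action of $S_2$ (or $S_1AS_2$) on $v_{kl}\int e^{it\lambda\omega\cdot}\widehat{\mathcal R_{kl}f}(\lambda\omega)d\omega$ through its integral kernel, the resulting double integral really matches the integrand of $\Phi$ (resp.\ $\Theta$) in Remark \ref{remark4.2}. Since $\Phi$ there permits an arbitrary absolutely bounded $B$ acting on the function $g(y)v(y)\int e^{it\lambda\omega y}\widehat{g(\mathcal R)f}(\lambda\omega)d\omega$, the choices $B_\Phi=S_1AS_2$ or $B_\Phi=AS_2$ cause no difficulty, and no estimate beyond Lemmas \ref{lem2.1}--\ref{lem2.3} and Remark \ref{remark4.2} is needed.
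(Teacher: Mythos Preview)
Your proposal is correct and follows essentially the same approach as the paper: both arguments use the Taylor expansion of $e^{i\lambda\omega z}$ together with the cancellations $S_2v=S_2v_k=0$ to raise the power of $\lambda$, and then identify the resulting pieces with the operators $\Theta$ and $\Phi$ of Remark \ref{remark4.2} to conclude $L^p$-boundedness for all $1<p<\infty$. The paper phrases this by saying the terms are ``essentially same as the entries of $\Psi^2_{kl}$ and $\Psi^2_{kls}$'' (resp.\ $\widetilde\Psi^2_{kl}$), which is exactly the reduction you carry out explicitly.
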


\begin{proof}The proof is almost analogous to that of $\Psi^2_{kl}$ and $\Psi^2_{kls}$ in the previous proposition. Indeed, by the Taylor expansion \eqref{taylor(1-theta)2}, the properties $S_2(v)=S_2(z_jv)=0$ for $j=1,2,3$, and \eqref{free_resolvent2}, we can observe that $S_2v(R_0^+  (\lambda^4)-	R_0^-(\lambda^4))f$ is a linear combination of 
	\begin{align*}
		\lambda S_2v_{kl}\int_{S^2}\widehat{\mathcal R_{kl}f}(\lambda\omega)d\omega,\quad \lambda^2\int_0^1(1-\theta)^2S_2v_{kls}\int_{S^2} e_{\lambda \theta\omega}\widehat{\mathcal{R}_{kls}f}(\lambda\omega)d\omega d\theta,\quad k,l,s=1,2,3. 
\end{align*}They are essentially same as the ones on second and third terms of \eqref{S1v(1-theta)2}, respectively. 
Hence, we see that $T_{\lambda^{-2}S_1AS}f$ is a linear combination of the integrals
\begin{align*}
&\int_0^\infty \lambda^2 \chi_{<a}(\lambda)\left(R_0^+(\lambda^4)vS_1AS_2v_{kl}\right)(x)\left(  \int_{S^2} \widehat{ \mathcal R_{kl}f}(\lambda\omega)d\omega\right )d\lambda,\\
&\int_0^1(1-\theta)^3\int_0^\infty \lambda^3\chi_{<a}(\lambda)
		\left[ R_0^+(\lambda^4)vS_1AS_2v_{kls}\int_{S^2} e_{\lambda \theta\omega}\widehat{\mathcal{R}_{kls}f}(\lambda\omega)d\omega\right](x)d\lambda d\theta.
\end{align*}
They are essentially same as the entries of $\Psi^2_{kl}$ and $\Psi^2_{kls}$. Similarly, since $S_2v(R_0^+  (\lambda^4)-	R_0^-(\lambda^4))f$ can be written in the form
$$
-\frac{\pi i}{2}\sum_{k,l=1}^3\lambda \int_0^1(1-\theta)S_2v_{kl}\int_{S^2}e_{\lambda \theta\omega}\widehat{\mathcal R_{kl}f}(\lambda\omega)d\omega d\theta,
$$
$T_{\lambda^{-1}AS_2}f(x)$ is of the form
$$
-\frac{\pi i}{2}\sum_{k,l=1}^3\int_0^1(1-\theta)\int_0^\infty \lambda^3\chi_{<a}(\lambda)\left[R_0^+(\lambda^4)AS_2v_{kl}\int_{S^2}e_{\lambda \theta\omega}\widehat{\mathcal R_{kl}f}(\lambda\omega)d\omega \right](x) d\lambda d\theta
$$
 which is essentially same as $\widetilde \Psi^2_{kl}f(x)$ in the previous proposition. Hence, we have the proposition by the same argument as above based on Remark \ref{remark4.2}. 
\end{proof}

\subsection{The unboundedness for $3\le p\le \infty $}\label{section 5.2-un}

 In this subsection, we show that $W_-$ is unbounded on $L^p(\R^3)$ for any $3\leq p\le \infty$ in the case when zero is  the second kind resonance of $H$. At first, note that if $W_-\notin \mathbb{B}(L^p(\R^3))$ for some $3\leq p<\infty$, then $W_-\notin \mathbb{B}(L^\infty(\R^3))$;  otherwise, interpolating with the case $1<p<3$, one has $W_-\in \mathbb{B}(L^p(\R^3))$ for all $1< p<\infty$, leading to a contradiction.

Building upon the above argument, it becomes clear that most terms associated with \eqref{second resonance} are bounded on $L^p$ for  all $1 < p < \infty$. However, there are three specific terms corresponding to $$B=\lambda^{-3}S_2A^2_{-3,1}S_2,\ \lambda^{-2}S_2A^2_{-2,1}S_1,\ \lambda^{-1}S_2A^2_{-1,1},$$ more precisely the operators $$-\frac12\sum_{k,l=1}^3\Phi^2_{kl},\quad i\sum_{k=1}^3\Psi^2_k,\quad \widetilde \Psi^2$$ in Propositions \ref{proposition_5_1} and \ref{proposition_5_2}, 
for which we have established so far the $L^p$-boundedness only for $1 < p < 3$ (without using any specific structure of $A^2_{-3,1},A^2_{-2,1},A^2_{-1,1}$). Then we shall prove in fact the sum of them is not bounded on $L^p$ if $3\le p<\infty$ by using their specific structures. To this end, 
we observe that one can write 
$$
S_2A^2_{-1,1}=S_2H^2_{-1,1} + S_2H^2_{-1,2}Q
$$ with an explicit operator $H^2_{-1,1}$ and $H^2_{-1,2}\in \mathop{\mathrm{AB}}(L^2)$ (see \eqref{H^2} below). The term associated with $\lambda^{-1}S_2H^2_{-1,2}Q$ has essentially the same structure as that of the one associated with $\lambda^{-1}S_1A^1_{-1,1}S_1$, so can be shown to be bounded on $L^p$ for all $1 < p < \infty$ as in the previous section.  As a result, it suffices to establish that the following operator $\Lambda^2$ is unbounded on $L^p$ for any $3 \leq p < \infty$:
\begin{align}
	\label{Lambda2}	
	\Lambda^2=-\frac12\sum_{k,l=1}^3\Phi^2_{kl}(A^2_{-3,1})+i\sum_{k=1}^3\Psi^2_k(A^2_{-2,1})+\widetilde \Psi^2(H^2_{-1,1}),
\end{align}
where $\Phi^2_{kl}(A^2_{-3,1})$, $\Psi^2_k(A^2_{-2,1})$ and $\widetilde \Psi^2(H^2_{-1,1})$ denote the operators $\Phi^2_{kl}$, $\Psi^2_k$ and $\widetilde \Psi^2$ given by \eqref{Phi^2_{kl}}, \eqref{Psi^2_k} and \eqref{widetildePsi^2} with $A=A^2_{-3,1}$, $A^2_{-2,1}$ and $H^2_{-1,1}$, respectively. 
\begin{proposition}	\label{proposition_5_4}
$\Lambda^2\notin \mathbb B(L^p(\R^3))$ for any $3\leq p<\infty$.
\end{proposition}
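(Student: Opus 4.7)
The plan is to produce a Schwartz test function $f$ for which $\Lambda^2 f(x)$ has a leading asymptotic of the form $c(x/|x|)/|x|$ as $|x|\to\infty$, with the angular profile $c$ not identically zero on $S^2$. Any function with such an $|x|^{-1}$ tail fails to lie in $L^3(\R^3)$, so this would give $\Lambda^2\notin \mathbb B(L^3)$. The extension to the full range $3\le p<\infty$ then follows from Riesz--Thorin interpolation: if $\Lambda^2$ were bounded on some $L^{p_1}$ with $p_1\in(3,\infty]$, interpolating with the boundedness on any $L^{p_0}$, $1<p_0<3$, established in Propositions \ref{proposition_5_1}--\ref{proposition_5_3}, would force boundedness on $L^3$, a contradiction.

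The derivation of the $|x|^{-1}$ tail rests on two expansions of the free resolvent kernel. For small $\lambda$, the power series
\[
R_0^+(\lambda^4,x,y)=\frac{1+i}{8\pi\lambda}-\frac{|x-y|}{8\pi}+\frac{(1-i)\lambda|x-y|^2}{48\pi}+\cdots,
\]
together with the orthogonality properties $S_2v=S_2v_j=0$ (Lemma \ref{lemma_resonance}) eliminate the $\lambda^{-1}$ singularity whenever $R_0^+(\lambda^4)$ acts on a function of the form $g=vS_2h$ with $h\in L^2$; indeed, then $\int g\,dy=\<S_2v,h\>=0$ and $\int y_jg\,dy=\<S_2v_j,h\>=0$ for $j=1,2,3$. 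For $|x|$ large, the geometric expansion
\[
|x-y|=|x|-\tfrac{x}{|x|}\cdot y+\frac{|y|^2-\big(\tfrac{x}{|x|}\cdot y\big)^2}{2|x|}+O(|y|^3/|x|^2)
\]
then yields, upon integration against such a $g$,
\[
[R_0^+(\lambda^4)g](x)=-\frac{1}{16\pi|x|}\sum_{i,j=1}^3\Big(\delta_{ij}-\frac{x_ix_j}{|x|^2}\Big)\int y_iy_j\, g(y)\,dy+O(\lambda)+O(|x|^{-2}).
\]
Substituting this into $\Phi^2_{kl}$, $\Psi^2_k$ and $\widetilde\Psi^2$ from \eqref{Lambda2} and evaluating the $\lambda$-integral (whose angular factor expands in the derivatives of $\hat f$ at the origin via $\hat f(\lambda\omega)=\hat f(0)+\lambda\omega\cdot\nabla\hat f(0)+O(\lambda^2)$), one reads off the $|x|^{-1}$ leading contribution of each term as an explicit quadratic form in $x/|x|$ with coefficients linear in $\hat f(0)$ and $\nabla\hat f(0)$.

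The main technical obstacle is then to verify that these three contributions do not cancel identically in $x/|x|$ and in $f$. This requires the concrete expressions for $A^2_{-3,1}$, $A^2_{-2,1}$ and $H^2_{-1,1}$ arising from the recursive inversion in the proof of Theorem \ref{thm-main-inver-M}: in the second kind case they can be written in terms of $T$, $P$, $\D_1$, $\D_2$ and the projections $S_1$, $S_2$, with specific coefficients whose signs must be tracked. Matching the three contributions, the leading coefficient of $|x|^{-1}$ in $\Lambda^2 f$ is expected to reduce to a nonnegative quantity essentially proportional to $1-\|V\|_{L^1}^{-1}\<\D_1 Tv,Tv\>$, whose nonnegativity is exactly Lemma \ref{DTv}. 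Showing strict positivity under the second kind assumption $S_2L^2\neq\{0\}$ will complete the proof; concretely, tracing the equality case in the proof of Lemma \ref{DTv} should reveal that strict inequality must hold whenever $S_2L^2$ is nontrivial, so that an appropriate choice of $\hat f(0)$ and $\nabla\hat f(0)$ produces a nonvanishing angular coefficient $c(x/|x|)$ and hence the desired $|x|^{-1}$ tail.
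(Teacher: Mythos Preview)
Your physical-space strategy of exhibiting an $|x|^{-1}$ tail could in principle work, but the decisive step --- the non-vanishing of the angular profile $c(x/|x|)$ --- is misdiagnosed. You assert that the leading coefficient ``is expected to reduce to a nonnegative quantity essentially proportional to $1-\|V\|_{L^1}^{-1}\langle \D_1 Tv,Tv\rangle$'' and then propose to extract \emph{strict} positivity of this scalar from the equality case of Lemma~\ref{DTv}. This is not what happens. Tracing Lemma~\ref{calcuate xishu2}, the combined coefficient in front of $\eta_k\eta_l/|\eta|^2$ (equivalently, in front of $x_kx_l/|x|^2$ in your picture) involves two independent pieces: one proportional to $\langle \D_2v_{kl},f_j\rangle$ and another proportional to $\big(1-\|V\|_{L^1}^{-1}\langle \D_1 Tv,Tv\rangle\big)\langle \D_2(|z|^2v),f_j\rangle$. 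There is no reason the second factor should be strictly positive under the second-kind hypothesis; the proof of Lemma~\ref{DTv} shows equality occurs exactly when $\langle x_jv,S_1\eta\rangle=0$ for all $j$ and $S_2\eta=0$ (with $\eta=\D_1Tv$), and nothing in the assumption $S_2L^2\neq\{0\}$, $S_3L^2=\{0\}$ rules this out.

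The paper's argument is structurally different and does not need strict positivity. It argues by contradiction on the Fourier side: assuming $\Lambda^2\in\mathbb B(L^{p_0})$, one isolates a finite-rank piece $\widetilde\Lambda^2 f=\sum_j\ell_j(f)\,m(D)vf_j$ and uses linear independence of $\{m(D)vf_j\}$ (Lemma~\ref{lemma_technical}) plus Hahn--Banach to force each $\ell_j$ to extend to $(L^{p_0})^*$. By Hausdorff--Young this means each symbol $h_j(\eta)=|\eta|^{-1}\widetilde\chi_{<a}(\eta)\phi_j(\eta)$ lies in $L^{p_0}$, and since $|\eta|^{-1}$ is not locally $L^{p_0}$ for $p_0\ge 3$, every homogeneous-degree-zero $\phi_j$ must vanish identically. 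At this point only the \emph{nonnegativity} from Lemma~\ref{DTv} is used: combined with the explicit formulas of Lemma~\ref{calcuate xishu2} and elementary algebra (equations \eqref{Dvz=0}--\eqref{Dvz=3Dvzk}), one deduces $\langle \D_2v_{kl},f_j\rangle=0$ for all $j,k,l$, hence $S_2v_{kl}=0$, i.e.\ $S_2L^2=S_3L^2$ --- contradicting the second-kind resonance assumption. The contradiction is thus driven by $S_2L^2\neq S_3L^2$, not by any strictness in Lemma~\ref{DTv}; your proposal locates the obstruction in the wrong place.
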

To prove Proposition \ref{proposition_5_4}, we  need to provide the following two Lemmas  \ref{Dg real}-\ref{calcuate xishu2}.
 \begin{lemma}\label{Dg real}
If $g\in L^2(\R^3)$ is real-valued, then  $\D_2g$ and $\D_1g$ are also real-valued, where $\D_1,\D_2\in \mathbb B(L^2(\R^3))$ are defined in  Definition \ref{definition_resonance} (ii) and (iii), respectively.
 \end{lemma}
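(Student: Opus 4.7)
My plan is to track complex conjugation through the construction. Let $C$ be the antilinear involution $Cf=\bar f$ on $L^2(\R^3)$, and say that a linear operator $A$ \emph{commutes with} $C$ when $\overline{Af}=A\bar f$ for all $f$ in its domain. The lemma is equivalent to the statement that both $\D_1$ and $\D_2$ commute with $C$. I will verify this by peeling off the layers in Definition~\ref{definition_resonance}.

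\textbf{Step 1: the building blocks.} The multiplications by $U=\sgn V$ and $v=|V|^{1/2}$ are real-valued, and the kernels $(\Delta^2)^{-1}(x,y)=-|x-y|/(8\pi)$ (see Appendix~\ref{Appendix A}), $G_1(x,y)=|x-y|^2$, $G_3(x,y)=|x-y|^4$ are all real; so each of these operators commutes with $C$. Hence so do $T=U+v(\Delta^2)^{-1}v$, $P=\|V\|_{L^1}^{-1}\langle\cdot,v\rangle v$ (since $v$ real implies $\overline{\langle f,v\rangle}=\langle\bar f,v\rangle$), and $Q=I-P$. \textbf{Step 2: the projections.} Because $T_0=QTQ$ commutes with $C$, its kernel is $C$-invariant: $T_0f=0\Rightarrow T_0\bar f=\overline{T_0 f}=0$. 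The orthogonal projection onto any $C$-invariant closed subspace commutes with $C$ (this is the content of Remark~\ref{real-valued basis}, which supplies a real-valued orthonormal basis). Hence $CS_1=S_1C$. The same reasoning applied in turn to $T_1$ in \eqref{T_1}, which is a composition of $S_1,T,P,v,G_1$, yields $CS_2=S_2C$; and, once Step~3 is in hand for $\D_1$, the same argument applied to $T_2$ in \eqref{T_2} yields $CS_3=S_3C$.

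\textbf{Step 3: the inverses.} If $A$ commutes with $C$ and is invertible on a $C$-invariant subspace, then so is $A^{-1}$: writing $h=A^{-1}g$ and applying $C$ to $Ah=g$ gives $A\bar h=\bar g$, so by uniqueness $\bar h=A^{-1}\bar g$. Apply this to $A=T_1+S_2$ on the $C$-invariant space $S_1L^2$: we get that $\D_1$ commutes with $C$ on $S_1L^2$, and extending by $\D_1=\D_1S_1$ (as is implicit when one writes $\D_1Tv$ in the proof of Lemma~\ref{DTv}) keeps this property thanks to $CS_1=S_1C$. Now $T_2$ involves only $S_2,v,G_1,G_3,T$ and $\D_1$, all commuting with $C$, so $T_2+S_3$ commutes with $C$ on $S_2L^2$, and the same inversion argument gives that $\D_2$ commutes with $C$. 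In particular, for real $g$, $\D_1g$ and $\D_2g$ satisfy $\overline{\D_jg}=\D_j\bar g=\D_jg$ and are thus real-valued, as required.

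\textbf{Main obstacle.} There is no real analytic obstacle; the argument is bookkeeping about reality. The only point requiring care is the order of the induction: $\D_1$ must be shown to commute with $C$ \emph{before} one handles $\D_2$ and $S_3$, since $\D_1$ appears inside the definition of $T_2$. Once this ordering is respected, the statement follows purely from the reality of the kernels $G_j$, of $(\Delta^2)^{-1}$, and of $U,v$.
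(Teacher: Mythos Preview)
Your proof is correct and follows the same underlying strategy as the paper: establish that $S_j$ and $T_j$ commute with complex conjugation, then use that inversion on a conjugation-invariant subspace preserves this property. Your execution is somewhat cleaner than the paper's, which explicitly computes $T_1f$ and $T_2f$ by expanding the kernels $G_1,G_3$ and invoking the cancellation relations in Lemma~\ref{lemma_resonance}; you bypass these computations by observing directly that compositions of $C$-commuting building blocks remain $C$-commuting.
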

 
\begin{proof}We first prove that $\D_2g$ is real-valued. To this end, we first observe that, for any $f\in L^2$, 
\begin{align}
\label{proposition_5_4_T_2}
\overline{S_2f}=S_2\overline{f},\quad \overline{T_2f}=T_2\overline f,
\end{align}
where $S_2,T_2$ are defined in Definition  \ref{definition_resonance}. Indeed, since $S_2L^2$ is invariant under the complex conjugation, namely $f\in S_2L^2$ if and only if $\overline f\in S_2L^2$, and hence has a real-valued orthonormal basis $\{f_j\}_{j=1}^{n_2}$ (See Remark \ref{real-valued basis}), we have
$$
\overline{S_2f}
=\sum_{j=1}^{n_2}\overline{\langle f, f_j\rangle f_j}=\sum_{j=1}^{n_2}\langle \overline{f}, f_j\rangle f_j=S_2\overline{f}.
$$
To show the second part of \eqref{proposition_5_4_T_2}, we observe that
\begin{align}\label{T_2fbiaoda}
\nonumber
		T_2f(x)=&2\langle S_2f, |y|^2v\rangle S_2(|x|^2v)+\sum_{i,j=1}^34\langle S_2f,  y_iy_jv\rangle S_2(x_ix_jv)\\
		&+\frac{10}{3}\left(1-\frac{\langle \D_1Tv, Tv\rangle}{\|V\|_{L^1}}\right)\langle S_2 f, |y|^2v\rangle S_2(|x|^2v). 
\end{align}
Indeed, if we write the first term of the RHS in \eqref{T_2} by expanding $G_3(x,y)=|x-y|^4$ as 
\begin{align*} 
&S_2vG_3vS_2f(x)\\
&=	(S_2v)(x)\int_{\R^3}\left\{ |x|^4-4x\cdot y(|x|^2+|y|^2)+2|x|^2|y|^2+4(x\cdot y)^2+|y|^4\right\}v(y)(S_2f)(y)dy,
\end{align*}
then the three terms associated with $|x|^4,4x\cdot y(|x|^2+|y|^2)$ and $|y|^4$ vanish identically thanks to the cancellation properties (see Lemma \ref{lemma_resonance}): 
\begin{align}
\label{cancelation}
S_2v=S_2v_k=0,\quad v,v_k\perp S_2L^2\quad \text{for}\quad k=1,2,3
\end{align}
Hence
\begin{align}
\label{T2-1}
S_2vG_3vS_2f=2\langle S_2f, |y|^2v\rangle S_2(|x|^2v)+\sum_{i,j=1}^34\langle S_2f,  y_iy_jv\rangle S_2(x_ix_jv).
\end{align}
Similarly, for the second term of  the RHS in \eqref{T_2}, since $G_0(x,y)=|x-y|^2$ and thus
\begin{align}\label{T2-2}	 
	G_1vS_2f(x)
	=\int_{\R^3}\big( |x|^2-2x\cdot y+|y|^2\big)v(y)(S_2f)(y)dy=\langle S_2f, |y|^2v\rangle
\end{align}
as above, recalling $v^2=V$, we have
\begin{align}\label{T2-3}
 S_2(vG_1v)^2S_2f(x)&= \langle S_2f, |y|^2v\rangle (S_2vG_1V)(x)\nonumber\\
&=\langle S_2f, |y|^2v\rangle (S_2v)(x)\int_{\R^3}\big( |x|^2-2x\cdot y+|y|^2\big)V(y)dy\nonumber\\
&=\|V\|_{L^1} \langle S_2f, |y|^2v\rangle S_2(|x|^2v).
\end{align}
Utilizing \eqref{T2-2} and \eqref{cancelation}, we also derive
\begin{align}\label{T2-4}
	S_2vG_1vT\D_1TvG_1vS_2f(x)&= \langle S_2f, |y|^2v\rangle (S_2v)(x)\int_{\R^3}\big( |x|^2-2x\cdot y+|y|^2\big)v(y)(T\D_1Tv)(y)dy\nonumber\\
&	=\langle S_2f, |y|^2v\rangle\langle T\D_1Tv, v\rangle S_2(|x|^2v)\nonumber\\
&=\langle S_2f, |y|^2v\rangle\langle \D_1Tv, Tv\rangle S_2(|x|^2v),
	\end{align}
where the last equality follows since $T= U+v(\Delta^2)^{-1}v$ is self-adjoint.
By \eqref{T_2} and combining with \eqref{T2-1}, \eqref{T2-3} and \eqref{T2-4}, we obtain \eqref{T_2fbiaoda}. Since $\overline{S_2f}=S_2\overline{f}$ and  $\langle \D_1Tv, Tv\rangle $  is real-valued  due to $\D_1$  being self-adjoint, it follows from \eqref{T_2fbiaoda} that $\overline{T_2f}=T_2\overline{f}$ for any $f\in L^2$. 
 
We now come to prove that $\D_2g$  is real-valued for any real-valued $g\in L^2$. Recall that $\D_2:S_2L^2\to S_2 L^2$ is the inverse of $T_2:S_2L^2\to S_2 L^2$. In particular, one has
$
S_2=T_2\D_2=\D_2T_2
$. 
Let $\D_2g=\varphi+i\psi$ with real-valued $\varphi,\psi\in S_2L^2$. Then, it follows from  \eqref{proposition_5_4_T_2} that $T_2\varphi,T_2\psi$ are real-valued. Hence,
$$
S_2g=T_2\D_2g=T_2(\varphi+i\psi)=T_2\varphi+iT_2\psi,
$$
which implies $T_2\psi=0$ since $S_2g$ is real-valued by \eqref{proposition_5_4_T_2}. We thus conclude $$
\psi=S_2\psi=\D_2T_2\psi=0
$$
and $\D_2g$ is real-valued. 

The proof for $\D_1g$ is essentially same as that for $\D_2g$ and one needs to check that $S_1,T_1$ satisfy the same property as \eqref{proposition_5_4_T_2}, where $T_1$ is \eqref{T_1}. To this end, it is enough to rewrite $T_1$ as
\begin{align*}
T_1f=\|V\|_{L^1}^{-1}\langle S_1f, Tv\rangle S_1(Tv)+\frac{2\|V\|_{L^1}}{3\cdot(8\pi)^2}\sum_{i=1}^3\langle S_1f,  y_iv\rangle S_1(x_iv)
\end{align*} 
by similar computations as above based on the properties $S_1v=0$ and $v\perp S_1L^2$. The remaining part is also same as above, and we thus omit it. 
\end{proof}

\begin{lemma}\label{calcuate xishu2}
	Let $\{f_j\}_{j=1}^{n_2}$ be a  real-valued  orthonormal basis of $S_2L^2$  and operators $A_{-3,1}^2, $  $A^2_{-2,1},$ 
	$H^2_{-1,1}$ be defined by \eqref{A^2}-\eqref{H^2}. Define	constants $c_{{j}}^{kl},$ $c_{{j}}^{k}$ and $c_{{j}}$  by 
		\begin{align*}
			&c_{{j}}^{kl}=\langle S_2A_{-3,1}^2S_2v_{kl}, f_{j} \rangle ,\ \ 
			 c_{{j}}^{k}=\langle S_2A^2_{-2,1}S_1v_k, f_{j} \rangle ,\ \ 
			c_{{j}}=\langle S_2H^2_{-1,1}v, f_{j} \rangle, \ \ k,l=1,2,3.
	\end{align*}
	Then the following identities hold:
		\begin{align}
			\label{2nd-xishu}
			&c_{{j}}^{kl}=4\pi\cdot5!(1-i)\langle \D_2v_{kl}, f_{j}\rangle,\ \
			 c_{{j}}^{k}=20i\langle \D_1(Tv), v_k\rangle  \langle \D_2(|z|^2v) , f_{j} \rangle,\nonumber \\
			&c_{{j}}=80\pi(1+i)\left(1-\frac{\langle \D_1Tv, Tv\rangle}{\|V\|_{L^1}}\right)\langle \D_2(|z|^2v) , f_{j} \rangle,
	\end{align}
	where the operators appearing in \eqref{2nd-xishu} are defined in Definition \ref{definition_resonance}.
\end{lemma}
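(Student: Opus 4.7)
The lemma is essentially a bookkeeping computation: once the operators $A^2_{-3,1}$, $A^2_{-2,1}$, $H^2_{-1,1}$ are written out explicitly (from the formulas \eqref{A^2}--\eqref{H^2} derived in Appendix \ref{Appendix A}), each inner product reduces to an elementary calculation on the finite--dimensional space $S_2L^2$. My plan is to start from the Taylor expansion of the free resolvent
\begin{equation*}
R_0^+(\lambda^4,x,y)=\frac{1+i}{8\pi \lambda}-\frac{|x-y|}{8\pi}+\frac{(1-i)\lambda |x-y|^2}{48\pi}-\frac{\lambda^2|x-y|^3}{24\pi}+\frac{(1+i)\lambda^3|x-y|^4}{960\pi}+\cdots
\end{equation*}
coming from \eqref{free_resolvent}, substitute it into $M(\lambda)=U+vR_0^+(\lambda^4)v$, and then invert by the nested symmetric Feshbach--Schur procedure along the chain $L^2\supset QL^2\supset S_1L^2\supset S_2L^2$ that underlies Theorem \ref{thm-main-inver-M}. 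All the complex phases $(1\pm i)$ and numerical factors in \eqref{2nd-xishu} are tracked through this expansion.

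Once the explicit form of $A^2_{-3,1}$ is in hand, the Feshbach reduction identifies $S_2 A^2_{-3,1} S_2$, on $S_2L^2$, with a scalar multiple of $\D_2$, the scalar being dictated by the $\lambda^3$-coefficient of the resolvent Taylor series. Applying this to $v_{kl}$ and pairing with $f_j$ gives the first identity, the prefactor $4\pi\cdot 5!\,(1-i)$ arising from $\frac{(1+i)\cdot \text{const}}{960\pi}$-type arithmetic. For the second identity I would use that $A^2_{-2,1}$ is a coupling operator $S_2\to S_1$ of the form (schematically) $\D_2\otimes \D_1 T$ composed with the first non-constant Taylor coefficient of $R_0^+$; exploiting $S_2v=S_2v_k=0$ and Lemma \ref{lemma_resonance}(iii) ($S_2T=TS_2=0$), the computation of $\langle S_2 A^2_{-2,1}S_1 v_k,f_j\rangle$ separates into the product $\langle \D_1(Tv),v_k\rangle \cdot \langle \D_2(|z|^2v),f_j\rangle$, with $20i$ produced by the numerical Taylor constants of the two different orders in $\lambda$ involved.

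For the third identity I would use the splitting $S_2A^2_{-1,1}=S_2H^2_{-1,1}+S_2H^2_{-1,2}Q$ alluded to above \eqref{Lambda2} (with $H^2_{-1,2}\in\AB(L^2)$ harmless for our purposes); only $H^2_{-1,1}$ survives against $v$ once the $Q$-component is isolated, because $v\perp QL^2$ is \emph{false}, but the $\D_1$-mediated contribution of $H^2_{-1,1}$ picks up exactly the Schur-complement factor $1-\|V\|_{L^1}^{-1}\langle \D_1Tv,Tv\rangle$ that already appeared in the expression \eqref{T_2fbiaoda} for $T_2$. Combining this with $\D_2(|z|^2 v)$ (the image that survives after the cancellations $S_2v=S_2v_k=0$) and the $\lambda$-Taylor coefficient $(1+i)$ at the appropriate order gives $80\pi(1+i)$. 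Throughout, Lemma \ref{Dg real} and Remark \ref{real-valued basis} guarantee that $\D_1Tv,\D_2v_{kl},\D_2(|z|^2v)$ are real-valued when paired against the real basis $\{f_j\}$, so the complex scalars $(1\pm i)$ are the only source of imaginary parts.

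The main obstacle is purely one of bookkeeping: keeping the constants, the $(1\pm i)$ phases, the factors of $\pi$ coming from $(\Delta^2)^{-1}(x,y)=-|x-y|/(8\pi)$, the combinatorial factors from higher Taylor coefficients, and the normalizations $\|V\|_{L^1}^{-1}$ introduced by the rank-one projection $P$, all consistent through several layers of Feshbach inversion. There is no conceptual difficulty once the appendix formulas \eqref{A^2}--\eqref{H^2} are on the page; the structural cancellations $S_2 v=S_2v_k=0$, $S_2T=0$, and $S_1T Q=0$ eliminate every term that would otherwise spoil the clean form of the right-hand side of \eqref{2nd-xishu}.
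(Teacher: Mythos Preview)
Your plan is the paper's approach: cite the explicit operators from the appendix and then simplify the inner products using the cancellation structure of $S_2$. The paper does \emph{not} redo any Feshbach inversion here; it simply quotes
\[
A^2_{-3,1}=4\pi\cdot5!(1-i)\D_2,\qquad A^2_{-2,1}=20i\,\D_2vG_1vT\D_1,\qquad H^2_{-1,1}=\frac{80\pi(1+i)}{\|V\|_{L^1}}\bigl(\D_2vG_1v-\D_2vG_1vT\D_1T\bigr)
\]
from Appendix \ref{Appendix A}, so all the constant-tracking you describe is already packaged into \eqref{A^2}--\eqref{H^2}. The only computation left, which you gesture at but do not write, is to expand $G_1(z,y)=|z|^2-2z\cdot y+|y|^2$ inside $\D_2vG_1v(\cdots)$ and use $\D_2=S_2\D_2$ together with $S_2v=S_2(z_lv)=0$ to kill all but the $|z|^2$-term; this gives, for instance,
\[
\D_2vG_1vT\D_1(v_k)=\langle \D_1(Tv),v_k\rangle\,\D_2(|z|^2v),
\]
and analogous identities for $\D_2vG_1v\,v$ and $\D_2vG_1vT\D_1Tv$. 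That factorization is the entire content of the lemma beyond the appendix citation.

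One correction: you assert that ``$v\perp QL^2$ is false.'' It is true --- equivalently $Qv=0$, since $Pv=v$. But this plays no role in the lemma, which already takes $c_j=\langle S_2H^2_{-1,1}v,f_j\rangle$ as its definition, so the $S_2H^2_{-1,2}Q$ piece of $A^2_{-1,1}$ never enters the computation.
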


\begin{proof}
It will be seen in the item (iii) in the proof of Theorem \ref{thm-main-inver-M} in Appendix \ref{expansion 6.2} that 
\begin{align*}
A^2_{-3,1}&=4\pi\cdot5!(1-i)\D_2,\quad
A^2_{-2,1}=20i\D_2vG_1vT\D_1,\\
H^2_{-1,1}&=80\pi(1+i)\|V\|^{-1}_{L^1}(\D_2vG_1v-\D_2vG_1vT\D_1T). 
\end{align*}

For $c_{{j}}^{kl}$, since $S_2\D_2=\D_2S_2=\D_2$ (see \eqref{orthog-relation-1}), we thus have $$c_{{j}}^{kl}=4\pi\cdot5!(1-i)\langle \D_2v_{kl}, f_{j}\rangle.$$
	
For $c_{{j}}^{k}$, since $ G_1(z,y)=|z-y|^2$, $S_2v=S_2x_lv=0$ and $S_2\D_2=\D_2S_2=\D_2$,  we obtain
	\begin{align}\label{cjk2}	 	 
		\D_2vG_1vT\D_1(v_k)&=\D_2v\int_{\R^3}\big( |z|^2-2z\cdot y+|y|^2\big)v(y)\big(T\D_1(v_{k})\big)(y)dy\nonumber\\
		&=	\big\langle \D_1(Tv), v_k\big\rangle  \D_2(|z|^2v).
	\end{align}	
Since $S_t\D_t= \D_ tS_t =\D_t$ for $t=1, 2$ (see \eqref{orthog-relation-1}),  we obtain the desired formula for $c_{{j}}^{k}.$
	
	It remains to deal with $c_{j}$. As above, we calculate 	\begin{align*}
	&	\D_2vG_1v^2=\D_2v\int_{\R^3}\big( |z|^2-2z\cdot y+|y|^2\big)v^2dy=\|V\|_{L^1}\D_2(|z|^2v),\\
	&\D_2vG_1vT\D_1Tv=\D_2v\int_{\R^3}\big( |z|^2-2z\cdot y+|y|^2\big)v(y)\big(T\D_1Tv\big)(y)dy=\big\langle \D_1(Tv), Tv\big\rangle\D_2(|z|^2v).
		\end{align*}
		Hence, we derive that 
			\begin{align*}
		\big(\D_2vG_1v-\D_2vG_1vT\D_1T\big)(v)=\|V\|_{L^1}\left(1-\frac{\langle \D_1Tv, Tv\rangle}{\|V\|_{L^1}}\right)\D_2(|z|^2v).
		\end{align*}
Combining with the above formula of $H^2_{-1,1}$, we obtain the desired result for $c_{j}$. 
\end{proof}

Next,  we are devoted to the  proof of Proposition  \ref{proposition_5_4}.
\begin{proof}[Proof of \rm{ Proposition  \ref{proposition_5_4}}]
Assume for contradiction $\Lambda^2\in \mathbb B(L^{p_0}(\R^3))$ for some $3\le p_0<\infty$. 
As above, we write $\Lambda^2=\widetilde \chi_{\ge 4a}(D)\Lambda^2+\widetilde \chi_{< 4a}(D)\Lambda^2$. Since $\widetilde \chi_{\ge 4a}(D)\in \mathbb B(L^p(\R^3))$ for all $1<p<\infty$ and all $a>0$, it is enough to show $\widetilde \chi_{\ge 4a}(D)\Lambda^2\notin \mathbb{B}(L^{p_0}(\R^3))$ for an $a>0$ small enough to achieve a contradiction. 

To this end, we first remove from $\widetilde \chi_{\ge 4a}(D)\Lambda^2$ several harmless terms. Recall that, by the same argument as for $\widetilde \chi_{\ge 4a}(D)\Phi^1_{k}$ in the proof of Proposition \ref{proposition_4_1}, we can rewrite $\widetilde \chi_{\ge 4a}(D)\Phi^2_{kl}(A^2_{-3,1})$ as
$$
\widetilde \chi_{\ge 4a}(D)\Phi^2_{kl}(A^2_{-3,1})=\Phi^{2,1}_{kl}(A^2_{-3,1})+\Phi^{2,2}_{kl}(A^2_{-3,1}),
$$
where $\Phi^{2,1}_{kl}$ and $\Phi^{2,2}_{kl}$ are given by \eqref{Phi^{2,1}_{kl}} and \eqref{Phi^{2,2}_{kl}} (with $A=A^2_{-3,1}$). In particular, $\Phi^{2,2}_{kl}(A^2_{-3,1})\in \mathbb B(L^p(\R^3))$ for all $1<p<\infty$. Since $\Psi^2_k(A^2_{-2,1})$ and $\widetilde \Psi^2(H^2_{-1,1})$ also have  similar structures to $\Phi^2_{kl}(A^2_{-3,1})$ (see \eqref{Phi^2_{kl}}, \eqref{Psi^2_k} and \eqref{widetildePsi^2}), we can obtain by exactly the same argument that
\begin{align*}
\widetilde \chi_{\ge 4a}(D)\Psi^2_k(A^2_{-2,1})&=\Psi^{2,1}_k(A^2_{-2,1})+\Psi^{2,2}_k(A^2_{-2,1}),\\
\widetilde \chi_{\ge 4a}(D)\widetilde \Psi^2(H^2_{-1,1})&=\widetilde \Psi^{2,1}(H^2_{-1,1})+\widetilde \Psi^{2,2}(H^2_{-1,1}),
\end{align*}
where $\Psi^{2,2}_k(A^2_{-2,1}),\widetilde \Psi^{2,2}(H^2_{-1,1})\in \mathbb B(L^p(\R^3))$ for all $1<p<\infty$ and 
\begin{align*}
\Psi^{2,1}_k(A^2_{-2,1})f(x)&=(m(D)vS_1A^2_{-2,1}S_1v_k)(x)\int_{\R^3}\widetilde m(\eta)\widehat{\mathcal R_kf}(\eta)d\eta,\\
\widetilde{\Psi}^{2,1}(H^2_{-1,1})f(x)&=(m(D)vS_2H^2_{-1,1}v)(x)\int_{\R^3}\widetilde m(\eta)\widehat{f}(\eta)d\eta,
\end{align*}
where  $m(\xi)=|\xi|^{-4}\widetilde\chi_{\ge 4a}(\xi)$  and  $\widetilde m(\eta)=|\eta|^{-1}\widetilde \chi_{<a}(\eta)$. Therefore,  it is enough to show 
$$
\widetilde \Lambda^2:=-\frac12\sum_{k,l=1}^3\Phi^{2,1}_{kl}(A^2_{-3,1})+i\sum_{k=1}^3\Psi^{2,1}_k(A^2_{-2,1})+\widetilde{\Psi}^{2,1}(H^2_{-1,1})\notin \mathbb{B}(L^{p_0}(\R^3)). 
$$

Next, we expand $S_2A^2_{-3,1}S_2v_{kl}$, $S_1A^2_{-2,1}S_1v_k$ and $S_2H^2_{-1,1}v$ by using the real-valued orthonormal basis $\{f_j\}_{j=1}^{n_2}$ of $S_2L^2$  (See Remark \ref{real-valued basis}) as:
\begin{align*}
S_2A^2_{-3,1}S_2v_{kl}=\sum_{j=1}^{n_2}c^{kl}_jf_j,\quad S_1A^2_{-2,1}S_1v_k=\sum_{j=1}^{n_2}c^k_jf_j,\quad S_2H^2_{-1,1}v=\sum_{j=1}^{n_2}c_jf_j,
\end{align*}
where 
$
c_{j}^{kl}=\langle S_2A_{-3,1}^2S_2v_{kl}, f_j \rangle$, $c_{j}^{k}=\langle S_2A^2_{-2,1}S_1v_k, f_j \rangle$ and $c_{j}=\langle S_2H^2_{-1,1}v, f_j \rangle$. It follows from these expressions that
\begin{align}
\nonumber
\widetilde \Lambda^2f&=\sum_{j=1}^{n_2}\ell_j(f) m(D)vf_j, \quad \ell_j(f)=\int_{\R^3}\F(h_j)(y)f(y)dy,\\
\nonumber
h_j(\eta)&=\widetilde m(\eta)\phi_{j}(\eta),\quad \phi_{j}(\eta)=-\frac{1}{2}\sum_{k,l=1}^3c_{j}^{kl}\frac{\eta_k\eta_l}{|\eta|^2}+i\sum_{k=1}^3c_{j}^{k}\frac{\eta_k}{|\eta|}+c_{j},
\end{align}
where recall that $m(\xi)=|\xi|^{-4}\widetilde\chi_{\ge 4a}(\xi)$  and  $\widetilde m(\eta)=|\eta|^{-1}\widetilde \chi_{<a}(\eta)$. We also note that
$$
\int_{\R^3}\widetilde m(\eta)\widehat{\mathcal R_{kl}f}(\eta)d\eta=\int_{\R^3}\F\left(\widetilde m(\eta)\eta_k\eta_l |\eta|^{-2}\right)(y)f(y)dy. 
$$

Now we shall show $\widetilde \Lambda^2\notin \mathbb{B}(L^{p_0}(\R^3))$ by contradiction.  Assume that $\widetilde \Lambda^2\in \mathbb{B}(L^{p_0}(\R^3))$. 

Under the above assumption, we claim that $\ell_j(f)$ is a continuous functional on $L^{p_0}(\R^3)$ for all $j=1,...,n_2$.  Indeed, taking into account that  $\{m(D)vf_j\}_{j=1}^{n_2}$ are mutually linearly independent on $L^{p_0}(\R^3)$ for any $a>0$ small enough (see Lemma \ref{lemma_technical}), we know 
by the Hahn-Banach theorem, there exist a sequence of  functions $\{g_k\}_{k=1}^{n_2}$ in $ L^{p_0'}(\R^3)$ with $p_0'=p_0/(p_0-1)$ such that 
$$\big\langle m(D)vf_j, g_k \big\rangle=0\  \text{for}\  j\neq k,
\ \ \text{and}\ \ \big\langle m(D)vf_k, g_k \big\rangle\neq0.$$
It yields that  $\big\langle\widetilde \Lambda^2f, g_k\big\rangle=\big\langle \ell_k(f) m(D)vf_k, g_k\big\rangle$. Since $\widetilde \Lambda^2\in \mathbb{B}(L^{p_0}(\mathbb{R}^3))$, one has
$$\big|\ell_k(f)\big| \left| \big\langle  m(D)vf_k, g_k\big\rangle\right|=\left|\big\langle \ell_k(f) m(D)vf_k, g_k\big\rangle\right|=\left| \big\langle\widetilde \Lambda^2f, g_k\big\rangle\right|\lesssim \|f\|_{L^{p_0}} \|g_k\|_{L^{p_0'}}.$$
Furthermore,  combining with $\big\langle m(D)vf_k, g_k \big\rangle\neq0,$ we have $|\ell_k(f)|\lesssim\|f\|_{L^{p_0}}$ for all $k=1,...,n_2.$  Therefore, $\ell_j(f)$ is a continuous functional on $L^{p_0}(\R^3)$  for all $j=1,...,n_2$. 
 
As a result,  by the duality, $\F h_{j}\in L^{p_0'}(\R^3)$ and hence $h_{j}\in L^{p_0}(\R^3)$ by Hausdorff-Young's inequality, for all $j=1,...,n_2.$ Since $|\eta|^{-1}\widetilde\chi_{<a}(\eta)\notin L^{p_0}(\R^3)$ if $p_0\ge3$ and every $\phi_{j}$ is homogeneous of degree zero and continuous on $\R^3\setminus\{0\}$, $\phi_{j}$ must vanish identically in order to ensure $h_{j}\in L^{p_0}(\R^3)$ for all $j=1,...,n_2,$ namely:
$$\phi_{j}(\eta)=-\frac{1}{2}\sum_{k,l=1}^3c_{j}^{kl}\frac{\eta_k\eta_l}{|\eta|^2}+i\sum_{k=1}^3c_{j}^{k}\frac{\eta_k}{|\eta|}+c_{j}\equiv0,\quad j=1,...,n_2,$$
where 
$
c_{j}^{kl}=\langle S_2A_{-3,1}^2S_2v_{kl}, f_j \rangle$, $c_{j}^{k}=\langle S_2A^2_{-2,1}S_1v_k, f_j \rangle,$  $c_{j}=\langle S_2H^2_{-1,1}v, f_j \rangle.$   Specifically, by virtue of Lemma \ref{calcuate xishu2}, we have
	\begin{align*}
	&c_{{j}}^{kl}=4\pi\cdot5!(1-i)\langle \D_2v_{kl}, f_{j}\rangle,\ \
	c_{{j}}^{k}=20i\langle \D_1(Tv), v_k\rangle  \langle \D_2(|z|^2v) , f_{j} \rangle,\nonumber \\
	&c_{{j}}=80\pi(1+i)\left(1-\frac{\langle \D_1Tv, Tv\rangle}{\|V\|_{L^1}}\right)\langle \D_2(|z|^2v) , f_{j} \rangle,
\end{align*}
where  $Tv$ is real-valued  since $T= U+v(\Delta^2)^{-1}v$ and $\langle \D_1Tv, Tv\rangle$ is also real-valued since $\D_1$ is self-adjoint. Moreover, by Lemma \ref{Dg real} and  the fact that $f_{j}$ are real-valued (See Remark \ref{real-valued basis}), the three terms $\langle \D_2v_{kl} , f_{j} \rangle$, $\langle \D_1(Tv), v_k\rangle$ and $\langle \D_2(|z|^2v) , f_{j} \rangle$  are real-valued for all $j,k,l$. Hence
\begin{align*}
\Im \phi_{j}(\eta)
=\sum_{k,l=1}^32\pi\cdot5!\<\D_2v_{kl} , f_{j} \>\frac{\eta_k\eta_l}{|\eta|^2}
			+80\pi\left(1-\frac{\langle \D_1Tv, Tv\rangle}{\|V\|_{L^1}}\right)\<\D_2(|z|^2v) , f_{j} \>:=\sum_{k,l=1}^3b_{j,kl}\frac{\eta_k\eta_l}{|\eta|^2}
\end{align*}
with $b_{j,kl}=2\pi\cdot5!\<\D_2(vz_{kl}),f_j\>$ for $k\neq\ell$ and 
\begin{align}\label{b_{j,kk}}
	b_{j,kk}=2\pi\cdot5!\<\D_2(vz_k^2) , f_{j} \>+80\pi\bigg(1-\frac{\langle \D_1Tv, Tv\rangle}{\|V\|_{L^1}}\bigg)\<\D_2(v|z|^2) , f_{j} \>.
\end{align}
Since $\phi_j\equiv0$ and $b_{{j},kl}=b_{{j},lk}$ for all $j,k,l$, we have $b_{{j},kl}=0$ for all $j,k,l$ (see Lemma \ref{equiv0}). In particular, by \eqref{b_{j,kk}},
\begin{align}\label{Dvz=0}
	3\<\D_2(vz_k^2) , f_{j} \>=-\bigg(1-\frac{\langle \D_1Tv, Tv\rangle}{\|V\|_{L^1}}\bigg)\<\D_2(v|z|^2),f_{j}\>
\end{align}
for all $j,k$. Since the RHS is independent of $k$, so is $\<\D_2(vz_k^2) , f_{j} \>$. Therefore, 
\begin{equation}\label{Dvz=3Dvzk}
	\<\D_2(v|z|^2) , f_{j} \>=\sum_{k=1}^3\<\D_2(vz_k^2) , f_{j} \>=3\<\D_2(vz_k^2) , f_{j} \>.
\end{equation}
Plugging \eqref{Dvz=3Dvzk} into  \eqref{Dvz=0}, we obtain that 
$$\<\D_2(vz_k^2) , f_{j} \>=-\bigg(1-\frac{\langle \D_1Tv, Tv\rangle}{\|V\|_{L^1}}\bigg)\<\D_2(vz_k^2),f_{j}\>.$$
Now it follows from Lemma \ref{DTv} that $ 1-\|V\|_{L^1}^{-1}\langle \D_1Tv, Tv\rangle\geq0$, which implies $\<\D_2(vz_k^2) , f_j \>=0$ for all $j,k$. 

Thus, by $\Im\phi_j\equiv0$ and Lemma \ref{equiv0}, we find $
\<\D_2(vz_{kl}) , f_{j} \>=0
$ for all $j,k,l$. Since $\D_2$ is invertible on $S_2L^2$ and $\{f_j\}$ forms an orthonormal basis of  $S_2L^2$, $vz_{kl}$ must vanish identically on $S_2L^2$, or equivalently, $S_2v_{kl}=0$ for all $k,l$. By virtue of Lemma \ref{lemma_resonance}, this means $S_2L^2=S_3L^2$ which clearly contradicts with the fact that $S_2L^2\neq \{0\}$ and  $S_3L^2=\{0\}$ under the assumption that zero is  the second kind resonance of $H$ (see Subsection \ref{subsection_resonance}). 

Hence $\widetilde \Lambda^2\notin \mathbb{B}(L^{p_0}(\R^3))$ by contradiction. Therefore, $\Lambda^2\notin \mathbb B(L^p(\R^3))$ for any $3\le p<\infty$. 
\end{proof}

With Propositions \ref{proposition_5_1}--\ref{proposition_5_4} at hand, 
we have finished the proof of Theorem \ref{theorem1.2} for the second kind resonance case (ii).

\section{The third kind resonance case}
\label{The third kind6}
In this section, we prove the third kind  resonance case (iii) of Theorem \ref{theorem1.2}. Although the proof  involves somewhat more complicated computations than the second kind resonance case, the strategy of the proof is almost analogous. 

\subsection{The boundedness for $1<p<3$}
Here we investigate the $L^p$ boundedness of the wave operators $W_-$ for $1<p<3$. 
Recall the  expansion of $M^{-1}(\lambda)$ in the third kind resonance case:
\begin{align}
\nonumber
&M^{-1}(\lambda)=\lambda^{-4} S_3 A_{-4,1}^3 S_3+\lambda^{-3} S_2A^3_{-3,1}S_2
+\lambda^{-2}\Big(S_2A^3_{-2,1}S_1 + S_1A^3_{-2,2}S_2\Big)\\
	\label{third_resonance}
&+\lambda^{-1}\Big(S_2A^3_{-1,1}+ A^3_{-1,2}S_2+S_1A^3_{-1,3}S_1\Big)
	+S_1A_{0,1}^3 +A^3_{0,2}S_1 +QA^3_{0,3}Q+ \lambda A^3_{1,1}+\Gamma^3_2(\lambda).
\end{align}
All terms, except for $\lambda^{-4} S_3 A_{-4,1}^3 S_3$, can be dealt with the same argument as above.  Hence, it suffices to establish the following proposition. 

\begin{proposition}	\label{S3A-413S3}
Let $B=\lambda^{-4} S_3 AS_3$ with $A\in \AB(L^2)$. Then $T_B\in \mathbb B(L^p(\R^3))$ for $1<p<3$.
\end{proposition}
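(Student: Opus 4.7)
The plan is to follow the argument for $\Phi^2_{kl}$ in Proposition \ref{proposition_5_1} essentially verbatim, now exploiting the stronger cancellation properties of $S_3$. Since $S_3 L^2 \subset S_2 L^2$ and $S_3 L^2 \perp \{x_ix_jv\}$ by Lemma \ref{lemma_resonance}, we have three levels of cancellation: $S_3 v = S_3 v_k = S_3 v_{kl} = 0$ for all $k,l$. I would apply \eqref{free_resolvent2} together with a fifth-order Taylor expansion of $e^{i\lambda\omega\cdot z}$; the terms $\ell = 0, 1, 2$ vanish, so only the monomials of degrees $3, 4$ and a degree-$5$ integral remainder survive. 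After multiplying by the factor $\lambda^{-4}$ from $B$, the $\lambda^3$ from \eqref{T_B}, and the $\lambda^{-1}$ hidden in $K_\lambda$, one obtains a decomposition of $T_B$ into three families
$$
\Phi^3_{kls},\quad \Phi^3_{klst},\quad \Phi^3_{klstm}
$$
carrying weights $\lambda, \lambda^2, \lambda^3$ respectively, whose structures are direct analogues of $\Phi^2_{kl}, \Phi^2_{kls}, \Phi^2_{klst}$ from Proposition \ref{proposition_5_1} (with five Riesz transforms appearing in the remainder term). The $L^2$-integrability of the weights $z^\alpha v$ with $|\alpha| \leq 5$ is ensured by $\mu > 25$.

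The two operators $\Phi^3_{klst}$ and $\Phi^3_{klstm}$ carry $\lambda^2$ and $\lambda^3$ weights and fit the setting of Remark \ref{remark4.2} (with polynomials $g$ of degree four and five in the Riesz transforms); hence they are bounded on $L^p(\R^3)$ for all $1 < p < \infty$ without any restriction.

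The main task is therefore the most singular term $\Phi^3_{kls}$, whose structure matches $\Phi^2_{kl}$ exactly. I would split it as $\widetilde\chi_{\geq 4a}(D)\Phi^3_{kls} + \widetilde\chi_{<4a}(D)\Phi^3_{kls}$. For the high-frequency part, the identity $R_0^+(\lambda^4) = m(D) + \lambda^4 m(D)R_0^+(\lambda^4)$ with $m(\xi) = |\xi|^{-4}\widetilde\chi_{\geq 4a}(\xi)$ yields two pieces $\Phi^{3,1}_{kls}$ and $\Phi^{3,2}_{kls}$; the second is bounded on $L^p$ for all $1 < p < \infty$ via Lemma \ref{lem2.1}, while the first reduces to the continuous functional $\<\F\widetilde m, \widehat{\mathcal R_{kls} f}\>$ with $\widetilde m(\eta) = |\eta|^{-1}\widetilde\chi_{<a}(\eta)$; the pointwise bound $|\F\widetilde m(y)| \lesssim \<y\>^{-2}$ forces $p' > 3/2$, i.e.\ $1 < p < 3$. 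For the low-frequency part, I would Taylor expand $\F[vS_3 A S_3 v_{kls}](\xi)$ using $v, z_i v \perp S_3 L^2$ to extract a factor $\xi_s\xi_t$, and then decompose $D_s D_t = \lambda^2 \mathcal R_{st} + (|D|^2 - \lambda^2)\mathcal R_{st}$; the first summand again falls to Lemma \ref{lem2.1}, while the second produces precisely the kernel $E_1$ of Lemma \ref{lem2.2}, which is $L^p$-bounded exactly on the range $1 < p < 3$.

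The main obstacle is therefore not conceptual but organizational: one must carefully check that the three cancellations supplied by $S_3$ are matched by exactly the right number of Taylor terms so that every resulting $\lambda$-integrand is integrable at zero, and that the range $1 < p < 3$ arising in both the high- and low-frequency analyses of $\Phi^3_{kls}$ (but nowhere else) is sharp. Since both the combinatorial bookkeeping and the technical estimates are identical to those for $\Phi^2_{kl}$, the argument transfers mutatis mutandis, and this yields $T_B \in \mathbb B(L^p(\R^3))$ for $1 < p < 3$.
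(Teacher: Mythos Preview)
Your proposal is correct and follows essentially the same approach as the paper's proof: the paper uses the fifth-order Taylor expansion \eqref{taylor(1-theta)4} together with the cancellations $S_3 v = S_3 v_k = S_3 v_{kl} = 0$ to decompose $T_B$ into exactly the three families $\Phi^3_{kls}$, $\Phi^3_{klst}$, $\Phi^3_{klstm}$ you describe, handles the latter two via Remark \ref{remark4.2}, and dispatches $\Phi^3_{kls}$ by noting that its structure is analogous to \eqref{Phi^2_{kl}} and invoking the argument of Proposition \ref{proposition_5_1}. Your write-up in fact gives more detail on the treatment of $\Phi^3_{kls}$ than the paper does (the paper simply says ``by the same argument as in Proposition \ref{proposition_5_1}''), and your high/low-frequency splitting with the $E_1$ kernel is precisely what that argument amounts to.
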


\begin{proof}
As in the previous two sections, we will use the notations in Section \ref{subsection_notation} frequently. 	Let $f\in\mathcal{S}(\R^3)$. By the  Taylor expansion of $e^{i\lambda\omega\cdot z}$:
\begin{align}\label{taylor(1-theta)4}
e^{i\lambda\omega\cdot z}=\sum_{\ell=0}^4\frac{i^{\ell}}{\ell !}(\lambda\omega\cdot z)^{\ell}+\frac{i}{24}(\lambda\omega\cdot z)^{5} \int_0^1(1-\theta)^4e^{i\lambda \theta\omega\cdot z}d\theta,\quad z\in \R^3,\ \omega\in S^2,
\end{align}
the properties $S_3(v)=S_3(v_k)=S_3(v_{kl})=0$ for $k,l=1,2,3$, and  \eqref{free_resolvent2} 
yields
\begin{align*}
&S_3v(R_0^+(\lambda^4)- R_0^-(\lambda^4))f\\
&=\frac{\pi}{12}\sum_{k,l,s=1}^3\lambda^2S_3v_{kls}\int_{S^2}\widehat{\mathcal{R}_{kls}f}(\lambda\omega)d\omega
			+\frac{\pi i}{48}\sum_{k,l,s,t=1}^3\lambda^3S_3v_{klst}
\int_{S^2}\widehat{\mathcal{R}_{klst}f}(\lambda\omega)d\omega\\
			&-\frac{\pi}{48}\sum_{k,l,s,t,m=1}^3\lambda^4\int_0^1(1-\theta)^4S_3 v_{klstm}
			\int_{S^2} e^{i\lambda \theta\omega\cdot z}\widehat{\mathcal{R}_{klstm}f}(\lambda\omega)d\omega d\theta.
\end{align*}
Thus, $T_B$ is a linear combination of 
\begin{align}
\label{TKLS3}
\Phi^3_{kls}f&=\int_0^\infty \lambda \chi_{<a}(\lambda)R_0^+(\lambda^4)vS_3AS_3v_{kls}\left(  \int_{S^2} \widehat{ \mathcal{R}_{kls}f}(\lambda\omega)d\omega\right)d\lambda,\\
\label{TKLTS3}
\Phi^3_{klst}f(x)&=\int_0^\infty \lambda^2 \chi_{<a}\lambda)R_0^+(\lambda^4)vS_3AS_3v_{klst}\left(\int_{S^2} \widehat{ \mathcal{R}_{klst}f}(\lambda\omega)d\omega\right )d\lambda,\\
\label{TKLTS4}
\Phi^3_{klstm}f(x)&=\int_0^1(1-\theta)^4\int_0^\infty \lambda \chi_{<a}(\lambda)R_0^+(\lambda^4)vS_3AS_3v_{klstm}\int_{S^2} e_{\lambda\theta\omega}\widehat{ \mathcal{R}_{klstm}f}(\lambda\omega)d\omega d\lambda d\theta,
\end{align}
where $k,l,s,t,m=1,2,3$. 
By Remark \ref{remark4.2}, $\Phi^3_{klst},\Phi^3_{klstm} \in \mathbb B(L^p(\R^3))$ for all  $1<p<\infty$ and $1\le k,l,s,t,m\le 3$. Moreover, the formulas of $\Phi^3_{kls}$  are analogous to  \eqref{Phi^2_{kl}}. Hence we can prove $\Phi^3_{kls} \in \mathbb B(L^p(\R^3))$ for all  $1<p<3$ by the same argument as in Proposition \ref{proposition_5_1}.
\end{proof}


\subsection{The boundedness and unboundedness for $p\ge3$}\label{section 6.2-un}
In this subsection we show the remaining part of Theorem \ref{theorem1.2} (iii), namely 
\begin{itemize}
\item If $S_2L^2= S_3L^2=S_4L^2$, then $W_{\pm}(H,\Delta^2) \in \mathbb{B}(L^p(\R^3))$ for all  $3\le p<\infty$,
\item If $S_2L^2\neq S_3L^2$  or  $S_3L^2\neq  S_4L^2$, then
  	     $W_{\pm}(H,\Delta^2) \notin \mathbb{B}(L^p(\R^3))$ for any $3\le p\leq\infty$,
\end{itemize}
where $S_j$ with $j=1,..,4$ are defined in Definition \ref{definition_resonance} and \eqref{projectionS_4} (see also the characterization of $S_jL^2$ in Lemma \ref{lemma_resonance}). 
For all the entries of \eqref{third_resonance}, except for the four terms
$$
\lambda^{-4}S_3A^3_{-4,1}S_3,\quad 
\lambda^{-3}S_2A^3_{-3,1}S_2,\quad \lambda^{-2}S_2A^3_{-2,1}S_1,\quad \lambda^{-1}S_2A^3_{-1,1},
$$
the associated operators $T_B$ (defined in \eqref{T_B}) can be shown to be bounded on $L^p$ for all $3\le p<\infty$ by the previous arguments. Precisely, taking into account the cancelation properties
$$
Q(v) = S_2(v)=S_3(v) = S_2(v_k)=S_3(v_k) = S_3(v_{kl}) = 0,\quad k,l=1,2,3,
$$
one can apply Proposition \ref{proposition_5_3} to $\lambda^{-2}S_1A^3_{-2,2}S_2$ and $\lambda^{-1}A^3_{-1,2}S_2$, Proposition \ref{proposition_4_1} to $\lambda^{-1}S_1A^3_{-1,3}S_1$ and Proposition \ref{proposition_4_3} to $S_1A_{0,1}^3$ and $A^3_{0,2}S_1$, respectively. For the terms $QA^3_{0,3}Q$, $\lambda A^3_{1,1}$ and $\Gamma^3_2(\lambda)$, the same argument as in the regular case applies. Moreover, we will observe in the proof of Theorem \ref{thm-main-inver-M} in Appendix \ref{Appendix A} (see the end of Appendix \ref{Appendix A}) that 
$$
S_2A^3_{-3,1}S_2 = S_2H^3_{-3,1}S_2 + S_2H^3_{-3,2}S_3,\quad 
S_2A^3_{-1,1} = S_2H^3_{-1,1} + S_2H^3_{-1,2}Q
$$
with some explicit operators $H^3_{-3,1}$, $H^3_{-1,1}$ and $H^3_{-3,2},H^3_{-1,2}\in \mathop{\mathrm{AB}}(L^2)$. Again, one can prove that $T_B$ associated with $B=\lambda^{-3} S_2H^3_{-3,2}S_3,\  \lambda^{-1}S_2H^3_{-1,2}Q$ are bounded on $L^p$ for all $3\le p<\infty$ by the same argument as that in Propositions \ref{proposition_4_1} (with the Taylor expansion \eqref{taylor3} for $B=\lambda^{-3} S_2H^3_{-3,2}S_3$). 
 Hence, it remains to determine whether the following operator $\Lambda^3$ is bounded on $L^p$ for $3 \leq p < \infty$:
\begin{align*}
	\Lambda^3=\frac{2}{\pi i}\int_0^\infty \lambda^3 \chi_{<a}(\lambda) & R_0^+(\lambda^4)v
	\Big(  \lambda^{-4}S_3A^3_{-4,1}S_3+\lambda^{-3}S_2H^3_{-3,1}S_2+\lambda^{-2}S_2A^3_{-2,1}S_1\nonumber\\
	&+\lambda^{-1}S_2H^3_{-1,1}\Big) v\left(R_0^+(\lambda^4)-R_0^-(\lambda^4)\right ) d\lambda.
\end{align*}

As in the proof of the unboundedness of $W_-$ in  the second kind resonance case, applying the same argument as that for deriving the decomposition \eqref{eq4.10} based on the Taylor expansion of $e^{i\omega\cdot z}$ to each parts of $\Lambda^3$, we can decompose $\Lambda^3$ into a main term and a harmless term, where the latter term is bounded on $L^p$ for all $1<p<\infty$. Moreover, as for $\Lambda^2$, we expand such a main term by using the real-valued  orthonormal basis $\{f_j\}_{j=1}^{n_2}$ of $S_2L^2$ (See Remark \ref{real-valued basis}). 
As a consequence, $\Lambda^3$ can be written in the form $\Lambda^3=\Lambda^3_1+\Lambda^3_2$
with some $\Lambda^3_2\in \mathbb B(L^p(\R^3))$ for all $1<p<\infty$ and 
 \begin{align}
	&\Lambda_1^3f(x)=\sum_{j=1}^{n_2}\bigg[
	\int_0^\infty \lambda \chi_{<a}(\lambda)\big(R_0^+(\lambda^4)vf_j\big)(x)  \int_{S^2}\Big(-\frac{i}{6}\sum_{k,l,s=1}^3c_{j}^{kls} \widehat{ \mathcal{R}_{kls}f}(\lambda\omega)\nonumber \\
&\ \ \ \ \ \ \ \ \ \ \ \ \ \ \ \  -\frac{1}{2}\sum_{k,l=1}^3c_{j}^{kl}\widehat{\mathcal R_{kl}f}(\lambda\omega)
+i\sum_{k=1}^3c_{j}^{k}\widehat{ \mathcal{R}_{k}f}(\lambda\omega)+c_{j}\widehat{f}(\lambda\omega)\Big)d\omega d\lambda\bigg],\nonumber
\end{align}
where $c_{j}^{kls}$, $ c_{j}^{kl}$, $ c_{j}^{k}$, and $ c_{j}$ are given as follows:
 \begin{align}
\label{xishu3}
&c_{j}^{kls}=\<S_3A_{-4,1}^3S_3v_{kls}, f_j \>,\quad
c_{j}^{kl}=\<S_2H_{-3,1}^3S_2v_{kl}, f_j \>,\nonumber\\ 
& c_{j}^{k}=\<S_2A^3_{-2,1}S_1v_k, f_j \>, \quad
c_{j}=\<S_2H^3_{-1,1}v, f_j \>.
\end{align}
 The  following Proposition \ref{propositionLambda_131} and  the above argument, show $W_-\in \mathbb B(L^p(\R^3))$ for all $3\le p<\infty$ if $S_2L^2= S_3L^2=S_4L^2$. While the  following Proposition \ref{propositionLambda_132} and  the above argument, show  $W_-\notin \mathbb B(L^p(\R^3))$ for any $3\le p\leq\infty$ if $S_2L^2\neq S_3L^2$  or  $S_3L^2\neq  S_4L^2.$
 
Before stating Proposition \ref{propositionLambda_131}-\ref{propositionLambda_132}, we need to focus on   the coefficients $c_{j}^{kls}$, $ c_{j}^{kl}$, $ c_{j}^{k}$, and $ c_{j}.$ 
 
\begin{lemma} \label{calcuate xishu3}
 Let the coefficients $c_{j}^{kls}$, $ c_{j}^{kl}$, $ c_{j}^{k}$, and $ c_{j}$ be defined by \eqref{xishu3}. Then
 \begin{align}
 	c_{j}^{kls}
 	&=\langle \D_3(v_{kls}), f_j\rangle,\nonumber\\
 	c_{j}^{kl}
 	&= 4\pi(1-i)5!\big\langle(\D_2-\D_3vG_4v\D_2)(v_{kl}), f_j\big\rangle \nonumber\\
 	&\quad -\frac{1-i}{2\pi\cdot3!}\sum_{m=1}^3
 	\langle \D_2(v_{kl}), |z|^2v\rangle \langle  \D_1(Tv), z_mv\rangle  \langle \D_3(z_m|z|^2v), f_j \rangle ,\nonumber\\
 	c_{j}^{k}
 	&=-i\frac{\|V\|_{L^1}}{8\pi^2\cdot5!}\sum_{m=1}^3  \langle \D_1(v_k), z_mv \rangle
 	\langle \D_3(z_m|z|^2v), f_j \rangle\nonumber\\
 	&\quad +20i \langle  \D_1(Tv), v_k \rangle \big\langle (\D_2-\D_3vG_4v\D_2)(|z|^2v), f_j \big\rangle\nonumber\\
 	&\quad-\frac{i}{8\pi^2(3!)^2}\sum_{m=1}^3\langle  \D_1(Tv), v_k \rangle \langle\D_2(|z|^2v), |z|^2v \rangle
 	\langle  \D_1(Tv), z_mv \rangle \langle \D_3(z_m|z|^2v), f_j \rangle,\nonumber
    \end{align}
    \begin{align*}
    c_{j}
 	&=(1+i)\bigg[80\pi\Big(1-\frac{\langle \D_1(Tv), Tv \rangle}{\|V\|_{L^1}}\Big)\big\langle(\D_2-\D_3vG_4v\D_2)(|z|^2v), f_j\big\rangle\nonumber\\
 	&\quad -\frac{1}{2\pi(3!)^2}\Big(1-\frac{\langle \D_1(Tv), Tv \rangle}{\|V\|_{L^1}}\Big)\sum_{m=1}^3\langle \D_2(|z|^2v), |z|^2v\rangle
 	\langle  \D_1(Tv), z_mv \rangle \langle \D_3(z_m|z|^2v), f_j \rangle \nonumber\\
 	&\quad +\frac{1}{2\pi\cdot5!}\sum_{m=1}^3\langle  \D_1(Tv), z_mv \rangle \langle \D_3(z_m|z|^2v), f_j \rangle \bigg],
 \end{align*}
where the relevant operators above are defined in Definition \ref{definition_resonance} or Appendix \ref{Appendix A}.
\end{lemma}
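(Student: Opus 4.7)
The plan is to mirror the strategy used in Lemma \ref{calcuate xishu2}, namely: (a) extract explicit expressions for the operators $A^3_{-4,1}$, $H^3_{-3,1}$, $A^3_{-2,1}$, $H^3_{-1,1}$ from the proof of Theorem \ref{thm-main-inver-M} given in Appendix \ref{Appendix A}; (b) plug them into the four defining inner products in \eqref{xishu3}; (c) systematically exploit the cancellations $S_2v=S_2v_k=0$, $S_3v=S_3v_k=S_3v_{kl}=0$ (Lemma \ref{lemma_resonance}), the idempotency relations $S_j\D_j=\D_jS_j=\D_j$, the self-adjointness of $T$ and $\D_j$, and the polynomial expansion of the kernels $G_1(x,y)=|x-y|^2$, $G_3(x,y)=|x-y|^4$, $G_4(x,y)=-|x-y|^5/(4\pi\cdot 6!)$.

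The simplest case is $c_j^{kls}$: from the appendix one reads off $A^3_{-4,1}=\D_3$ (up to the identification $S_3\D_3=\D_3S_3=\D_3$), so $c_j^{kls}=\langle\D_3 v_{kls},f_j\rangle$ falls out immediately. For $c_j^{kl}$, I expect $H^3_{-3,1}$ to consist of a ``principal'' piece $4\pi(1-i)\,5!\,(\D_2-\D_3vG_4v\D_2)$ together with correction terms built from $\D_2vG_1vT\D_1TvG_1v\D_2$-type compositions; after expanding $G_1$ in $(|z|^2-2z\cdot y+|y|^2)$ and using $S_2v=S_2v_k=0$, all middle-moment contributions collapse to the scalar factors $\langle\D_2(v_{kl}),|z|^2v\rangle$ and $\langle\D_1(Tv),z_m v\rangle$ displayed in the claim, producing $\D_3(z_m|z|^2 v)$ after reapplying the $G_4$-kernel expansion against $S_3$, which annihilates all moments of degree $\le 2$. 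The formulas for $c_j^k$ and $c_j$ follow in an analogous way, now using a mixture of $G_1$, $G_3$, and $G_4$ expansions and the identity $\D_2 vG_1 v^2=\|V\|_{L^1}\D_2(|z|^2v)$ together with $\langle\D_1Tv,Tv\rangle$ that already appeared in the proof of Lemma \ref{calcuate xishu2}.

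Concretely, I would proceed in the following order. First, transcribe the expressions for $A^3_{-4,1}$, $H^3_{-3,1}$, $A^3_{-2,1}$, $H^3_{-1,1}$ from the expansion produced in Appendix \ref{Appendix A}. Second, evaluate $c_j^{kls}$ in one line. Third, for each of $c_j^{kl}$, $c_j^k$, $c_j$, I would repeatedly apply the template
\begin{align*}
\D_3 v G_4 v\, g(z)
&= -\frac{1}{4\pi\cdot 6!}\sum_{m=1}^3 \langle g,z_m|z|^2 v\rangle\,\D_3(z_m|z|^2 v)+\text{terms killed by }S_3,\\
\D_2 v G_1 v\, g(z)
&= \langle g,|z|^2 v\rangle\,\D_2(|z|^2 v)+\text{terms killed by }S_2,
\end{align*}
where in each step only the moments of $g$ that survive against $S_2L^2$ or $S_3L^2$ remain, because of the cancellations recorded in Lemma \ref{lemma_resonance}. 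Fourth, for mixed terms containing $T\D_1T$, I would use the self-adjointness of $T$ and Lemma \ref{Dg real} (reality of $\D_1,\D_2$ on real inputs) exactly as in the proof of Lemma \ref{calcuate xishu2}, where it was shown that $\D_2 vG_1 vT\D_1 Tv G_1 v S_2 = \langle\D_1 Tv,Tv\rangle\,\D_2(|z|^2 v)\otimes S_2(|z|^2 v)$. Finally, assembling the resulting rank-one contributions and pairing with $f_j$ gives the stated formulas.

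The main obstacle is bookkeeping: there are four coefficients, each a product of absolutely bounded operators with several moments of $v$, and each expansion of $G_3$ and $G_4$ produces numerous intermediate terms that must be shown either to cancel (via $S_2 v, S_3v_{kl}$, etc.) or to reorganize into the inner products displayed. Accurately tracking the combinatorial prefactors (the $4\pi\cdot 5!$, $80\pi$, $\tfrac{1}{8\pi^2(3!)^2}$, and the signs/imaginary units coming from the $(1\pm i)$ factors in the asymptotic expansion of $M^{-1}(\lambda)$) is where mistakes are most likely, and I would cross-check them against the already-established formulas for $A^2_{-3,1}$, $A^2_{-2,1}$, $H^2_{-1,1}$ in Lemma \ref{calcuate xishu2}, since those must arise as the ``$\D_3=0$'' limit of the present formulas. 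No genuinely new analytic ingredient beyond those in Lemma \ref{calcuate xishu2} is needed; it is essentially a longer algebraic computation of the same nature.
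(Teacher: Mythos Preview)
Your plan matches the paper's proof: extract the explicit formulas for $A^3_{-4,1}$, $H^3_{-3,1}$, $A^3_{-2,1}$, $H^3_{-1,1}$ from Appendix~\ref{Appendix A}, then evaluate each inner product by expanding the polynomial kernels and invoking the cancellations $S_1v=0$, $S_2v=S_2v_k=0$, $S_3v=S_3v_k=S_3v_{kl}=0$ together with $S_t\D_s=\D_sS_t=\D_s$ for $t\le s$.

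One concrete correction is needed. Your template
\[
\D_3 v G_4 v\, g = -\frac{1}{4\pi\cdot 6!}\sum_{m=1}^3 \langle g,z_m|z|^2 v\rangle\,\D_3(z_m|z|^2 v)+\cdots
\]
is not valid: $G_4(x,y)=-|x-y|^5/(4\pi\cdot 6!)$ is \emph{not} a polynomial in $x,y$, so no monomial expansion exists. In the paper's argument the operator $\D_3vG_4v\D_2$ is never expanded; it is left intact inside the combination $(\D_2-\D_3vG_4v\D_2)$ that appears verbatim in the final formulas. The expansion that actually produces the $\D_3(z_m|z|^2v)$ factors uses $G_3(x,y)=|x-y|^4$: after expanding $|x-z|^4$ and using $S_1v=0$ on the right and $S_3v=S_3v_k=S_3v_{kl}=0$ on the left, one obtains
\[
\D_3vG_3v\D_1Tv=-4\sum_{m=1}^3\langle\D_1(Tv),z_mv\rangle\,\D_3(z_m|z|^2v),
\]
and this identity (together with its variants obtained by replacing $Tv$ by $v_k$, etc.) drives all the ``correction'' terms in $c_j^{kl}$, $c_j^k$, $c_j$. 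Also, Lemma~\ref{Dg real} is not used in this computation; it enters only later, in Proposition~\ref{propositionLambda_132}.
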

 
\begin{proof}
 	In the following, we sequentially compute  the coefficients $c_{j}^{kls}$, $ c_{j}^{kl}$, $ c_{j}^{k}$, and $ c_{j}$.
 	
For $c_{j}^{kls},$  by \eqref{A_3_1} and $S_3\D_3=\D_3S_3=\D_3$ from \eqref{orthog-relation-1},  we have  $c_{j}^{kls}
 	=\langle \D_3(v_{kls}), f_j\rangle.$
 	
For $c_{j}^{kl},$ combining with \eqref {A_3_5} and $S_t\D_s=\D_ sS_t =\D_s$ for $ t \leq s$ from \eqref{orthog-relation-1}, in order to get the desired result for 
 	 $c_{j}^{kl}$, it suffices to prove that 
 \begin{align*}	 
 \big\langle	\D_3vG_3v\D_1TvG_1v\D_2(v_{kl}), f_j \big\rangle=-4\sum_{m=1}^3
 	\langle  \D_2(v_{kl}), |z|^2v \rangle \langle \D_1(Tv), z_mv\rangle \langle \D_3(z_m|z|^2v), f_j\rangle.
 \end{align*}	 
 	Indeed,  since  $G_1(x, z)=|x-z|^2$ given by \eqref {def-Gk} and  $S_2v=S_2x_\ell v=0, $ $S_2\D_2=\D_2S_2=\D_2$, we have
 	\begin{align}\label{cjkl1}	 
 	G_1v\D_2(v_{kl})=\int_{\R^3}\big( |x|^2-2x\cdot z+|z|^2\big)v(z)\big(\D_2(v_{kl})\big)(z)dz=	\langle  \D_2(v_{kl}), |z|^2v \rangle.
 	\end{align}	
Furthermore, note that  $G_3(x,z)=|x-z|^4,$   $S_t\D_s=\D_ sS_t =\D_s$ for $ t \leq s$ from \eqref{orthog-relation-1}  and $S_1v=S_3v=S_3x_\ell v=S_3x_ix_\ell v=0 $, we know
 	    	\begin{align}\label{cjkl2}	 	 
 	    	\D_3vG_3v\D_1Tv&=	\D_3v\int_{\R^3}\big( |x|^4-4x\cdot z|x|^2+2|x|^2|z|^2+4(x\cdot z)^2-4x\cdot z|z|^2+|z|^4\big)v(z)(\D_1Tv)(z)dz\nonumber\\
 	    	&=-4\sum_{m=1}^3
 	    	\langle  \D_1(Tv), z_mv\rangle \D_3(x_m|x|^2v)=-4\sum_{m=1}^3
 	    	\langle  \D_1(Tv), z_mv\rangle \D_3(z_m|z|^2v).
 	    \end{align}	
 By virtue of 	\eqref{cjkl1} and 	\eqref{cjkl2}, we derive the desired result for  $c_{j}^{kl}.$
 	    
For  $c_{j}^{k},$ by replacing $Tv$ with $v_k$ in \eqref{cjkl2},	 we have
 	\begin{align}\label{cjk1}	 	 
 	\D_3vG_3v\D_1(v_k)
 	&=-4\sum_{m=1}^3
 	\langle  \D_1(v_k), z_mv\rangle \D_3(z_m|z|^2v).
 \end{align}	
 Moreover, by $\D_2vG_1vT\D_1(v_k)=\big\langle \D_1(Tv), v_k\big\rangle  \D_2(|z|^2v)$ from \eqref{cjk2},	 we derive
 	\begin{align}\label{cjk3}	 	 
 	&(\D_2vG_1vT\D_1-\D_3vG_4v\D_2vG_1vT\D_1)(v_k)\nonumber\\&=\big(\Id-\D_3vG_4v\big)\D_2vG_1vT\D_1(v_k)
 	=\big\langle \D_1(Tv), v_k\big\rangle\big(\D_2-\D_3vG_4v\D_2\big)(|z|^2v).
 \end{align}	
    Use \eqref{cjk2},  $S_2v=S_2v_\ell=0, $ and  $S_2\D_2=\D_2S_2=\D_2$ to get that 
    \begin{align*}
   & \D_1TvG_1v\D_2vG_1vT\D_1(v_k)=\big\langle \D_1(Tv), v_k\big\rangle  \D_1TvG_1v \D_2(|z|^2v)\nonumber\\
  &=  \big\langle \D_1(Tv), v_k\big\rangle \D_1Tv\int_{\R^3}\big( |x|^2-2x\cdot z+|z|^2\big)v(z)\big(\D_2(|z|^2v)\big)(z)dz\nonumber\\
  &=\big\langle \D_1(Tv), v_k\big\rangle\langle \D_2(|z|^2v), |z|^2v\rangle \D_1Tv.
    \end{align*}	
 Combining with    \eqref{cjkl2},  it follows that
  \begin{align}\label{cjk5}	 	 
 	&\D_3vG_3v\D_1TvG_1v\D_2vG_1vT\D_1(v_k)\nonumber\\
 	&=\big\langle \D_1(Tv), v_k\big\rangle\langle \D_2(|z|^2v), |z|^2v, \rangle \D_3vG_3v\D_1Tv\nonumber\\
 	&=-4\sum_{m=1}^3\big\langle \D_1(Tv), v_k\big\rangle\langle \D_2(|z|^2v), |z|^2v\rangle
 	\langle \D_1(Tv), z_mv\rangle \D_3(z_m|z|^2v).
 \end{align}	
 Taking into account that \eqref{cjk1}, \eqref{cjk3}, \eqref{cjk5}	and  \eqref{A_3_3}, 	we derive the desired result for  $c_{j}^{k}.$ 
 
For $c_{j},$ by substituting $T\D_1v_k$ with $v$ and  substituting $v_k$ with $Tv$ in \eqref {cjk3}, respectively,  it is not hard to find that 
 \begin{align*}
&(\D_2vG_1v-\D_3vG_4v\D_2vG_1v)(v)=\langle v, v\rangle\big(\D_2-\D_3vG_4v\D_2\big)(|z|^2v),\\
&(\D_2vG_1vT\D_1T-\D_3vG_4v\D_2vG_1vT\D_1T)(v)
=\big\langle \D_1(Tv), Tv\big\rangle\big(\D_2-\D_3vG_4v\D_2\big)(|z|^2v).
    \end{align*}
    Hence, note that $\langle v, v\rangle=\|V\|_{L^1},$ then
    \begin{align}\label{cj1}
    	&\big(\D_2vG_1v-\D_3vG_4v\D_2vG_1v-\D_2vG_1vT\D_1T+\D_3vG_4v\D_2vG_1vT\D_1T\big)(v)\nonumber\\
    	&=\|V\|_{L^1}\Big(1-\frac{\langle \D_1(Tv), Tv \rangle}{\|V\|_{L^1}}\Big)(\D_2-\D_3vG_4v\D_2)(|z|^2v).
    	\end{align}
Additionally, by replacing $T\D_1v_k$ with $v$ and  replacing $v_k$ with $Tv$ in \eqref {cjk5},	 	respectively,    one has 
\begin{align*}
	&\D_3vG_3v\D_1TvG_1v\D_2vG_1v(v)=-4\sum_{m=1}^3\big\langle v, v\big\rangle\langle \D_2(|z|^2v), |z|^2v\rangle
	\langle \D_1(Tv), z_mv\rangle \D_3(z_m|z|^2v),\\
	&\D_3vG_3v\D_1TvG_1v\D_2vG_1vT\D_1T(v)\\
	&\quad\quad\quad=-4\sum_{m=1}^3\big\langle \D_1(Tv), Tv\big\rangle\langle \D_2(|z|^2v), |z|^2v\rangle
	\langle \D_1(Tv), z_mv\rangle \D_3(z_m|z|^2v).
\end{align*}
Since $\langle v, v\rangle=\|V\|_{L^1},$ it follows that 
   \begin{align}\label{cj2}
	&\Big(\D_3vG_3v\D_1TvG_1v\D_2vG_1v-\D_3vG_3v\D_1TvG_1v\D_2vG_1vT\D_1T\Big)(v)\nonumber\\
	&=-4\|V\|_{L^1}\Big(1-\frac{\langle \D_1(Tv), Tv \rangle}{\|V\|_{L^1}}\Big)\sum_{m=1}^3\langle \D_2(|z|^2v), |z|^2v\rangle
	\langle \D_1(Tv), z_mv\rangle \D_3(z_m|z|^2v).
\end{align}
Taking into account  \eqref{cj1}, \eqref{cj2}, \eqref{cjkl2}	and \eqref{A_3_6}, we derive the desired result for  $c_{j}.$ 
  \end{proof}

\begin{proposition}\label{propositionLambda_131}
If $S_2L^2= S_3L^2=S_4L^2$, then $\Lambda_1^3$ vanishes identically. 
\end{proposition}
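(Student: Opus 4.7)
The goal is to show that, under the hypothesis $S_2L^2 = S_3L^2 = S_4L^2$, every one of the coefficients $c_j^{kls}$, $c_j^{kl}$, $c_j^k$ and $c_j$ appearing in the definition of $\Lambda_1^3$ vanishes, so that $\Lambda_1^3 \equiv 0$ follows at once from its integral expression. The argument is purely algebraic once the bookkeeping with the projections $S_j$ and the inverses $\D_j$ is carried out; no new analytic estimates are needed.

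First I would translate the hypothesis into two algebraic identities. By the definition \eqref{projectionS_4}, the equality $S_3L^2 = S_4L^2$ is equivalent to $\<x_ix_jx_k v, f\> = 0$ for every $f \in S_3L^2$ and every $i,j,k$; that is, $S_3 v_{ijk} = 0$ for all $i,j,k=1,2,3$, and in particular $S_3(z_m|z|^2 v) = \sum_\ell S_3 v_{m\ell\ell} = 0$ for each $m$. Using the convention $\D_3 = \D_3 S_3$ from \eqref{orthog-relation-1}, this gives $\D_3 v_{kls} = 0$ and $\D_3(z_m|z|^2 v) = 0$ for all $k,l,s,m$. The first identity, combined with the formula for $c_j^{kls}$ in Lemma \ref{calcuate xishu3}, yields $c_j^{kls} = 0$ immediately, while the second annihilates every term in the expressions for $c_j^{kl}$, $c_j^k$ and $c_j$ that carries a factor $\<\D_3(z_m|z|^2 v), f_j\>$. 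What remains of those three coefficients, after this pruning, are only the contributions of the form $\<(\D_2 - \D_3 v G_4 v \D_2)(g), f_j\>$ with $g = v_{kl}$ or $g = |z|^2 v$.

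The remaining step, which I take to be the conceptual heart of the proof, is to show that $(\D_2 - \D_3 v G_4 v \D_2)g = 0$ for every $g \in L^2$. Here the other half of the hypothesis, $S_2L^2 = S_3L^2$, comes into play: it guarantees $\psi := \D_2 g \in S_2L^2 = S_3L^2$. Then, using $\D_3 = \D_3 S_3$ and $T_3 = S_3 v G_4 v S_3$, one computes
\begin{align*}
\D_3 v G_4 v \psi = \D_3 S_3 v G_4 v S_3 \psi = \D_3 T_3 \psi = \psi,
\end{align*}
the last equality because $\D_3$ inverts $T_3$ on $S_3L^2$. Hence $(\D_2 - \D_3 v G_4 v \D_2)g = \psi - \psi = 0$. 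Combining this with the first paragraph, all four families of coefficients vanish, so $\Lambda_1^3 \equiv 0$.

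The only subtlety in executing this plan is the interplay between the two halves of the hypothesis: $S_3L^2 = S_4L^2$ disposes of the cubic-moment terms passing through $\D_3$, while $S_2L^2 = S_3L^2$ forces the range of $\D_2$ into $S_3L^2$, on which $\D_3 v G_4 v$ acts as the identity. Both ingredients are essential, and each by itself would leave a nontrivial residue.
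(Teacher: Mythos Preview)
Your proof is correct and follows essentially the same strategy as the paper: reduce the statement to showing that all coefficients $c_j^{kls}, c_j^{kl}, c_j^k, c_j$ from Lemma \ref{calcuate xishu3} vanish, and deduce this from the orthogonality relations in \eqref{orthog-relation-1} together with the characterizations of $S_3L^2$ and $S_4L^2$. The only difference is one of packaging: the paper moves the operators $\D_2,\D_3$ onto $f_j$ via self-adjointness and then observes that the resulting element lies in $S_3L^2$ (resp.\ $S_4L^2$), whereas you compute directly that $\D_3 v_{kls}=0$ and establish the operator identity $\D_3 vG_4v\D_2=\D_2$ on all of $L^2$ under the hypothesis $S_2L^2=S_3L^2$. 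Your identity $(\D_2-\D_3vG_4v\D_2)\equiv 0$ is a clean, self-contained formulation that the paper's terse proof leaves implicit, but the underlying algebra is identical.
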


\begin{proof}
Now, we recall that $\langle x_jx_kx_lv, f\rangle =0$ for all $f\in S_4L^2$ and $j,k,l=1,2,3$. Additionally, note that $\D_1, \D_2, \D_3$ are self-adjoint  and $S_t\D_s = \D_sS_t =S_t, $ if $t>s;$  $S_t\D_s= \D_ sS_t =\D_s,$ if $ t \leq s$ from \eqref{orthog-relation-1}. Therefore, if $S_2L^2=S_3L^2=S_4L^2$, then all of the inner products against $f_j$ in Lemma \ref {calcuate xishu3} must vanish, yielding that $c_{j}^{kls}\equiv c_{j}^{kl}\equiv c_{j}^{k}\equiv c_{j}\equiv 0$. This shows $\Lambda^3_1\equiv0$.
\end{proof}


It remains to deal with the case $S_2L^2\neq S_3L^2$  or  $S_3L^2\neq  S_4L^2$. 

\begin{proposition}\label{propositionLambda_132}
	If  $S_2L^2\neq S_3L^2$  or  $S_3L^2\neq  S_4L^2$, then $\Lambda_1^3\notin\mathbb B(L^p(\R^3))$ for any $3\le p<\infty$.
\end{proposition}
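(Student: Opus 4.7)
The plan is to follow the contradiction scheme of Proposition \ref{proposition_5_4}, with two main adaptations: the symbol $\phi_j$ now carries an additional cubic-in-$\omega$ term coming from $c_j^{kls}$, and there are two separate hypotheses to rule out. Assume for contradiction that $\Lambda_1^3\in\mathbb{B}(L^{p_0}(\R^3))$ for some $3\le p_0<\infty$. Splitting $\Lambda_1^3=\widetilde\chi_{\ge 4a}(D)\Lambda_1^3+\widetilde\chi_{<4a}(D)\Lambda_1^3$ and applying the identity $R_0^+(\lambda^4)=m(D)+\lambda^4 m(D)R_0^+(\lambda^4)$ with $m(\xi)=|\xi|^{-4}\widetilde\chi_{\ge 4a}(\xi)$ on the high-frequency piece should reduce the problem, modulo operators bounded on every $L^p$ with $1<p<\infty$, to the unboundedness of
\[
\widetilde\Lambda_1^3 f(x)=\sum_{j=1}^{n_2}\ell_j(f)\,m(D)vf_j(x),\qquad \ell_j(f)=\int_{\R^3}\F(h_j)(y)f(y)\,dy,
\]
with $h_j(\eta)=\widetilde m(\eta)\phi_j(\eta)$, $\widetilde m(\eta)=|\eta|^{-1}\widetilde\chi_{<a}(\eta)$, and $\phi_j$ the degree-$0$ homogeneous symbol
\[
\phi_j(\eta)=-\frac{i}{6}\sum_{k,l,s}c_j^{kls}\frac{\eta_k\eta_l\eta_s}{|\eta|^3}-\frac{1}{2}\sum_{k,l}c_j^{kl}\frac{\eta_k\eta_l}{|\eta|^2}+i\sum_k c_j^k\frac{\eta_k}{|\eta|}+c_j.
\]
Exactly as in Proposition \ref{proposition_5_4}, linear independence of $\{m(D)vf_j\}$ in $L^{p_0}$ (Lemma \ref{lemma_technical}) together with Hausdorff--Young would force $h_j\in L^{p_0}$ for each $j$; since $\widetilde m\notin L^{p_0}$ whenever $p_0\ge 3$ and $\phi_j$ is continuous and homogeneous of degree $0$, this would compel $\phi_j\equiv 0$ for every $j=1,\ldots,n_2$, which is the identity I would then contradict.

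The next step is to promote $\phi_j\equiv 0$ to the individual vanishing $c_j^{kls}=c_j^{kl}=c_j^k=c_j=0$. By Lemma \ref{calcuate xishu3} together with Lemma \ref{Dg real} (extended to $\D_3$), $c_j^{kls}$ is real while $c_j^{kl}$, $c_j^k$, $c_j$ are $(1-i)$, $i$, $(1+i)$ times real scalars $\beta_j^{kl}$, $\gamma_j^k$, $\delta_j$. Splitting $\phi_j\equiv 0$ into real and imaginary parts on $S^2$ and decomposing the real part into spherical harmonics (as in Proposition \ref{proposition_5_4} via Lemma \ref{equiv0}) yields $\gamma_j^k=0$, $\beta_j^{kl}=b_j\delta_{kl}$, and $\delta_j=b_j/2$. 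Substituting these back into the imaginary part and using $\sum_k\omega_k^2=1$, the degree-$\le 2$ contributions collapse to the constant $b_j$ on $S^2$, leaving $\tfrac{1}{6}\sum_{k,l,s} c_j^{kls}\omega_k\omega_l\omega_s=b_j$ on $S^2$. Since the symmetric cubic form on the left has only degree-$1$ and degree-$3$ spherical-harmonic components while the right side is degree-$0$, both harmonic pieces must vanish, which forces $b_j=0$ and $c_j^{kls}=0$ for all $j,k,l,s$, so all four families of coefficients vanish.

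The final step is to derive a contradiction under each of the two hypotheses. In the case $S_3L^2\neq S_4L^2$, choose $\{f_j\}_{j=1}^{n_3}$ to be an orthonormal basis of $S_3L^2$; the identity $c_j^{kls}=\langle\D_3 v_{kls},f_j\rangle=0$ for $1\le j\le n_3$ combined with the invertibility of $\D_3$ on $S_3L^2$ forces $S_3v_{kls}=0$ for all $k,l,s$, contradicting $S_3L^2\neq S_4L^2$ via \eqref{projectionS_4}. In the case $S_2L^2\neq S_3L^2$, fix $f_j\in S_2L^2\ominus S_3L^2$, i.e.\ $j>n_3$; every inner product of the form $\langle\D_3(\cdot),f_j\rangle$ appearing in Lemma \ref{calcuate xishu3} then vanishes because $\Ran(\D_3)\subset S_3L^2\perp f_j$, so the expressions for $c_j^{kl}$, $c_j^k$, $c_j$ reduce to precisely those treated in Proposition \ref{proposition_5_4}; repeating that argument (which crucially invokes Lemma \ref{DTv}) yields $\langle\D_2v_{kl},f_j\rangle=0$ for all $k,l$. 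Setting $\psi:=\D_2 S_2 v_{kl}\in S_2L^2$, this orthogonality places $\psi\in S_3L^2$; combining the explicit formula \eqref{T_2fbiaoda} with the cancellations $\langle\psi,|y|^2v\rangle=\langle\psi,y_iy_jv\rangle=0$ valid for $\psi\in S_3L^2$ (Lemma \ref{lemma_resonance}(iv)) gives $T_2\psi=0$, whence $(T_2+S_3)\psi=\psi=S_2 v_{kl}$. Therefore $S_2 v_{kl}\in S_3L^2$ for every $k,l$, and since $S_2L^2\ominus S_3L^2=\mathrm{span}\{S_2v_{kl}\}_{k,l}$ by the characterization in Lemma \ref{lemma_resonance}(iv), this forces $S_2L^2=S_3L^2$, the desired contradiction.

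The main obstacle I anticipate is the harmonic-decomposition step that extracts $c_j^{kls}=0$ individually from $\phi_j\equiv 0$: the cubic symbol $c_j^{kls}\omega_k\omega_l\omega_s$ contains both a degree-$3$ and a degree-$1$ harmonic component, the latter of which can a priori interact with $\gamma_j^k\omega_k$, so the cancellation has to be obtained by combining the real and imaginary parts of the symbol equation in the correct order. The secondary delicate point is Case B, where $T_2$ annihilating $S_3L^2$ (via \eqref{T_2fbiaoda}) and the spanning identity $S_2L^2\ominus S_3L^2=\mathrm{span}\{S_2v_{kl}\}$ must be used in tandem to upgrade the per-basis-vector orthogonality into the global equality $S_2L^2=S_3L^2$.
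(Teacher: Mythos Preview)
Your proof is correct, and in fact your route is cleaner than the paper's in two respects.

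First, you exploit both the real and imaginary parts of $\phi_j\equiv 0$: parity on $S^2$ immediately gives $\gamma_j^k=0$ and $\beta_j^{kl}=b_j\delta_{kl}$, $\delta_j=b_j/2$ from $\Re\phi_j=0$, and then $\Im\phi_j=0$ reduces to $\tfrac16\sum c_j^{kls}\omega_{kls}=b_j$, which forces $b_j=0$ and $c_j^{kls}=0$ by odd/even parity. The paper instead works only with $\Im\phi_j$ and invokes Lemma~\ref{equiv0} to separate the cubic and quadratic parts; this yields the weaker conditions $c_j^{kls}=0$ and $b_{j,kl}=0$, and the paper then needs Lemma~\ref{DTv} to untangle $\beta_j^{kk}$ from $\delta_j$ inside $b_{j,kk}$. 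Your approach bypasses Lemma~\ref{DTv} entirely --- your parenthetical ``which crucially invokes Lemma~\ref{DTv}'' is therefore unnecessary: once you know $c_j^{kl}=0$ and, for $j>n_3$, that $c_j^{kl}=4\pi(1-i)5!\langle\D_2 v_{kl},f_j\rangle$, the conclusion $\langle\D_2 v_{kl},f_j\rangle=0$ is immediate.

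Second, your endgame for $S_2L^2\neq S_3L^2$ is more direct than the paper's. You show $\psi=\D_2 S_2 v_{kl}\in S_3L^2$ from the orthogonality, then $(T_2+S_3)\psi=\psi$ together with $(T_2+S_3)\D_2=S_2$ gives $S_2v_{kl}=\psi\in S_3L^2$, whence $S_2L^2\ominus S_3L^2=\mathrm{span}\{S_2v_{kl}\}=\{0\}$. The paper instead works with all $j$ simultaneously, shows $\tilde f_j:=(\D_2-\D_2vG_4v\D_3)f_j\in S_3L^2$, computes $T_2\tilde f_j=0$ to derive $f_j=S_2vG_4v\D_3 f_j$, and concludes by a rank inequality $\dim S_2L^2\le\dim S_3L^2$. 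Both arguments are valid; yours avoids the operator $\D_3vG_4v\D_2$ altogether by restricting to $j>n_3$ where those terms automatically vanish.
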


\begin{proof}
The proof is almost analogous to that of Proposition \ref{proposition_5_4}. It suffices to prove 
$\widetilde \chi_{\ge 4a}(D)\Lambda_1^3\notin \mathbb{B}(L^p(\R^3))$ for any $3\leq p<\infty$. As done for $\widetilde \chi_{\ge 4a}(D)\Lambda^2$, $\widetilde \chi_{\ge 4a}(D)\Lambda_1^3$ can be decomposed as the sum of $\widetilde \Lambda_{1}^3$ defined below, and an operator given as a (finite) linear combination of the operators:
\begin{equation*}
			\int_{\R^3}v(\rho)f_j(\rho)\big(m(D)A_{\rho,0} |D|^2\widetilde \chi_{<4a}(D)g_j(\mathcal{R})f\big)(x)d\rho
\end{equation*}
with some polynomial $g_j$, which are bounded on $L^p(\R^3)$ for all $1<p<\infty$. Here $\widetilde \Lambda_{1}^3$ is given by
	 \begin{align*}
			\widetilde \Lambda_{1}^3f(x)=\sum_{j=1}^{n_2}\ell_j(f) m(D)vf_j,\quad
			\ell_j(f)=\int_{\R^3}\F(h_j)(y)f(y)dy,\quad
			h_j(\eta)=\widetilde m(\eta)\phi_j(\eta),
	\end{align*}
where $m(\xi)=|\xi|^{-4}\widetilde\chi_{\ge 4a}(\xi),$  \  $\widetilde m(\eta)=|\eta|^{-1}\widetilde \chi_{<a}(\eta),$\  and
\begin{align*}
  \phi_j(\eta)=-\frac{i}{6}\sum_{k,l,s=1}^3c_{j}^{kls}\frac{\eta_{k}\eta_{l}\eta_{s}}{|\eta|^3}
			-\frac{1}{2}\sum_{k,l=1}^3c_{j}^{kl}\frac{\eta_{k}\eta_{l}}{|\eta|^2}+i\sum_{k=1}^3c_{j}^{k}\frac{\eta_{k}}{|\eta|}
			+c_{j}.
  \end{align*}
 Thus, it remains to show $\widetilde \Lambda_{1}^3\notin\mathbb{B}(L^p(\R^3))$ for any $3\leq p<\infty$ provided $S_3L^2\neq S_2L^2$ or $S_3L^2\neq S_4L^2$. 
 
 To this end, we assume for contradiction that $\widetilde \Lambda_{1}^3\in\mathbb{B}(L^{p_0}(\R^3))$ for some $3\leq p_0<\infty$.
 
  As for $\widetilde \Lambda^{2}$, this assumption implies $\phi_j\equiv0$ for all $j=1,...,n_2$. Noticing that $ \D_3v_{kls}$, $ \D_2v_{kl}$ and $ \D_1v_k$  are real-valued for each $k,l,s=1,2,3,$ shown by the same way as Lemma \ref{Dg real} and the fact that $f_{j}$ are real-valued (see Remark \ref{real-valued basis}). Furthermore, by Lemma \ref{calcuate xishu3}, one can calculate
 	\begin{equation*}
		\Im \phi_j(\eta)=-\frac{1}{6}\sum_{k,l,s=1}^3\<\D_3v_{kls}, f_j\> \frac{\eta_{k}\eta_{l}\eta_{s}}{|\eta|^3}
		+\sum_{k,l=1}^3b_{j,kl}\frac{\eta_{k}\eta_{l}}{|\eta|^2},
	\end{equation*}
	where $j=1,\cdots,n_2$, and  the coefficients $b_{j,kl}$ for $k\neq l$ and $b_{j,kk}$ are given as follows:
	\begin{align*}
b_{j,kl}&=2\pi\cdot5!\langle(\D_2-\D_3vG_4v\D_2)(vz_{kl}), f_j\>\nonumber\\
&\quad  -\frac{1}{4\pi\cdot3!}\sum_{m=1}^3 \langle|z|^2v, \D_2(vz_{kl}) \> \<z_mv, \D_1(Tv) \> \<\D_3(z_m|z|^2v), f_j \>,\\
b_{j,kk}&=2\pi\cdot5!\langle(\D_2-\D_3vG_4v\D_2)(vz_k^2), f_j\>\nonumber\\
&\quad -\frac{1}{4\pi\cdot3!}\sum_{m=1}^3\langle|z|^2v, \D_2(vz_k^2) \> \<z_mv, \D_1(Tv) \> \<\D_3(z_m|z|^2v), f_j \>\nonumber\\
&\quad +80\pi\left(1-\frac{\langle \D_1(Tv), Tv \rangle}{\|V\|_{L^1}}\right)\langle(\D_2-\D_3vG_4v\D_2)(|z|^2v), f_j\>\nonumber\\
&\quad -\frac{1}{2\pi(3!)^2}\left(1-\frac{\langle \D_1(Tv), Tv \rangle}{\|V\|_{L^1}}\right)\sum_{m=1}^3\langle|z|^2v, \D_2(|z|^2v) \>\<z_mv, \D_1(Tv) \>\<\D_3(z_m|z|^2v), f_j \>\nonumber\\
&\quad +\frac{1}{2\pi\cdot5!}\sum_{m=1}^3\<z_mv, \D_1(Tv) \>\<\D_3(z_m|z|^2v), f_j \>.
	\end{align*}
By $\Im\phi_j(\eta)\equiv0$ and Lemma \ref{equiv0},  we conclude that $\<\D_3(vz_{kls}), f_j\>=\<vz_{kls}, \D_3f_j\>\equiv 0$ and $b_{j,kl}=0$ for all $j, k, l, s.$ Since $\D_3$ is invertible on $S_3L^2$ and $\{f_j\}_{j=1}^{n_3}$ is a basis of $S_3L^2$ (See Remark \ref{real-valued basis}), we have $S_3(vz_{kls})=0$ for all $k,l,s=1,2,3,$ and thus, by Lemma \ref{lemma_resonance} and \eqref{projectionS_4},  we derive that $$ S_3L^2=S_4L^2.$$ Furthermore, since $b_{j,kl}=0$ for all $j,k,l$, we have
\begin{align}
\label{bbb_j,kl}
b_{j,kl}&=2\pi\cdot5!\langle(\D_2-\D_3vG_4v\D_2)(vz_{kl}), f_j\>=0,\quad k\neq l;\\
\nonumber 
b_{j,kk}&=2\pi\cdot5!\langle(\D_2-\D_3vG_4v\D_2)(vz_k^2), f_j\>\\
\nonumber
& \ \ \ \ \ +80\pi\Big(1-\frac{\langle \D_1(Tv), Tv \rangle}{\|V\|_{L^1}}\Big)\langle(\D_2-\D_3vG_4v\D_2)(|z|^2v), f_j\>=0.
\end{align}
In particular, for all $k=1,2,3$ and $j=1,..,n_2$, 
	\begin{equation}
\label{proof_proposition_6_3_3}
			3\langle(\D_2-\D_3vG_4v\D_2)(vz_k^2), f_j\> =-\Big(1-\frac{\langle \D_1(Tv), Tv \rangle}{\|V\|_{L^1}}\Big)\langle(\D_2-\D_3vG_4v\D_2)(|z|^2v), f_j\>,
	\end{equation}
which implies that $\langle(\D_2-\D_3vG_4v\D_2)(vz_k^2), f_j\>$
is independent of $k$. Thus, for all $k=1,2,3$, 
\begin{align}
\label{proof_proposition_6_3_4}
\langle(\D_2-\D_3vG_4v\D_2)(|z|^2v), f_j\>=3\langle(\D_2-\D_3vG_4v\D_2)(vz_k^2), f_j\>. 
\end{align}
Take into account that $ 1-\|V\|_{L^1}^{-1}\langle \D_1Tv, Tv\rangle\ge 0$ from Lemma \ref{DTv}, then
\eqref{proof_proposition_6_3_3} and \eqref{proof_proposition_6_3_4} imply  $\langle(\D_2-\D_3vG_4v\D_2)(vz_k^2), f_j\>\equiv0$. This, combined with \eqref{bbb_j,kl}, shows 
$$
\<vz_{kl},(\D_2-\D_2vG_4v\D_3)f_j\>=\langle(\D_2-\D_3vG_4v\D_2)(vz_{kl}), f_j\>\equiv 0,\ \text{for all}\  j,k,l,
$$
which by the definition of $S_3L^2$, implies that  $\tilde{f}_j:=(\D_2-\D_2vG_4v\D_3)f_j\in S_3L^2$ for all $j$. Furthermore, since $S_3L^2=\Ker T_2$ (see Definition \ref{definition_resonance}), one has
	\begin{equation*}
		\begin{split}
			0=T_2S_3\tilde{f}_j=T_2\tilde{f}_j&=T_2(\D_2-\D_2vG_4v\D_3)f_j.
		\end{split}
	\end{equation*}
Note that by  Definition \ref{definition_resonance} and \eqref{orthog-relation-1}, we have  $(T_2+S_3)\D_2=S_2$, $S_3\D_2=S_3$, $S_3\D_3=\D_3$ and  $S_3\D_2vG_4v\D_3=S_3vG_4vS_3\D_3=S_3$.  Hence  it follows that
	\begin{equation*}
		\begin{split}
			0&=T_2(\D_2-\D_2vG_4v\D_3)f_j\\
			&=(T_2+S_3)\D_2f_j-S_3\D_2f_j-(T_2+S_3)\D_2vG_4v\D_3f_j+S_3\D_2vG_4v\D_3f_j\\
			&=f_j-S_3f_j-S_2vG_4v\D_3f_j+S_3f_j=f_j-S_2vG_4v\D_3f_j.
		\end{split}
	\end{equation*}
	Hence $f_j=S_2vG_4v\D_3f_j$ for each $j=1,\cdots,n_2$, which implies that  ${\rm Ran}(S_2vG_4v\D_3)=S_2L^2(\R^3)$, since $\{f_j\}_{j=1}^{n_2}$ is a basis of $S_2L^2$.  Thus we have
	$$\dim S_2L^2=\dim S_2vG_4v\D_3L^2=\dim S_2vG_4vS_3\D_3L^2\leq\dim S_3L^2<\infty,$$
which yields  $S_2L^2=S_3L^2$ since $S_3L^2\subset S_2L^2$ by definition.

As a result, we conclude $S_2L^2=S_3L^2=S_4L^2$, leading a contradiction. Hence $\widetilde \Lambda^3\notin \mathbb B(L^p(\R^3))$ for any   $3\le p<\infty$, which means that $\Lambda_1^3\notin\mathbb B(L^p(\R^3))$ for any $3\le p<\infty$.
\end{proof}


\section{The proofs of Lemma \ref{lem2.2} and two technical results}
\label{Section_technical_lemma}

In this section,  we prove  Lemma \ref{lem2.2} and provide two technical  Lemmas \ref{lemma_technical} and \ref{equiv0} used in the proof of the unboundedness of $W_-$.

\subsection{The proof of Lemma \ref{lem2.2}}

Here we prove Lemma \ref{lem2.2} by establishing the following four propositions. Recalling that $\widetilde \chi$ was defined in Section \ref{subsection_notation}, we set
$$m_1(\xi,\eta)=\frac{\widetilde \chi_{<4a}(\xi)\widetilde \chi_{<a}(\eta)}{|\xi|^2+|\eta|^2},\quad   m_2(\xi,\eta) =\delta_\xi \otimes\frac{\widetilde \chi_{<a}(\eta)}{|\eta|},\quad m_3(\xi,\eta)= \frac{\widetilde \chi_{<4a}(\xi)\widetilde \chi_{<a}(\eta)}{|\xi|+|\eta|}.$$

\begin{proposition}\label{L1,L2,L3} Let $$L_j(x,y)=\int_{\mathbb{R}^6}e^{i\xi x-i\eta y}m_j(\xi,\eta)d\xi d\eta, \ j=1,2,3. $$
Then  $|L_1(x,y)|\lesssim \langle|x|+|y|\rangle^{-4},$ \ $|L_2(x,y)|\lesssim\langle y\rangle^{-2}$
	and  $|L_3(x,y)|\lesssim \langle x\rangle^{-1}\langle y\rangle^{-1}\langle|x|+|y|\rangle^{-3}.$
\end{proposition}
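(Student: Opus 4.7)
The plan is to treat each $L_j$ as a (partial) Fourier integral of the multiplier $m_j$ and to extract the required decay in $(x,y)$ by combining integration by parts with a dyadic decomposition at the scale $1/|w|$, where $w=(x,-y)\in\R^6$.

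For $L_2$, the factor $\delta_\xi$ collapses the $\xi$-integration, so $L_2(x,y)$ reduces to the 3D Fourier transform of $\widetilde \chi_{<a}(\eta)/|\eta|$ (independent of $x$). Integrability of $|\eta|^{-1}$ at the origin in $\R^3$ immediately gives $|L_2|\lesssim 1$. For large $|y|$ I will integrate by parts twice via the distributional identity $\Delta_\eta(1/|\eta|)=-4\pi\delta_0$; the resulting integrand is $-4\pi\delta_0$ plus two terms supported in the annulus $|\eta|\sim a$ from derivatives hitting $\widetilde \chi_{<a}$, whose Fourier transforms are bounded uniformly in $y$, yielding $|y|^2|L_2|\lesssim 1$.

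For $L_1$, viewing $m_1$ as a function on $\R^6$ with $|z|^{-2}$-type singularity at the origin, the target $\langle w\rangle^{-4}$ matches the formal 6D identity $\widehat{|z|^{-2}}\sim|w|^{-4}$. For $|w|\le 1$ the estimate follows from $m_1\in L^1(\R^6)$ (since $\int_0^1 r^{-2}r^5\,dr<\infty$). For $|w|\ge 1$ I will split $m_1=m_1\rho_{|w|}+m_1(1-\rho_{|w|})$ with $\rho_{|w|}$ a smooth cutoff at scale $1/|w|$: the near-origin part has $L^1$-norm $\lesssim\int_0^{1/|w|}r^3\,dr\sim|w|^{-4}$, while on the bulk part I will apply $k\ge 3$ integrations by parts via $(-\Delta_z)^k e^{iz\cdot w}=|w|^{2k}e^{iz\cdot w}$, checking that both the tail contribution $\int_{1/|w|}^R r^{3-2k}\,dr$ and the transition-region contribution are $\lesssim|w|^{2k-4}$, so that the bulk piece is also $\lesssim|w|^{-4}$.

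For $L_3$, a direct 6D estimate yields only $\langle|x|+|y|\rangle^{-5}$, which is strictly weaker than the product bound in the anisotropic regime $|x|\gg|y|$ (or vice versa). I will therefore establish three separate bounds: (a) $|L_3|\lesssim|x|^{-4}$ for $|x|\ge 1$ via four integrations by parts in $\xi$ alone, using $|\partial_\xi^\alpha(|\xi|+|\eta|)^{-1}|\lesssim(|\xi|+|\eta|)^{-1-|\alpha|}$ and the fact that $\int_0^R r^{4-|\alpha|}\,dr<\infty$ in $\R^6$ for $|\alpha|\le 4$; (b) the symmetric bound $|L_3|\lesssim|y|^{-4}$; and (c) for $|x|,|y|\ge 1$, one integration by parts in $\xi_i$ and one in $\eta_j$ produces a kernel $\partial_{\xi_i}\partial_{\eta_j}m_3$ with a $|z|^{-3}$-type singularity in $\R^6$, to which I apply the dyadic/IBP scheme from the $L_1$ proof to obtain $|x_iy_jL_3|\lesssim\langle|x|+|y|\rangle^{-3}$; selecting dominant components $|x_{i^*}|\gtrsim|x|$, $|y_{j^*}|\gtrsim|y|$ then gives $|L_3|\lesssim(|x||y|)^{-1}\langle|x|+|y|\rangle^{-3}$. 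Combining (a)--(c) with the trivial $\|L_3\|_{L^\infty}\lesssim 1$ and case analysis according to whether $|x|,|y|$ are $\le 1$ or $\ge 1$ gives the claimed product decay.

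The main obstacle will be step (c): derivatives of $(|\xi|+|\eta|)^{-1}$ involve factors $\xi_i/|\xi|$ and $\eta_j/|\eta|$ that are bounded but not smooth at $\xi=0$ or $\eta=0$, so the full 6D Laplacian cannot be iterated as cleanly as for $L_1$. I will instead work with the explicit mixed derivative $\partial_{\xi_i}\partial_{\eta_j}(|\xi|+|\eta|)^{-1}=2\xi_i\eta_j/(|\xi||\eta|(|\xi|+|\eta|)^3)$, pointwise bounded by $(|\xi|+|\eta|)^{-3}$, and handle terms where derivatives fall on the cutoff $\phi=\widetilde \chi_{<4a}(\xi)\widetilde \chi_{<a}(\eta)$ separately, since such terms are supported away from the singular set $\{\xi=0\}\cup\{\eta=0\}$ and contribute harmless Schwartz-like factors to the Fourier estimate.
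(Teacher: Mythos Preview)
Your treatments of $L_1$ and $L_2$ are correct and take a different route from the paper, which instead writes $(|\xi|^2+|\eta|^2)^{-1}=\int_0^\infty e^{-s(|\xi|^2+|\eta|^2)}\,ds$ and $|\eta|^{-1}=\int_0^\infty e^{-s|\eta|}\,ds$ and recognizes the resulting Fourier integrals as convolutions of heat/Poisson kernels with the Schwartz functions $\F\widetilde\chi$. Both routes are standard.

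For $L_3$ your plan has a genuine gap. The estimate you invoke in step~(a), $|\partial_\xi^\alpha(|\xi|+|\eta|)^{-1}|\lesssim(|\xi|+|\eta|)^{-1-|\alpha|}$, is \emph{false} for $|\alpha|\ge 2$: already $\partial_{\xi_1}^2(|\xi|+|\eta|)^{-1}$ contains the term $-\frac{|\xi|^2-\xi_1^2}{|\xi|^3}(|\xi|+|\eta|)^{-2}$, which is of size $|\xi|^{-1}|\eta|^{-2}$ (not $|\eta|^{-3}$) when $|\xi|\ll|\eta|$. The same non-smoothness of $|\xi|$ at $\xi=0$ (and of $|\eta|$ at $\eta=0$) is exactly what breaks step~(c): after your mixed IBP the integrand is $h=\frac{\xi_i\eta_j}{|\xi|\,|\eta|}(|\xi|+|\eta|)^{-3}\phi$, and any further $z$-derivative that lands on $\xi_i/|\xi|$ or $\eta_j/|\eta|$ produces factors $|\xi|^{-1}$ or $|\eta|^{-1}$ not controlled by the six-dimensional radius $|z|=(|\xi|^2+|\eta|^2)^{1/2}$. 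For instance $|\Delta_z h|\lesssim(|\xi|^{-2}+|\eta|^{-2})(|\xi|+|\eta|)^{-3}$, whose integral over $\supp\phi\cap\{|z|\ge 1/|w|\}$ is only $O(1)$, not the $O(|w|^{-1})$ required by the dyadic scheme. You flag this obstacle, but your proposed fix---separating off the terms where derivatives hit the cutoff---addresses the wrong issue: the singularity lives in the main term, not the cutoff terms. Without a working (c), the bounds (a), (b) and the trivial $L^\infty$ estimate together give at best $\min(\langle x\rangle^{-4},\langle y\rangle^{-4})$, which at $|x|\sim|y|\sim R$ is $R^{-4}$ while the target is $\sim R^{-5}$.

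The paper sidesteps all of this by writing $(|\xi|+|\eta|)^{-1}=\int_0^\infty e^{-s|\xi|}e^{-s|\eta|}\,ds$, which \emph{factorizes} the $\xi$- and $\eta$-dependence. Each $3$D Fourier transform becomes a convolution of the explicit Poisson kernel $c\,s(s^2+|\cdot|^2)^{-2}$ with $\F\widetilde\chi$; the $s$-integral of the product of two such kernels is then estimated directly (via the substitution $\rho=s/(|w|^2+|z|^2)^{1/2}$) by $|w|^{-1}|z|^{-1}(|w|+|z|)^{-3}$; a final convolution against the rapidly decaying $\F\widetilde\chi_{<4a}$, $\F\widetilde\chi_{<a}$, split into four regions according to whether $|w|,|z|$ are $\le 1/2$ or $>1/2$, yields the claimed bound.
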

\begin{proof}
(i) $L_1(x,y)$ can be written as
	\begin{equation*}
		\begin{split}
			L_1(x,y)=\int_0^\infty\Big(\int_{\mathbb{R}^3}e^{i\xi x-s|\xi|^2}\widetilde \chi_{<4a}(\xi)d\xi\Big)
			\Big(\int_{\mathbb{R}^3}e^{-i\eta y-s|\eta|^2}\widetilde\chi_{<a}(\eta)d\eta\Big)ds.
		\end{split}
	\end{equation*}
		It is not hard to see that the functions inside parentheses above are convolutions of Gauss-Weierstrass kernels with Schwartz functions
	 $\F\widetilde \chi_{<a}$  and  $\F\widetilde \chi_{<4a}$ ,  respectively.
	
	 Hence, for any $N>0$, 
	\begin{equation*}
		\begin{split}	
 \left|L_1(x,y)\right|&\lesssim\left|\int_{\mathbb{R}^6}(\F\widetilde\chi_{<4a})(x-w)(\F\widetilde\chi_{<a})(y-z)
		\Big(	\int_0^\infty\frac{1}{s^3}e^{-\frac{|w|^2+|z|^2}{4s}}ds\Big)dwdz\right|\\
			&\lesssim\bigg|\int_{\mathbb{R}^6}
			\frac{(\F\widetilde\chi_{<4a})(x-w)(\F\widetilde\chi_{<a})(y-z)}{(|w|^2+|z|^2)^2}dwdz\bigg|\\
          & \lesssim\int_{\mathbb{R}^6}\<x-w\>^{-N}\<y-z\>^{-N}(|w|^2+|z|^2)^{-2}dwdz \\
	&\lesssim\int_{\mathbb{R}^6}\<(x-w,y-z)\>^{-N}|(w,z)|^{-4}dwdz
			\lesssim\<|x|+|y|\>^{-4}.		
		\end{split}
	\end{equation*}
(ii) For $L_2(x,y)$, similarly we have
	\begin{equation*}
		\begin{split}
			\big|L_2(x,y)\big|&=\bigg|\int_0^\infty
			\Big(\int_{\mathbb{R}^3}e^{-i\eta y-s|\eta|}\widetilde \chi_{<a}(\eta)d\eta\Big)ds\bigg|\\
			&\lesssim\bigg|\int_0^\infty\int_{\mathbb{R}^3}\frac{s}{(s^2+|z|^2)^2}\mathcal{F}\big(\widetilde \chi_{<a}\big)(y-z)dzds\bigg|
			\lesssim\int_{\mathbb{R}^3}|z|^{-2}\<y-z\>^{-N}dz\lesssim\<y\>^{-2}.
		\end{split}
	\end{equation*}
(iii) $L_3(x,y)$ can be written as
	\begin{equation}\label{L_3}
		\begin{split}
			L_3(x,y)=\int_0^\infty\Big(\int_{\mathbb{R}^3}e^{i\xi x-s|\xi|}\widetilde \chi_{<4a}(\xi)d\xi\Big)
		\Big	(\int_{\mathbb{R}^3}e^{-i\eta y-s|\eta|}\widetilde\chi_{<a}(\eta)d\eta\Big)ds.
		\end{split}
	\end{equation}
By computing $\F\big(e^{-s|\cdot|}\big)(w)\cdot\F\big(e^{-s|\cdot|}\big)(z)$, we can see that $L_3(x,y)$ is dominated by
	\begin{equation*}
		\begin{split}
			&\bigg|\int_{\mathbb{R}^6}\mathcal{F}\big(\widetilde \chi_{<4a}\big)(x-w)\mathcal{F}\big(\widetilde \chi_{<a}\big)(y-z)
			\bigg(\int_0^\infty\frac{s^2}{\big(s^4+(|w|^2+|z|^2)s^2+|w|^2|z|^2\big)^2}ds\bigg)dwdz\bigg|.
		\end{split}
	\end{equation*}
	Let $\rho=(|w|^2+|z|^2)^{-\frac{1}{2}}s$ and $\gamma=|w||z|(|w|^2+|z|^2)^{-1}$.  Then
	\begin{equation*}
		\begin{split}
			\int_0^\infty&\frac{s^2}{\big(s^4+(|w|^2+|z|^2)s^2+|w|^2|z|^2\big)^2}ds
			=\frac{1}{(|w|^2+|z|^2)^{\frac{5}{2}}}
			\int_0^\infty\frac{\rho^2}{(\rho^4+\rho^2+\gamma^2)^2}d\rho\\
		& \lesssim\frac{1}{(|w|^2+|z|^2)^{\frac{5}{2}}}\Big(\int_0^\gamma\frac{\rho^2}{\gamma^4}d\rho+\int_\gamma^\infty\frac{\rho^2}{\rho^4}d\rho\Big)
			\lesssim\frac{1}{\gamma(|w|^2+|z|^2)^{\frac{5}{2}}}\lesssim\frac{1}{|w||z|(|w|+|z|)^3}.
		\end{split}
	\end{equation*}
	Hence choosing a enough large $N$,  we obtain
	\begin{equation}\label{L_3'}
		\begin{split}
			\big|  L_3(x,y)\big|\lesssim\int_{\mathbb{R}^6}\frac{1}{\langle x-w\rangle^N\langle y-z\rangle^N|w||z|(|w|+|z|)^3}dwdz.
		\end{split}
	\end{equation}
	To estimate the integral in \eqref{L_3'}, we decompose $\mathbb{R}^6$ into the four parts $\{\Sigma_i\times\Sigma _j^{'}\}_{i,j=1}^2,$ where
	\begin{equation*}
		\begin{split}
			&\Sigma_1=\{w\in\mathbb{R}^3\ |\ |w|\leq1/2\},\quad \Sigma_2=\{w\in\mathbb{R}^3\ |\ |w|>1/2\},\\
			&\Sigma_1^{'}=\{z\in\mathbb{R}^3\ |\ |z|\leq1/2\},\quad \Sigma_2^{'}=\{z\in\mathbb{R}^3\ |\ |z|>1/2\}.
		\end{split}
	\end{equation*}
	Then one has
	\begin{equation*}
		\begin{split}
			\big|  L_3(x,y)\big|\lesssim \sum_{i,j=1}^2\int_{\Sigma_i\times\Sigma _j^{'}}\frac{1}{\langle x-w\rangle^N\langle y-z\rangle^N|w||z|(|w|+|z|)^3}dwdz=:\sum_{i,j=1}^2 L_3^{ij}(x,y).
		\end{split}
	\end{equation*}
Thus, it is enough to show that each of $\{L_3^{ij}(x,y)\}_{i,j=1}^2$ is dominated by
	$C\langle x\rangle^{-1}\langle y\rangle^{-1}\langle|x|+|y|\rangle^{-3}$.
	
For $L_3^{11}(x,y)$, we use $\langle x-w\rangle\sim\langle x\rangle, \langle y-z\rangle\sim\langle y\rangle$ for $|w|\leq\frac{1}{2}$ and $|z|\leq\frac{1}{2}$, to obtain
	\begin{equation*}
		\begin{split}
			L_3^{11}(x,y)&=\int_{\Sigma_1\times\Sigma _1^{'}}\frac{1}{\langle x-w\rangle^N\langle y-z\rangle^N|w||z|(|w|+|z|)^3}dwdz\\
			&\lesssim\frac{1}{\langle x\rangle^N\langle y\rangle^N}\int_{|z|\leq\frac{1}{2}}\int_{|w|\leq\frac{1}{2}}\frac{1}{|w||z|(|w|+|z|)^3}dwdz\\
			&\lesssim\frac{1}{\langle x\rangle^N\langle y\rangle^N}\int_{|z|\leq\frac{1}{2}}\int_{|w|\leq\frac{1}{2}}\frac{1}{|w|^{\frac{5}{2}}|z|^{\frac{5}{2}}}dwdz
		\lesssim\frac{1}{\langle x\rangle^N\langle y\rangle^N}\lesssim\frac{1}{\langle x\rangle \langle y\rangle\<|x|+|y|\>^3},
		\end{split}
	\end{equation*}
for $N$ large enough, where we have used the bound $|w|+|z|\gtrsim\sqrt{|w||z|}$ in the third line.

Next we deal with $L_3^{12}(x,y)$. It is easy to obtain that
	\begin{equation*}
		\begin{split}
			L_3^{12}(x,y)&=\int_{\Sigma_1\times\Sigma _2^{'}}\frac{1}{\langle x-w\rangle^N\langle y-z\rangle^N|w||z|(|w|+|z|)^3}dwdz\\
			&\lesssim\frac{1}{\langle x\rangle^N}\int_{|z|>\frac{1}{2}}\int_{|w|\leq\frac{1}{2}}\frac{1}{\langle y-z\rangle^N|w||z|^4}dwdz
			\lesssim \frac{1}{\langle x\rangle^N}\int_{|z|>\frac{1}{2}}\frac{1}{\langle y-z\rangle^N|z|^4}dz.
		\end{split}
	\end{equation*}
	Since $|z|>\frac{1}{2}$, one has
	\begin{equation*}
		\begin{split}
			\frac{1}{\langle y-z\rangle^N|z|^4}\lesssim\frac{1}{\langle y-z\rangle^{N-4}(1+|y-z|+|z|)^4}
			\lesssim\frac{1}{\langle y-z\rangle^{N-4}\langle y\rangle^4}.
		\end{split}
	\end{equation*}
Therefore, 
	\begin{equation*}
		\begin{split}
			L_3^{12}(x,y)\lesssim\frac{1}{\langle x\rangle^N\langle y\rangle^4}\int_{|z|>\frac{1}{2}}\frac{1}{\langle y-z\rangle^{N-4}}dz
			\lesssim\frac{1}{\langle x\rangle^N\langle y\rangle^4}\lesssim\frac{1}{\langle x\rangle\langle y\rangle\langle|x|+|y|\rangle^3}.
		\end{split}
	\end{equation*}
	
For $L_3^{21}(x,y)$,  it is enough to observe $L_3^{21}(x,y)=L_3^{12}(y,x)$ by the symmetry. 

Finally, we focus on $L_3^{22}(x,y)$. Noting that $|w|, |z|>\frac{1}{2}$, one has
	\begin{equation*}
		\begin{split}
			L_3^{22}(x,y)&=\int_{\Sigma_2\Sigma_2^{'}}\frac{1}{\langle x-w\rangle^N\langle y-z\rangle^N|w||z|(|w|+|z|)^3}dwdz\\
			&\lesssim\frac{1}{\langle x\rangle\langle y\rangle}
			\int_{|z|>\frac{1}{2}}\int_{|w|>\frac{1}{2}}\frac{1}{\langle x-w\rangle^{N-1}\langle y-z\rangle^{N-1}(|w|+|z|)^3}dwdz\\
			&\lesssim \frac{1}{\langle x\rangle\langle y\rangle}\int_{\mathbb{R}^6}\<(x-w,y-z)\>^{-N+1}|(w,z)|^{-3}dwdz
	\lesssim\frac{1}{\langle x\rangle\langle y\rangle\langle|x|+|y|\rangle^3}.
		\end{split}
	\end{equation*}
Summing up (i)--(iii) above, we complete the proof of  Proposition \ref{L1,L2,L3}.
\end{proof}

\begin{proposition}\label{K}
	Let $$E_0(x,y)=\frac{1}{(2\pi)^3}\int_{\mathbb{R}^6}e^{i\xi x-i\eta y}\frac{\widetilde \chi_{<4a}(\xi)\widetilde \chi_{<a}(\eta)}{(|\xi|+|\eta|)(|\xi|^2+|\eta|^2)}d\xi d\eta.$$  Then
			$\big|E_0(x,y)\big|\lesssim \langle|x|+|y|\>^{-3}.$
	\end{proposition}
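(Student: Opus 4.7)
The plan is to combine the subordination trick used for $L_3$ in Proposition~\ref{L1,L2,L3} with a scaling argument exploiting that $m(\xi,\eta):=1/[(|\xi|+|\eta|)(|\xi|^2+|\eta|^2)]$ is positively homogeneous of degree $-3$ on $\R^6\setminus\{0\}$. Writing
\[
\frac{1}{|\xi|+|\eta|}=\int_0^\infty e^{-s(|\xi|+|\eta|)}ds,\qquad \frac{1}{|\xi|^2+|\eta|^2}=\int_0^\infty e^{-\sigma(|\xi|^2+|\eta|^2)}d\sigma,
\]
and interchanging integrals (justified by Fubini, since the $(\xi,\eta)$-integration is compactly supported), I get
\[
E_0(x,y)=(2\pi)^{-3}\int_0^\infty\!\!\int_0^\infty A_{s,\sigma}(x)B_{s,\sigma}(y)\,ds\,d\sigma, \quad A_{s,\sigma}(x):=\int_{\R^3}e^{i\xi x-s|\xi|-\sigma|\xi|^2}\widetilde\chi_{<4a}(\xi)d\xi,
\]
and similarly for $B_{s,\sigma}(y)$ with $\widetilde\chi_{<a}$ and $-\eta y$.

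Each bracket factors, up to constants, as a convolution $A_{s,\sigma}=F_{s,\sigma}*\F^{-1}\widetilde\chi_{<4a}$, where $F_{s,\sigma}:=P_s*H_\sigma\geq 0$ is the $\R^3$ subordinated Poisson--heat kernel with $P_s(u)=cs/(s^2+|u|^2)^2$ and $H_\sigma(u)=(4\pi\sigma)^{-3/2}e^{-|u|^2/(4\sigma)}$. Substituting and using Tonelli on the positive integrand, together with $|\F^{-1}\widetilde\chi_{<4a}|, |\F^{-1}\widetilde\chi_{<a}|\lesssim\langle\cdot\rangle^{-N}$ for any $N$, yields
\[
|E_0(x,y)|\lesssim \int_{\R^3}\!\!\int_{\R^3}\langle x-w\rangle^{-N}\langle y-z\rangle^{-N}G(w,z)\,dw\,dz, \quad G(w,z):=\int_0^\infty\!\!\int_0^\infty F_{s,\sigma}(w)F_{s,\sigma}(z)\,ds\,d\sigma.
\]

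The heart of the argument is the bound $G(w,z)\lesssim (|w|+|z|)^{-3}$. The substitution $(\xi,\eta)\mapsto(\xi/R,\eta/R)$, $s\mapsto Rs$, $\sigma\mapsto R^2\sigma$ with $R:=|w|+|z|$ shows that $G$ is positively homogeneous of degree $-3$ on $\R^6\setminus\{0\}$, so it suffices to bound $G$ uniformly on the compact set $\{|w|+|z|=1\}$. There, $G$ coincides with the distributional inverse Fourier transform (in $\R^6$) of $m$; since $m$ is smooth, positively homogeneous of degree $-3$, and locally integrable near the origin in $\R^6$, the standard theory of homogeneous distributions yields that its Fourier transform is smooth and positively homogeneous of degree $-3$ off the origin, hence bounded on $\{|w|+|z|=1\}$. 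Given this, one finishes with a routine convolution estimate by splitting $(w,z)$ into $|w|+|z|\leq (|x|+|y|)/2$ and its complement: in the first region, $\max(|x-w|,|y-z|)\geq(|x|+|y|)/4$ gives Schwartz decay $\langle|x|+|y|\rangle^{-N}$ while $\int_{|w|+|z|\leq R} G\lesssim R^3$ with $R=(|x|+|y|)/2$, so choosing $N\geq 6$ yields $\lesssim \langle|x|+|y|\rangle^{-3}$; in the second region, $G\lesssim (|x|+|y|)^{-3}$ pointwise and the remaining Schwartz integral is finite.

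The main technical obstacle will be the rigorous verification of uniform boundedness of $G$ on the unit set $\{|w|+|z|=1\}$, i.e.\ the convergence of the $(s,\sigma)$-integral. Besides appealing to homogeneous distributions, one could give a direct case analysis tracking the four regimes $s\lessgtr|w|$ and $\sqrt\sigma\lessgtr|w|$, using the pointwise estimates $F_{s,\sigma}(w)\lesssim s/(s+|w|)^4$ for $|w|\gtrsim\sqrt\sigma$ and $F_{s,\sigma}(w)\lesssim \sigma^{-3/2}e^{-|w|^2/(c\sigma)}$ otherwise, together with symmetric bounds for $F_{s,\sigma}(z)$. Either route is technical but elementary; the rest of the argument is routine given the homogeneity.
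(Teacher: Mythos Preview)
Your overall architecture is sound and the homogeneity observation is nice, but the appeal to the ``standard theory of homogeneous distributions'' contains a genuine error: the symbol $m(\xi,\eta)=1/[(|\xi|+|\eta|)(|\xi|^2+|\eta|^2)]$ is \emph{not} smooth on $\R^6\setminus\{0\}$. Indeed $|\xi|$ fails to be differentiable at $\xi=0$, so $m$ is singular along the $3$-planes $\{\xi=0,\ \eta\neq0\}$ and $\{\xi\neq0,\ \eta=0\}$. The theorem ``$m$ smooth and homogeneous $\Rightarrow$ $\widehat m$ smooth off the origin'' therefore does not apply, and you cannot conclude boundedness of $G$ on $\{|w|+|z|=1\}$ this way. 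That this is a real issue, not a technicality, is visible already in Proposition~\ref{L1,L2,L3}: the kernel $L_3$ associated with $1/(|\xi|+|\eta|)$ alone is only $\lesssim \langle w\rangle^{-1}\langle z\rangle^{-1}\langle|w|+|z|\rangle^{-3}$, which blows up on the unit set as $w\to0$ or $z\to0$. The claimed bound $G\lesssim(|w|+|z|)^{-3}$ is in fact true, but it hinges on the extra smoothing from the factor $1/(|\xi|^2+|\eta|^2)$ and must be proved by the direct $(s,\sigma)$ case analysis you only sketch; that analysis is the substance of the argument, not a side remark.

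By contrast, the paper does not attempt to bound the Fourier transform of the full symbol $m$ at once. It factors $m=m_1 m_3$ and uses the already-proved pointwise bounds $|L_1(x,y)|\lesssim\langle|x|+|y|\rangle^{-4}$ and $|L_3(x,y)|\lesssim\langle x\rangle^{-1}\langle y\rangle^{-1}\langle|x|+|y|\rangle^{-3}$ from Proposition~\ref{L1,L2,L3}, then controls the six-dimensional convolution $\int|L_1(x-w,y-z)||L_3(w,z)|\,dwdz$ by a nine-region decomposition according to the relative sizes of $|x-w|,|x|$ and $|y-z|,|y|$. This is more hands-on than your homogeneity route, but it makes the role of the two factors transparent: the anisotropic decay of $L_3$ (reflecting the non-smoothness of $|\xi|+|\eta|$) is compensated region by region by the isotropic $\langle|x|+|y|\rangle^{-4}$ decay of $L_1$. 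Your approach would become a legitimate alternative once you actually carry out the direct verification that $G$ is bounded on $\{|w|+|z|=1\}$; the final convolution step with the Schwartz cutoffs is then indeed routine.
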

\begin{proof}
	Combining with the estimates of $L_1(x,y)$ and $L_3(x,y)$ in Proposition \ref{L1,L2,L3},  we have
	\begin{equation*}
		\begin{split}
			|E_0(x,y)|&\lesssim \int_{\mathbb{R}^6}|L_1(x-w,y-z)||L_3(w,z)|dwdz\\
			&\lesssim\int_{\mathbb{R}^6}
			\frac{dwdz}{\langle|x-w|+|y-z|\rangle^4\langle w\rangle\langle z\rangle\langle|w|+|z|\rangle^3}.
		\end{split}
	\end{equation*}
	Now we decompose $\mathbb{R}^6$ into nine parts $\{\Sigma_i\times\Sigma _j^{'}\}_{i,j=1}^3,$ where
	\begin{equation*}
		\begin{split}
			&\Sigma_1=\{w\in\mathbb{R}^3\ |\ |x-w|\leq|x|/2\},\ \ \ \ \Sigma_2=\{w\in\mathbb{R}^3\ |\ |x|/2<|x-w|\leq2|x|\},\\
			&\Sigma_3=\{w\in\mathbb{R}^3\ |\ |x-w|>2|x|\},\ \ \ \
	\Sigma_1^{'}=\{z\in\mathbb{R}^3\ |\ |y-z|\leq|y/2|\},\\ &\Sigma_2^{'}=\{z\in\mathbb{R}^3\ |\ |y|/2<|y-z|\leq2|y|\},\ \ \ \
	\Sigma_3^{'}=\{z\in\mathbb{R}^3\ |\ |y-z|>2|y|\}.
		\end{split}
	\end{equation*}
Then it follows that
	\begin{equation*}
		\begin{split}
			|E_0(x,y)|\lesssim \sum_{i,j=1}^3\int_{\Sigma_i\times\Sigma _j^{'}}
			\frac{dwdz}{\langle|x-w|+|y-z|\rangle^4\langle w\rangle\langle z\rangle\langle|w|+|z|\rangle^3}=:\sum_{i,j=1}^3K_{ij}(x,y).
		\end{split}
	\end{equation*}
	Hence, it suffices to show that all of $K_{ij}(x,y)$ with $i,j=1,2,3$ are dominated by $\langle|x|+|y|\rangle^{-3}$.\\
(i) For $K_{11}$, it is easy to obtain that
\begin{equation*}
	\begin{split}
		K_{11}(x,y)&\lesssim\frac{1}{\langle x\rangle\langle y\rangle\langle|x|+|y|\rangle^3}
		\int_{\Sigma_1\times\Sigma _1^{'}}\frac{dwdz}{\langle|x-w|+|y-z|\rangle^4}\\
		&\lesssim\frac{1}{\langle x\rangle\langle y\rangle\langle|x|+|y|\rangle^3}\int_0^{\frac{|x|}{2}}
		\int_0^{\frac{|y|}{2}}\frac{r^2\rho^2}{\langle r+\rho\rangle^4}d\rho dr\lesssim \langle|x|+|y|\rangle^{-3},
	\end{split}
\end{equation*}
where we have used the bound $r^2\rho^2\lesssim \<r+\rho\>^4$ for $r,\rho\ge0$. \\
(ii)  For $K_{12}$, we have
\begin{align}\nonumber
		K_{12}(x,y)&\lesssim\frac{1}{\langle x\rangle}\int_{\Sigma_1\times\Sigma _2^{'}}\frac{dwdz}{\langle|x-w|+|y|\rangle^4
			\langle z\rangle\langle|x|+|z|\rangle^3}\\\nonumber
		&\lesssim\frac{1}{\langle x\rangle}\int_0^{3|y|}\int_0^{\frac{|x|}{2}}\frac{r^2\rho^2}{\langle r+|y|\rangle^4
			\langle \rho\rangle\langle|x|+\rho\rangle^3}drd\rho\\
\label{K_12}
		&\lesssim\frac{1}{\langle x\rangle}\Big(\int_0^{|x|/2}\frac{r^2}{(1+ r+|y|)^4} dr\Big) \Big(\int_0^{3|y|}	\frac{\rho} {(1+|x|+\rho)^3}d\rho\Big).
\end{align}
Plugging the explicit formulas
\begin{align}
\label{rho}
\int_0^{\frac{|x|}{2}}\frac{r^2}{(1+ r+ |y| )^4}dr&=\frac{|x|^3}{24(1+|y|)(1+\frac{|x|}{2}+|y|)^3},\\
\label{r}
\int_0^{3|y|}\frac{\rho}{(1+|x|+\rho)^3}d\rho&=\frac{9|y|^2}{2(1+|x|)(1+|x|+3|y|)^2}, 
\end{align}
into \eqref{K_12} yields
$$K_{12}(x,y) \lesssim\frac{|x|^3|y|^2}{\langle x\rangle^2\langle y\rangle(1+\frac{|x|}{2}+|y|)^3(1+|x|+3|y|)^2}\lesssim\frac{1}{\langle|x|+|y|\rangle^3}.$$
(iii) For $K_{13}$, we have
\begin{equation*}
	\begin{split}
		K_{13}(x,y)&\lesssim\frac{1}{\langle x\rangle}\int_{\Sigma_1\times\Sigma _3^{'}}\frac{dwdz}{\langle|x-w|+|z|\rangle^4
			\langle z\rangle\langle|x|+|z|\rangle^3}\\
		&\lesssim\frac{1}{\langle x\rangle}\int_{|y|}^{\infty}\int_0^{\frac{|x|}{2}}\frac{r^2\rho^2}{\langle r+\rho\rangle^4
			\langle \rho\rangle\langle|x|+\rho\rangle^3}drd\rho\\
		&\lesssim\frac{|x|^3}{\langle x\rangle}\int_{|y|}^{\infty}\frac{\rho^2}{\langle \rho\rangle^2(1+|x|+\rho)^3(1+\frac{|x|}{2}+\rho)^3}d\rho\\
		&\lesssim\frac{|x|^2}{\langle|x|+|y|\rangle^3}\int_{|y|}^{\infty}\frac{1}{(1+\frac{|x|}{2}+\rho)^3}d\rho\lesssim\frac{1}{\langle|x|+|y|\rangle^3},
	\end{split}
\end{equation*}
where we have used \eqref{rho} in the third line. \\
(iv) For $K_{21}$, one has  $K_{21}(x,y)=K_{12}(y,x)$ by the symmetry and hence
$$K_{21}(x,y)\lesssim \langle|x|+|y|\rangle^{-3}$$
by the item (ii) above. \\
(v) For $K_{22}(x,y)$, by using the integral \eqref{r}, we  have
\begin{equation*}
	\begin{split}
		K_{22}(x,y)&\lesssim\frac{1}{\langle|x|+|y|\rangle^4}\int_{\Sigma_2\times\Sigma _2^{'}}\frac{dwdz}{\langle w\rangle
			\langle z\rangle\langle|w|+|z|\rangle^3}\\
		&\lesssim\frac{1}{\langle|x|+|y|\rangle^4}\int_0^{3|x|}\int_0^{3|y|}\frac{r^2\rho^2}{\langle r\rangle
			\langle \rho\rangle\langle r+\rho\rangle^3}d\rho dr\\
		&\lesssim\frac{|y|^2}{\langle|x|+|y|\rangle^4}\int_0^{3|x|}\frac{r^2}{\langle r\rangle^2(1+r+3|y|)^2}dr.
	\end{split}
\end{equation*}
Since
\begin{align*}
	\int_0^{3|x|}\frac{1}{(1+r+3|y|)^2}dr=\frac{3|x|}{(1+3|y|)(1+3|x|+3|y|)}, 
	\end{align*}
we have 
\begin{align*}
		K_{22}(x,y)\lesssim\frac{|x||y|^2}{\langle|x|+|y|\rangle^4(1+3|y|)(1+3|x|+3|y|)}\lesssim\frac{1}{\langle|x|+|y|\rangle^3}.
\end{align*}
(vi) For $K_{23}$, we again use the integral  \eqref{r} to obtain
\begin{equation*}
	\begin{split}
		K_{23}(x,y)&\lesssim\int_{\Sigma_2\times\Sigma _3^{'}}\frac{dwdz}{\langle|x|+|z|\rangle^4\langle w\rangle
			\langle z\rangle\langle|w|+|z|\rangle^3}
		\lesssim\int_{|y|}^\infty\int_0^{3|x|}\frac{r\rho}{\langle|x|+\rho\rangle^4\langle r+\rho\rangle^3}drd\rho\\
		&\lesssim|x|^2\int_{|y|}^\infty\frac{1}{\langle|x|+\rho\rangle^4(1+\rho+3|x|)^2}d\rho\lesssim|x|^2\int_{|y|}^\infty\frac{1}{(1+\rho+|x|)^6}d\rho
		\lesssim\frac{1}{\langle|x|+|y|\rangle^3}.
	\end{split}
\end{equation*}
(vii) For $K_{31}$ and $ K_{32}$, we have $K_{31}(x,y)=K_{13}(y,x)$ and $K_{32}(x,y)=K_{23}(y,x)$ by symmetry, which deduces that both
$K_{31}(x,y)$ and $K_{32}(x,y)$ are dominated by $C\langle|x|+|y|\rangle^{-3}$ with some $C>0$. \\
(ix) It remains to deal with $K_{33}$. We can obtain that
\begin{equation*}
	\begin{split}
		K_{33}(x,y)&\lesssim\int_{\Sigma_3\times\Sigma _3^{'}}\frac{dwdz}{\langle|w|+|z|\rangle^4\langle w\rangle
			\langle z\rangle\langle|w|+|z|\rangle^3}
		\lesssim\int_{|y|}^\infty\int_{|x|}^\infty\frac{r^2\rho^2}{\langle r+\rho\rangle^7\langle r\rangle\langle \rho\rangle}drd\rho\\
		& \lesssim\int_{|y|}^\infty\int_{|x|}^\infty\frac{1}{\langle r+\rho\rangle^5}drd\rho\lesssim\int_{|y|}^\infty
		\frac{1}{(1+|x|+\rho)^4}d\rho
		\lesssim\frac{1}{\langle|x|+|y|\rangle^3}.
	\end{split}
\end{equation*}
	Hence, we complete the proof of  Proposition \ref{K}.
\end{proof}

\begin{proposition}\label{E}
	Let $$E_1(x,y)=\frac{1}{(2\pi)^3}\int_{\mathbb{R}^6}e^{i\xi x-i\eta y}\frac{\widetilde \chi_{<4a}(\xi)\widetilde \chi_{<a}(\eta)}{|\eta|(|\xi|^2+|\eta|^2)}d\xi d\eta.$$  Then
			$\big|E_1(x,y)\big|\lesssim\langle x\rangle^{-1}\langle|x|+|y|\rangle^{-2}.$

\end{proposition}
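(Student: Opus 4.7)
The plan is to represent $E_1$ as a spatial convolution and leverage the bounds from Proposition \ref{L1,L2,L3}, mirroring the strategy used for $E_0$ in Proposition \ref{K}. First, I would factor the Fourier symbol as
\[
\frac{\widetilde\chi_{<4a}(\xi)\widetilde\chi_{<a}(\eta)}{|\eta|(|\xi|^2+|\eta|^2)}=\frac{\widetilde\chi_{<4a}(\xi)\widetilde\chi_{<a}(\eta)}{|\xi|^2+|\eta|^2}\cdot\frac{\widetilde\chi_{<a}(\eta)}{|\eta|},
\]
after replacing the inner $\widetilde\chi_{<a}$'s by a nested pair of bump functions supported in $|\eta|<a$ so that the factorization is exact; this modification is harmless, since the estimates in Proposition \ref{L1,L2,L3} only depend on the Schwartz nature of the cutoffs. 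Taking the inverse Fourier transform jointly in $(\xi,\eta)\mapsto(x,y)$ then expresses $E_1$ as
\[
E_1(x,y)=\int_{\mathbb{R}^3}L_1(x,y-z)\,\widetilde L_2(z)\,dz,
\]
where $L_1$ is the kernel from Proposition \ref{L1,L2,L3} and $\widetilde L_2(z)=\int e^{-i\eta z}|\eta|^{-1}\widetilde\chi_{<a}(\eta)\,d\eta$ obeys $|\widetilde L_2(z)|\lesssim\langle z\rangle^{-2}$ by exactly the argument used for $L_2$ there. Combined with $|L_1(x,y-z)|\lesssim\langle|x|+|y-z|\rangle^{-4}$, this reduces the claim to the pointwise estimate
\[
I(x,y):=\int_{\mathbb{R}^3}\frac{dz}{\langle|x|+|y-z|\rangle^{4}\langle z\rangle^{2}}\lesssim\langle x\rangle^{-1}\langle|x|+|y|\rangle^{-2}.
\]

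To estimate $I(x,y)$, I would adopt the three-region decomposition from the proof of Proposition \ref{K}, splitting $\mathbb{R}^3$ into $\Sigma_1'=\{|z|\le|y|/2\}$, $\Sigma_2'=\{|y|/2<|z|\le 2|y|\}$, and $\Sigma_3'=\{|z|>2|y|\}$. On $\Sigma_1'$, the bound $|y-z|\ge|y|/2$ pulls out $\langle|x|+|y|\rangle^{-4}$, and $\int_{|z|\le|y|/2}\langle z\rangle^{-2}dz$ is $\lesssim|y|^3$ when $|y|\le 1$ and $\lesssim|y|$ otherwise, both of which are easily absorbed into the target bound. On $\Sigma_3'$, the inequality $|y-z|\ge|z|/2\ge|y|$ reduces the piece to a one-dimensional radial integral which I would control by separating at $r=\langle x\rangle$ and performing the standard change of variables.

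The main obstacle is the middle region $\Sigma_2'$ in the subcase $|y|\lesssim|x|$. After using $\langle z\rangle\sim\langle y\rangle$ and substituting $w=y-z$, the contribution becomes $\langle y\rangle^{-2}\int_{|w|\lesssim|y|}\langle|x|+|w|\rangle^{-4}dw$, and a crude application of $\int\langle|x|+|w|\rangle^{-4}dw\lesssim\langle x\rangle^{-1}$ yields only $\langle x\rangle^{-1}\langle y\rangle^{-2}$, which is insufficient since $\langle y\rangle$ may be much smaller than $\langle|x|+|y|\rangle$. To recover the sharp weight, I would instead exploit the constraint $|w|\lesssim|y|\le|x|$, which forces $\langle|x|+|w|\rangle\sim\langle x\rangle$ throughout the domain and yields an inner bound of $|y|^3\langle x\rangle^{-4}$; regrouping gives $|y|^3\langle x\rangle^{-4}\langle y\rangle^{-2}\lesssim\langle x\rangle^{-1}\langle|x|+|y|\rangle^{-2}$, as desired. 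The complementary subcase $|y|>|x|$ is immediate because $\langle|x|+|y|\rangle\sim\langle y\rangle$ there.
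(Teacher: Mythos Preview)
Your proposal is correct and follows essentially the same route as the paper: represent $E_1$ as a convolution in the $y$-variable, invoke the bounds $|L_1(x,y-z)|\lesssim\langle|x|+|y-z|\rangle^{-4}$ and $|\widetilde L_2(z)|\lesssim\langle z\rangle^{-2}$ from Proposition~\ref{L1,L2,L3}, and then estimate the resulting three-dimensional integral by a three-region decomposition. The only visible difference is that the paper splits according to the size of $|y-z|$ rather than $|z|$; with that choice the middle region immediately gives $\langle|x|+|y-z|\rangle\sim\langle|x|+|y|\rangle$, which lets the paper dispatch it via the explicit integral formula \eqref{rho} without your subcase analysis in $|x|$ versus $|y|$, but the two decompositions are dual under $z\mapsto y-z$ and the arguments are otherwise parallel. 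Your handling of the cutoff-factorization subtlety is in fact more explicit than the paper's.
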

\begin{proof}
	Using the estimates of $L_1(x,y)$ and $L_2(x,y)$ in Proposition \ref{L1,L2,L3}, one has
	\begin{equation*}
		\begin{split}
			\big|E_1(x,y)\big|&\lesssim \int_{\mathbb{R}^3}\big|L_1(x,y-z)\big|\big|L_2(z)\big|dz
			\lesssim\int_{\mathbb{R}^3}
			\frac{dwdz}{\langle|x|+|y-z|\rangle^4\langle z\rangle^2}.
		\end{split}
	\end{equation*}
We decompose $\mathbb{R}^3$ into three  parts $\{\Sigma_i\}_{i=1}^3,$ where
	\begin{equation*}
		\begin{split}
			&\Sigma_1=\{z\in\mathbb{R}^3\ |\ |y-z|\leq |y|/2\},\ \ \ \ \Sigma_2=\{z\in\mathbb{R}^3\ |\ |y|/2<|y-z|\leq2|y|\},\\
			&\Sigma_3=\{z\in\mathbb{R}^3\ |\ |y-z|>2|y|\}.
		\end{split}
	\end{equation*}
Then we have
	\begin{equation*}
		\begin{split}
			\big|E_1(x,y)\big|\lesssim \sum_{i=1}^3\int_{\Sigma_i}
			\frac{1}{\langle|x|+|y-z|\rangle^4\langle z\rangle^2}dz=:\sum_{i=1}^3E_i(x,y).
		\end{split}
	\end{equation*}
	(i) For $E_1$, by (\ref{rho})  one has
\begin{equation*}
	\begin{split}
		E_1(x,y)&\lesssim\frac{1}{\langle y\rangle^2}
		\int_{\Sigma_1}\frac{1}{\langle|x|+|y-z|\rangle^4}dz
		\lesssim\frac{1}{\langle y\rangle^2}\int_0^{\frac{|y|}{2}}\frac{r^2}{\langle|x|+ r\rangle^4} dr\\
		&\lesssim\frac{1}{\langle y\rangle^2}\frac{|y|^3}{(1+|x|)(1+|x|+\frac{|y|}{2})^3}\lesssim\frac{1}{\langle x\rangle\langle|x|+|y|\rangle^2}.
	\end{split}
\end{equation*}
(ii) For $E_2$, it is easy to get that
\begin{equation*}
	\begin{split}
		E_2(x,y)\lesssim\frac{1}{\langle|x|+|y|\rangle^4}
		\int_{\Sigma_2}\frac{1}{\langle z\rangle^2}dz
		\lesssim\frac{1}{\langle|x|+|y|\rangle^4}\int_0^{3|y|}\frac{r^2}{\langle r\rangle^2}dr\lesssim\frac{1}{\langle|x|+|y|\rangle^3}.
	\end{split}
\end{equation*}

(iii) Finally, for $E_3$,  one has
\begin{equation*}
	\begin{split}
		E_3(x,y)\lesssim\int_{\Sigma_3}\frac{1}{\langle|x|+|z|\rangle^4\langle z\rangle^2}dz
		\lesssim\int_{|y|}^\infty\frac{r^2}{\langle|x|+r\rangle^4\langle r\rangle^2}dr\lesssim\frac{1}{\langle|x|+|y|\rangle^3}.
	\end{split}
\end{equation*}
Thus we complete  the proof of  Proposition \ref{E}.
\end{proof}

\begin{proposition}\label{prop5.2} Let $K_1$ (resp. $K_2$) be the integral operator with kernel $\<|x|+|y|\>^{-3}$ (resp. $ \langle x\rangle^{-1}\langle|x|+|y|\rangle^{-2}$). Then $K_1\in \mathbb B(L^p(\R^3))$ for all $1<p<\infty$ and $K_2\in \mathbb B(L^p(\R^3))$ for all $1<p<3$. 
\end{proposition}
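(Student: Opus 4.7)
The obstacle is that neither kernel satisfies the classical Schur test \eqref{Schur test}: integrating $\langle |x|+|y|\rangle^{-3}$ in $y$ gives $\int_0^\infty r^2/(1+|x|+r)^3\,dr$, which diverges logarithmically, and the same phenomenon occurs for $K_2$. My plan is to bypass Schur by splitting each kernel into a ``Hardy part'' and a ``co-Hardy part'' according to whether $|y|\le|x|$ or $|y|>|x|$, and then to control each piece by a dyadic/Young's inequality argument.

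For $K_1$, note that $\langle|x|+|y|\rangle\sim\langle x\rangle$ when $|y|\le|x|$ and $\langle|x|+|y|\rangle\sim\langle y\rangle$ when $|y|>|x|$. This gives the pointwise bound $K_1|f|(x)\lesssim T_1|f|(x)+T_2|f|(x)$, where
\[
T_1 g(x) = \langle x\rangle^{-3}\int_{|y|\le|x|} g(y)\,dy,\qquad T_2 g(x)=\int_{|y|>|x|}\langle y\rangle^{-3} g(y)\,dy.
\]
Since $T_2$ is the adjoint of $T_1$, it suffices to show $T_1\in\mathbb B(L^p)$ for $1<p<\infty$. I would then decompose $\mathbb{R}^3$ into dyadic annuli $\Omega_j=\{2^{j-1}\le|x|<2^j\}$ ($j\ge 1$, with $\Omega_0=B(0,1)$), apply H\"older on each $\Omega_k$ to get $\int_{\Omega_k}|g|\le C\,2^{3k/p'}\|g\|_{L^p(\Omega_k)}$, and conclude
\[
\|T_1 g\|_{L^p(\Omega_j)}\le C\sum_{k\le j+1} 2^{3(k-j)/p'}\|g\|_{L^p(\Omega_k)}.
\]
Young's inequality on $\ell^p(\mathbb{Z})$ applied with the summable one-sided kernel $c_n=2^{-3n/p'}\mathbf 1_{n\ge -1}$ (summable precisely when $p>1$) yields $T_1\in\mathbb B(L^p)$ for $1<p<\infty$.

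For $K_2$ the same splitting gives $K_2|f|(x)\lesssim T_1|f|(x)+T_3|f|(x)$, where
\[
T_3 g(x)=\langle x\rangle^{-1}\int_{|y|>|x|}\langle y\rangle^{-2} g(y)\,dy.
\]
Repeating the dyadic estimate produces, for $x\in\Omega_j$,
\[
\|T_3 g\|_{L^p(\Omega_j)}\le C\sum_{k\ge j-1} 2^{(k-j)(3/p'-2)}\|g\|_{L^p(\Omega_k)}.
\]
The decisive point is that the one-sided kernel $c_n=2^{n(3/p'-2)}\mathbf 1_{n\ge -1}$ is summable if and only if $3/p'<2$, i.e.\ $p<3$; under this condition Young's inequality again delivers $\|T_3 g\|_{L^p}\lesssim\|g\|_{L^p}$, whence $K_2\in\mathbb B(L^p)$ for $1<p<3$.

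The main thing to get right is the dyadic bookkeeping and identifying how the geometric factors balance: the exponent $3/p'-2$ appearing in the $T_3$ analysis is what forces the threshold $p=3$, matching the sharpness noted after Lemma \ref{lem2.2}. No new machinery is required beyond H\"older's inequality and the one-dimensional Young convolution inequality on $\ell^p(\mathbb{Z})$.
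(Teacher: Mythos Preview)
Your argument is correct, but the paper proceeds differently. Instead of the Hardy/co-Hardy split and dyadic Young inequality, the paper exploits the homogeneity of the kernel via the change of variables $y=(1+|x|)z$:
\[
K_1f(x)=\int_{\R^3}\frac{f((1+|x|)z)}{(1+|z|)^3}\,dz=\int_0^\infty\frac{r^2}{(1+r)^3}M_f\big((1+|x|)r\big)\,dr,
\]
where $M_f(t)=\int_{S^2}f(t\omega)\,d\omega$ is the spherical average. A direct $L^p$ computation gives $\|M_f((1+|\cdot|)r)\|_{L^p}\lesssim r^{-3/p}\|f\|_{L^p}$, and then Minkowski's inequality reduces everything to the one-variable condition
\[
\int_0^\infty\frac{r^{2-3/p}}{(1+r)^\alpha}\,dr<\infty,
\]
with $\alpha=3$ for $K_1$ (yielding $1<p<\infty$) and $\alpha=2$ for $K_2$ (yielding $1<p<3$). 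The paper's route is shorter because the kernels are essentially functions of $|y|/(1+|x|)$, so a single scaling collapses the problem; your dyadic decomposition is more flexible and would survive perturbations of the kernel that break exact scaling, but involves more bookkeeping. Both identify the threshold $p=3$ through the same divergence: in your language the failure of $\sum 2^{n(3/p'-2)}$, in theirs the divergence of $\int r^{-3/p}\,dr$ at infinity.
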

\begin{proof}Let $f\in \mathcal S(\R^3)$. Since
	\begin{equation*}
		\begin{split}
			K_1f(x)=\int_{\mathbb{R}^3}\frac{1}{(1+|x|+|y|)^3}f(y)dy=\int_{\mathbb{R}^3}\frac{1}{(1+|z|)^3}f\big((1+|x|)z\big)dz, 
		\end{split}
	\end{equation*}
by the polar coordinate $z=r\omega$ with $r>0$ and  $\omega\in S^2$, one has
	\begin{equation}\label{K1f}
		\begin{split}
			K_1f(x)=\int_0^\infty\frac{r^2}{(1+r)^3}\Big(\int_{S^2}f\big((1+|x|)r\omega\big)d\omega\Big)dr=\int_0^\infty\frac{r^2}{(1+r)^3}M_f\big((1+|x|)r\big)dr,
		\end{split}
	\end{equation}
	where $M_f(t)=\int_{S^2}f(t\omega)d\omega$. By the polar coordinate $x=\rho\sigma$, $\rho>0$, $\sigma\in S^2$ and H\"{o}lder's inequality,
	\begin{equation*}
		\begin{split}
			\big\|M_f\big((1+|\cdot|)r\big)\big\|_{L^p} ^p
			&\lesssim\int_0^\infty \rho^2\big|M_f\big((1+\rho)r\big)\big|^pd\rho\lesssim\int_0^\infty \rho^2\big|M_f(\rho r)\big|^pd\rho\\
			&\lesssim\int_0^\infty\rho^2\int_{S^2}\big|f(\rho r\omega)\big|^pd\omega d\rho\lesssim \frac{1}{r^3}\|f\|_{L^p}^p.
		\end{split}
	\end{equation*}
	Combining with \eqref{K1f} and Minkowski's inequality, one has for all $ 1<p<\infty$
	\begin{equation*}
		\begin{split}
			\|K_1f\|_{L^p}\lesssim\Big(\int_0^\infty\frac{r^{2-\frac{3}{p}}}{(1+r)^3}dr\Big)\|f\|_{L^p}\lesssim\|f\|_{L^p}.
		\end{split}
	\end{equation*}
Similarly, it is easy to check that
$$
			K_2f(x)=\int_0^\infty\frac{r^2}{(1+r)^2}M_f\big((1+|x|)r\big)dr. 
$$
Thus, we have for $1<p<3,$
	\begin{equation*}
		\begin{split}
			\|K_2f\|_{L^p}\lesssim\int_0^\infty\frac{r^{2-\frac{3}{p}}}{(1+r)^2}dr\|f\|_{L^p}\lesssim\|f\|_{L^p}.
		\end{split}
	\end{equation*}
	Thus we finish the proof of the proposition.
\end{proof}
\begin{proof}[Proof of Lemma \ref{lem2.2}]  In fact, in view of  Proposition \ref{K},\ \ref{E} and \ref{prop5.2}, we can immediately conclude that $E_0\in \mathbb{B}(L^p)(\R^3)$ for all $1<p<\infty$, and $E_1\in \mathbb{B}(L^p)(\R^3)$ for all $1<p<3$.
\end{proof}

\subsection{Two technical lemmas}
\begin{lemma}
\label{lemma_technical}
Let $\widetilde \chi_{\ge a}$ be defined in Section \ref{subsection_notation}, $m_a(\xi)=|\xi|^{-4}\widetilde\chi_{\ge a}(\xi)$, $v=|V|^{1/2}$ and $\{f_j\}_{j=1}^{n_2}$ be an orthonormal basis of $S_2L^2$.  Then $\{m_a(D)vf_j\}_{j=1}^{n_2}$  are   mutually linearly independent in the distributional sense  for any $a>0$ small enough. 
\end{lemma}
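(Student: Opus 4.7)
The plan is to regard each $m_a(D)vf_j$ as a small $L^\infty$-perturbation of $G_j:=(\Delta^2)^{-1}(vf_j)$ and to prove that the $G_j$'s themselves are linearly independent. Writing $m_a(\xi)=|\xi|^{-4}-|\xi|^{-4}\widetilde\chi_{<a}(\xi)$ (where $\widetilde\chi_{<a}=1-\widetilde\chi_{\ge a}$) and setting $r_j^a:=\mathcal F^{-1}\bigl(|\xi|^{-4}\widetilde\chi_{<a}(\xi)\widehat{vf_j}(\xi)\bigr)$, I get the identity $m_a(D)vf_j=G_j-r_j^a$. A finite-dimensional perturbation argument will then conclude, once I verify that (a) each $G_j$ makes sense and $\{G_j\}_{j=1}^{n_2}$ are linearly independent in $L^\infty$, and (b) $\|r_j^a\|_{L^\infty}\to 0$ as $a\to 0$ uniformly in $j$.

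\textbf{Step 1: Construction of $G_j$ and remainder estimate.} Since $f_j\in S_2L^2\subset QL^2$, Lemma \ref{lemma_resonance} yields $\langle v,f_j\rangle=0$ and $\langle x_kv,f_j\rangle=0$ for $k=1,2,3$. With $\mu>21$ one has $\<x\>^2 vf_j\in L^1$, so Taylor expansion of $\widehat{vf_j}$ at the origin yields $|\widehat{vf_j}(\xi)|\lesssim|\xi|^2$ for $|\xi|\le 1$. Hence $|\xi|^{-4}\widehat{vf_j}(\xi)\in L^1(\mathbb{R}^3)$ (behaving like $|\xi|^{-2}$ near $0$ and integrable at infinity), so $G_j:=\mathcal F^{-1}(|\xi|^{-4}\widehat{vf_j})\in C_0(\mathbb{R}^3)$. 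Using the fundamental solution recalled in the paper, $G_j(x)=-\tfrac{1}{8\pi}\int|x-y|v(y)f_j(y)\,dy$, and the expansion $|x-y|=|x|-\hat x\cdot y+O(|y|^2/|x|)$ combined with the two vanishing moments shows $G_j(x)=O(|x|^{-1})$, so $G_j\in L^\infty(\mathbb{R}^3)$. For the remainder, the same moment bound gives
\[
\|r_j^a\|_{L^\infty}\lesssim\int_{|\xi|\le a}|\xi|^{-4}|\widehat{vf_j}(\xi)|\,d\xi\lesssim\int_0^a r^{-2}\cdot r^2\,dr\lesssim a,
\]
uniformly in $j$.

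\textbf{Step 2: Independence of $\{G_j\}$ and the perturbation argument.} Suppose $\sum_jc_jG_j\equiv 0$. Applying $\Delta^2$ gives $v\sum_jc_jf_j=0$ a.e. By Lemma \ref{gongzhengkehua-1}(ii), each $f_j$ has the form $Uv\phi_j$ for some resonance state $\phi_j$, so $f_j$ vanishes wherever $v=0$; hence $\sum_jc_jf_j=0$ on $\{v=0\}$ trivially and on $\{v>0\}$ by dividing by $v$. Thus $\sum_jc_jf_j=0$ in $L^2$, and orthonormality of $\{f_j\}$ forces $c_j=0$. Since $\mathrm{span}\{G_j\}$ is finite-dimensional, norm equivalence yields $c_0>0$ with $\|\sum_jc_jG_j\|_{L^\infty}\ge c_0\sum_j|c_j|$. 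If $\sum_jc_jm_a(D)vf_j=0$ as a distribution, then as bounded functions $\sum_jc_jG_j=\sum_jc_j r_j^a$ a.e., so by Step 1 we have $c_0\sum_j|c_j|\le Ca\sum_j|c_j|$. Choosing $0<a<c_0/C$ forces $c_j=0$ for all $j$, concluding the proof.

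\textbf{Main obstacle.} The only delicate part is Step 1: leveraging the two cancellations $\langle v,f_j\rangle=\langle x_kv,f_j\rangle=0$ inherent in $S_2L^2$ both to make sense of $G_j$ via $|\xi|^{-4}\widehat{vf_j}$ and to deduce the uniform bound $\|r_j^a\|_{L^\infty}=O(a)$. Once this is done, the remainder of the argument is soft: Lemma \ref{gongzhengkehua-1}(ii) rules out any obstruction from the possible vanishing of $V$, and finite-dimensionality supplies the norm equivalence needed to absorb the perturbation.
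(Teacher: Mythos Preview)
Your proof is correct, but it takes a genuinely different route from the paper's.

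The paper argues by contradiction and compactness: assuming dependence for a sequence $a_m\to 0$, it extracts (via compactness of the unit sphere in $\mathbb{C}^{n_2}$) a single nonzero vector $(b_j)$ for which $\sum_j b_j\widehat{vf_j}\equiv 0$, hence $\sum_j b_j vf_j=0$. It then invokes the structural identity $(I+UvG_0v)f_j=0$, which follows directly from $QTS_1=0$ and $PTS_2=0$ in Lemma~\ref{lemma_resonance}, to rewrite $f_j=-UvG_0(vf_j)$; applying $-UvG_0$ to the relation $\sum_j b_j vf_j=0$ immediately yields $\sum_j b_j f_j=0$, contradicting orthonormality. No estimates on $(\Delta^2)^{-1}(vf_j)$ or on remainders are needed.

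Your approach is constructive and quantitative: you realize $m_a(D)vf_j$ as a small $L^\infty$-perturbation of $G_j=(\Delta^2)^{-1}(vf_j)$, exploit the second-order moment cancellations built into $S_2L^2$ to make $G_j$ a bona fide element of $C_0\cap L^\infty$ and to get $\|r_j^a\|_{L^\infty}=O(a)$, and then run a finite-dimensional stability argument. Your way of passing from $v\sum_j c_j f_j=0$ to $\sum_j c_j f_j=0$ via Lemma~\ref{gongzhengkehua-1}(ii) (each $f_j=Uv\phi_j$ vanishes on $\{v=0\}$) is a valid alternative to the paper's identity $f_j=-UvG_0(vf_j)$. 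The payoff of your route is an explicit smallness threshold for $a$, at the cost of a bit more analytic input; the paper's route is shorter and avoids estimating $G_j$ and $r_j^a$ altogether.
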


\begin{proof}
We show the lemma by contradiction. Assume that $\{m_a(D)vf_j\}_{j=1}^{n_2}$ are  linearly dependent  for sufficiently small $a>0$. Then their Fourier transforms $\{|\xi|^{-4}\widetilde\chi_{\geq4a}(\xi)\widehat{vf_j}(\xi)\}_{j=1}^{n_2}$ are also linearly dependent  in the distributional sense.

Choose an arbitrarily decreasing sequence $\{a_m\}$ with $a_m\to 0$ as $m\to \infty$.
For each $m\in \N$, there exists a vector $\beta_m=(b_{m1},\cdots,b_{mn})$ with $|\beta_m|=1$ and a null measure set $\Theta_m$ such that
\begin{equation*}
	\begin{split}
		\sum_{j=1}^{n_2}b_{mj}|\xi|^{-4}\widetilde \chi_{\geq4a_m}(\xi)\widehat{vf_j}(\xi)=0,\quad \xi\in \Theta_m^c.
	\end{split}
\end{equation*}
In particular $\sum_{j=1}^{n_2}b_{mj}\widehat{vf_j}(\xi)=0$ for $\xi\in\big(\bigcup_{k=1}^\infty \Theta_k\big)^c$ satisfying $|\xi|\geq4a_m.$ Let
\begin{equation*}
	B_m:=\Big\{ \beta_m=(b_{m1}, \cdots, b_{mn})\ \Big|\  |\beta_m|=1, \sum_{j=1}^n b_{mj}\widehat{vf_j}(\xi)=0,\ \xi \in \Big(\bigcup_{k=1}^\infty \Theta_k \Big)^c,\ |\xi|\geq4a_m  \Big\}.
\end{equation*}
Note that $B_m\supset B_{m+1}$ and  each $B_m$ is a non-empty compact set. Hence
\begin{equation*}
	B=\bigcap_{m=1}^\infty B_m\neq\emptyset.
\end{equation*}
Moreover, for any $\beta=(b_1, \cdots, b_n)\in B$, one has
\begin{equation*}
	\sum_{j=1}^nb_j\widehat{vf_j}(\xi)=0,\quad \xi\in\big(\bigcup_{k=1}^\infty \Theta_k\big)^c,
\end{equation*}
which implies that $\{vf_j\}_{j=1}^n$ are linearly dependent and, for all $\beta=(b_1, \cdots, b_n)\in B$
$$\sum_{j=1}^nb_jvf_j=0.$$
Since $f_j\in S_2L^2\subset S_1L^2\subset QL^2$, we observe by Lemma \ref{lemma_resonance} (ii) and (iii) that
$$
0=U QTf_j=U(U+vG_0v)f_j-UPTf_j=(I+UvG_0v)f_j. 
$$
Thus, for all $\beta=(b_1, \cdots, b_n)\in B$, 
$$\sum_{j=1}^nb_jf_j=-UvG_0\Big(\sum_{j=1}^nb_jvf_j\Big)=0.$$
Hence $\{f_j\}_{j=1}^{n_2}$ are  linearly dependent, which is a contradiction.
\end{proof}

\begin{lemma}\label{equiv0}
Let $c_{kls},b_{kl}\in \C$ and 
\begin{equation*}
\phi(\eta)=\sum_{k,l,s=1}^3c_{kls}\frac{\eta_{k}\eta_{l}\eta_{s}}{|\eta|^3}
		-\sum_{k,l=1}^3b_{kl}\frac{\eta_{k}\eta_{l}}{|\eta|^2},\quad \eta\in \R^3\setminus\{0\}. 
\end{equation*}
If $\phi$ vanishes identically and the values of coefficients $c_{kls}, b_{kl}$ do not change by arranging the order of  the variables $k,l,s$ for all $k,l,s=1,2,3$.  Then $c_{kls}=b_{kl}=0$ for all $k,l,s=1,2,3$. 
\end{lemma}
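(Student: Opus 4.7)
The approach is to exploit the parity of the two terms under the reflection $\eta\mapsto -\eta$. Write
\begin{equation*}
\phi(\eta)=\frac{C(\eta)}{|\eta|^3}-\frac{B(\eta)}{|\eta|^2},\qquad C(\eta):=\sum_{k,l,s=1}^{3}c_{kls}\eta_k\eta_l\eta_s,\quad B(\eta):=\sum_{k,l=1}^{3}b_{kl}\eta_k\eta_l.
\end{equation*}
The cubic term is odd homogeneous of degree $0$ (i.e.\ $C(-\eta)/|-\eta|^3=-C(\eta)/|\eta|^3$) while the quadratic term is even. Since $\phi\equiv 0$ on $\R^3\setminus\{0\}$, evaluating at $\eta$ and at $-\eta$ and adding (resp.\ subtracting) shows that the even and odd parts vanish separately:
\begin{equation*}
B(\eta)=0\quad\text{and}\quad C(\eta)=0\qquad\text{for all }\eta\in\R^3\setminus\{0\}.
\end{equation*}

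The first step is to conclude $b_{kl}=0$. Multiplying the identity $B(\eta)=0$ by $|\eta|^2$ (or just noting $B$ is a polynomial) gives a quadratic polynomial vanishing identically on $\R^3$; since the coefficients are symmetric ($b_{kl}=b_{lk}$), comparing monomials (e.g.\ by taking $\partial_{\eta_k}\partial_{\eta_l}$ at $0$) yields $b_{kl}=0$ for all $k,l$. Equivalently, evaluate $B$ at the standard basis vectors $e_k$ to get $b_{kk}=0$, and at $e_k+e_l$ to get $2b_{kl}=0$.

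The second step is analogous for the cubic form $C(\eta)$: full symmetry of $c_{kls}$ in its three indices together with $C\equiv 0$ forces $c_{kls}=0$ by taking third-order partial derivatives at the origin (or by evaluating at $e_k$, $e_k+e_l$, and $e_k+e_l+e_s$ and solving the resulting triangular linear system). There is no real obstacle here; the whole argument is a short parity-plus-polynomial-identity computation, and the symmetry hypothesis on the coefficients is exactly what is needed to pass from ``polynomial vanishes'' to ``each coefficient vanishes.''
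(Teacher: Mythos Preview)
Your proof is correct and follows essentially the same approach as the paper: both arguments separate the cubic and quadratic parts by parity under $\eta\mapsto-\eta$, then use that a symmetric polynomial vanishing identically has all coefficients zero. Your version makes the parity step explicit, whereas the paper phrases it as linear independence of the listed monomials after restricting to $|\eta|=1$.
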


\begin{proof}By the homogeneity of $\phi$, we may assume $|\eta|=1$ without loss of generality. 
By the assumption, $f(\eta)=\sum_{k,l=1}^3b_{kl}\eta_{k}\eta_{l}$ and $g(\eta)=\sum_{k,l,s=1}^3c_{kls}\eta_{k}\eta_{l}\eta_{s}$ are of the following forms:
\begin{align*}
f(\eta)&=b_{11}\eta_{1}^2+b_{22}\eta_{2}^2+b_{33}\eta_{3}^2+2b_{12}\eta_{1}\eta_{2}+2b_{13}\eta_{1}\eta_{3}+2b_{23}\eta_{2}\eta_{3}\\
		g(\eta)&=  c_{111}\eta_{1}^3+c_{222}\eta_{2}^3+c_{333}\eta_{3}^3+3c_{112}\eta_{1}^2\eta_{2}+3c_{113}\eta_{1}^2\eta_{3}+3c_{122}\eta_{1}\eta_{2}^2\nonumber\\		&+3c_{133}\eta_{1}\eta_{3}^2+3c_{223}\eta_{2}^2\eta_{3}+3c_{233}\eta_{2}\eta_{3}^2+6c_{123}\eta_{1}\eta_{2}\eta_{3}. 
\end{align*}
Since each polynomials of entries of $f,g$ are linearly independent, if $f-g\equiv0$ then $c_{kls}\equiv b_{kl}\equiv0$. 
\end{proof}

\appendix

\section{Asymptotic expansions of $M^{-1}(\lambda)$  with threshold singularities}
\label{Appendix A}
In this Appendix, we show Theorem \ref{thm-main-inver-M}, the low energy expansions of the inverse of
\begin{align}
\label{M^pm}
M^\pm(\lambda)=U+ vR^\pm_0(\lambda^4)v. 
\end{align}

\subsection{The  asymptotic expansion of $M^\pm(\lambda)$}
\label{subsection_M^pm}
We first derive the asymptotic expansions of  $M^\pm(\lambda)$.  The formula \eqref{free_resolvent} gives the following  asymptotic expansion as $\lambda\to 0$:
\begin{equation}\label{resolvent-expansion-R0lambda4}
	\begin{split}
		R^\pm_0(\lambda^4)(x,y)=& \frac{a^\pm}{\lambda}+G_0(x,y) + \lambda a_1^\pm G_1(x,y) +\lambda^3 a_3^\pm  G_3(x,y)\\
		&+\lambda^4G_4(x,y)+ \sum_{k=5}^N  \lambda^k a_k^\pm G_k(x,y) +O\big( \lambda^{N+1}|x-y|^{N+2}\big),
	\end{split}
\end{equation}
where 
\begin{equation}\label{def-Gk}
	\begin{split}
		&G_0(x,y)=-\frac{|x-y|}{8\pi},\quad G_1(x,y)=|x-y|^2,\quad G_3(x,y)=|x-y|^4,\\
		& G_4(x,y)=-\frac{|x-y|^5}{4\pi\cdot 6!},\quad
		G_k(x,y)= |x-y|^{k+1}, \quad k\geq5,
	\end{split}
\end{equation}
and
$$\displaystyle a^\pm= \frac{1\pm i}{8\pi}, \ a_1^\pm=\frac{1\mp i}{8\pi \cdot3!},
\ a_3^\pm=\frac{1\pm i}{8\pi \cdot 5!},
\ a_k^\pm= \frac{ (-1)^{k+1}+ (\pm i)^{k+2}}{8\pi\cdot (k+2)!},\quad k\ge5.$$ 
Let $G_k$ be the integral operator with kernel $G_k(x,y)$ (note that  $G_0=(\Delta^2)^{-1}$), $T= U+vG_0v$ and $P= \|V\|^{-1}_{L^1(\mathbb{R}^3)} \langle\cdot, v \rangle v$.  
Plugging \eqref{resolvent-expansion-R0lambda4} into \eqref{M^pm} implies the following: 
\begin{lemma}\label{lem-M}
	Let $ \tilde{a}^\pm= a^\pm \|V \|_{L^1}$. Assume that $|V(x)|\lesssim (1+|x|)^{-\mu}$ with some $\mu > 0$. Then the following expansions of $M^\pm(\lambda)$ hold in $\mathbb{B}(L^2(\mathbb{R}^3))$ as $\lambda\to 0$:
	\begin{itemize}
		\item[(i)] If $\mu > 9$, then
		\begin{equation}\label{Mpm-1}
			\begin{split}
				M^\pm(\lambda)= \frac{\tilde{a}^\pm}{\lambda}P +T+a_1^\pm \lambda vG_1v+\Gamma_2(\lambda);
			\end{split}
		\end{equation}
		
		\item[(ii)]If $\mu > 13$, then
		\begin{equation}\label{Mpm-2}
			\begin{split}
				M^\pm(\lambda)= \frac{\tilde{a}^\pm}{\lambda}P +T+a_1^\pm \lambda vG_1v+a_3^\pm \lambda^3 vG_3v+\Gamma_4(\lambda);
			\end{split}
		\end{equation}
		
		\item[(iii)] If $\mu > 21$, then
		\begin{equation}\label{Mpm-3}
			\begin{split}
				M^\pm(\lambda)= &\frac{\tilde{a}^\pm}{\lambda}P +T+a_1^\pm \lambda vG_1v+a_3^\pm \lambda^3 vG_3v \\
				&+\lambda^4vG_4v + a_5^\pm\lambda^5 vG_5v + a_6^\pm \lambda^6vG_6v +a_7^\pm \lambda^7vG_7v +\Gamma_8(\lambda);
			\end{split}
		\end{equation}
		
		\item[(iv)] If $ \mu > 25$, then
		\begin{equation}\label{Mpm-4}
			\begin{split}
				M^\pm(\lambda)= \frac{\tilde{a}^\pm}{\lambda}P &+T+a_1^\pm \lambda vG_1v+a_3^\pm \lambda^3 vG_3v
				+\lambda^4vG_4v +\sum_{k=5}^9a_k^\pm \lambda^kvG_kv + \Gamma_{10}(\lambda).
			\end{split}
		\end{equation}
	\end{itemize}
Here $\Gamma_{k}(\lambda)$ are $\lambda$-dependent Hilbert-Schmidt operators satisfying
	\begin{equation}\label{Gamma_k}
		\big\| \Gamma_k(\lambda)\big\|_{L^2\rightarrow L^2}+\lambda\big\|\partial_\lambda\Gamma_k(\lambda)\big\|_{L^2\rightarrow L^2}
		+\lambda^2\big\|\partial^2_\lambda\Gamma_k(\lambda)\big\|_{L^2\rightarrow L^2}
		\lesssim \lambda^k.
	\end{equation}
\end{lemma}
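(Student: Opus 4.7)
The strategy is a direct computation: substitute the free resolvent expansion \eqref{resolvent-expansion-R0lambda4} into $M^\pm(\lambda)=U+vR_0^\pm(\lambda^4)v$, sandwich by $v$ term by term, and identify each resulting operator.

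I would first treat the lowest-order contributions, which are the only ones whose form is not already read off from \eqref{resolvent-expansion-R0lambda4}. The $\lambda^{-1}$ coefficient of $vR_0^\pm(\lambda^4)v$ is the rank-one kernel $a^\pm v(x)v(y)$; since $(v\otimes v)f=\langle f,v\rangle v=\|V\|_{L^1}Pf$, this is exactly $\tilde a^\pm\lambda^{-1}P$. The $\lambda^0$ coefficient is $vG_0v$, which combined with $U$ produces $T=U+vG_0v$. All higher-order explicit coefficients are $a_k^\pm vG_kv$ with $a_k^\pm$ and $G_k$ defined in \eqref{def-Gk} and in the line after \eqref{def-Gk}; note in particular that $a_2^\pm=0$ (equivalently $(\pm i)^4-(-1)^4=0$), which is why no $\lambda^2vG_2v$ term appears in \eqref{Mpm-1}--\eqref{Mpm-4}.

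Next I would check that every polynomial operator $vG_kv$ appearing explicitly in (i)--(iv) is Hilbert--Schmidt on $L^2(\mathbb{R}^3)$. Since $G_k(x,y)$ is a constant multiple of $|x-y|^{k+1}$, using the elementary bound $|x-y|^{2(k+1)}\lesssim \langle x\rangle^{2(k+1)}+\langle y\rangle^{2(k+1)}$ and $v(x)^2\lesssim \langle x\rangle^{-\mu}$, one finds
\begin{equation*}
\|vG_kv\|_{\mathrm{HS}}^2\lesssim \int\!\!\int \langle x\rangle^{-\mu}\bigl(\langle x\rangle^{2(k+1)}+\langle y\rangle^{2(k+1)}\bigr)\langle y\rangle^{-\mu}\,dx\,dy<\infty
\end{equation*}
whenever $\mu>2k+5$. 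Each case (i)--(iv) is arranged precisely so that this threshold is satisfied by every retained $vG_kv$.

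The main obstacle, and the heart of the proof, is the derivative bound \eqref{Gamma_k} for the remainders $\Gamma_k(\lambda)$. My plan is to write the remainder in Taylor integral form. Since $R_0^\pm(\lambda^4)(x,y)=F_\pm(\lambda|x-y|)/(8\pi\lambda)$ with $F_\pm(s)=s^{-1}(e^{\pm is}-e^{-s})$ bounded together with all its derivatives on $[0,\infty)$, a standard Taylor-with-remainder argument yields
\begin{equation*}
R_0^\pm(\lambda^4)(x,y)-\text{(explicit terms through order $\lambda^{k-1}$)}
=\lambda^{k}|x-y|^{k+1}\,H^\pm_k(\lambda,|x-y|),
\end{equation*}
where $H^\pm_k$ is smooth in $\lambda$ and uniformly bounded together with its first two $\lambda$-derivatives. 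Each $\partial_\lambda$ acting on $H^\pm_k(\lambda,|x-y|)$ costs an extra factor of $|x-y|$ but no loss in $\lambda$; combined with the overall prefactor $\lambda^k$, two derivatives thus yield a kernel whose modulus is bounded by $C\lambda^{k-\ell}|x-y|^{k+1+\ell}$ for $\ell=0,1,2$. Sandwiching by $v$ and computing the Hilbert--Schmidt norm gives
\begin{equation*}
\|\partial_\lambda^\ell\Gamma_k(\lambda)\|_{\mathrm{HS}}^2\lesssim \lambda^{2(k-\ell)}\int\!\!\int v^2(x)|x-y|^{2(k+1+\ell)}v^2(y)\,dx\,dy,
\end{equation*}
which is the bound \eqref{Gamma_k} provided $\mu>2(k+1+\ell)+3$, i.e.\ $\mu>2k+7$ for $\ell=2$. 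This is precisely what forces the thresholds $\mu>9,13,21,25$ in the four cases, taking into account the orders $k=2,4,8,10$ of the retained expansions. The proof is then complete by gathering the explicit terms and the remainder.
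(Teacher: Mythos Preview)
Your overall plan---substitute the expansion \eqref{resolvent-expansion-R0lambda4}, identify $\tilde a^\pm\lambda^{-1}P$ and $T$, and bound the Taylor remainder in Hilbert--Schmidt norm---is exactly what the paper has in mind (the paper simply says ``plugging \eqref{resolvent-expansion-R0lambda4} into \eqref{M^pm}''). The identification of the explicit terms and the Hilbert--Schmidt check for each $vG_kv$ are fine.

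The gap is in your derivative bound for the remainder. Your claim that ``each $\partial_\lambda$ acting on $H^\pm_k(\lambda,|x-y|)$ costs an extra factor of $|x-y|$ but no loss in $\lambda$'' is too crude, and with that bookkeeping the thresholds do \emph{not} come out to $\mu>9,13,21,25$: your own inequality $\mu>2(k+1+\ell)+3$ gives $\mu>2k+9$ at $\ell=2$ (not $2k+7$; there is an arithmetic slip), hence $\mu>13,17,25,29$ for $k=2,4,8,10$. So the final sentence ``this is precisely what forces the thresholds'' is not justified by what precedes it.

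The fix is to observe that the remainder factors as $\Gamma_k(\lambda)(x,y)=\dfrac{v(x)v(y)}{8\pi\lambda}\,R_k(\lambda r)$ with $r=|x-y|$ and $R_k(s)=F_\pm(s)-\sum_{n=0}^{k}c_n^\pm s^n$. Since $R_k^{(j)}$ is itself the Taylor remainder of $F_\pm^{(j)}$ of order $k+1-j$, and $F_\pm^{(j)}$ is bounded on $[0,\infty)$, one has the uniform bound
\[
|R_k^{(j)}(s)|\lesssim s^{\,k+1-j},\qquad s\ge0,\ 0\le j\le k+1,
\]
valid for small $s$ by Taylor and for large $s$ because both $F_\pm^{(j)}$ and the truncated polynomial are $O(s^{k+1-j})$. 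Applying Leibniz to $\lambda^{-1}R_k(\lambda r)$ then gives
\[
|\partial_\lambda^\ell\,\Gamma_k(\lambda)(x,y)|\lesssim v(x)v(y)\sum_{j=0}^{\ell}\lambda^{-1-j}\,r^{\ell-j}\,(\lambda r)^{k+1-(\ell-j)}\ \lesssim\ \lambda^{k-\ell}\,v(x)\,r^{k+1}\,v(y),
\]
with \emph{no} additional powers of $r$. This yields $\|\partial_\lambda^\ell\Gamma_k(\lambda)\|_{\mathrm{HS}}\lesssim\lambda^{k-\ell}$ as soon as $\mu>2(k+1)+3=2k+5$, which is exactly $\mu>9,13,21,25$ for $k=2,4,8,10$.
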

For the sake of simplicity, any Hilbert-Schmidt operator-valued functions  satisfying the property \eqref{Gamma_k}  will be indiscriminately denoted by $\Gamma_k(\lambda)$. 

\subsection {The asymptotic expansions of $M^\pm(\lambda)^{-1}$}
\label{expansion 6.2}
We next deal with $M^\pm(\lambda)^{-1}$. The following lemma will be used frequently in the proof of Theorem \ref{thm-main-inver-M}. 
\begin{lemma}[{\cite[Lemma 2.1]{JN}}]
\label{lemma-JN}
Let $A$ be a closed operator and $S$ be a projection. Suppose $A+S$ has a bounded inverse. Then $A$ has
	a bounded inverse if and only if
	\begin{equation*}		
		a:= S-S(A+S)^{-1}S
	\end{equation*}
	has a bounded inverse in $SH$, and in this case
	\begin{equation*}		
		A^{-1}= (A+S)^{-1} + (A+S)^{-1}S a^{-1} S(A+S)^{-1}.
	\end{equation*}	
\end{lemma}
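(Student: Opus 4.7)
The plan is to prove both equivalences by direct algebraic verification, built around the single identity $A = (A+S) - S$ and the assumed bounded inverse $B := (A+S)^{-1}$. From $(A+S)B = B(A+S) = I$ one immediately obtains the two relations $AB = I - SB$ and $BA = I - BS$, which will supply essentially all the nontrivial content of the argument.

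For the sufficiency direction (if $a$ is invertible on $SH$, then $A$ is invertible on $H$), I would take the candidate inverse $X := B + BSa^{-1}SB$ furnished by the statement of the lemma and verify $AX = I$ directly. Expanding and applying $AB = I - SB$ yields
\begin{equation*}
AX = I - SB + (S - SBS)\, a^{-1}SB.
\end{equation*}
The crucial observation is that $a^{-1}SB$ lies in $SH$, on which $S - SBS$ coincides with $a$, so the last term collapses to $a a^{-1}SB = SB$ and $AX = I$ follows. The verification $XA = I$ is symmetric via $BA = I - BS$, starting from the same formula for $X$.

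For the necessity direction (if $A^{-1}$ exists, then $a$ is invertible on $SH$), I would test the candidate $c := S + SA^{-1}S$, viewed as an operator on $SH$. Right-multiplying $BA = I - BS$ by $A^{-1}$ gives $B = A^{-1} - BSA^{-1}$, and hence $SBS = SA^{-1}S - SBSA^{-1}S$. Substituting into the expansion
\begin{equation*}
ac = (S - SBS)(S + SA^{-1}S) = S + SA^{-1}S - SBS - SBSA^{-1}S
\end{equation*}
produces the needed cancellation and reduces it to $S$, which is the identity on $SH$; the other composition $ca = S$ is handled symmetrically.

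The only real obstacle is the bookkeeping of domains: $a$, $a^{-1}$, and $c$ live on $SH$, while $A$, $B$, $A^{-1}$ act on the full space $H$, and the identity $(S - SBS)|_{SH} = a$ (together with the fact that $S$ is the identity on $SH$) is what causes every verification to collapse. Aside from that, the whole argument is mechanical algebraic manipulation driven by the two identities $AB = I - SB$ and $BA = I - BS$; no spectral theory, perturbation analysis, or analytic estimate is required.
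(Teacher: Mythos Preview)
Your proof is correct. The paper does not actually prove this lemma; it is quoted verbatim from Jensen--Nenciu \cite[Lemma 2.1]{JN} and used as a black box in Appendix~\ref{Appendix A}. Your direct algebraic verification via the identities $AB = I - SB$ and $BA = I - BS$ (with $B=(A+S)^{-1}$) is exactly the standard argument and matches the original Jensen--Nenciu proof.
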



Now we are  in a position to prove Theorem \ref{thm-main-inver-M}.

\begin{proof}[Proof of Theorem \ref{thm-main-inver-M}]
The proof basically follows a similar line to that of \cite[Theorem 4.4]{Erdogan-Green-Toprak}. For simplicity, we drop the letter ``$+$", as well as the variable $\lambda$, denoting for instance 
$$M=M^+(\lambda),\quad \tilde a=\tilde a^+, \quad \Gamma_k=\Gamma_k(\lambda)$$
 and so on. Let $\widetilde M=\lambda \tilde a^{-1}M$ and recall that $Q=I-P$. 
By virtue of Lemma \ref{lem-M}, the Neumann series expansion of $(\widetilde{M}+Q)^{-1}$ for sufficiently small $\lambda>0$ is of the form
\begin{align}
\label{M+Q}
(\widetilde{M}+Q)^{-1}= I-\sum_{k=1}^{N-1} \lambda^kB_k+ \Gamma_{N},
\end{align}
where $N$ depends on the decaying condition $|V(x)|\lesssim \<x\>^{-\mu}$ as 
$$
N\le \begin{cases}3&\text{if $\mu>9$,}\\5&\text{if $\mu>13$,}\\
9&\text{if $\mu>21$,}\\
11& \text{if $\mu>25$.}
\end{cases}$$
$B_k$ are independent of $\lambda$ and $B_k\in \AB(L^2)$ since all of the entries of the expansions of $M$ in Lemma \ref{lem-M} are absolutely bounded. Moreover, $B_k$ can be computed explicitly in terms of combinations of $T,v,G_k$. In particular, $B_1,...,B_5$ are of the forms
\begin{align*}
B_1=& \frac{1}{\tilde{a}} T,\quad
			B_2 = \frac{a_1}{\tilde{a}} vG_1v -\frac{1}{\tilde{a}^2}T^2,\quad
			B_3= -\frac{a_1}{\tilde{a}^2} (TvG_1v +vG_1vT )
			+ \frac{1}{\tilde{a}^3}T^3,\\
			B_4=& \frac{a_3}{\tilde{a}}vG_3v
			- \frac{(a_1)^2}{\tilde{a}^2}(vG_1v)^2
			+ \frac{a_1}{\tilde{a}^3} ( T^2vG_1v+TvG_1vT+vG_1vT^2)
			-\frac{1}{\tilde{a}^4}T^4,\\
			B_5=& \frac{1}{\tilde{a}}vG_4v -\frac{a_3}{\tilde{a}^2}(TvG_3v+vG_3vT )
			+ \frac{a_1^2}{\tilde{a}^3}\left(T(vG_1v)^2+(vG_1v)^2T+vG_1vTvG_1v \right)\\
			&\ \ \ \ \  -\frac{a_1}{\tilde{a}^4}\big( T^3vG_1v +vG_1vT^3 +T^2vG_1vT +TvG_1vT^2 \big)
			+\frac{1}{\tilde{a}^5}T^5.
\end{align*}					
Here we list the orthogonality relations of the operators defined in Definition \ref{definition_resonance}: 
	\begin{equation}
		\begin{split}
\label{orthog-relation-1}
Q\D_0 =& \D_0Q=\D_0,\quad \D_i\D_j = \D_j\D_i =\D_i,\quad S_i\D_j = \D_jS_i =S_i,\quad\text{for}\quad i >  j,\\
			S_i\D_j=& \D_jS_i =\D_j\quad\text{for}\quad i \leq j;
			\end{split}
	\end{equation}
	\begin{equation}\label{orthog-relation-2}
		\begin{split}
			& S_2TP=PTS_2=S_2T=TS_2=QvG_1vS_2=S_2vG_1vQ=0,\\
			&QTS_1=S_1TQ= 0,\,vG_1vS_3=S_3vG_1v=0, \, S_2vG_3vS_3=S_3vG_3vS_2=0;
		\end{split}
	\end{equation}
	\begin{equation}\label{orthog-relation-3}
		\begin{split}
			&B_1S_2=S_2B_1=0,\quad QB_2 S_2=S_2B_2 Q=B_2S_3=S_3B_2=0,\\
			&S_2B_3S_2=B_3 S_3=S_3B_3=0, \quad S_2B_4S_3=S_3B_4S_2=0.
		\end{split}
	\end{equation}
	
	 We give some details about  \eqref{orthog-relation-1}-\eqref{orthog-relation-3}. Note that $S_j$ is an orthogonal projection  with $S_{j+1}L^2 \subset S_jL^2 $ for all $j=0,1,2,3,$ here let $Q=S_0,$ then $S_iS_j = S_j $ for $i \leq j$.
	 Furthermore,  $\D_j=(T_j+S_{j+1})^ {-1}$ defined in Definition \ref{definition_resonance} is invertible on $S_jL^2$ and 
	 $\hbox{ker} T_j=S_{j+1}L^2.$ 
	 Then $\D_jS_j=S_j\D_j=\D_j$ and 
	 $$ \D_jS_i =\D_jS_{j+1}S_i =\D_j(T_j+S_{j+1})S_{j+1}S_i =S_jS_{j+1}S_i=S_i \ \text{for}\  i >  j.$$
	 Since $S_i, \D_j$ are self-adjoint, then $ S_i\D_j=S_i $ for $  i >  j.$
	Moreover,  $$\D_i\D_j=\D_iS_i\D_j=\D_iS_i=\D_i\ \text{for}\ i >  j.$$ We can also easily obtain that $S_i\D_j=S_iS_j\D_j=S_j\D_j=\D_j$ for $i \leq j.$ And by the self-adjointness of 
	  $S_i, \D_j$, one has $\D_jS_i=\D_j$ for $i \leq j.$ Thus we obtain \eqref{orthog-relation-1}.
	  
	  As for \eqref{orthog-relation-2}-\eqref{orthog-relation-3}, it follows from the orthogonality of $S_j$ (see Lemma \ref{lemma_resonance} ).  We take to show
	  $QvG_1vS_2=0$ as an  example.  Since $G_1(x,y)=|x-y|^2$ 
	  (see \eqref{def-Gk}), $Qv=0$ and $S_1v=S_1x_jv=0, j=1,2,3,$ (see Lemma \ref{lemma_resonance}), then taken $f\in L^2,$
	  \begin{align*}
	  QvG_1vS_2f(x)&=Qv(x)\int_{\R^3}\big(|x|^2-2x\cdot y+|y|^2\big)v(y)S_2f(y)dy\\
	  	&=\<v, S_2f\>Q|x|^2v-2\sum_{j=1}^{3}\<y_jv, S_2f\>Qx_jv
	  	+\<|y|^2v, S_2f\>Qv=0.
	  	\end{align*}
	  Similarly, by the orthogonality of $S_j$ (see Lemma \ref{lemma_resonance} ), we can check the remaining orthogonality relations \eqref{orthog-relation-2}-\eqref{orthog-relation-3}.


By Lemma \ref{lemma-JN} with $A=\widetilde{M}$, $S=Q$, and \eqref{M+Q}, $\widetilde{M}$ is invertible on $L^2(\mathbb{R}^3)$ if and only if
	\begin{equation}\label{id-M1M1tabu}
		\begin{split}
			M_1
			&:=Q-Q(\widetilde{M}+Q)^{-1}Q=\frac{\lambda}{\tilde{a}}T_0+\sum_{k=2}^{N-1}\lambda^kQB_kQ +\Gamma_{N}
		\end{split}
\end{equation}
is invertible on $QL^2(\mathbb{R}^3)$ in which case
\begin{align}
M^{-1}
&=\frac{\lambda}{\tilde{a}}\big( \widetilde{M}^+(\lambda)\big)^{-1}
=\frac{\lambda}{\tilde{a}}\left\{(\widetilde{M} +Q)^{-1}+ ( \widetilde{M}+Q)^{-1}Q M_1^{-1}Q(\widetilde{M}+Q)^{-1}\right\}\nonumber\\
\label{M^{-1}}
&=\frac{\lambda}{\tilde{a}}\Big\{I-\sum_{k=1}^{N-1} \lambda^kB_k+ \Gamma_{N}+\Big(I-\sum_{k=1}^{N-1} \lambda^kB_k+ \Gamma_{N}\Big)QM_1^{-1}Q\Big(I-\sum_{k=1}^{N-1} \lambda^kB_k+ \Gamma_{N}\Big)\Big\}
\end{align}
where $T_0=QTQ$. It thus is enough to derive the asymptotic expansion of  $M_1^{-1}$. 
\bigskip\\
\underline{{\bf(i) The regular case}}. In this case, $T_0$ is invertible on $QL^2$ and 
	$\D_0=T_0^{-1}$ exists on $QL^2$ (see Definition \ref{definition_resonance}). Moreover, it is known by \cite[Lemma 4.3]{Erdogan-Green-Toprak} and \eqref{orthog-relation-1} that $\D_0=Q\D_0Q\in \AB(L^2)$. 

It follows from \eqref{id-M1M1tabu} with $N=3$,  the Neumann series expansion that 
\begin{align*}
M_1^{-1}=\left\{\frac{\lambda}{\tilde a}T_0\Big(I+\lambda \tilde a \D_0QB_2Q+\Gamma_2\Big)\right\}^{-1}
=\frac{\tilde{a}}{\lambda}\D_0-\tilde{a}^2 \D_0B_2\D_0+\Gamma_1
\end{align*}
for sufficiently small $\lambda>0$, 
which, together with $Q\D_0=\D_0Q=\D_0,$ \eqref{id-M1M1tabu} and \eqref{M^{-1}}, shows
\begin{align*}
M^{-1}
&=\frac{\lambda}{\tilde{a}}\left\{ I+\Gamma_1+(I+\lambda B_1+\Gamma_2)QM_1^{-1}Q(I+\lambda B_1+\Gamma_2)\right\}\\
&=\frac{\lambda}{\tilde{a}}\left(\frac{\tilde{a}}{\lambda}\D_0+ I-\D_0T-T\D_0-\tilde{a}a_1\D_0vG_1v\D_0+\D_0T^2\D_0+\Gamma_1\right)\\
			&=QA^0_{0,1}Q +\lambda A_{1,1}^0+\Gamma_2,
\end{align*}
where  $A^0_{0,1},A^0_{1,1}\in \AB(L^2)$ are given by
	\begin{equation*}
		\begin{split}
			A^0_{0,1}=\D_0,\quad A_{1,1}^0=\frac{1}{\tilde a}-\frac{1}{\tilde{a}}\D_0T-\frac{1}{\tilde{a}}T\D_0-a_1\D_0vG_1v\D_0
			+\frac{1}{\tilde{a}}\D_0T^2\D_0.
		\end{split}
	\end{equation*}
\noindent
\underline{{\bf (ii) The first kind resonance case}}. In this case, $T_0$ is not invertible on $QL^2$, while $T_0+S_1$ is invertible on $QL^2$ with  $S_1$ being the orthogonal (Riesz) projection onto $\Ker (T_0)$.

 Set $\D_0:= (T_0+S_1)^{-1}$ and $\widetilde{M}_1:=\lambda^{-1}\tilde a M_1$. Then we use \eqref{id-M1M1tabu} with $N=5$, the fact $Q\D_0=\D_0Q=\D_0$ and the Neumann series expansion to derive that
\begin{align}
\nonumber
(\widetilde{M}_1+S_1)^{-1}
&=\left\{(T_0+S_1)\Big(I+\sum_{k=2}^4\lambda^{k-1}\tilde a \D_0QB_kQ+\Gamma_4\Big)\right\}^{-1}\\
\label{id-M1tuba+S1}
&=\D_0- \lambda B_1^0 -\lambda^2B_2^0-\lambda^3B_3^0+ \Gamma_4,
\end{align}
as an operator on $QL^2$, where  $B_k^0\in \AB(L^2)$ are $\lambda$-independent, and
	\begin{equation}\label{Bjo}
		\begin{split}
			B_1^0=& \tilde{a}\D_0B_2\D_0, \quad
		 	B_2^0= \tilde{a}\D_0B_3\D_0- \tilde{a}^2\D_0(B_2\D_0)^2,\\
			B_3^0=& \tilde{a}\D_0B_4\D_0 -\tilde{a}^2( \D_0B_2\D_0B_3\D_0 + \D_0B_3\D_0B_2\D_0)
			+\tilde{a}^3\D_0(B_2\D_0)^3.
		\end{split}
	\end{equation}
By Lemma \ref{lemma-JN}, $ \widetilde{M}^+_1$ has a bounded inverse on $QL^2$
	if and only if
	\begin{align}\label{M2,1st}
	 M_2:= S_1- S_1(\widetilde{M}_1+S_1)^{-1}S_1=\lambda S_1B_1^0S_1+\lambda^2S_1B_2^0S_1+\lambda^3S_1B_3^0S_1+\Gamma_4
	 \end{align}
	 has a bounded inverse on $S_1L^2$, where we used \eqref{id-M1tuba+S1}  in the second equality. By using \eqref{Bjo}, the definition of $B_2$, \eqref{orthog-relation-1} and \eqref{orthog-relation-2}, and the fact $T_0=0$ on $S_1L^2$, we compute
\begin{align*}
S_1B_1^0S_1
&=\tilde a S_1\D_0\left(\frac{a_1}{\tilde{a}} vG_1v -\frac{1}{\tilde{a}^2}T^2\right)\D_0S_1=a_1S_1vG_1vS_1-\frac{1}{\tilde a}S_1\D_0T(P+Q)T\D_0S_1\\
&=-\frac{1}{\tilde a}\left(-\tilde a a_1S_1vG_1vS_1+S_1TPTS_1\right)-\frac{1}{\tilde a}S_1\D_0TQT\D_0S_1
=-\frac{1}{\tilde a}T_1,
\end{align*}
where $T_1$ was given in Definition \ref{definition_resonance} and $S_1\D_0TQT\D_0S_1$ vanishes identically since
$$
S_1\D_0TQT\D_0S_1=S_1\D_0(QTQ)^2\D_0S_1=S_1T_0^2S_1=0.
$$ 
Since $T_1$ is invertible as an operator on $S_1L^2$ in the first kind resonance case, so is $M_2$. Furthermore, by \eqref{M2,1st} and the Neumann series expansion, we obtain
\begin{align*}
M_2^{-1}=\left\{ -\frac{\lambda}{\tilde a}T_1\left(I-\lambda \tilde a\D_1S_1B_2^0S_1-\lambda^2\tilde a\D_1S_1B_3^0S_1+\Gamma_3\right)\right\}^{-1}=
-\frac{\tilde{a}}{\lambda}\D_1+\tilde{a} B_1^1+\tilde{a}\lambda B_2^1+\Gamma_2,
\end{align*}
where  $\D_1=T_1^{-1}$, $B_1^1=-\tilde{a}^+\D_1B_2^0\D_1$ and $B_2^1=-\tilde{a}\D_1B_3^0\D_1- \tilde{a}^2\D_1(B_2^0\D_1)^2$ and we have used \eqref{orthog-relation-1} and \eqref{orthog-relation-2}.  
Moreover, 
Lemma \ref{lemma-JN} with $A=\widetilde{M}_1$ and $S=S_1$ and \eqref{id-M1tuba+S1} then yields
\begin{align*}
\label{M_1^{-1}}
\widetilde{M}_1^{-1}=
			(\widetilde{M}_1+S_1)^{-1}
			+ (\widetilde{M}_1+S_1)^{-1}S_1M_2^{-1}S_1
			( \widetilde{M}_1+S_1)^{-1}
= -\frac{\tilde{a}}{\lambda}\D_1+C^1_{1,0}+\lambda C^1_{1,1}+\Gamma_2,
\end{align*}
	where  $C^1_{1,0}=\tilde{a}\D_1B^0_1+\tilde{a} S_1B^1_1S_1+\tilde{a} B^0_1\D_1+\D_0$ and 	$$
C^1_{1,1}=\tilde{a} \D_1B^0_2-\tilde{a} S_1B^1_1S_1B^0_1+\tilde{a} S_1B^1_2S_1-\tilde{a} B^0_1\D_1B^0_1
			-\tilde{a} B^0_1S_1B^1_1S_1+\tilde{a} B^0_2\D_1-B^0_1.
$$
	Furthermore, since $M_1^{-1} = {\tilde{a}}{\lambda}^{-1}\widetilde{M}_1^{-1}$, then
\begin{equation*}
	\begin{split}
		M_1^{-1}=-\frac{\tilde{a}^2}{\lambda^2}\D_1+\frac{\tilde{a}}{\lambda}C^1_{1,0}+\tilde{a}C^1_{1,1}+\Gamma_1.
	\end{split}
\end{equation*}
Plugging this expansion $M_1^{-1}$ above  into \eqref{M^{-1}} with $N=3$, we arrive at
\begin{align*}
M^{-1}
&=\frac{\lambda}{\tilde{a}}\left\{I+ \Gamma_{1}+(I-\lambda B_1-\lambda^2 B_2+ \Gamma_{3})QM_1^{-1}Q(I-\lambda B_1-\lambda^2 B_2+ \Gamma_{3})\Big)\right\}\\
			&= \lambda^{-1}S_1A^1_{-1,1}S_1 + S_1A^1_{0,1}+A^1_{0,2}S_1+QA^1_{0,3}Q+\lambda A_{1,1}^1+\Gamma_2,
\end{align*}
	where  $A^1_{i,j}\in \AB(L^2)$ are $\lambda$-independent and given as follows:
\begin{align*}
A^1_{-1,1}&=- \tilde{a} \D_1,\quad A^1_{0,1}=\D_1T,\quad A^1_{0,2}=T\D_1,\quad A^1_{0,3}=\tilde{a}\D_1B^0_1+\tilde{a}S_1B^1_1S_1+\tilde{a}B^0_1\D_1+\D_0,\\
A_{1,1}^1&=\frac{1}{\tilde a}+\tilde{a}\D_1B_2-QC^1_{1,0}QB_1+QC^1_{1,1}Q-\tilde{a}B_1\D_1B_1-B_1QC^1_{1,0}Q+\tilde{a}B_2\D_1.
\end{align*}
\underline{{\bf(iii) The second  kind  resonance case}}. The general strategy is essentially same as above. For the case with the second kind resonance, $T_1$ is not invertible on $S_1L^2$. Let $S_2$ be the orthogonal projection onto $\mathop{\mathrm{Ker}}T_1$ and  $\D_1= (T_1+S_2)^{-1}$ be the inverse of $T_1+S_2$ on $S_1L^2$.
	By the same argument as above with the expansion of $M_1$ given in \eqref{id-M1M1tabu} with $N=9$, we have \begin{align*}
M_1
		&	=\frac{\lambda}{\tilde{a}}T_0 +\sum_{k=2}^8 \lambda^{k}Q B_kQ+ \Gamma_9:=\frac{\lambda}{\tilde{a}}\widetilde M_1, \quad	(\widetilde M_1+S_1)^{-1}=\D_0- \sum_{k=1}^7\lambda^kB_k^0 + \Gamma_8,\\
		M_2&:= S_1- S_1(\widetilde{M}_1+S_1)^{-1}S_1=-\frac{\lambda}{\tilde{a}}T_1+ \sum_{k=2}^7\lambda^{k} S_1B_k^0S_1
			+ \Gamma_8.
\end{align*}
Let $\widetilde M_2=-\lambda^{-1}\tilde a M_2$. Then we utilize the Neumann series expansion to derive
\begin{align}
	\nonumber
	(\widetilde{M}_2+S_2)^{-1}
	&=\left\{(T_1+S_2)\Big(I-\tilde a \sum_{k=2}^7\lambda^{k-1} \D_1S_1B_k^0S_1
	+ \Gamma_7\Big)\right\}^{-1}\\
	\label{id-M1tuba+S2}
	&= \D_1- \sum_{k=1}^6\lambda^kB_k^1 + \Gamma_7(\lambda),
\end{align}
as an operator on $S_1L^2$, where  $B_k^1\in \AB(L^2)$ are $\lambda$-independent, and
\begin{equation}\label{B1}
		\begin{split}
			B_1^1=& -\tilde{a}\D_1B_2^0\D_1, \quad
			B_2^1= -\tilde{a}\D_1B_3^0\D_1- \tilde{a}^2\D_1(B_2^0\D_1)^2,\\
			B_3^1=& -\tilde{a}\D_1B_4^0\D_1 -\tilde{a}^2( \D_1B_2^0\D_1B_3^0\D_1 + \D_1B_3^0\D_1B_2^0\D_1)
			-\tilde{a}^3\D_1(B_2^0\D_1)^3.
		\end{split}
	\end{equation}
By Lemma \ref{lemma-JN}, $ \widetilde{M}^+_2$ has a bounded inverse on $S_1L^2$
if and only if
\begin{align}
\label{M_3}
			M_3:= S_2- S_2(\widetilde{M}_2+S_2)^{-1}S_2
			=\sum_{k=1}^6\lambda^kS_2B_k^1S_2+\Gamma_7,
\end{align}
has a bounded inverse on $S_2L^2$.
Using \eqref{Bjo} and the properties \eqref{orthog-relation-1}-\eqref{orthog-relation-3}, we find $S_2B_1^1S_2=0$ and
	\begin{equation*}
		\begin{split}
			S_2B_2^1S_2
			=& -\tilde{a}a_3\Big( S_2vG_3vS_2 +\frac{10}{3\|V\|_{L^1}}S_2(vG_1v)^2S_2
			-\frac{10}{3\|V\|_{L^1}} S_2vG_1vT\D_1TvG_1vS_2\Big)
			=-\tilde{a}a_3T_2,
		\end{split}
	\end{equation*}
where $T_2$ has been defined in Definition \ref{definition_resonance} and is invertible on $S_2L^2$ in the second kind resonance case.  Thus $	M_3$ is invertible on $S_2L^2.$ Specifically, by the Neumann series expansion, we obtain
\begin{align*}
	M_3^{-1}&=\left\{ -{\tilde a}a_3\lambda^2 T_2\left(I-     {\tilde a}^{-1}a_3^{-1}\sum_{k=3}^6\lambda^{k-2}\D_2S_2B_k^1S_2+\Gamma_5\right)\right\}^{-1}\\
	&=-({\tilde a}a_3)^{-1}\lambda^{-2}\D_2+({\tilde a}a_3)^{-1}\sum_{k=1}^4\lambda^{k-2} B_k^2+ \Gamma_3.
\end{align*} 
where $\D_2 = T_2^{-1}$, $B_k^2\in \AB(L^2)$ are independent of $\lambda$,  and
	\begin{equation}\label{B2}
		\begin{split}
			&B_1^2= -\frac{1}{\tilde{a}a_3}\D_2B_3^1\D_2,\quad B_2^2= -\frac{1}{\tilde{a}a_3}\D_2B_4^1\D_2
			-\frac{1}{(\tilde{a}a_3)^2}\D_2(B_3^1\D_2)^2.
		\end{split}
	\end{equation}
As in the previous case, we use Lemma \ref{lemma-JN} repeatedly to obtain	\begin{align*}
M^{-1}&=\lambda^{-3}S_2A^2_{-3,1}S_2+\lambda^{-2}(S_2A^2_{-2,1}S_1 + S_1A^2_{-2,2}S_2)
			+\lambda^{-1}(S_2A^2_{-1,1}+A^2_{-1,2}S_2+S_1A^2_{-1,3}S_1)\\
			&+(S_1A_{0,1}^2 +A^2_{0,2}S_1 +QA^2_{0,3}Q)
			+\lambda A_{1,1}^2+ \Gamma_2,
\end{align*}
where  all the $A^2_{i,j}\in \AB(L^2)$ are independent of $\lambda$. To show Theorem \ref{theorem1.2}, for  most operators $A_{i,j}^2$, we utilize the absolute boundedness only and do not need their explicit formulas, except for  operators $A^2_{-3,1}$, $A^2_{-2,1}$ and  $A^2_{-1,1}$. Thus for simplicity, we only give more details about  $A^2_{-3,1}$, $A^2_{-2,1}$ and  $A^2_{-1,1}$, which can be obtained by using the properties \eqref{orthog-relation-1}-\eqref{orthog-relation-3}:
\begin{align}
\label{A^2}
A^2_{-3,1}=4\pi\cdot5!(1-i)\D_2,\quad A^2_{-2,1}=20i\D_2vG_1vT\D_1,\quad A^2_{-1,1}=S_2H^2_{-1,1}+S_2H^2_{-1,2}Q,
\end{align}
where $H^2_{-1,2}\in \AB(L^2)$ and
\begin{align}
\label{H^2}
H^2_{-1,1}=80\pi(1+i)\|V\|^{-1}_{L^1}(\D_2vG_1v-\D_2vG_1vT\D_1T).
\end{align}
\underline{{\bf (iv) The third kind  resonance case}}. By the same argument as above, we obtain the more detail expansion of
	$M_3(\lambda)$ than \eqref{M_3} as follows:
\begin{align*}
M_3&=-\tilde{a}a_3\lambda^2T_2+\sum_{k=3}^8\lambda^{k} S_2B_k^1S_2+ \Gamma_9\\
&=-\tilde{a}a_3\lambda^2\Big(T_2-\tilde{a}^{-1}a_3^{-1}\sum_{k=3}^8\lambda^{k-2} S_2B_k^1S_2+ \Gamma_7\Big)
:=-\lambda^2\tilde{a}a_3\widetilde{M}_3,
\end{align*}
where $T_2$ is not invertible on $S_2L^2$ in the third kind resonance case. We denotes by $S_3$ the orthogonal projection onto $\Ker (T_2)$ and $\D_2= (T_2+S_3)^{-1}$. Then by the Neumann series expansion, 
$$
(\widetilde{M}_3+S_3)^{-1}= \D_2- \sum_{k=1}^6\lambda^kB_k^2 + \Gamma_7,
$$
where  $B_k^2\in \AB(L^2)$ are independent of $\lambda$. In particular, 
$B_k^2 \  (k=1,2)$ can be seen in \eqref{B2}.  Moreover, 
by Lemma \ref{lemma-JN}, $ \widetilde{M}^+_3$ has a bounded inverse on $S_2L^2$
if and only if
\begin{align*}
	M_4:= S_3- S_3(\widetilde{M}_3+S_3)^{-1}S_3
	=\sum_{k=1}^6 \lambda^kS_3B_k^2S_3+\Gamma_7(\lambda),
\end{align*}
has a bounded inverse on $S_3L^2$.
Using the orthogonality relationship \eqref{orthog-relation-1}-\eqref{orthog-relation-3}, we have
\begin{equation}
	S_3B_1^2S_3= \frac{1}{a_3^+}S_3vG_4vS_3:=\frac{1}{a_3^+}T_3 .
\end{equation}
 Since $T_3$ is always invertible on $S_3L^2,$ 
 it follows  $	M_4$ is invertible on $S_3L^2.$
 Specifically, by the Neumann series expansion, we obtain
 \begin{align*}
 	M_4^{-1}&=\left\{ \frac{\lambda}{a_3} T_3\left(I+  \sum_{k=2}^6a_3\lambda^{k-1}\D_3S_3B_k^2S_3+\Gamma_6\right)\right\}^{-1}\\
 	&=a_3\lambda^{-1}\D_3+\sum_{k=1}^5a_3\lambda^{k-1} C^3_{4,k}+\Gamma_5,
 \end{align*} 
	where $\D_3=T_3^{-1}$ and $C^3_{4,k}$
	 are absolutely bounded operators on $S_3L^2$ and independent of $\lambda,$ whose explicit formulas can be calculated in details. We now apply Lemma \ref{lemma-JN} repeatedly to obtain
\begin{align*}
M^{-1}
&=\lambda^{-4}S_3 A^3_{-4,1} S_3+\lambda^{-3}S_2A^3_{-3,1}S_2+\lambda^{-2}(S_2A^3_{-2,1}S_1+ S_1A^3_{-2,2}S_2)\\
&+\lambda^{-1}(S_2A^3_{-1,1}+ A^3_{-1,2}S_2 +S_1A^3_{-1,3}S_1) +(S_1A_{0,1}^3 +A^3_{0,2}S_1 +QA^3_{0,3}Q) +\lambda A_{1,1}^3+\Gamma_2,
\end{align*}
where  $A^3_{i,j}\in \AB(L^2)$ are independent of $\lambda$. Here we list the explicit formulas of a part of $\{A^3_{i,j}\}$ needed in the proof of Theorem \ref{theorem1.2}, which can be obtained by using \eqref{orthog-relation-1}-\eqref{orthog-relation-3}: 
\begin{align}
\label{A_3_1}
A^3_{-4,1}&=\D_3=(S_3vG_4vS_3)^{-1},\\
\label{A_3_2}
A^3_{-3,1}&=S_2H^3_{-3,1}S_2+S_2H^3_{-3,2}S_3,\\
\label{A_3_3}
A^3_{-2,1}&=i(32\pi^2\cdot5!)^{-1}\|V\|_{L^1}\D_3vG_3v\D_1+20i(\D_2vG_1vT\D_1-\D_3vG_4v\D_2vG_1vT\D_1)\\&+i(32\pi^2\cdot3!^2)^{-1}\D_3vG_3v\D_1TvG_1v\D_2vG_1vT\D_1,\\
\label{A_3_4}
A^3_{-1,1}&=S_2H^3_{-1,1}+S_2H^3_{-1,2}Q,
\end{align}
where $H^3_{j,k}$ are absolutely bounded operators on $L^2$ and 
 \begin{align}
\label{A_3_5}
 H^3_{-3,1}&=4\pi\cdot5!(1-i)(\D_2-\D_3vG_4v\D_2)+(1-i)(8\pi\cdot3!)^{-1}\D_3vG_3v\D_1TvG_1v\D_2,\\
\label{A_3_6}
 H^3_{-1,1}&=\frac{80\pi(1+i)}{\|V\|_{L^1}}\Big(\D_2vG_1v-\D_3vG_4v\D_2vG_1v-\D_2vG_1vT\D_1T+\D_3vG_4v\D_2vG_1vT\D_1T\Big)\nonumber\\
			&\ \ \ \  +\frac{1+i}{8\pi(3!)^2\|V\|_{L^1}}\Big(\D_3vG_3v\D_1TvG_1v\D_2vG_1v-\D_3vG_3v\D_1TvG_1v\D_2vG_1vT\D_1T\Big)\nonumber\\
			&\ \ \ \  -\frac{1+i}{8\pi\cdot5!}\D_3vG_3v\D_1T.
\end{align}
The proof of Theorem \ref{thm-main-inver-M} is completed.	
\end{proof}

\section*{Acknowledgments}
H. Mizutani is partially supported by JSPS KAKENHI Grant Numbers JP21K03325 and JP24K00529. Z. Wan and X. Yao are partially supported by NSFC grants No.11771165 and 12171182. The authors would like to express their thanks to Professor Avy Soffer for his interests and insightful discussions about topics on higher-order operators.


\end{document}